\numberwithin{equation}{section}
\newcommand{\nc}{\newcommand}
\newtheorem{theorem}{Theorem}[section]
\newtheorem{proposition}[theorem]{Proposition}
\newtheorem{corollary}[theorem]{Corollary}
\newtheorem{conjecture}[theorem]{Conjecture}
\newtheorem{lemma}[theorem]{Lemma}
\theoremstyle{definition}
\newtheorem{definition}[theorem]{Definition}
\newtheorem{example}[theorem]{Example}
\newtheorem{remark}[theorem]{Remark}
\nc{\Prop}{\begin{proposition}}
\nc{\enprop}{\end{proposition}}
\nc{\Conj}{\begin{conjecture}}
\nc{\enconj}{\end{conjecture}}
\nc{\Def}{\begin{definition}}
\nc{\edf}{\end{definition}}
\def\g{\mathfrak g}
\nc{\gt}{\mathfrak{t}}
\def\la{\langle}
\def\ra{\rangle}
\def\lan{\langle}
\def\ran{\rangle}
\def\k{\mathbf k}
\nc{\cor}{\Bbbk}
\def\P{\mathrm{P}}
\def\cQ{\mathcal Q}
\def\cP{\mathcal P}
\def\rtl{\mathrm{Q}}
\def\O{\mathcal O}
\def\A{\mathbb A}
\def\Q{\mathbb Q}
\def\Z{\mathbb Z}
\def\degZ{{\rm deg}_\Z}
\nc\Oint{\mathcal{O}_{{\rm int}}}
\def\Iev{I_{\rm even}}
\def\Iod{I_{\rm odd}}
\def\Pev{\P_{\rm even}}
\def\pa{{\rm p}}
\def\ot{\otimes}
\def\bt{\boxtimes}
\def\wt{{\rm wt}}
\nc{\on}{\operatorname}
\def\Uqg{{\mathsf U}_v(\g)}
\nc{\VB}[1][{\theta,\pa}]{{\rm V}_{#1}}
\nc{\VBe}[1][{\theta,\pa}]{{\rm V}_{#1}}
\nc{\tn}{{\mathring{\theta}}}
\nc{\MB}[1][{\theta,\pa}]{\mathrm{M}_{#1}}
\nc{\MBe}[1][{\theta,\pa}]{\mathrm{M}_{#1}}
\nc{\NB}[1][{\theta,\pa}]{\mathrm{N}_{#1}}
\nc{\NBe}[1][{\theta,\pa}]{\mathrm{N}_{#1}}
\newcommand{\UqBg}[1][{\theta}]{{\rm U}^{q}_{#1}(\g)}
\newcommand{\BKM}[1][{\theta}]{{\rm U}^{#1}_q(\g)}
\def\UqmBg{\BKM^-}
\def\Us{{\mathcal U}}
\def\Vs{{\mathcal V}}
\def\Uqsg{\Us(\g)}
\def\Uqmsg{\Us^-(\g)}
\def\Uqpsg{\Us^+(\g)}
\def\Uqzsg{\Us^0(\g)}
\def\Usu{{\bf U}}
\def\Vsu{{\bf V}}
\def\Msu{{\bf M}}
\def\Uqsug{\Usu(\g)}
\def\Uqmsug{\Usu^-(\g)}
\def\Uqzsug{\Usu^0(\g)}
\def\Uqpsug{\Usu^+(\g)}
\def\super{{\rm super}}
\def\Mod{{\rm Mod}}
\def\Proj{ {\rm Proj}}
\def\Rep{{\rm Rep}}
\def\MOD{{\rm Mod}_\super}
\def\id{{\rm id}}
\def\PROJ{{\rm Proj}_\super}
\def\REP{{\rm Rep}_\super}
\def\ch{{\rm ch}}
\newcommand{\ind}{{\rm Ind}}
\newcommand{\soc}{{\rm soc}}
\newcommand{\mult}{\on{mult}}
\newcommand{\hd}{{\rm hd}}
\newcommand{\ad}{{\rm ad}}
\newcommand{\eps}{\varepsilon}
\newcommand{\al}{\alpha}
\def\Hom{{\rm Hom}}
\def\End{{\rm End}}
\nc{\be}{\begin{enumerate}} \nc{\ee}{\end{enumerate}}
\nc{\bnum}{\be[{\rm(i)}]} \nc{\bna}{\be[{\rm(a)}]}
\nc{\eq}{\begin{eqnarray}} \nc{\eneq}{\end{eqnarray}}
\nc{\seteq}{\mathbin{:=}} \nc{\set}[2]{\left\{#1\mid #2\right\}}
\nc{\ba}{\begin{array}} \nc{\ea}{\end{array}}
\nc{\hs}{\hspace*}
\nc{\car}{\mathrm{A}}
\nc{\cl}{\colon}
\nc{\eqn}{\begin{eqnarray*}} \nc{\eneqn}{\end{eqnarray*}}
\newcommand{\shc}{{\mathscr{C}}}
\newcommand{\Irr}{\mathcal{I}\mathit{rr}}
\nc{\tp}{\mathrm{top}}
\newcommand{\isoto}[1][]{\mathop{\xrightarrow[#1]%
{\rule{0pt}{.9ex}%
{\raisebox{-.6ex}[0ex][-.7ex]{$\mspace{4mu}\sim\mspace{3mu}$}}}}}
\nc{\ro}{{\rm(}}
\nc{\rf}{{\rm)}}
\nc{\bl}{\bigl(}
\nc{\br}{\bigr)}
\nc{\Lemma}{\begin{lemma}}
\nc{\enlemma}{\end{lemma}}
\nc{\Cor}{\begin{corollary}}
\nc{\encor}{\end{corollary}}
\nc{\vs}{\vspace*}
\nc{\rk}{\on{rank}}
\nc{\Proof}{\begin{proof}}
\nc{\QED}{\end{proof}}
\nc{\tens}{\mathop{\otimes}\limits}
\nc{\qintt}[2][i]{{\la #2\ra\mspace{1mu}{}^{\tilde p}_{#1}}}
\nc{\factt}[2][i]{{\la #2\ra\mspace{1mu}{}^{\tilde p}_{#1}}\,!}
\nc{\qintp}[2][i]{[#2]\mspace{1mu}{}^{\pa}_{#1}}
\nc{\factp}[2][i]{{[#2]\mspace{1mu}{}^{\pa}_{#1}}\,!}
\nc{\binm}[2]{\genfrac{[}{]}{0pt}{0}{#1}{#2}}
\nc{\binma}[3][i]{{\genfrac{\langle}{\rangle}{0pt}{0}{#2}{#3}}%
^{\hs{-.5ex}\pi}_{\hs{-.5ex}{#1}}}
\nc{\binmpi}[3][i]{{\genfrac{[}{]}{0pt}{0}{#2}{#3}}%
^{\hs{-.5ex}\pi}_{\hs{-.5ex}{#1}}}
\nc{\binmp}[3][i]{{\genfrac{[}{]}{0pt}{0}{#2}{#3}}^{\pa}_{#1}}
\nc{\lam}{\lambda}
\nc{\dK}{\widetilde{K}}
\nc{\vphi}{\varphi}
\nc{\monoto}{\rightarrowtail}
\nc{\Ocat}[1][{\mathrm{int}}]{\mathcal{O}_{#1}}
\nc{\epi}{\twoheadrightarrow}
\nc{\Ad}{\on{Ad}}
\nc{\tpa}{\widetilde{\pa}}
\nc{\tth}{\widetilde{\theta}}
\nc{\tip}{\widetilde{p}}
\nc{\F}[1][{\theta,\pa}]{\mathcal{F}(#1)}
\nc{\FS}[1][{\theta,\pa}]{U_{#1}(\g)}
\nc{\HF}[1][{\tth,\tpa}]{\mathcal{H}(#1)}
\nc{\HS}[1][{\tth,\tpa}]{{\mathcal{U}}_{#1}(\g)}
\nc{\FSp}[1][{\theta,\pa}]{U_{#1}^+(\g)}
\nc{\FSm}[1][{\theta,\pa}]{U_{#1}^-(\g)}
\nc{\FSz}[1][{\theta,\pa}]{U_{#1}^0(\g)}
\nc{\HSp}[1][{\tth,\tpa}]{{\mathcal{U}}_{#1}^+(\g)}
\nc{\HSm}[1][{\tth,\tpa}]{{\mathcal{U}}_{#1}^-(\g)}
\nc{\HSz}[1][{\tth,\tpa}]{{\mathcal{U}}_{#1}^0(\g)}
\nc{\ev}{\mathrm{ev}}
\nc{\trp}[1]{{{}^\mathrm{t}{#1}}}
\nc{\px}[1]{{(#1)}}
\nc{\pix}[1]{{<#1>}}
\nc{\bal}{\begin{align}}
\nc{\eal}{\end{align}}
\nc{\baln}{\begin{align*}}
\nc{\ealn}{\end{align*}}
\nc{\ssum}{\mathop\sum\limits}
\nc{\Pevp}{\P^+_{\mathrm{even}}}
\nc{\C}{\mathbb{C}}
\nc{\Bgt}[1][{{}^t\tth,\tip}]{\mathrm{B}_{#1}(\g)}
\nc{\Bg}[1][{\tth,\tip}]{\mathrm{B}_{#1}(\g)}
\nc{\BqBg}[1][{\theta}]{\mathrm{B}^{#1}(\g)}
\nc{\Bqsug}{{\mathcal B}(\g)}
\nc{\Lam}{\Lambda}
\nc{\tC}{\widetilde{\shc}}
\nc{\Ob}{\on{Ob}}
\nc{\To}[1][\quad]{\xrightarrow{\;#1\;}}
\nc{\sd}[1]{{#1}{}^{\mathrm{D}}}
\nc{\one}{\mathbf{1}}
\nc{\Fct}{\mathrm{Fct}}
\nc{\Fcts}{\mathrm{Fct}_\super}
\nc{\ct}{\mathrm{{CT}}}
\nc{\Fm}[1][{\theta,\pa}]{\mathcal{F}^-(#1)}
\nc{\Fp}[1][{\theta,\pa}]{\mathcal{F}^+(#1)}
\nc{\noi}{\noindent}
\nc{\qtext}[1]{\quad\text{#1}\quad}
\nc{\Sl}{\mathfrak{sl}}
\nc{\EL}{E^\Lambda}
\nc{\FL}{F^\Lambda}
\nc{\La}{\Lambda}
\nc{\rev}{\mathrm{\mspace{1mu}sr}}
\nc{\Mods}{\MOD}
\nc{\op}{\mathrm{opp}}
\nc{\ex}[1]{\mspace{2mu}\mathrm{e}^{#1}}
\nc{\sop}{\mathrm{superop}}
\nc{\dg}{\mathrm{d}}
\nc{\alm}[1]{\al_{\substack{\rule[.8ex]{0ex}{1.5ex}}#1}}
\nc{\FA}{{\textgoth{A}}}
\nc{\CHom}{{{\mathscr H}\mspace{-4mu}om}}
\nc{\CEnd}{{\mathcal{E}nd}}
\nc{\can}{{\mathrm{can}}}
\nc{\ol}{\overline}
\newcommand{\scbul}{\,\raise.4ex\hbox{$\scriptscriptstyle\bullet$}\,}
\begin{document}
\title[Supercategorification of quantum Kac-Moody algebras II]
{Supercategorification of \\ quantum Kac-Moody algebras II}

\author[Seok-Jin Kang]{Seok-Jin Kang$^1$}
\thanks{$^1$ This work was supported by NRF Grant \# 2012-005700 and NRF Grant \# 2011-0027952.}
\address{Department of Mathematical Sciences and Research Institute of Mathematics,
Seoul National University, 599 Gwanak-ro, Gwanak-gu, Seoul 151-747,
Korea} \email{sjkang@snu.ac.kr}

\author[Masaki Kashiwara]{Masaki Kashiwara$^{2}$}
\thanks{$^2$ This work was supported by Grant-in-Aid for
Scientific Research (B) 22340005, Japan Society for the Promotion of
Science.}
\address{Research Institute for Mathematical Sciences, Kyoto University, Kyoto 606-8502,
Japan, and Department of Mathematical Sciences, Seoul National
University, 599 Gwanak-ro, Gwanak-gu, Seoul 151-747, Korea}
\email{masaki@kurims.kyoto-u.ac.jp}

\author[Se-jin Oh]{Se-jin Oh$^{3}$}
\thanks{$^{3}$  This work was supported by Priority Research Centers
 Program through the National Research Foundation of Korea (NRF) funded by
 the Ministry of Education, Science and Technology \# 2012-047640.}

\address{Pohang Mathematics Institute, Pohang University of Science and
Technology, San31 Hyoja-Dong Nam-Gu, Pohang 790-784, Korea}
\email{sejin092@gmail.com}

\date{March 8, 2013}
\subjclass[2000]{ 05E10, 16G99, 81R10}
\keywords{categorification, quiver Hecke superalgebras, cyclotomic quotient,
quantum Kac-Moody superalgebras}

\begin{abstract}

In this paper, we investigate the supercategories consisting of
supermodules over quiver Hecke superalgebras and cyclotomic quiver
Hecke superalgebras. We prove that these supercategories provide a
supercategorification of a certain family of quantum superalgebras
and their integrable highest weight modules. We show that, by taking
a specialization, we obtain a supercategorification of quantum
Kac-Moody superalgebras and their integrable highest weight modules.

\end{abstract}

\maketitle
\tableofcontents
\vskip 2em

\section*{Introduction}

This is a continuation of our previous work on the
supercategorification of quantum Kac-Moody algebras and their
integrable highest weight modules \cite{KKO12}. We first recall the
main results of \cite{KKO12}.

Let $I$ be an index set, $(\car =(a_{ij})_{i, j \in I}, \P, \Pi,
\Pi^{\vee})$ be a symmetrizable Cartan datum and $U_q(\g)$ be the
corresponding quantum group (or quantum Kac-Moody algebra). Since
$\car$ is symmetrizable, there is a diagonal matrix ${\rm D}$ with
positive integral entries ${\rm d}_i$ $(i\in I)$ such that ${\rm
DA}$ is symmetric. For a dominant integral weight $\Lambda \in
\P^{+}$, we denote by $V(\Lambda)$ the integrable highest weight
$U_q(\g)$-module with highest weight $\Lambda$. The integral forms
of $U_q(\g)$ and $V(\Lambda)$ will be denoted by $U_{\A}(\g)$ and
$V_{\A}(\Lambda)$, where $\A = \Z[q, q^{-1}]$.

In \cite{KL1, KL2, R08}, Khovanov-Lauda and Rouquier independently
introduced a new family of graded algebras, the {\em
Khovanov-Lauda-Rouquier algebras} or {\em quiver Hecke algebras},
that gives a categorification of quantum Kac-Moody algebras.
Furthermore, Khovanov and Lauda conjectured that the cyclotomic
quotients of quiver Hecke algebras give a categorification of
integrable highest weight modules over quantum Kac-Moody algebras.
This conjecture was proved by Kang and Kashiwara \cite{KK11}. (See
\cite{W10} for another proof of this conjecture.)

Naturally, our next goal is to find a super-version of
Khovanov-Lauda-Rouquier categorification theorem and Kang-Kashiwara
cyclotomic categorification theorem. In \cite{KKT11}, Kang,
Kashiwara and Tsuchioka introduced the notion of {\em quiver Hecke
superalgebras} and {\em quiver Hecke-Clifford superalgebras} which
are $\Z$-graded algebras over a commutative graded ring $\k =
\oplus_{n\ge 0} \k_{n}$ with $\k_{0}$ a field. They showed that
these superalgebras are weakly Morita superequivalent and that,
after some completion, the quiver Hecke-Clifford superalgebras are
isomorphic to the affine Hecke-Clifford superalgebras. It folws
that the same statements hold for the cyclotomic quotients of these
superalgebras,

Based on the results of \cite{KKT11}, Kang, Kashiwara and Oh proved
that the quiver Hecke superalgebras and the cyclotomic quiver Hecke
superalgebras provide a supercategorification of quantum Kac-Moody
algebras and their integrable highest weight modules \cite{KKO12}.
Here, a supercategorification of an algebraic structure means a
construction of a 1-supercategory or a 2-supercategory whose
Grothendieck group is isomorphic to the given algebraic structure.
To describe the main results of \cite{KKO12} in more detail, we need
to fix some notations and conventions.

Let $\cor$ be a commutative ring in which 2 is invertible. A {\em
supercategory} is a $\cor$-linear category $\shc$ with an
endofunctor $\Pi$ and a natural isomorphism $\xi: \Pi^2
\overset{\sim} \longrightarrow \id$ such that $\xi \cdot \Pi = \Pi
\cdot \xi$. A {\em 1-supercategory} is a $\cor$-linear category
$\shc$ such that

\ \ (i) $\Hom_{\shc}(X,Y)$ is endowed with a $\cor$-supermodule
structure for all $X,Y \in \shc$,

\ \ (ii) the composition map $$\Hom_{\shc}(Y,Z) \times
\Hom_{\shc}(X,Y) \rightarrow \Hom_{\shc}(X,Z)$$ is
$\cor$-superbilinear.

The notion of supercategories and that of 1-supercategories are
almost equivalent. One can also define the notion of {\em
2-supercategories}. The basic properties of supercategories,
1-supercategories and 2-supercategories are explained in Section
\ref{Sec:supers}.

Let $A = A_0 \oplus A_1$ be a $\cor$-superalgebra with an involution
$\phi_{A}$ defined by
$$\phi_{A}(a)=(-1)^{\epsilon} a \quad (a \in A_{\epsilon}, \
\epsilon = 0, 1).$$ We denote by $\Mod(A)$ be the category of left
$A$-modules. Then $\Mod(A)$ is endowed with a supercategory
structure induced by $\phi_{A}$. On the other hand, let
$\Mod_{\super}(A)$ denote the category of left $A$-supermodules with
$\Z_{2}$-degree preserving homomorphisms. Then $\Mod_{\super}(A)$
has a structure of supercategory induced by the parity shift functor
$\Pi$.

For $\beta \in \rtl^{+}$, let $R(\beta)$ and $R^{\Lambda}(\beta)$ be
the quiver Hecke superalgebra and the cyclotomic quiver Hecke
superalgebra at $\beta$, respectively. In \cite{KKO12}, we dealt
with the supercategory $\Mod(R(\beta))$ and
$\Mod(R^{\Lambda}(\beta))$, not $\Mod_{\super}(R(\beta))$ and
$\Mod_{\super}(R^{\Lambda}(\beta))$. More precisely, let $(\car, \P,
\Pi, \Pi^{\vee})$ be a {\em Cartan superdatum}. That is, the index
set $I$ has a decomposition $I =\Iev \sqcup \Iod$ and $a_{ij} \in 2
\Z$ for $i \in \Iod$, $j \in I$. We denote by
$\text{Proj}(R(\beta))$ the supercategory of finitely generated
projective $\Z$-graded $R(\beta)$-modules and $\text{Rep}(R(\beta))$
the supercategory of $\Z$-graded $R(\beta)$-modules that are
finite-dimensional over $\k_0$. We define the supercategories
$\Mod(R^{\Lambda}(\beta))$, $\text{Proj}(R^{\Lambda}(\beta))$ and
$\text{Rep}(R^{\Lambda}(\beta))$ in a similar way. Consider the
supercategories
$$
\begin{aligned}
& \Rep(R^{\Lambda}) = \bigoplus_{\beta \in \rtl^{+}}
\Rep(R^{\Lambda}(\beta)), \quad \Proj(R^{\Lambda})= \bigoplus_{\beta
\in \rtl^{+}} \Proj(R^{\Lambda}(\beta)), \\
& \Rep(R) = \bigoplus_{\beta \in \rtl^{+}} \Rep(R(\beta)), \quad
\Proj(R)= \bigoplus_{\beta \in \rtl^{+}} \Proj(R(\beta)).
\end{aligned}
$$
In \cite{KKO12}, we proved that
$$
\begin{aligned}
& V_{\A}(\Lambda)^{\vee} \overset{\sim} \longrightarrow
[\Rep(R^{\Lambda})], \qquad V_{\A}(\Lambda) \overset{\sim}
\longrightarrow [\Proj(R^{\Lambda})], \\
& U_{\A}^{-}(\g)^{\vee} \overset{\sim} \longrightarrow [\Rep(R)],
\qquad U_{\A}^{-}(\g) \overset{\sim} \longrightarrow [\Proj(R)],
\end{aligned}
$$
where $[ \ \ \ ]$ denotes the Grothendieck group and
$V_{\A}(\Lambda)^{\vee}$ (resp. $U_{\A}^{-}(\g)^{\vee}$) is the dual
of $V_{\A}(\Lambda)$ (resp. $U_{\A}^{-}(\g)$).

The main theme of this paper is to investigate the structure of
supercategories
$$
\begin{aligned}
& \MOD(R^{\Lambda}) = \bigoplus_{\beta \in \rtl^{+}}
\MOD(R^{\Lambda}(\beta)), \quad \MOD(R)= \bigoplus_{\beta
\in \rtl^{+}} \MOD(R(\beta)), \\
& \REP(R^{\Lambda}) = \bigoplus_{\beta \in \Q^{+}}
\REP(R^{\Lambda}(\beta)), \quad  \REP(R)= \bigoplus_{\beta
\in \rtl^{+}} \REP(R(\beta)), \\
& \PROJ(R^{\Lambda}) = \bigoplus_{\beta \in \rtl^{+}}
\PROJ(R^{\Lambda}(\beta)), \quad \PROJ(R)= \bigoplus_{\beta \in
\rtl^{+}} \PROJ(R(\beta)).
\end{aligned}
$$
Here,  we denote by $\MOD(R(\beta))$ the supercategory of
$\Z$-graded $R(\beta)$-supermodules, by $\PROJ(R(\beta))$ the
supercategory of finitely generated projective
$R(\beta)$-supermodules and by $\REP(R(\beta))$ the supercategory of
$R(\beta)$-supermodules finite-dimensional over $\k_0$. We define
the supercategories $\MOD(R^{\Lambda}(\beta))$,
$\PROJ(R^{\Lambda}(\beta))$ and $\REP(R^{\Lambda}(\beta))$ in a
similar manner. The parity functor $\Pi$ induces involutions $\pi$
on the Grothendieck groups of these supercategories and we have
isomorphisms
$$
\begin{aligned}
& [\Rep(R^{\Lambda})] \overset{\sim} \longrightarrow
[\REP(R^{\Lambda})] \big/ (\pi -1) [\REP(R^{\Lambda})], \\
& [\Proj(R^{\Lambda})] \overset{\sim} \longrightarrow
[\PROJ(R^{\Lambda})] \big/ (\pi - 1) [\PROJ(R^{\Lambda})],\\
& [\Rep(R)] \overset{\sim} \longrightarrow
[\REP(R)] \big/ (\pi -1) [\REP(R)], \\
& [\Proj(R)] \overset{\sim} \longrightarrow [\PROJ(R)] \big/ (\pi -
1) [\PROJ(R)].
\end{aligned}
$$

Our goal is to prove that quiver Hecke superalgebras and cyclotomic
quiver Hecke superalgebras provide a supercategorification of a
certain family of quantum superalgebras and their integrable highest
weight modules. We will also show that, by taking a specialization,
we obtain a supercategorification of quantum Kac-Moody superalgebras
and their integrable highest weight modules. However, it is quite
delicate and needs some special care to present a precise statement
of our main theorem.

We first define the algebras $\FS$ and $\HS$ which are
generalizations of quantum Kac-Moody (super)algebras. Let
$\theta\seteq\{\theta_{ij}\}_{i,j\in I}$ and $\pa\seteq
(\{p_{ij}\}_{i,j\in I},\{p_i\}_{i\in I})$ be families of invertible
elements in $\cor$ such that $p_i^{n}-1$ is invertible for all $i\in
I$ and $n \in \Z_{>0}$. Assume that $\theta$ and $\pa$ satisfy the
condition \eqref{cond:pt}. We define $\FS$ to be the $\cor$-algebra
generated by $e_i$, $f_i$, $K_i^{\pm 1}$ with the defining relations
\eqref{def:genU1} and \eqref{eq:Serre}. We denote by
$\Mod^{\P}(\FS)$ the category of $\P$-weighted $\FS$-modules and
$\Oint^{\P}(\FS)$ the subcategory consisting of $\P$-weighted
integrable $\FS$-modules.

For each $i\in I$, choose a function $\chi_{i}: \P \rightarrow
\cor^{\times}$ satisfying \eqref{cond:chi}. The {\em Verma module}
$\MBe(\Lambda)$ is defined to be the $\FS$-module generated by a
vector $u_{\Lambda}$ with defining relations
$$K_i u_{\Lambda} = \chi_{i}(\Lambda) u_{\Lambda}, \quad e_{i}
u_{\Lambda}=0 \ \ \text{for all} \ i \in I.$$ We define
$\VBe(\Lambda) = \MBe(\Lambda) / \NBe(\Lambda)$, where
$\NBe(\Lambda)$ is the unique maximal $\FS$-submodule of
$\MBe(\Lambda)$ such that $\NBe(\Lambda) \cap \cor u_{\Lambda} =0$.
If $\Lambda \in \P^{+}$, then $\VBe(\Lambda)$ belongs to
$\Oint^{\P}(\FS)$ and $ f_i^{\la h_i, \Lambda \ra +1} v_{\Lambda} =0
\quad \text{for all} \ i \in I,$ where $v_{\Lambda}$ is the image of
$u_{\Lambda}$ in $\VBe(\Lambda)$. We conjecture that the category
$\Oint^{\P}(\FS)$ is semisimple and every simple object is
isomorphic to $\VBe(\Lambda)$ for some $\Lambda \in \P^{+}$. (See
Conjecture \ref{conj}.)

On the other hand, let $\tilde{\theta} = \{\theta_{ij}\}_{i,j \in
I}$ and $\tilde{\pa} = \{\tilde{p}_{i} \}_{i\in I}$ be families of
invertible elements in $\cor$ such that $1 - \tilde{p}_{i}^{n}$ is
invertible for all $i \in I$, $n \in \Z_{>0}$. Assume that
$\tilde{\theta}$ and $\tilde{\pa}$ satisfy the condition
\eqref{cond:ttp}. We define $\HS$ to be the $\cor$-algebra generated
by $e_i$, $f_i$, $\tilde{K}_{i}^{\pm 1}$ with defining relations
\eqref{def:genU2} and \eqref{eq:Serre2}. Assume that $\theta$,
$\pa$, $\tilde{\theta}$ and $\tilde{\pa}$ satisfy the relation
\eqref{rel:ptt}. Then we have the following equivalences of
categories (Proposition \ref{Prop: equivalent 1}):
$$\Mod^{\P}(\FS) \overset{\sim} \longrightarrow \Mod^{\P}(\HS),
\quad \Oint^{\P}(\FS) \overset{\sim}\longrightarrow
\Oint^{\P}(\HS).$$ Moreover these categories only depend on
$\{p_i^2\}_{i\in I}$.

The algebras $\FSm$ and $\HSm$ have a structure of $\Bg$-module,
where $\FSm$ (resp. $\HSm$) is the subalgebra of $\FS$ (resp. $\HS$)
generated by $f_i$'s $(i \in I)$ and $\Bg$ is the {\em quantum boson
algebra} (see Definition \ref{def:BqBg}).

For a Cartan superdatum $(A, \P, \Pi, \Pi^{\vee})$, we define the
{\em parity function} ${\rm p}: I \rightarrow \{0, 1\}$ by ${\rm
p}(i)=0$ if $i$ is even, \ ${\rm p}(i)=1$ if $i$ is odd. We denote
by $\Pev = \{ \lambda \in \P \mid \la h_i, \lambda \ra \in 2 \Z \ \
\text{for} \ i \in \Iod \}$ and set $\Pev^{+} = \P^{+} \cap \Pev$.

Let $\pi$ (resp. $\sqrt{\pi}$) be an indeterminate such that $\pi^2
=1$ (resp. $(\sqrt{\pi})^2 = \pi$). For any ring $R$, we define
$$R^{\pi} = R \otimes \Z[\pi], \qquad R^{\sqrt{\pi}} = R \otimes
\Z[\sqrt{\pi}].$$

Set $\cor=\Q(q)^{\sqrt{\pi}}$ and choose $\theta$ and $\pa$
satisfying \eqref{eq:KMsuper}:
$$p_i = q_i \sqrt{\pi_{i}}, \ \ p_{ij} = q_i^{a_{ij}}, \ \
\theta_{ij}\theta_{ji} = 1, \ \ \theta_{ii}=\pi_{i}.$$ Let ${\rm
U}_{\theta}^q(\g) = \FS$ and $V_{\theta}^q(\Lambda)=\VBe(\Lambda)$
for this choice of $\theta$ and $\pa$. The algebra ${\rm
U}_{\theta}^q(\g)$ is the {\em quantum Kac-Moody superalgebra}
introduced by \cite{KT91, BKM98}. It was shown in \cite{BKM98} that
the category $\Oint^{\Pev}(\C(q) \otimes_{\Q(q)} {\rm
U}_{\theta}^{q}(\g))$ is semisimple and every simple object is
isomorphic to $V_{\theta}^{q}(\Lambda) \big/ (\sqrt{\pi} -c)$ for
some $\Lambda \in \Pev$ and $c \in \C$ with $c^{4}=1$.

The parameter $\pi$ was first introduced by Hill and Wang
\cite{HW12}. Using this, they defined the notion of {\it covering
Kac-Moody algebras} which specialize to Kac-Moody algebras when
$\pi=1$ and to Kac-Moody superalgebras when $\pi=-1$. The discovery
of $\pi$ is a simple but an important observation because it
explains the subtle behavior of the parity functor  $\Pi$. In this
sense,  $\Pi$ gives a categorification of $\pi$. 

Now we take another choice of $\theta$ and $\pa$ given in
\eqref{eq:boldU}:
$$p_i = q_i \sqrt{\pi_{i}}, \quad p_{ij} = p_i^{a_{ij}}, \quad
\theta_{ij} = \begin{cases}  \sqrt{\pi_{j}}^{a_{ji}} \ \ &\text{if}
\ i\neq j, \\ 1 \ \ & \text{if} \ i=j.
\end{cases}$$
We denote by  $\Uqsug = U_{\theta, \pa}(\g)$ and
$\mathbf{V}(\Lambda)=V_{\theta, \pa}(\Lambda)$ for this choice. We
prove in Corollary \ref{cor: tri dec Super} and in Theorem \ref{thm:
ch V(Lambda)} that

\ \ (i) We have the equivalence of categories
$$\Mod^{\P} \Uqsug \overset{\sim} \longrightarrow \Mod^{\P} U_{\theta}^q
(\g).$$

\ \ (ii) The category $\Oint^{\P}(\Uqsug)$ is semisimple and every
simple object is isomorphic to ${\mathbf V}(\Lambda)$ for some
$\Lambda \in \P^{+}$.

The key ingredient of the proof is the {\em quantum Casimir
operator} for the quantum superalgebra $\Uqsug$ (See Section
\ref{sec: Rep Uqsug}).

We finally define $\Uqsg$ to be the $\cor$-algebra $\HS$ with
$\tilde{\theta}$, $\tilde{\pa}$ and $\cor$ given in \eqref{eq:Uqsg}:
$$\tilde{p}_i = q_i^2 \pi_{i}, \quad \tilde{\theta}_{ij} =
\tilde{\theta}_{ji} = \pi^{{\rm p}(i) {\rm p}(j)}
q_i^{-a_{ij}},\quad \cor = \Q(q)^{\pi}.$$ For $\Lambda \in \P^{+}$,
let $\Vs(\Lambda)$ be the $\P$-weighted $\Uqsg$-module generated by
$v_{\Lambda}$ with defining relations \eqref{def:V}:
$$\tilde{K}_{i} v_{\Lambda} = (q_i^2 \pi_{i})^{\la h_i, \Lambda \ra}
v_{\Lambda} = \tilde{p}_i^{\la h_i, \Lambda \ra} v_{\Lambda}, \quad
e_i v_{\Lambda}=0, \quad f_i^{\la h_i, \Lambda \ra +1} v_{\Lambda}
=0.$$ Then we prove in Theorem \ref{thm:Oint} that

\ \ (i) We have the equivalences of categories
$$
\begin{aligned}
& \Mod^{\P}(\Q(q)^{\sqrt{\pi}} \otimes_{\Q(q)^{\pi}} \Uqsg)
\overset{\sim} \longrightarrow \Mod^{\P}(\Uqsug), \\
& \Oint^{\P}(\Q(q)^{\sqrt{\pi}} \otimes_{\Q(q)^{\pi}} \Uqsg)
\overset{\sim} \longrightarrow \Oint^{\P}(\Uqsug).
\end{aligned}
$$

\ \ (ii) The category $\Oint^{\P}(\Uqsg)$ is semisimple and every
simple object is isomorphic to $\Vs(\Lambda) \big/ (\pi -
\varepsilon) \Vs(\Lambda)$ for some $\Lambda \in \P^{+}$ and
$\varepsilon = \pm 1$.

The algebra $\Uqsg$ and the $\Uqsg$-module $\Vs(\Lambda)$ are
directly related to the supercategorification via quiver Hecke
superalgebras and cyclotomic quiver Hecke superalgebras. We denote
by $\Us_{\A^{\pi}}(\g)$ and $\Vs_{\A^{\pi}}(\Lambda)$ the
$\A^{\pi}$-forms of $\Uqsg$ and $\Vs(\Lambda)$, respectively, where
$\A^{\pi}=\Z[q, q^{-1}]^{\pi} \subset \Q(q)^{\pi}$. Also, we denote
by $B_{\A^{\pi}}^{\rm up}(\g)$ and $B_{\A^{\pi}}^{\rm low}(\g)$ the
upper and lower $\A^{\pi}$-forms of the quantum boson algebra
$B_{\tilde{\theta}, \tilde{\pa}}(\g)$.

Now we can state our supercategorification  theorems (Theorem
\ref{th:main1} and Corollary \ref{cor:main2}):

\ \ (a) There exist isomorphisms of $\Us_{\A^{\pi}}(\g)$-modules
$$\Vs_{\A^{\pi}}(\Lambda)^{\vee} \overset{\sim} \longrightarrow
[\REP(R^{\Lambda})], \qquad \Vs_{\A^{\pi}}(\Lambda)
\overset{\sim}\longrightarrow [\PROJ(R^{\Lambda})].$$

\ \ (b) There exist isomorphisms
$$\Us_{\A^{\pi}}^{-}(\g)^{\vee} \overset{\sim}\longrightarrow [\REP(R)],
\qquad \Us_{\A^{\pi}}^{-}(\g) \overset{\sim}\longrightarrow
[\PROJ(R)]$$ as $B_{\A^{\pi}}^{\rm up}(\g)$-modules and
$B_{\A^{\pi}}^{\rm low}(\g)$-modules, respectively.

To prove our main theorems, for each $i\in I$ and $\beta \in
\rtl^{+}$, we define the superfunctors
\begin{align*}
& E_i^{\Lambda}\cl \MOD(R^{\Lambda}(\beta+\alpha_i)) \to \MOD(R^{\Lambda}(\beta)), \\
& F_i^{\Lambda}\cl \MOD(R^{\Lambda}(\beta)) \to
\MOD(R^{\Lambda}(\beta+\alpha_i))
\end{align*}
by
\begin{align*}
& E_i^{\Lambda}(N)=e(\beta,i)N
=e(\beta,i)R^{\Lambda}(\beta+\alpha_i)
\otimes_{R^{\Lambda}(\beta+\alpha_i)}N,\\
& F_i^{\Lambda}(M)=R^{\Lambda}(\beta+\alpha_i)e(\beta,i)
\otimes_{R^{\Lambda}(\beta)}M
\end{align*}
for $M \in \MOD(R^{\Lambda}(\beta))$ and $N\in
\MOD(R^{\Lambda}(\beta+\alpha_i))$. By the same argument as in
\cite{KKO12}, one can verify that $E_{i}^{\Lambda}$ and
$F_{i}^{\Lambda}$ are well-defined exact functors on
$\REP(R^{\Lambda})$ and $\PROJ(R^{\Lambda})$. Similarly, one can
show that there exist natural isomorphisms of endofunctors on
$\MOD(R^{\Lambda}(\beta))$ given below:
\begin{equation*}
\begin{aligned}
& E^{\Lambda}_iF^{\Lambda}_j \overset{\sim}{\to}
 q^{-(\alpha_i|\alpha_j)}\Pi^{\pa(i)\pa(j)}F^{\Lambda}_j E^{\Lambda}_i \quad
  \text{ if $i \neq j$}, \\
& \Pi_iq_i^{-2}F^{\Lambda}_iE^{\Lambda}_i \oplus \bigoplus^{\langle
h_i,\Lambda-\beta \rangle-1}_{k=0}\Pi_i^k q_i^{2k}
\overset{\sim}{\to} E^{\Lambda}_iF^{\Lambda}_i \quad
\text{ if $\langle h_i,\Lambda-\beta \rangle \ge 0$}, \\
& \Pi_iq_i^{-2}F^{\Lambda}_iE^{\Lambda}_i \overset{\sim}{\to}
E^{\Lambda}_iF^{\Lambda}_i \oplus \bigoplus^{-\langle
h_i,\Lambda-\beta \rangle-1}_{k=0}\Pi_i^{k+1}q_i^{-2k-2} \quad
\text{ if $\langle h_i,\Lambda-\beta \rangle < 0$}.
\end{aligned}
\end{equation*}
It follows that $[\REP(R^{\Lambda})]$ and $[\PROJ(R^{\Lambda})]$ are
endowed with $\Us_{\A^{\pi}}(\g)$-module structure. Moreover, using
the characterization theorem of $\Vs_{\A^{\pi}}(\Lambda)^{\vee}$ in
terms of strong perfect bases (Theorem \ref{Thm:recognition thm}),
we conclude that
$$\Vs_{\A^{\pi}}(\Lambda)^{\vee} \overset{\sim}\longrightarrow
[\REP(R^{\Lambda})].$$ The rest of our statements follow by duality
and by taking inductive limit.

When the Cartan superdatum satisfies the (C6) condition proposed by
\cite{HW12}: ${\rm d}_i$ is odd if and only of $i \in \Iod$, we have
$$\Mod^{\P}(\Uqsg) \overset{\sim}\longrightarrow \Mod^{\P}(\Uqg),$$
where $\Uqg$ is the usual quantum Kac-Moody algebra with $v = q
\sqrt{\pi}$. Hence the results in \cite{HW12} follow as a special
case of our supercategorification theorems.

\vskip 5mm

{\bf Acknowledgements.} We would like to express our gratitude to
Sabin Cautis for fruitful correspondences.

\vskip 5mm

\section{Preliminaries}\label{sec:cartan}

Let $I$ be an index set.
 An integral matrix
$\car=(a_{ij})_{i,j \in I}$ is called a {\em Cartan matrix}  if it
satisfies: (i) $a_{ii}=2$, \  (ii) $a_{ij} \le 0$ for $i \neq j$, \
(iii) $a_{ij} =0 $ if $a_{ji}=0$. We say that  $\car$ is {\em
symmetrizable}
 if there is a diagonal matrix  ${\rm D}={\rm diag}(\dg_i \in \Z_{>0} \ | \ i \in I)$
such that ${\rm D} \car$ is symmetric.

\begin{definition} \label{dfn:cartan datum}
A {\em Cartan datum} is a quadruple $(\car,\P,\Pi,\Pi^\vee)$
consisting of \bnum
\item a symmetrizable Cartan matrix $\car$,
\item a free abelian group $\P$, called the {\em weight lattice},
\item $\Pi=\{ \alpha_i \in \P \ | \ i \in I \}$, called the set of
{\em simple roots},
\item $\Pi^\vee=\{ h_i \ | \ i \in I \} \subset \P^\vee \seteq \Hom(\P,\Z)$,
called the set of {\em simple coroots},
\end{enumerate}
satisfying the following properties:
\bna
\item $\la h_i,\alpha_j \ra=a_{ij}$ for all $i,j \in I$, \label{eqn:root pairing}
\item $\Pi$ is linearly independent.
\end{enumerate}
\end{definition}

The weight lattice $\P$ has a symmetric bilinear form $( \ | \ )$ satisfying
$$ (\alpha_i | \lambda ) = \dg_i \la h_i , \lambda \ra \quad \text{ for all } \lambda \in \P.$$
In particular, we have $(\alpha_i | \alpha_j)=\dg_i a_{ij}$. Let
$\P^+ \seteq \{ \lambda \in \P \ | \ \la h_i,\lambda\ra\in\Z_{\ge 0}
\text{ for all } i \in I \}$ be the set of {\em dominant integral
weights}. The free abelian group $\rtl \seteq \oplus_{i \in
I}\Z\alpha_i$ is called the {\em root lattice}. Set $\rtl^+=\sum_{i
\in I} \Z_{\ge 0} \alpha_i$ and $\rtl^- = -\rtl^+$. For $\beta =
\sum k_i \alpha_i \in \rtl$, the {\em height} of $\beta$ is defined
to be $|\beta| = \sum |k_i|$.  For each $i \in I$, let $s_i \in
\mathrm{GL}(\P)$ be the {\em simple reflection} on $\P$ defined by
$s_i(\lambda)=\lambda - \la h_i,\lambda \ra \alpha_i$ for $\lambda
\in \P$. The subgroup $W$ of $\mathrm{GL}(\P)$ generated by $s_i$ is
called the {\em Weyl group} associated with the Cartan datum
$(\car,\P,\Pi,\Pi^\vee)$.

\begin{definition}[\cite{Kac90}] \label{dfn:KM}
The Kac-Moody Lie algebra $\g$ associated with the Cartan datum
$(\car,\P,\Pi,\Pi^\vee)$ is the Lie algebra over $\Q$ generated by
$\gt\seteq\Q\otimes P^\vee$ and $e_i$, $f_i$ $(i\in I)$ satisfying
the following defining relations: \bnum
\item $\gt$ is abelian,
\item $[h,e_i]=\lan h,\al_i\ran e_i$, $[h,f_i]=-\lan h,\al_i\ran f_i$,
\item $[e_i,f_j]=\delta_{i,j}h_i$,
\item $\ad(e_i)^{1-a_{ij}}e_j=0$, \ $\ad(f_i)^{1-a_{ij}}f_j=0$
for any $i\not=j\in I$.
\end{enumerate}
\end{definition}
Then $\g$ has the root space decomposition:
$\g=\soplus_{\beta\in Q}\g_\beta$, where
$$\g_\beta=\set{a\in\g}{\text{$[h,a]=\lan h,\beta\ran a$ for any
$h\in\gt$}}.$$ We denote by
\begin{itemize}
\item[(i)]
$\Delta\seteq\set{\beta\in \rtl\setminus\{0\}}{\g_\beta\not=0}$,
the set of {\em roots} of $\g$,
\item[(ii)] $\Delta^\pm\seteq\Delta\cap\rtl^\pm$, the set of {\em positive roots}
(resp.\ {\em negative roots}) of $\g$,
\item[(iii)] $\mult(\beta) \seteq \dim \g_{\beta}$, the {\em multiplicity}
of the root $\beta$.
\end{itemize}

Let $\cor$ be a commutative ring which will play the role of base
ring. In this paper, we will deal with several associative
$\cor$-algebras $\mathcal{A}$ generated by $e_i$, $f_i$ , $K_i^{\pm
1}$ ($i \in I$) satisfying the relations
$$K_i e_j K_i^{-1} = g_{i}^{a_{ij}}e_j, \quad  K_i f_j K_i^{-1} =
g_i^{-a_{ij}}f_j $$ for some invertible elements $g_{i}$ in $\cor$.

We say that $\mathcal{A}$ has a {\em weight space decomposition} if
it is endowed with a decomposition
$$\mathcal{A} = \bigoplus_{\alpha \in \rtl} \mathcal{A}_{\alpha}$$
such that
$e_i\mathcal{A}_{\alpha}+\mathcal{A}_\alpha e_i\subset \mathcal{A}_{\alpha+\al_i}$,
$f_i\mathcal{A}_{\alpha}+\mathcal{A}_\alpha f_i\subset \mathcal{A}_{\alpha-\al_i}$
and $K_iaK_i^{-1}=g_{i}^{\lan h_i,\al\ran}a$
for any $\al\in\rtl$ and $a\in\mathcal{A}_\al$.

Let $G$ be a subset of $\P$ such that $G+\rtl\subset \P$. An
$\mathcal{A}$-module $V$ is called a {\em $G$-weighted module} if it
is endowed with a {\em $G$-weight space decomposition}
$$ V= \bigoplus_{\mu \in G}V_\mu $$
such that $\mathcal{A}_\alpha V_\mu\subset V_{\mu+\alpha}$, and
$K_i\vert_{V_\mu}=g_i^{\lan h_i,\mu\ran}\id_{V_\mu}$ for any $\al\in
\rtl$ and $\mu\in G$. A vector $v \in V_{\mu}$ is called a {\em
weight vector} of weight $\mu$. We denote the set of weights of $V$
by $\wt(V) \seteq \set{ \mu \in G}{\ V_{\mu} \neq 0 }$.

We call an $\mathcal{A}$-module  $M$ a {\em highest weight module}
with highest weight $\Lambda$ if $M$ is $(\Lambda+\rtl)$-weighted
module and there exists a vector $v_\Lambda \in M_{\Lambda}$ (called
a {\em highest weight vector}) such that
\begin{equation} \label{dfn: highest weight module}
\  M = \mathcal{A} \ v_\Lambda, \quad e_iv_\Lambda =0 \ \text{ for
all } i\in I.
\end{equation}
An $\mathcal{A}$-module $M(\Lambda)$ with highest weight $\Lambda
\in \P$ is called an $\mathcal{A}$-{\em Verma} module if every
$\mathcal{A}$-module with highest weight $\Lambda$ is a quotient of
$M(\Lambda)$.

\vskip 3mm

For later use, we fix some notations.

\bnum
\item We denote by $\Mod^{G}(\mathcal{A})$ the abelian category of
$G$-weighted $\mathcal{A}$-modules $V$.
\item We denote by $\Ocat[]^{G}(\mathcal{A})$
the full subcategory of $\Mod^{G}(\mathcal{A})$
consisting of $G$-weighted $\mathcal{A}$-modules $V$ satisfying the
following conditions:
\bna
\item $\dim V_\lam<\infty$ for any $\lam\in G$,
\item there are finitely many $\lambda_1, \ldots, \lambda_s \in
G$ such that $\wt(V) \subset \bigcup^{s}_{i=1}(\lambda_i -
\rtl^{+}).$ \ee
\item We denote by $\Oint^{G}(\mathcal{A})$
the full subcategory of $\Ocat[]^{G}(\mathcal{A})$
 consisting of the modules $V$ satisfying the additional condition:
\be
\item[{\rm(c)}] For any $i \in I$, the actions of $e_i$ and $f_i$ on $V$
are locally nilpotent.
\ee
\ee

\begin{definition} \label{Def: O, Oint w.r.t G} \hfill
\bna
\item
 We say that an $\mathcal{A}$-module is \emph {integrable}
if it belongs to the category $\Ocat^\P(\mathcal{A})$.
\item For $V\in \Ocat[]^\P(\mathcal{A})$, we define its \emph{character} by
$$\ch(V)=\sum_{\lam\in \P}(\dim V_\lam)\ex{\lam}.$$
\ee
\end{definition}

Let $R$ be a ring and let $\{ X_{j}^{\pm 1} \mid j \in J \}$ be a
family of commuting variables. Set $$R[X_{j}^{\pm 1} \mid j \in J ]
= R \otimes_{\Z} Z [X_{j}^{\pm 1} \mid j \in J ].$$ Then the
following lemma is obvious.

\Lemma  \ {\rm (a)} Let $\{\varphi_{j} \mid j \in J \}$ be a family
of commuting automorphisms of $R$. Then $R[X_{j}^{\pm 1} \mid j \in
J]$ has a ring structure given by
$$X_{j}^{\pm 1} \, a = \varphi_{j}^{\pm 1}(a) \,
 X_{j}^{\pm 1} \quad (a
\in R, \ j \in J).$$

{\rm (b)} If $J' \subset J$ and $\varphi_{j}^2 = \id$ for all $j \in
J'$, then we may assume that $X_{j}^2 =1$ for all $j \in J'$.
\enlemma

In this case, we say that $R[X_{j}^{\pm 1} \mid j \in J ]$ is
obtained from $R$ by adding the mutually commuting operators
satisfying
$$X_{j} \, a \, X_{j}^{-1} = \varphi_{j}(a) \quad (a \in R, j \in
J).$$

For $a$, $b\in\cor$
and $n \in \Z_{\ge 0}$ , we define
\begin{equation}
 \begin{aligned}
 \ &[n]_{a,b} =\frac{ a^n - b^{n} }{ a -b },
 \ &{}[n]_{a,b}! = \prod^{n}_{k=1} [k]_{a,b} ,
 \ & {\binm{m}{n}}_{a,b}= \frac{ [m]_{a,b}! }{[m-n]_{a,b}!\; [n]_{a,b}! }.
 \end{aligned}\label{def:qint}
\end{equation}
Note that they are polynomials of $a$ and $b$. Moreover, we have \eq
&&[n]_{ac,bc} =c^{n-1}[a]_{a,b}, \quad[n]_{ac,bc}!=c^{n(n-1)/2}
[n]_{a,b}!,
 \quad{\binm{m}{n}}_{ac,bc}= c^{n(m-n)}{\binm{m}{n}}_{a,b},\\
&& \sum_{k=0}^{n}{\binm{n}{k}}_{a,b}(ab)^{\frac{k(k-1)}{2}}z^k
=\prod_{k=0}^{n-1}(1+a^{n-1-k}b^kz). \label{eq:bino}
\eneq

\vskip 5mm

\section{The algebra $\FS$}
\label{sec:FS}

Let $\theta\seteq\{\theta_{ij}\}_{i,j\in I}$ and $\pa\seteq
(\{p_{ij}\}_{i,j\in I},\{p_i\}_{i\in I})$ be families of invertible
elements of a commutative ring $\cor$ such that $p_i^n-1$ is
invertible for any $i$ and $n\in\Z_{>0}$. Define $\F$ to be the
$\cor$-algebra generated by $e_i$, $f_i$, $K_i^{\pm1}$ ($i\in I$)
with the defining relations \eq &&\ba{l}
K_iK_j=K_jK_i, \quad  K_i e_j K_i^{-1}=p_{ij}e_j,\quad K_i f_j K_i^{-1}=p_{ij}^{-1}f_j,\\[1ex]
e_if_j-\theta_{ji}f_je_i=\delta_{i,j}\dfrac{K_i-K_i^{-1}}{p_i-p_i^{-1}}.
\ea\label{def:genU1} \eneq Then there exists an anti-isomorphism \eq
&&\F\isoto\F[{\trp{\theta},p}] \label{eq:antiiso} \eneq given by
$$e_i\mapsto f_i, \quad f_i\mapsto e_i,\quad  K_i\mapsto K_i \quad (i \in I),$$
where $(\trp{\theta})_{ij}=\theta_{ji}$.

Let us denote by $\Fm$ be the subalgebra of $\F$ generated by the
$f_i$'s $(i \in I)$. Then $\Fm$ is a free $\cor$-algebra with $\{f_i
\mid i \in I \}$ as generators. Similarly, let $\Fp$ be the
subalgebra generated by the $e_i$'s $(i \in I)$ and set
$\mathcal{F}^{0}=\cor[K_{i}^{\pm 1} \mid i \in I]$. Then we have a
triangular decomposition
\eq&&\Fm\tens\cor[K_i^{\pm1}\mid i\in
I]\tens\Fp\isoto\F.\eneq

We will investigate the role of $\theta$ and $\pa$ in characterizing
the algebra $\F$. Let $\theta'$ and $\pa'$ be another choice of such
families and consider the algebra ${\mathcal F}(\theta', \pa')$. We
take a set of invertible elements $x_{ij}$,$y_{ij}$,
$\varepsilon_{ij}$, $c_i$ in $\cor$ and let ${\mathcal F}(\theta,
\pa)[P,Q,R]$ (resp. ${\mathcal F}(\theta', \pa')[P, Q, R]$) be the
algebra obtained from $\F$ (resp. ${\mathcal F}(\theta', \pa')$) by
adding mutually commuting operators $P=(P_{i}^{\pm 1})$,
$Q=(Q_{i}^{\pm 1})$, $R=(R_{ij}^{\pm 1})$ satisfying
\eq
&&\ba{l}
P_ie_jP_i^{-1}=x_{ij}e_j, \ P_if_jP_i^{-1}=x_{ij}^{-1}f_j,
\ P_iK_jP_i^{-1}=K_j,\\[1ex]
Q_ie_jQ_i^{-1}=y_{ij}e_j, \ Q_if_jQ_i^{-1}=y_{ij}^{-1}f_j,
\ Q_iK_jQ_i^{-1}=K_j,\\[1ex]
R_ie_jR_i^{-1}=\eps_{ij}e_j, \ R_if_jR_i^{-1}=\eps_{ij}^{-1}f_j, \
R_iK_jR_i^{-1}=K_j, \\[1ex]
 x_{ij} y_{ij} = \eps_{ij}, \ \eps_{ij}^2 =1, \ P_i Q_i = c_i R_i,
\ R_i^2 =1.
\ea\label{eq:PQR}
\eneq

\Prop\label{prop:guage}
Assume that \eq
&&\theta'_{ij}=\eps_{ij}x_{ji}x_{ij}^{-1}\theta_{ij}=
\eps_{ji}y_{ij}y_{ji}^{-1}\theta_{ij},\quad p'_{ij}=\eps_{ij}p_{ij},
\quad c_i=x_{ii}\dfrac{p'_i-p'_i{}^{-1}}{p_i-p_i^{-1}}.\label{eq:xyc}
\eneq Then
there exists a $\cor$-algebra isomorphism
$$\kappa\cl\F{[P,Q,R]}\isoto\F[\theta',\pa']{[P,Q,R]}$$
given by
\eq
&&e_i\mapsto e_iP_i,\ f_i\mapsto f_iQ_i,\ K_i\mapsto K_iR_i.
\label{def:kappa}
\eneq
\enprop \Proof We have \eqn \kappa(e_if_j-\theta_{ji}f_je_i)
&=&e_iP_if_jQ_j-\theta_{ji}f_jQ_je_iP_i\\
&=&\bl x_{ij}^{-1}e_if_j-\theta_{ji}y_{ji}f_je_i\br P_iQ_j
\eneqn
Since $\theta'_{ji}=x_{ij}y_{ji}\theta_{ji}$,
it is equal to
\eqn
x_{ij}^{-1}\bl e_if_j-\theta'_{ji}f_je_i\br P_iQ_j
&=&\delta_{i,j}x_{ii}^{-1}\dfrac{K_i-K_i^{-1}}{p'_i-p_i'{}^{-1}}P_iQ_i\\
&=&\delta_{i,j}x_{ii}^{-1}c_i\dfrac{
K_iR_i-(K_iR_i)^{-1}}{p'_i-p'_i{}^{-1}}
=\kappa\Bigl(\delta_{i,j}\dfrac{K_i-K_i^{-1}}{p_i-p_i^{-1}}\Bigr).
\eneqn The other relations can be easily checked. \QED Hence we
obtain the following corollary.

\Cor\label{cor:Fequ} Suppose we have
\eq {p'_{ij}}^2=p_{ij}^2,\quad
(p'_{ij}p'_{ji})/(\theta'_{ij}\theta'_{ji})=
(p_{ij}p_{ji})/(\theta_{ij}\theta_{ji}), \quad
p'_{ii}/\theta'_{ii}=p_{ii}/\theta_{ii}.\label{eq:twop}
\eneq Then there exists a
$\cor$-algebra isomorphism
$$\kappa:\F{[P,Q,R]}\isoto\F[\theta',\pa']{[P,Q,R]}$$
for some choice of $x_{ij},\;y_{ij},\;\eps_{ij},\;c_i \ (i, j \in
I)$. \encor

Now let us investigate the conditions under which the Serre type
relations \eqn &&\sum_{k=0}^{n}x_k f_i^\px{n-k}f_jf_i^\px{k}=0
\eneqn can be added to the defining relations \eqref{def:genU1}.
Here,
\begin{equation}
\begin{aligned}
& \qintp{n}=[n]_{p_i,p_i^{-1}}, \quad
\factp{n}={[n]}_{p_i,p_i^{-1}}!, \quad & \binmp{n}{m} =
\frac{\factp{n}}{\factp{m} \, \factp{n-m}}, \\
& e_i^\px{n}=e_i^n/{\factp{n}},\quad f_i^\px{n}=f_i^n/{\factp{n}}.
\end{aligned}
\end{equation}

Assume for a while that \eq &&\text{$\theta_{ii}=1$ and
$p_{{ii}}=p_i^2$.} \eneq For $i,j\in I$ with $i\not=j$, let \eqn
&&S_{ij}\seteq\sum_{m=0}^{n_{ij}}x_{ij,m}
f_i^\px{n_{ij}-m}f_jf_i^\px{m} \eneqn for some $n_{ij}\in\Z_{>0}$
and $x_{ij,m}\in\cor$. We shall investigate the conditions under
which $S_{ij}$ satisfies: $e_kS_{ij}\in\F e_k$ for any $k\in I.$ It
is obvious that $e_kS_{ij}\in \F e_k$ for any $k$ such that
$k\not=i,j$. Set
$$\{x\}_i^\pa=(x-x^{-1})/(p_i-p_i^{-1}).$$
 Then we have
\eqn e_if_i^\px{n}=f_i^\px{n}e_i+f_i^\px{n-1}\{p_i^{1-n}K_i\}_i^\pa.
\eneqn It follows that \eqn & & e_iS_{ij}=
\sum_{m=0}^{n_{ij}}x_{ij,m}\bl
f_i^\px{n_{ij}-m}e_i+f_i^\px{n_{ij}-m-1}
\{p_i^{1-n_{ij}+m}K_i\}^\pa_i\br
f_jf_i^\px{m}\\
&=&\sum_{m=0}^{n_{ij}}x_{ij,m}\theta_{ji}f_i^\px{n_{ij}-m}f_j
\bl f_i^\px{m}e_i+f_i^\px{m-1}\{p_i^{1-m}K_i\}^\pa_i\br\\
&&\hs{15ex}+\sum_{m=0}^{n_{ij}}x_{ij,m}
f_i^\px{n_{ij}-m-1}f_jf_i^\px{m}
\{p_i^{1-n_{ij}+m}p_{ij}^{-1}p_i^{-2m}K_i\}^\pa_i\\
&=&\theta_{ji}S_{ij}e_i
+\sum_{m=0}^{n_{ij}-1}f_i^\px{n_{ij}-m-1}f_jf_i^\px{m}
\Bigl(x_{ij,m+1}\theta_{ji}\{p_i^{-m}K_i\}^\pa_i+
x_{ij,m}\{p_i^{1-n_{ij}+m}p_{ij}^{-1}p_i^{-2m}K_i\}^\pa_i\Bigr).
\eneqn Comparing the coefficients of $K_i^{\pm1}$, we see that
$e_iS_{ij}\in\F e_i$ if and only if
$$x_{ij,m+1}\theta_{ji}(p_i^{-m})^{\pm1}
+x_{ij,m}(p_i^{1-n_{ij}-m}p_{ij}^{-1})^{\pm1}=0 \quad\text{for $0\le
m\le n_{ij}$.}$$ Hence we obtain \eqn x_{ij,m+1}
&=&-\theta_{ji}^{-1}(p_i^{1-n_{ij}}p_{ij}^{-1})^{\pm1}x_{ij,m}.\eneqn
Set  $x_{ij,0}=1$. Then we have
$$(p_i^{1-n_{ij}}p_{ij}^{-1})^2=1, \quad
p_{ij}=c_{ij}p_i^{1-n_{ij}},\quad
x_{ij,m}=(-c_{ij}\theta_{ji}^{-1})^m,\quad c_{ij}^2=1, $$ which
yields
$$S_{ij}=\sum_{m=0}^{n_{ij}}(-c_{ij}\theta_{ji}^{-1})^mf_i^\px{n_{ij}-m}f_jf_i^ \px{m}.$$
Thus we have \eqn e_jS_{ij}&=&
\sum_{m=0}^{n_{ij}}(-c_{ij}\theta_{ji}^{-1})^m
\theta_{ij}^{n_{ij}-m}f_i^\px{n_{ij}-m}(f_je_j+\{K_j\}^\pa_j)f_i^\px{m}\\
&=&\theta_{ij}^{n_{ij}}S_{ij}+
f_i^\px{n_{ij}}\Bigl(\sum_{m=0}^{n_{ij}}(-c_{ij}\theta_{ji}^{-1})^m
\binmp{n_{ij}}{m}
\theta_{ij}^{n_{ij}-m}\{p_{ji}^{-m}K_j\}^\pa_j\Bigr).
\eneqn
Hence the following quantity vanishes for $\eps=\pm1$:
\eqn
\sum_{m=0}^{n_{ij}}(-c_{ij}\theta_{ji}^{-1})^m\binmp{n_{ij}}{m}
\theta_{ij}^{n_{ij}-m}p_{ji}{}^{\eps\, m}
&=&\theta_{ij}^{n_{ij}}\sum_{m=0}^{n_{ij}}\binmp{n_{ij}}{m}
\bl -c_{ij}\theta_{ji}^{-1}\theta_{ij}^{-1}p_{ji}{}^{\eps}\br^m\\
&=&\prod_{k=0}^{n_{ij}-1} \bl
1-p_i^{1-n_{ij}+2k}c_{ij}\theta_{ji}^{-1}\theta_{ij}^{-1}p_{ji}{}^{\eps}\br.
\eneqn Here, the last equality follows from \eqref{eq:bino}.

Therefore  there exist $\ell_\eps$ with $|\ell_\eps|<n_{ij}$
satisfying
$$\ell_\eps\equiv n_{ij}-1\mod2, \quad
p_{ji}=\bl\theta_{ji}\theta_{ij}c_{ij}\br ^\eps p_{i}^{\ell_\eps}.$$
Hence $(p_{ji})^2=p_i^{\ell_++\ell_-}$ which implies
$p_{ji}=d_{ij}p_i^{\ell_{ij}}$, where
$\ell_{ij}=(\ell_++\ell_-)/2\in\Z$ and $d_{ij}^2=1$. Then we have
$\theta_{ij}\theta_{ji}=c_{ij}d_{ij}p_i^{\ell'_{ij}}$ for some
$\ell'_{ij}$. Since
$p_i^{\ell_{ij}}=p_i^{\eps\,\ell'_{ij}}p_i^{\ell_\eps}$, we have
$\ell_\eps=\ell_{ij}-\eps\ell_{ij}'$. Thus we obtain \eqn
\parbox{70ex}
{$p_{ji}=d_{ij}p_i^{\ell_{ij}}$,
$\theta_{ij}\theta_{ji}=c_{ij}d_{ij}p_i^{\ell'_{ij}}$
with $d_{ij}^2=1$, $|l_{ij}|+|l_{ij}'|\le n_{ij}-1$, $\ell_{ij}+\ell_{ij}'\equiv n_{ij}-1\mod2$}
\eneqn

As its solution, we take \eq &&n_{ij}=1-a_{ij},\quad
\ell_{ij}=n_{ij},\quad \ell'_{ij}=0. \eneq With this choice, we have
$$p_{ij}=c_{ij}p_i^{a_{ij}}=d_{ji}p_j^{a_{ji}}, \quad
\theta_{ij}\theta_{ji}=c_{ij}d_{ji}.$$ Hence,  together with
$\theta_{ii}=1$, we obtain
\begin{equation*}
p_{ij}^2=p_i^{2a_{ij}},\quad
(p_{ij}p_{ji})/(\theta_{ij}\theta_{ji})=p_i^{2a_{ij}},\quad
p_{ii}/\theta_{ii}=p_i^2.
\end{equation*}

\Prop\label{prop:serre} Assume that families
$\theta\seteq\{\theta_{ij}\}_{i,j\in I}$ and $\pa\seteq
(\{p_{ij}\}_{i,j\in I},\{p_i\}_{i\in I})$ of invertible elements of
$\cor$ satisfy the following conditions: \eq &&\ba{l}
p_{ij}^2=p_i^{2a_{ij}},\quad
(p_{ij}p_{ji})/(\theta_{ij}\theta_{ji})=p_i^{2a_{ij}},\quad
p_{ii}/\theta_{ii}=p_i^2\qtext{and}\\[1ex]
\text{$1-p_i^n$ is an invertible element of $\cor$
for any $i\in I$ and $n\in\Z_{>0}$.}
\ea. \label{cond:pt} \eneq

Set $p_{ij}=c_{ij}p_i^{a_{ij}}$. Then we have
\begin{align*}
&e_\ell\Bigl(\sum_{k=0}^{1-a_{ij}}(-c_{ij}\theta_{ji}^{-1})^k
f_i^\px{1-a_{ij}-k}f_jf_i^\px{k}\Bigr)\\
&\hs{15ex}
=\theta_{i\ell}^{1-a_{ij}}\theta_{j\ell }
\Bigl(\sum_{k=0}^{1-a_{ij}}(-c_{ij}\theta_{ji}^{-1})^k
f_i^\px{1-a_{ij}-k}f_jf_i^\px{k}\Bigr)e_\ell, \\
&f_\ell\Bigl(\sum_{k=0}^{1-a_{ij}}(-c_{ij}\theta_{ij})^ke_i^\px{1-a_{ij}-k}e_j
e_i^\px{k}\Bigr)\\
&\hs{15ex}=
\theta_{\ell i}^{-1+a_{ij}}\theta_{\ell j}^{-1}
\Bigl(\sum_{k=0}^{1-a_{ij}}(-c_{ij}\theta_{ij})^k
e_i^\px{1-a_{ij}-k}e_je_i^\px{k}\Bigr)f_\ell
\end{align*}
for all $\ell$ and $i\not=j$ in $I$. \ro Note that $c_{ij}^2=1$.\rf
\enprop

\Proof Set $\theta'=\{\theta'_{ij}\}$, $\pa'=(\{p'_{ij}\}_{i,j\in
I},\{p_i\}_{i\in I})$ with $\theta'_{ij}=\theta_{ij}/\theta_{jj}$
and $p'_{ij}=p_{ij}/\theta_{ii}$. Then
$p'_{ij}=(\theta_{ii}c_{ij})p_i^{a_{ij}}$ and as shown in
Proposition~\ref{prop:guage}, there exists an isomorphism
$\kappa\cl\F[{\pa,\theta}][P,Q,R]\isoto \F[{\pa',\theta'}][P,Q,R]$
with $x_{ij}=\theta_{ii}$, $y_{ij}=1$, $Q_i=1$, and
$\eps_{ij}=\theta_{ii}$. Set \eqn
S_{ij}&=&\sum_{k=0}^{1-a_{ij}}(-(\theta_{ii}c_{ij})\theta'_{ji}{}^{-1})^k
f_i^\px{1-a_{ij}-k}f_jf_i^\px{k}\\
&=&\sum_{k=0}^{1-a_{ij}}(-c_{ij}\theta_{ji}^{-1})^k
f_i^\px{1-a_{ij}-k}f_jf_i^\px{k}
\in\F[{\pa,\theta}]\quad\text{for $i\not=j$.}
\eneqn
Then we have $e_\ell \kappa(S_{ij})=\theta'_{i\ell}{}^{1-a_{ij}}\theta'_{j\ell}
\kappa(S_{ij})e_{\ell}$. On the other hand, we have
\begin{align*}
\kappa^{-1}(e_\ell)f_i^\px{1-a_{ij}-k}f_jf_i^\px{k}&= e_\ell
P_\ell^{-1}f_i^\px{1-a_{ij}-k}f_jf_i^\px{k} =e_\ell
\theta_{\ell\ell}^{2-a_{ij}}f_i^\px{1-a_{ij}-k}f_jf_i^\px{k}P_\ell^{-1}.
\end{align*}
Hence we obtain the first equality.

The other equality follows from this equality by applying
the anti-automorphism \eqref{eq:antiiso}.
\QED

The condition  \eqref{cond:pt} implies \eq&&
(\theta_{ij}\theta_{ji})^2=1,\quad \theta_{ii}^2=1, \quad
\theta_{ij}\theta_{ji}=p_{ij}p_{ji}^{-1},
 \quad p_i^{2a_{ij}}=p_j^{2a_{ji}}. \label{cond:tip} \eneq
Conversely, for any family $\{p_i\}_{i\in I}$ of elements in
$\cor^\times$ satisfying \eqref{cond:tip}, we can find
$\theta=\{\theta_{ij}\}_{i,j\in I}$ and $\pa=(\{p_{ij}\}_{i,j\in
I},\{p_i\}_{i\in I})$ satisfying \eqref{cond:pt}. Indeed, it is
enough to take $$p_{ij}=p_i^{a_{ij}}, \quad \theta_{ii}=1, \quad
\theta_{ij}\theta_{ji}= p_i^{a_{ij}}p_j^{-a_{ji}} \ \  (i\not=j).$$
Note that under the condition \eqref{cond:pt}, we have \eq
&&e_if_i^{\px{n}}=\theta_{ii}^nf_i^{\px{n}}e_i+\theta_{ii}^{n-1}f_i^{\px{n-1}}
\{p_i^{1-n}K_i\}_i^\pa. \eneq

\begin{definition} \label{Dfn:U(theta,p)}
Assume that $\theta=\{\theta_{ij}\}_{i,j\in I}$ and
$\pa=(\{p_{ij}\}_{i,j\in I},\{p_i\}_{i\in I})$ satisfy the condition
\eqref{cond:pt}. We define the \emph{quantum algebra}
 $\FS$ to be
the quotient of $\F$ by imposing the Serre relations:
\begin{equation}\label{eq:Serre}
\begin{aligned}
&
\sum\limits_{k=0}^{1-a_{ij}}(-c_{ij}\theta_{ji}^{-1})^kf_i^\px{1-a_{ij}-k}f_j
f_i^\px{k}=0 \quad (i \neq j),\\
&
\sum\limits_{k=0}^{1-a_{ij}}(-c_{ij}\theta_{ij})^ke_i^\px{1-a_{ij}-k}e_j
e_i^\px{k}=0 \quad (i \neq j).
\end{aligned}
\end{equation}
\end{definition}

Note that
$$c_{ij}\theta_{ji}^{-1}=\theta_{ij}p_{ji}^{-1}p_i^{a_{ij}}, \quad
c_{ij}\theta_{ij}=\theta_{ji}^{-1}p_{ij}p_i^{-a_{ij}}.$$ Hence there
exists an automorphism $\psi\cl\FS \to \FS$ given by
\begin{equation} \label{eq: auto psi}
e_i \mapsto f_i K_i^{-1}, \quad f_i \mapsto K_ie_i,
\quad K_i \mapsto  K_i^{-1}\theta_{ii},
\end{equation}

It is easy to see that the  algebra $\FS$ has a $\rtl$-weight space
decomposition
$$\FS=\soplus_{\al \in \rtl}\FS_\al$$
with $K_i^{\pm1}\in\FS_0$, $e_i\in\FS_{\al_i}$,
$f_i\in\FS_{-\al_i}$. Let $\FSp$ (resp.\ $\FSm$) be the
$\cor$-subalgebra of $\FS$ generated by  $f_i$'s (resp.\  $e_i$'s)
$(i \in I)$ and set $\FSz=\cor[K_i^{\pm1}\mid i\in I]$. By a
standard argument, we obtain a triangular decomposition of $\FS$:

\Prop\label{prop:tri} The multiplication on $\FS$ induces an
isomorphism
$$\FSm\tens\FSz\tens\FSp\isoto\FS.$$
\enprop

Let $G$ be a subset of $\P$ such that $G+\rtl\subset \P$. For each
$i\in I$, let us take a function $\chi_i\cl G\to \cor^\times$ such
that \eq \chi_i(\lam)^2=p_i^{2\la h_i,\lam\ra}, \quad
\chi_i(\lam+\al_j)=p_{ij}\,\chi_i(\lam) \ \ \text{for all} \ \
\lam\in G, \ j\in I. \label{cond:chi} \eneq Such a $\chi_i$ always
exists as seen in Lemma~\ref{lem:equiv} below. We say that a
$\FS$-module $V$ is a {\em $G$-weighted module} if it is endowed
with a {\em $G$-weight-space decomposition}
$$ V= \bigoplus_{\lam \in G}V_\lam $$
such that $\FS_\alpha V_\lam\subset V_{\lam+\alpha}$ for any $\al\in
\rtl$, $\lam\in G$ and $K_i\vert_{V_\lam}=\chi_i(\lam)\id_{V_\lam}$
for any $\lam\in\P$, $i\in I$.

We define $\Mod^G(\FS)$, $\Ocat[]^G(\FS)$ and $\Ocat^G(\FS)$ in the
same way as in Section \ref{sec:cartan}. The category $\Mod^G(\FS)$
does not depend on the choice of $\{ \chi_i \}_{i \in I}$ in the
following sense.

\begin{lemma}\label{lem:equiv}Let $G$ be a subset of $\P$ such that $G+\rtl\subset \P$.
\bnum
\item There exists $\{\chi_i\}_{i\in I}$ satisfying the condition
\eqref{cond:chi}.
\item
For  another choice of $\{\chi'_i\}_{i\in I}$ satisfying \eqref{cond:chi}, let
 $\Mod^G(\FS)'$ be the  category of $G$-weighted
$\FS$-modules with respect to $\{\chi'_i\}_{i\in I}$.
Then there is an equivalence of categories
$$\Phi\cl\Mod^G(\FS)\isoto\Mod^G(\FS)'.$$
\ee
\end{lemma}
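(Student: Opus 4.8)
The plan is to prove the two parts in the order stated. For \textbf{(i)}, the key observation is the defining property \eqref{cond:chi}: a function $\chi_i\cl G\to\cor^\times$ is determined, modulo a sign, by its value $\chi_i(\lambda)^2=p_i^{2\langle h_i,\lambda\rangle}$, together with the $\rtl$-equivariance $\chi_i(\lambda+\alpha_j)=p_{ij}\chi_i(\lambda)$. First I would establish existence: fix, for each coset of $G$ modulo $\rtl$, a base point $\lambda_0$; choose any square root of $p_i^{2\langle h_i,\lambda_0\rangle}$ in $\cor^\times$ (such a square root exists since $p_i^{\langle h_i,\lambda_0\rangle}$ itself is one); then extend to all of $\lambda_0+\rtl$ by the forced rule $\chi_i(\lambda_0+\beta)=\bigl(\prod_j p_{ij}^{m_j}\bigr)\chi_i(\lambda_0)$ for $\beta=\sum_j m_j\alpha_j$. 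One must check this is well defined (it is, because $\rtl$ is free on $\{\alpha_j\}$) and that the square condition propagates correctly, which amounts to $p_{ij}^2=p_i^{2a_{ij}}$ — exactly the first clause of \eqref{cond:pt}. Doing this on each $\rtl$-coset of $G$ and taking the union gives the required $\{\chi_i\}_{i\in I}$.

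For \textbf{(ii)}, given a second choice $\{\chi_i'\}_{i\in I}$, I would form the ``sign character'' $\eta_i\seteq\chi_i'/\chi_i\cl G\to\cor^\times$. From \eqref{cond:chi} applied to both $\chi_i$ and $\chi_i'$ we get $\eta_i(\lambda)^2=1$ and $\eta_i(\lambda+\alpha_j)=\eta_i(\lambda)$ for all $j$; hence $\eta_i$ is an $\rtl$-invariant function taking values in $\{\pm1\}$ (the square roots of $1$ in $\cor$, which we may call $\mu_2(\cor)$), i.e.\ $\eta_i$ descends to a function on the finite or infinite set $G/\rtl$. Now define the functor $\Phi\cl\Mod^G(\FS)\to\Mod^G(\FS)'$ to be the identity on underlying $\cor$-modules and on the action of $e_i,f_i$, but rescaling the action of $K_i$ on the weight space $V_\lambda$ by the scalar $\eta_i(\lambda)$; equivalently $\Phi(V)=V$ with $K_i\vert_{V_\lambda}=\chi_i'(\lambda)\id=\eta_i(\lambda)\chi_i(\lambda)\id$. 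I would then verify that $\Phi(V)$ is indeed an object of $\Mod^G(\FS)'$: the relations $K_iK_j=K_jK_i$ are untouched; $K_ie_jK_i^{-1}=p_{ij}e_j$ and $K_if_jK_i^{-1}=p_{ij}^{-1}f_j$ continue to hold because multiplying $K_i$ by the $\rtl$-invariant scalar $\eta_i$ does not change the ratio $K_ie_jK_i^{-1}$ (on $V_\lambda$, $e_j$ shifts to $V_{\lambda+\alpha_j}$ where $\eta_i$ takes the same value); and the commutator relation $e_if_j-\theta_{ji}f_je_i=\delta_{i,j}(K_i-K_i^{-1})/(p_i-p_i^{-1})$ holds because on $V_\lambda$ the operator $K_i-K_i^{-1}$ is multiplied by $\eta_i(\lambda)$ while also $e_if_i-\theta_{ii}f_ie_i$ acts by a scalar that transforms the same way — here I would use the explicit formula $e_if_i^{\px n}=\theta_{ii}^nf_i^{\px n}e_i+\theta_{ii}^{n-1}f_i^{\px{n-1}}\{p_i^{1-n}K_i\}_i^\pa$ together with $\eta_i(\lambda)^2=1$ to see the scalars match. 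The Serre relations \eqref{eq:Serre} involve only $e_i,f_i$, so they are preserved automatically.

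Finally I would note that $\Phi$ is an equivalence because the inverse construction, using $\eta_i^{-1}=\eta_i$, gives a functor $\Psi\cl\Mod^G(\FS)'\to\Mod^G(\FS)$ with $\Psi\circ\Phi=\id$ and $\Phi\circ\Psi=\id$ on the nose (not merely up to isomorphism), since $\eta_i(\lambda)^2=1$. The main obstacle — really the only subtlety — is checking that the rescaled $K_i$ still satisfies the cross relations with $e_j$ and $f_j$ and, crucially, the Serre-free commutator relation in \eqref{def:genU1}; this is where $\rtl$-invariance of $\eta_i$ and the identity $\eta_i^2=1$ both get used, and where one has to be careful that $\chi_i$ and $\chi_i'$ genuinely share the same family $\pa$ (they do, since $\pa$ is fixed throughout). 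Everything else is bookkeeping.
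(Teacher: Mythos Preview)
Your argument for (i) is fine and matches the paper's.

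For (ii), however, your proposed functor does \emph{not} work. You keep the actions of $e_i,f_i$ unchanged and only rescale $K_i$ on $V_\lambda$ by $\eta_i(\lambda)$. But then on $V_\lambda$ the left-hand side of the relation
\[
e_if_i-\theta_{ii}f_ie_i=\dfrac{K_i-K_i^{-1}}{p_i-p_i^{-1}}
\]
is literally the same operator as before, while the right-hand side gets multiplied by $\eta_i(\lambda)$. So whenever $\eta_i(\lambda)=-1$ the relation fails, and $\Phi(V)$ is not a $\FS$-module. Your claim that ``$e_if_i-\theta_{ii}f_ie_i$ acts by a scalar that transforms the same way'' is simply false: at the module level this operator involves only the (unchanged) $e_i,f_i$, so it cannot pick up the factor $\eta_i(\lambda)$. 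The formula $e_if_i^{\px n}=\theta_{ii}^nf_i^{\px n}e_i+\theta_{ii}^{n-1}f_i^{\px{n-1}}\{p_i^{1-n}K_i\}_i^\pa$ is an identity in the abstract algebra, not a statement about how the module operators transform under your twist.

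The fix, which is what the paper does, is to twist $e_i$ as well: set $\Phi(M)=\{\varphi(u)\mid u\in M\}$ with $K_i\varphi(u)=\varphi(\eta_i(\lambda)K_iu)$, $e_i\varphi(u)=\varphi(\eta_i(\lambda)e_iu)$, and $f_i\varphi(u)=\varphi(f_iu)$ for $u\in M_\lambda$. Then both sides of the commutator relation acquire the same factor $\eta_i(\lambda)$, and the $\rtl$-invariance of $\eta_i$ together with $\eta_i^2=1$ takes care of the remaining relations (including the Serre relations, which now involve the twisted $e_i$'s but are homogeneous of a fixed degree in each $e_i$, so the signs cancel).
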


\begin{proof}

\noi
(i)\quad We may assume that $G=\lam_0+\rtl$ for some $\lam_0$.
Then it is enough to take
$\chi_i(\lam_0+\sum_{j\in I}m_j\al_j)=p_{i}^{\lan h_i,\lam_0\ran}\prod_{j\in I}p_{ij}^{m_j}$.

\noi (ii)\quad Set $\xi_i(\lam)=\chi'_i(\lam)\chi_i(\lam)^{-1}$.
Then we have $\xi_i(\lam+\al_j)=\xi_i(\lam)$ and $\xi_i(\lam)^2=1$.
For $M\in \Mod^G(\FS)$, we define $\Phi(M)=\set{\vphi(u)}{u\in M}$
with the actions
$$K_i\vphi(u)=\vphi(\xi_i(\lam)K_iu), \quad
e_i\vphi(u)=\vphi(\xi_i(\lam)e_iu), \quad f_i\vphi(u)=\vphi(f_iu) \
\ \text{for} \ u\in M_\lam.$$ We can easily see that $\Phi(M)$
belongs to $\Mod^G(\FS)'$, and hence $\Phi$ gives a desired
equivalence.
\end{proof}

The following proposition is a consequence of
Proposition~\ref{prop:guage}.

\Prop \label{Prop: depend on
p_{ii}} 
Under the condition \eqref{cond:pt}, the category
$\Mod^G(\FS)$ depends only on $\left\{ p_{i}^2 \right\}_{i\in I}$.
\enprop
\Proof Assume that $\pa= (\{p_{ij}\}_{i,j\in
I},\{p_i\}_{i\in I})$ and $\pa'= (\{p'_{ij}\}_{i,j\in
I},\{p'_i\}_{i\in I})$ satisfy \eqref{cond:pt} and also
$p_i^2=p'_i{}^2$. Then the condition \eqref{eq:twop} is satisfied.
Therefore, there exist $x_{i,j}$, $y_{i,j}$, $\eps_{i,j}$ and $c_i$
in $\cor^\times$ such that $\eps_{i,j}^2=1$ and \eqref{eq:xyc}
holds. Hence, Proposition~\ref{prop:guage} implies that there exists
an isomorphism $\kappa\cl\F{[P,Q,R]}\isoto\F[\theta',\pa']{[P,Q,R]}$
satisfying \eqref{def:kappa}. Now we can check easily that $\kappa$
sends the Serre relation in $\F$ to the Serre relation in
$\F[\theta',\pa']$, which implies that $\kappa$ induces an
isomorphism
$$\kappa'\cl\FS{[P,Q,R]}\isoto\FS[\theta',\pa']{[P.Q,R]}.$$
Now we shall show
$\Mod^G(\FS)$ and $\Mod^G(\FS[\theta',\pa'])$ are equivalent.
We may assume that $G=\lam_0+\rtl$ for some $\lam_0\in \P$
without loss of generality.
Then for $M\in\Mod^G(\FS)$, we define the action of
$P_i$, $Q_i$, $R_i$ by
\eqn
&&P_iu=(\prod_{j\in I}x_{i,j}^{m_{j}})u,\
Q_iu=c_i(\prod_{j\in I}y_{i,j}^{m_{j}})u,\
R_iu=(\prod_{j\in I}\eps_{i,j}^{m_{j}})u
\eneqn
for $u\in M_\lam$ with $\lam=\lam_0+\sum_{j\in I}m_j\al_j$.
Then it is obvious that $P=(P_i)_{i\in I}$, $Q=(Q_i)_{i\in I}$ and $R=(R_i)_{i\in I}$
satisfy the relations
\eqref{eq:PQR}.
Hence $M$ has a structure of $\FS{[P,Q,R]}$.
Then the isomorphism $\kappa'$ induces
a $\FS[\theta',\pa']{[P.Q,R]}$-module structure on $M$.
Thus we obtain a functor
$\Mod^G(\FS)\to\Mod^G(\FS[\theta',\pa'])$.
It is obvious that it is an equivalence of categories.
\QED

Recall that $p_{ii}\theta_{ii}^{-1}=p_i^2$ and that if
$$\text{$(p_i^2)^{a_{ij}}=(p_j^2)^{a_{ji}}$ for any $i,j\in I$,}$$
then we can find $\theta$ and $\pa$ satisfying \eqref{cond:pt}.

Let us take $\chi_i\cl \P\to \cor^\times$  satisfying the condition
\eqref{cond:chi}.

For $\Lambda \in \P$, the {\em Verma module} $\MBe(\Lambda)$
is the $\P$-weighted $\FS$-module
generated by a vector
$u_\Lambda$ of weight $\Lambda$ with the defining relations:
\begin{equation} \label{eq:Verma}
K_iu_\Lambda=\chi_i(\La) u_\Lambda, \quad e_iu_\Lambda=0
\quad\text{for all $i\in I$.}
\end{equation}

Then $\FSm\to \MBe$ ($a\mapsto au_\Lambda$) is a $\FSm$-linear
isomorphism.

There exists a unique maximal submodule $\NBe(\Lambda)$ of
$\MBe(\Lambda)$ such that $\NBe(\Lambda)\cap\cor u_\Lambda=0$. Let
\begin{equation}\label{eq:V(lambda)}
\VBe(\Lambda) \seteq\MBe(\Lambda)/\NBe(\Lambda).
\end{equation} Then
$\VBe$ is generated by $v_{\Lambda}$ which is the image of $u_\Lam$.
If $\Lam\in\P^+$, then $\VBe(\Lambda)$ belongs to $\Ocat^{P}(\FS)$
and we have $f_i^{\la h_i,\Lam\ra+1}v_\Lam=0$ for any $i\in I$.

\Conj \label{conj} When $\cor$ is a field, the representation theory
of $\FS$ is similar to that of quantum group.

More precisely,
we conjecture that
\bnum
\item
$\ch (\FSm)\seteq\sum_{\mu\in Q}\bl\dim_\cor\FSm_\mu \br \ex{\mu} =
\prod_{\alpha \in \Delta^+} (1-\ex{-\alpha})^{-\mult(\alpha)}$,

\item
the category $\Ocat^\P(\FS)$ is semisimple,

\item for any $\Lambda\in\P^+$,
the $\FS$-module $\VB(\Lambda)$ is a simple object in
$\Ocat^\P(\FS)$ and is isomorphic to $$\FS/\sum\limits_{i\in
I}\bl\FS(K_i-\chi_i(\La))+\FS e_i+\FS f_i^{\la h_i,\Lam\ra+1}\br.$$
That is, $\VBe(\Lambda)$ is generated by $v_{\Lambda}$ with defining
relations
\begin{equation*}
K_{i}v_{\Lambda} = \chi_{i}(\Lambda) v_{\Lambda}, \quad e_{i}
v_{\Lambda} =0, \quad f_{i}^{\langle h_{i}, \Lambda \rangle +1 }
v_{\Lambda} = 0 \ \ \text{for all} \ i \in I.
\end{equation*}

\item every simple module in $\Ocat^\P(\FS)$ is isomorphic to
$\VBe(\Lambda)$ for some $\Lambda\in\P^+$,
\item for any $\Lambda\in\P^+$, we have
$$\ch (\VBe(\Lambda)) \seteq
\sum_{\mu\in \P}\bl\dim \VBe(\Lambda)_\mu \br \ex{\mu}
= \dfrac{\sum_{ w \in W} \epsilon(w)\ex{w(\Lambda+\rho)-\rho}}%
{\prod_{\alpha \in \Delta_+} (1-\ex{-\alpha})^{\mult(\alpha)}},$$
where $\rho$ is an element of $\P$ such that $\la h_i,\rho \ra =1$
for all $i \in I$.
\ee
\enconj

Note that we have assumed that any $p_{i}$ is not a root of unity.

The notion of quantum Kac-Moody superalgebras introduced in
\cite{KT91, BKM98} is a special case of $\FS$. We will show that our
conjecture holds for such algebras (Theorem~\ref{thm: ch
V(Lambda)}). Our proof depends on their results
(Corollary~\ref{Cor:BKM e quasi commute}).

Now we will prove the $\FS$-version of \cite[Proposition
B.1]{KMPY96} under the condition \eqref{cond:pt}. We assume that the
base ring $\cor$ is a field and that any of $p_i$ is not a root of
unity. We say that an $\F$-module $M$ is {\em integrable} if
\bnum
\item $M$ has a weight decomposition
$$M=\soplus_{\lam\in \P}M_\lam$$
such that $\F_\al M_\lam\subset M_{\lam+\al}$ and
$K_i^2\vert_{M_\lam}=p_i^{2\la h_i,\lam\ra}\id_{M_\lam}$,
\item the action of $K_i$ on $M$ is semisimple for any $i$,
\item the actions of $e_i$ and $f_i$ on $M$ are locally nilpotent
for all $i \in I$. \ee

\begin{proposition} \label{prop: KMPY96}
Let $M$ be an integrable  $\F$-module. Then $M$ is a $\FS$-module.
That is, the actions of $e_i$ and $f_i$ on $M$ satisfy the Serre
relations in \eqref{eq:Serre}.
\end{proposition}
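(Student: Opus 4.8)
The plan is to reduce the Serre relations to a statement about representations of the rank-two subalgebra generated by $e_i,f_i,e_j,f_j,K_i^{\pm1},K_j^{\pm1}$ acting on the integrable module $M$, and then to exploit the characterization established in Proposition~\ref{prop:serre}: the element
$$S_{ij}\seteq\sum_{k=0}^{1-a_{ij}}(-c_{ij}\theta_{ji}^{-1})^k f_i^\px{1-a_{ij}-k}f_jf_i^\px{k}\in\F[\pa,\theta]$$
satisfies $e_\ell S_{ij}=\theta_{i\ell}^{1-a_{ij}}\theta_{j\ell}\,S_{ij}e_\ell$ for every $\ell\in I$ (and similarly for the $e$-version with $f_\ell$). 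First I would observe that, since $M$ is integrable, it suffices to show $S_{ij}\cdot m=0$ for every weight vector $m\in M_\lam$; by the quasi-commutation just recalled, the subspace $S_{ij}M$ is stable under all $e_\ell$ and $f_\ell$, so it is an $\F$-submodule, and in fact an integrable one. Hence it is enough to prove that $S_{ij}M$ contains no highest-weight vector, equivalently that any integrable $\F$-module generated by a highest-weight vector $v_\Lambda$ (with $e_iv_\Lambda=0$, $K_iv_\Lambda=\chi_i(\Lambda)v_\Lambda$) satisfies $S_{ij}v_\Lambda'=0$ for all weight vectors $v_\Lambda'$ obtained from it — or, more directly, that $S_{ij}$ kills any vector annihilated by all $e_\ell$.

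The key computational step is the standard $\Sl_2$-type argument applied to $i$. Consider a weight vector $m\in M$ with $e_i m = 0$ and $f_j m$ understood; using the commutation relation
$$e_if_i^{\px{n}}=\theta_{ii}^nf_i^{\px{n}}e_i+\theta_{ii}^{n-1}f_i^{\px{n-1}}\{p_i^{1-n}K_i\}_i^\pa$$
from the excerpt (with $\theta_{ii}=1$ here, since \eqref{cond:pt} forces $\theta_{ii}^2=1$ and the Serre setup takes $\theta_{ii}=1$), together with the binomial identity \eqref{eq:bino}, one computes $e_i^{\px{r}} S_{ij} m$ for $r=1,\dots,1-a_{ij}$. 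The coefficients that appear are exactly the products $\prod_{k=0}^{n_{ij}-1}(1-p_i^{1-n_{ij}+2k}c_{ij}\theta_{ji}^{-1}\theta_{ij}^{-1}p_{ji}^{\eps})$ that were shown to vanish in the derivation preceding Proposition~\ref{prop:serre}, using $n_{ij}=1-a_{ij}$ and the relations $p_{ij}=c_{ij}p_i^{a_{ij}}$, $\theta_{ij}\theta_{ji}=c_{ij}d_{ji}$ coming from \eqref{cond:pt}. This shows $e_i^{\px{r}}S_{ij}m$ is a known multiple of $S_{ij}$ applied to lower vectors, and by descending induction on $r$ one gets that $S_{ij}m$ generates, under the $e_i$-action, a finite-dimensional integrable $\Sl_2$-string whose highest-weight component is forced to be zero by the vanishing coefficients; local nilpotency of $f_i$ then forces $S_{ij}m=0$. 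Applying the anti-isomorphism \eqref{eq:antiiso} (which swaps $e$'s and $f$'s and replaces $\theta$ by $\trp\theta$) handles the $e$-version of the Serre relation once the $f$-version is established.

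The main obstacle I anticipate is bookkeeping: making the descending induction clean requires organizing the coefficients $(-c_{ij}\theta_{ji}^{-1})^k$ and the $q$-binomials $\binmp{1-a_{ij}}{k}$ so that the telescoping identity \eqref{eq:bino} can be invoked cleanly, and keeping careful track of the scalars $\theta_{ii}=1$, $p_{ii}=p_i^2$, $p_{ij}=c_{ij}p_i^{a_{ij}}$ through the repeated application of the $e_if_i^{\px n}$ commutation formula. A subtler point is that integrability is used twice — once to guarantee $M$ decomposes into finite-dimensional $e_i$-strings (so the induction terminates and the "highest weight of the string is zero" argument applies), and once to reduce to highest-weight-generated submodules — so I would state at the outset a small lemma: \emph{in an integrable $\F$-module, a weight vector $v$ with $S_{ij}v\neq0$ produces, after applying a suitable $e_i^{\px r}$, a nonzero vector killed by $e_i$ and lying in $S_{ij}M$, contradicting the vanishing of the string coefficients.} With that lemma the proof is essentially the rank-one $\Sl_2$ reduction already implicit in the computation preceding Proposition~\ref{prop:serre}, now read inside $M$ rather than inside $\F$ itself.
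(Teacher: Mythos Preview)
Your proposal has the right ingredients --- the quasi-commutation $e_\ell S_{ij}=c_\ell\,S_{ij}e_\ell$ from Proposition~\ref{prop:serre} and a rank-one $\Sl_2$-reduction on an integrable module --- but the way you assemble them contains real gaps.

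First, the claim that $S_{ij}M$ is stable under all $f_\ell$ is unjustified: Proposition~\ref{prop:serre} gives quasi-commutation of $S_{ij}$ only with the $e_\ell$'s, not with the $f_\ell$'s, so $S_{ij}M$ is not an $\F$-submodule in any obvious way. Consequently your reduction ``it is enough to prove $S_{ij}M$ contains no highest-weight vector'' is not available. Second, the ``vanishing of the string coefficients'' you invoke --- the products $\prod_{k}(1-p_i^{1-n_{ij}+2k}c_{ij}\theta_{ji}^{-1}\theta_{ij}^{-1}p_{ji}^{\eps})$ --- arise in the paper's derivation of the condition $e_jS_{ij}\in\F e_j$, i.e.\ they concern commutation with $e_j$, not with $e_i$. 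They play no role in showing $S_{ij}$ acts by zero once one already knows $e_iS_{ij}=c\,S_{ij}e_i$; citing them here is a red herring and does not give the contradiction you want.

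The paper's argument is much cleaner and avoids both problems. It isolates a general lemma: if $\vphi\in\End_\cor(M)$ has weight $\mu$ and satisfies $e_i\vphi=c\,\vphi e_i$ for some $c\in\cor^\times$, then $\la h_i,\mu\ra<0$ forces $\vphi=0$. The proof of the lemma reduces to integrable $U_q(\Sl_2)$-modules (by twisting to make the $i$-subalgebra literally $U_q(\Sl_2)$), uses semisimplicity to reduce to $f_i$-lowest vectors $v\in M_\lam$, and then observes that $e_i^{m+n}\cl M_{\lam+\mu}\to M_{s_i(\lam+\mu)}$ is \emph{bijective} (where $m=-\la h_i,\lam\ra$, $n=-\la h_i,\mu\ra>0$), while $e_i^{m+n}\vphi(v)=c^{m+n}\vphi(e_i^{m+n}v)=0$; hence $\vphi(v)=0$. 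Applying this lemma with $\vphi=S_{ij}$ and $\mu=-(1-a_{ij})\al_i-\al_j$, one computes $\la h_i,\mu\ra=a_{ij}-2<0$ and is done. The missing idea in your sketch is precisely this bijectivity of $e_i^{m+n}$ on the relevant weight space --- that is what converts ``$e_i$ eventually kills $S_{ij}v$'' into ``$S_{ij}v=0$'', and it is not captured by any descending induction on binomial coefficients.
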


We begin with the following lemma.

\Lemma Let $M$ be an integrable $\F$-module. Fix $i\in I$ and let
$\vphi$ be a $\cor$-linear endomorphism of $M$. Suppose that $\vphi$
satisfies the following conditions:
\bna
\item  $\vphi$ is of weight $\mu$; i.e., $\vphi(M_\lam)\subset M_{\lam+\mu}$
for any $\lam\in \P$,
\item  $e_i\vphi=c\vphi e_i$ for some $c\in\cor^\times$.
\ee

Then $\la h_i,\mu\ra<0$ implies $\vphi=0$. \enlemma \Proof By
Proposition~\ref{prop:guage}, we may assume that $\theta_{ii}=1$.
Let $S_i$ be the operator defined by $S_i\vert_{M_\lam}=p_i^{-\la
h_i,\lam\ra}K_i\id_{M_\lam}$. Then $S_i^2=1$ and the algebra
generated by $e_i,f_iS_i,K_iS_i$ is isomorphic to $U_q(\Sl_2)$.
Hence we can reduce our statement to the one for integrable
$U_q(\Sl_2)$-modules.

Recall that any integrable $U_q(\Sl_2)$-module is semisimple and
generated by the vectors killed by $f_i$.

Hence it is enough to show that
$\vphi(U_q(\Sl_2)v)=0$ for any
$v\in M_\lam$ with $f_iv=0$.
Set $m=-\la h_i,\lam\ra\in\Z_{\ge0}$. Then $e_i^{m+1}v=0$ and hence
$e_i^{m+1}\vphi(v)=c^{m+1}\vphi(e_i^{m+1}v)=0$.

On the other hand, setting $n=-\la h_i,\mu\ra>0$, the map
$e_i^{m+n}\cl M_{\lam+\mu}\to M_{s_i(\lam+\mu)}$ is bijective.
Hence
$e_i^{m+n}\vphi(v)=0$ implies $\vphi(v)=0$. Therefore we obtain
$\vphi(e_i^kv)=0$ for any $k$. \QED

\begin{proof}[Proof of Proposition~\ref{prop: KMPY96}]
Let us denote by $S_{ij}$  the multiplication operator on $M$ by
$\sum_{k=0}^{1-a_{ij}}(-c_{ij}\theta_{ji}^{-1})^k
f_i^\px{1-a_{ij}-k}f_jf_i^\px{k}$. Then $S_{ij}$ has weight
$\mu=-(1-a_{ij})\al_i-\al_j$. Moreover, $e_iS_{ij}=cS_{ij}e_i$ for
some $c\in\cor^\times$ by Proposition~\ref{prop:serre}. Since $\la
h_i,\mu\ra=-2(1-a_{ij})-a_{ij}=-2+a_{ij}<0$, we have $S_{ij}=0$. .
\end{proof}

\vskip 5mm

\section{The algebra $\HS$}\label{sec: HS}

In this section, we introduce another generalization of quantum
groups. Let $\tth\seteq\{\tth_{ij}\}_{i,j\in I}$ and $\tpa\seteq
\{\tip_i\}_{i\in I}$ be families of invertible elements in the base
ring $\cor$ such that $1-\tip_i^n$ is invertible for any
$n\in\Z_{>0}$. We define $\HF$ to be the $\cor$-algebra generated by
$e_i$, $f_i$, $\dK_i^{\pm1}$ with the defining relations \eq
&&\ba{l} \dK_i \dK_j = \dK_j \dK_i, \quad \dK_i e_j
\dK_i^{-1}=\tip_{i}^{a_{ij}}e_j,\quad \dK_i f_j
\dK_i^{-1}=\tip_{i}^{-a_{ij}}f_j,
\\[1ex]
e_if_j-\tth_{ji}f_je_i=\delta_{i,j}\dfrac{1-\dK_i}{1-\tip_i}.
\ea\label{def:genU2}
\eneq
Then there exists an anti-isomorphism
\eq
&&\HF\isoto\HF[{\trp{\tth},{\tip}}]
\label{eq:antiiso2}
\eneq
given by
$$e_i\mapsto f_i, \quad f_i\mapsto e_i, \quad \dK_i\mapsto \dK_i, $$
where $(\trp{\tth})_{ij}=\tth_{ji}$.

We embed $\cor[\dK_i^{\pm1}\mid i\in I]$ into $\cor[K_i^{\pm1}\mid
i\in I]$ by $\dK_i=K_i^2$. If $p_{ij}^2=\tip_{i}^{a_{ij}}$ for
$i,j\in I$, then $\HF\tens_{\cor[\dK_i^{\pm1}\mid\; i\in
I]}\cor[K_i^{\pm1}\mid i\in I]$ has a ring structure given by
$$K_i e_j K_i^{-1}=p_{ij}e_j,\quad K_i f_j K_i^{-1}=p_{ij}^{-1}f_j.$$

\Prop \label{prop:FHequiv} Let $\theta\seteq\{\theta_{ij}\}_{i,j\in
I}$ and $\pa\seteq (\{p_{ij}\}_{i,j\in I},\{p_i\}_{i\in I})$ be
families of invertible elements in $\cor$ such that \eq&&
\tth_{ij}=\theta_{ij}p_{ji}^{-1},\quad \tip_i^{a_{ij}}=p_{ij}^2,
\quad \tip_i=p_i^2. \label{rel:ptt} \eneq

Then we have a $\cor$-algebra isomorphism
$$\phi:\HF\tens_{\cor[\dK_i^{\pm1}\mid i\in I]}\cor[K_i^{\pm1}\mid i\in I]
\isoto\F
$$
given by $$e_i\mapsto p_i^{-1}p_{ii}\,e_iK_i,\quad f_i\mapsto f_i,
\quad K_i\mapsto K_i \ \ (i \in I).$$
\enprop
\Proof we have
\begin{align*}
\phi(e_if_j & - \tth_{ji}f_je_i)
=p_i^{-1}p_{ii}(e_iK_if_j-\theta_{ji}p_{ij}^{-1}f_je_iK_i)\\
&=p_i^{-1}p_{ii}p_{ij}^{-1}(e_if_j-\theta_{ji}f_je_i)K_i\\
&=\delta_{i,j}p_i^{-1}\dfrac{K_i-K_i^{-1}}{p_i-p_i^{-1}}K_i
=\delta_{i,j}\dfrac{K_i^2-1}{p_i^2-1}
=\phi\Bigl(\delta_{i,j}\dfrac{1-\dK_i}{1-\tip_i}\Bigr),
\end{align*}
which proves our claim.
\QED

If \eqref{cond:pt} and \eqref{rel:ptt} are satisfied, then we have
\eq && \tth_{ij}\tth_{ji}=\tip_i^{-a_{ij}} \quad \text{and} \quad
\tth_{ii}=\tip_i^{-1}, \label{cond:ttp} \eneq which implies
\eq&&\tip_i{}^{a_{ij}}=\tip_{j}{}^{a_{ji}}\label{eq:tp}\eneq
Conversely, if the family $\{\tip_i\}_{i\in I}$ satisfies
\eqref{eq:tp}, then we can find $\{\tth_{ij}\}_{i,j\in I}$
satisfying \eqref{cond:ttp}.

\smallskip
Let $\tth=\{ \tth_{ij} \}$ and $\tip=\{ \tip_{i} \}$ be families of
elements in $\cor^\times$ satisfying \eqref{cond:ttp}. Set
\eq
&&\qintt{n}\seteq\dfrac{1-\tip_i^n}{1-\tip_i},\quad
\factt{n}\seteq\prod_{k=1}^n\qintt{k},\quad e_i^\pix{n}\seteq
e_i^n/\factt{n},\quad f_i^\pix{n}=f_i^n/\factt{n}.\label{def:factt}
\eneq Then under the condition \eqref{rel:ptt}, we have
$$\qintt{n}=p_i^{n-1}\qintp{n} \quad \text{and} \quad
\factt{n}=p_i^{n(n-1)/2}\factp{n}.$$ Hence we have
$$f_i^\px{n}=p_i^{n(n-1)/2}f_i^\pix{n}.$$

Take $p_i\in \cor^\times$ such that $p_i^2=\tip_i$ and set \eq
&&\text{$p_{ij}=p_i^{a_{ij}}$ and
$\theta_{ij}=\tth_{ij}p_j^{a_{ji}}$.} \label{eq:expl} \eneq Then
\eqref{cond:pt} and \eqref{rel:ptt} hold.
Since we have \eqn
&&f_i^\px{1-a_{ij}-k}f_jf_i^\px{k}
=p_i^{-(1-a_{ij})a_{ij}/2-k(1-a_{ij}-k)}f_i^\pix{1-a_{ij}-k}f_jf_i^\pix{k}
\eneqn and \eqn
&&(-c_{ij}\theta_{ji}^{-1})^kp_i^{-(1-a_{ij})a_{ij}/2-k(1-a_{ij}-k)}\\
&&=(-c_{ij}\tth_{ji}^{-1}p_{ij}^{-1})^kp_i^{-(1-a_{ij})a_{ij}/2-k(1-a_{ij}-k)}\\
&&=(-\tth_{ji}^{-1}p_i^{-a_{ij}})^kp_i^{-(1-a_{ij})a_{ij}/2-k(1-a_{ij}-k)}\\
&&=(-\tth_{ji})^{-k}\,\tip_i{}^{k(k-1)/2}
p_i^{-(1-a_{ij})a_{ij}/2},
\eneqn
Proposition~\ref{prop:serre} implies that
$$S'_{ij}\seteq\ssum_{k=0}^{1-a_{ij}}(-\tth_{ji})^{-k}\tip_i{}^{k(k-1)/2}
f_i^\pix{1-a_{ij}-k}f_jf_i^\pix{k}$$ quasi-commutes with $e_k$'s for
all $k$ (i.e., $e_kS'_{ij}\in\cor^\times S'_{ij}e_k$). Hence by
applying the anti-involution  \eqref{eq:antiiso2}, we see that
$\ssum_{k=0}^{1-a_{ij}}(-\tth_{ij})^{-k}\tip_i{}^{k(k-1)/2}
e_i^\pix{k}e_je_i^\pix{1-a_{ij}-k}$ quasi-commutes with all the
$e_\ell$'s.

\begin{definition}
Assume that $\tilde{\theta}$ and $\tilde{\pa}$ satisfy the
condition \eqref{cond:ttp}. We define the \emph{quantum algebra}
$\HS$ to be the quotient of $\HF$ by imposing the Serre relation:
\begin{equation} \label{eq:Serre2}
\begin{aligned}
& \ssum_{k=0}^{1-a_{ij}}(-\tth_{ji})^{-k}\tip_i{}^{k(k-1)/2}
f_i^\pix{1-a_{ij}-k}f_jf_i^\pix{k}=0 \quad (i \neq j), \\
& \ssum_{k=0}^{1-a_{ij}}(-\tth_{ij})^{-k}\tip_i{}^{k(k-1)/2}
e_i^\pix{k}e_je_i^\pix{1-a_{ij}-k}=0  \quad (i \neq j).
\end{aligned}
\end{equation}
\end{definition}
We can see that the algebra $\HS$ has a $\rtl$-weight space
decomposition
$$\HS=\soplus_{\al\in\rtl}\HS_\al.$$
Let $\HSp$ (resp.\ $\HSm$) be the $\cor$-subalgebra of $\HS$
generated by the $f_i$'s (resp.\ the $e_i$'s) $(i \in I)$ and set
$\HSz=\cor[\dK_i^{\pm1}\mid i\in I]$. By a standard argument, we
have: \Lemma \label{Lem: triangluar for HS} The multiplication on
$\HS$ induces an isomorphism
$$\HSm\tens\HSz\tens\HSp\isoto\HS.$$
\enlemma

Note that we have an algebra isomorphism: \eq&& \HSm\simeq\FSm.
\eneq

For a subset $G$ of $\P$ such that $G+\rtl\subset \P$, a
$\HS$-module $V$ is called a {\em $G$-weight module} if it is
endowed with a {\em $G$-weight space decomposition}
$$ V= \bigoplus_{\mu \in G}V_\mu $$
such that $\HS_\alpha V_\mu\subset V_{\mu+\alpha}$ and
$\dK_i\vert_{V_\mu}=\tip_i^{\lan h_i\mu\ran}\id_{V_\mu}$ for any
$\al\in \rtl$ and $\mu\in G$. We define the categories
$\Mod^G(\HS)$,  $\Ocat[]^G(\HS)$ and $\Ocat^G(\HS)$ in the same
manner as in Section \ref{sec:cartan}. The following proposition is
an immediate consequence of Proposition~\ref{Prop: depend on p_{ii}}
and Proposition~\ref{prop:FHequiv}.

\Prop \label{Prop: equivalent 1} Assume that
$\tth\seteq\{\tth_{ij}\}_{i,j\in I}$, $\tpa\seteq \{\tip_i\}_{i\in
I}$, $\theta\seteq\{\theta_{ij}\}_{i,j\in I}$ and $\pa\seteq
(\{p_{ij}\}_{i,j\in I},\{p_i\}_{i\in I})$ satisfy \eqref{rel:ptt}
and \eqref{cond:ttp}. Then the following statements hold.

\bnum
\item The relation \eqref{cond:pt} is satisfied.
\item
There exist equivalences of categories
$$\Mod^G(\HS)\simeq \Mod^G(\FS)
\quad\text{and}\quad\Ocat^G(\HS)\simeq \Ocat^G(\FS).$$
\item
The category $\Mod^G(\HS)$ depends only on the parameters
$\{\tip_i\}_{i\in I}$ satisfying $\tip_i^{a_{ij}}=\tip_j^{a_{ji}}$.
\ee \enprop

Let $\FS{[T_i\mid i\in I]}$ be the algebra obtained from $\FS$ by
adding the mutually commuting operators $T_i$ $(i \in I)$ with the
multiplication given by \eq&& T_ie_j T_i^{-1} = \theta_{ji}e_j
,\quad T_i f_j T_i^{-1} = \theta_{ji}^{-1}f_j,\quad T_i K_j T_i^{-1}
=K_j \quad\text{for any $j\in I$.}\label{def:Ti} \eneq We will
introduce another kind of algebra that acts on $\FSm$ and $\HSm$. We
first prove:

\begin{lemma} \label{Lem: boson relation BqBg}
For any $P \in \FSm$, there exist unique $Q$, $R \in \FSm$  such
that \eq && e_i P - (T_i^{-1} P T_i)e_i =
\dfrac{(T_i^{-1}QT_i)K_i-K_i^{-1} R}{p_i-p_i^{-1}}. \label{eq:der}
\eneq
\end{lemma}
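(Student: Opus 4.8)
The plan is to reduce to the free algebra $\Fm$, where existence and uniqueness of such a decomposition are visible by a direct induction on the weight (equivalently, on the length of a monomial in the $f_j$'s), and then check that the Serre relations are respected so that the statement descends to $\FSm$. Concretely, I would first work in $\F[{T_i\mid i\in I}]$ and define, for each fixed $i$, two $\cor$-linear endomorphisms $\partial_i^L,\partial_i^R$ of $\Fm$ (the ``left'' and ``right'' skew derivations) by the requirement that on $P\in\Fm$ one has
$$
e_iP-(T_i^{-1}PT_i)\,e_i=\frac{(T_i^{-1}\partial_i^L(P)\,T_i)K_i-K_i^{-1}\partial_i^R(P)}{p_i-p_i^{-1}},
$$
and take $Q=\partial_i^L(P)$, $R=\partial_i^R(P)$. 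The existence of such operators on the \emph{free} algebra $\Fm$ is proved by induction: for $P=1$ both sides vanish (take $Q=R=0$), and for $P=f_jP'$ one uses the commutation relation
$e_i f_j=\theta_{ji}f_j e_i+\delta_{i,j}\dfrac{K_i-K_i^{-1}}{p_i-p_i^{-1}}$
from \eqref{def:genU1} together with $K_i f_j K_i^{-1}=p_{ij}^{-1}f_j$ and $T_i^{-1}f_jT_i=\theta_{ji}^{-1}f_j$ (see \eqref{def:Ti}) to push $e_i$ past $f_j$ and then apply the inductive hypothesis to $P'$; one gets an explicit recursion $\partial_i^L(f_jP')=\theta_{ji}f_j\,\partial_i^L(P')+\delta_{i,j}P'$ and a similar one for $\partial_i^R$. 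Since $\Fm$ is free on the $f_j$'s (as recalled after \eqref{def:genU1}), this recursion defines $\partial_i^L$, $\partial_i^R$ unambiguously, giving existence.

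For uniqueness, I would argue inside $\F[{T_i\mid i\in I}]$: if $Q,R\in\Fm$ satisfy $(T_i^{-1}QT_i)K_i=K_i^{-1}R$ in $\F$, then comparing the $K_i$-degree (using the triangular decomposition $\Fm\otimes\cor[K_i^{\pm1}]\otimes\Fp\isoto\F$ of Proposition~\ref{prop:tri}, applied in the $\theta$-version with $\Fp=0$ here, i.e. already $\Fm\otimes\cor[K_i^{\pm1}]$) forces both $Q$ and $R$ to vanish, because $K_i$ and $K_i^{-1}$ are linearly independent over $\Fm$ inside $\Fm\otimes\cor[K_i^{\pm1}]$. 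Hence if $(Q,R)$ and $(Q',R')$ both satisfy \eqref{eq:der}, subtracting gives $(T_i^{-1}(Q-Q')T_i)K_i=K_i^{-1}(R-R')$, whence $Q=Q'$ and $R=R'$.

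The remaining — and genuinely necessary — point is that $\partial_i^L$ and $\partial_i^R$ descend from $\Fm$ to the quotient $\FSm$, i.e. that they kill the left ideal generated by the Serre elements
$S_{jk}=\sum_{m=0}^{1-a_{jk}}(-c_{jk}\theta_{kj}^{-1})^m f_j^{(1-a_{jk}-m)}f_kf_j^{(m)}$ ($j\neq k$). Here I would invoke Proposition~\ref{prop:serre}: it says precisely that $e_\ell S_{jk}=(\text{scalar})\,S_{jk}e_\ell$ in $\F$ for every $\ell$, and in particular $e_iS_{jk}\in \F e_i$; combined with the fact that $S_{jk}$ has a definite weight so $T_i^{-1}S_{jk}T_i$ is a scalar multiple of $S_{jk}$, the defining equation \eqref{eq:der} for $P=S_{jk}$ forces the right-hand side, hence $(T_i^{-1}QT_i)K_i-K_i^{-1}R$ with $Q=\partial_i^L(S_{jk})$, $R=\partial_i^R(S_{jk})$, to lie in $\sum_j\F e_j\cap(\Fm K_i+\Fm K_i^{-1})=0$ (again by Proposition~\ref{prop:tri}); thus $\partial_i^L(S_{jk})=\partial_i^R(S_{jk})=0$. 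A short computation with the recursion then shows $\partial_i^L$ and $\partial_i^R$ kill all of $\FSm S_{jk}$, so they are well defined on $\FSm$. Taking $Q=\partial_i^L(P)$, $R=\partial_i^R(P)$ in $\FSm$ gives existence, and the uniqueness argument above goes through verbatim in $\FSm\otimes\FSz\isoto$ the relevant subalgebra of $\FS$ by Proposition~\ref{prop:tri}. \textbf{The main obstacle} is this last descent step: making sure the skew-derivation relations are compatible with the Serre relations, which is exactly where Proposition~\ref{prop:serre} is used, and where one must be careful that $\partial_i^L,\partial_i^R$ annihilate the whole ideal and not merely its generators.
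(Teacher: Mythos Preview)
Your core ingredients --- induction on the length of monomials in the $f_j$'s together with uniqueness via the triangular decomposition (Proposition~\ref{prop:tri}) --- are exactly what the paper uses. However, you have made the argument more elaborate than necessary by lifting to the free algebra $\Fm$ and then worrying about descent to $\FSm$. The paper works directly in $\FS[T_i\mid i\in I]$ from the start: the left-hand side of \eqref{eq:der} is a well-defined element of this algebra (it depends only on $P\in\FSm$, not on any presentation of $P$), and the induction on height simply shows that if such $Q,R\in\FSm$ exist for $P$, then they exist for $f_jP$. Since uniqueness is already available from Proposition~\ref{prop:tri}, there is no well-definedness issue to resolve --- different presentations of the same $P$ automatically yield the same $Q,R$. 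So the step you flag as ``genuinely necessary'' (checking that the skew-derivations kill the Serre ideal) is in fact avoidable, and the paper's proof is correspondingly shorter. Your approach does work (modulo the observation that the ideal is two-sided, so you also need a right Leibniz rule to show $\partial_i^L(S_{jk}\cdot a)$ lies in the ideal), but it proves more than is required. One small inaccuracy: your stated recursion $\partial_i^L(f_jP')=\theta_{ji}f_j\,\partial_i^L(P')+\delta_{i,j}P'$ does not match either of the two recursions that actually arise --- compare with the paper's computation \eqref{eqn: boson com rel}, which gives $Q\mapsto f_jQ+\delta_{i,j}\Ad(K_i)P$ and $R\mapsto \theta_{ji}p_{ij}^{-1}f_jR+\delta_{i,j}P$.
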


\begin{proof}
The uniqueness follows from Proposition \ref{prop:tri}. Using
induction on the height of $P$, it is enough to show \eqref{eq:der}
for $f_jP$ assuming \eqref{eq:der} for $P$. If \eqref{eq:der} holds
for $P$, then we have
\begin{equation} \label{eqn: boson com rel}
\begin{aligned}
& \ e_i f_j P - (T_i^{-1} f_j P T_i)e_i \\
& = \left(e_if_j-(T_i^{-1}f_jT_i)e_i\right)P + (T_i^{-1} f_j T_i) \left( e_i P - (T_i^{-1} P T_i) e_i \right ) \\
& = \delta_{i,j} \dfrac{K_i-K_i^{-1}}{p_i-p_i^{-1}}P + \theta_{ji}f_j\dfrac{(T_i^{-1}QT_i)K_i-K_i^{-1}R}{p_i-p_i^{-1}}\\
& = \dfrac{(T_i^{-1}f_jQT_i+\delta_{i,j}K_iPK_i^{-1})K_i-
K_i^{-1}(\theta_{ji}p_{ij}^{-1}f_jR+\delta_{i,j}P)}{p_i-p_i^{-1}}.
\end{aligned}
\end{equation}
\end{proof}

We define the endomorphisms $e_i'$ and $e_i^*$ of $\FSm$ by
$$e_i'(P)=R, \quad e_i^*(P)=Q.$$
Assume that $\tth$ and $\tip$ satisfy \eqref{rel:ptt}. Then by
Proposition~\ref{prop:FHequiv}, we have \eq \FSm\simeq\HSm \eneq and
hence we may also regard $e_i'$ and $e_i^*$ as endomorphisms of
$\HSm$. Note that $f_i$ can be regarded as an operator on $\FSm$
given by left multiplication. Thus we have the following relations
in $\End(\FSm)\simeq\End(\HSm)$ as is shown by \eqref{eqn: boson com
rel}:
\begin{equation} \label{eq: e_i' f_j 1}
e_i'f_j = \theta_{ji}p_{ij}^{-1}f_je_i' +\delta_{i,j},\quad
 e_i^*f_j = f_je_i^* +\delta_{i,j}\Ad(T_iK_i).
\end{equation}
More generally, we have \Lemma For $a$, $b\in \FSm$, we have \eqn
e'_i(ab)=(e'_ia)b+\bl\Ad(T_i^{-1}K_i)a\br e'_ib,\\
e^*_i(ab)=(e^*_ia)\bl\Ad(T_iK_i)b\br+a e_i^*b. \eneqn \enlemma

\Proof We have \eqn \bl e_iab-T_i^{-1}abT_ie_i\br
&=&\bl e_ia-T_i^{-1}aT_ie_i\br b+T_i^{-1}aT_i\bl e_ib-T_i^{-1}bT_ie_i\br\\
&=&\dfrac{T_i^{-1}(e^*_ia)T_iK_i-K_i^{-1}e'_ia}{p_i-p_i^{-1}}b
-T_i^{-1}aT_i\dfrac{T_i^{-1}(e^*_ib)T_iK_i-K_i^{-1}e'_ib}{p_i-p_i^{-1}}\\
&=&\dfrac{T_i^{-1}(e^*_ia)(T_iK_ibT_i^{-1}K_i^{-1})K_i-K_i^{-1}(e'_ia)b}{p_i-p_i^{-1}}\\
&&\hs{10ex}-\dfrac{T_i^{-1}a(e^*_ib)T_iK_i-K_i^{-1}(K_iT_i^{-1}aT_iK_i^{-1})e'_ib}{p_i-p_i^{-1}},
\eneqn
which proves our assertion.  \QED

Recalling
$\tth_{ij}=\theta_{ij}p_{ji}^{-1}=\theta_{ji}^{-1}p_{ij}^{-1}$, we
obtain
$$e_i'f_j = \tth_{ji}f_je_i' +\delta_{i,j}.$$
Using induction on $n$, we obtain
\begin{equation} \label{eq: e_i' f_j 2}
e_i'^{n}f_j = \tth_{ji}^nf_je_i'^{n} +
\delta_{i,j}p_i^{1-n}\qintp{n} e_i'^{n-1}.
\end{equation}

\begin{definition} \label{def:BqBg}
We define the \emph{quantum boson algebra} $\Bg$ to be the
$\cor$-algebra generated by $e_i'$, $f_i \ (i \in I)$ satisfying the
following defining relations :
\begin{equation} \label{eq:qboson}
\begin{aligned}
& e_i'f_j = \tth_{ji}f_je_i' + \delta_{i,j}, \\
& \sum\limits_{k=0}^{1-a_{ij}}  (-\tth_{ij}p_i^{a_{ij}})^k
\binmp{1-a_{ij}}{k} e_i'{}^{1-a_{ij}-k}e_j'e_i'{}^{k}=0 \ \ (i\neq
j), \\
& \sum\limits_{k=0}^{1-a_{ij}} (-\tth_{ij}p_i^{a_{ij}})^k
\binmp{1-a_{ij}}{k}f_i^{1-a_{ij}-k}f_jf_i^{k} =0 \ \ (i\neq j).
\end{aligned}
\end{equation}
\end{definition}

Note that $p_i^{ka_{ij}}\binmp{1-a_{ij}}{k}\in\Z[p_i^2,p_i^{-2}]$.
There is an anti-isomorphism $\Bg \leftrightarrow \Bgt$ given by
\begin{equation}\label{eqn: anti-auto of BqBg}
e_i' \leftrightarrow f_i, \quad f_i \leftrightarrow e_i', \quad
\text{where} \ \ ({}^t\tth)_{ij}=\tth_{ji}.
\end{equation}

\begin{proposition} \label{prop: BqBg-structure on UqBg^-}
The algebras $\FSm$ and $\HSm$ have a structure of left
$\Bg$-modules and they are isomorphic as $\Bg$-modules.
\end{proposition}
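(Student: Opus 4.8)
The plan is to show that the operators $e_i'$ and $f_i$ acting on $\FSm$ (equivalently on $\HSm\simeq\FSm$) satisfy the defining relations \eqref{eq:qboson} of $\Bg$, and then to verify that the resulting $\Bg$-action is faithful enough to transfer the $\Bg$-module structure and that the two modules are isomorphic via the isomorphism $\FSm\isoto\HSm$ coming from Proposition~\ref{prop:FHequiv}.

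First I would recall that the operator $f_i$ on $\FSm$ is left multiplication and that $e_i'$ was defined in Lemma~\ref{Lem: boson relation BqBg} via \eqref{eq:der}; the commutation relation $e_i'f_j=\tth_{ji}f_je_i'+\delta_{i,j}$ has already been established in \eqref{eq: e_i' f_j 1} (after using $\tth_{ij}=\theta_{ij}p_{ji}^{-1}=\theta_{ji}^{-1}p_{ij}^{-1}$), so the first of the three families of relations in \eqref{eq:qboson} holds on $\FSm$. The third family --- the Serre relations among the $f_i$'s --- holds by definition, since $\FSm$ is literally the quotient of the free algebra by those relations (recall $p_i^{ka_{ij}}\binmp{1-a_{ij}}{k}\in\Z[p_i^2,p_i^{-2}]$, and one must check the coefficient $(-c_{ij}\theta_{ji}^{-1})^k$ of the defining Serre relation \eqref{eq:Serre} matches $(-\tth_{ij}p_i^{a_{ij}})^k\binmp{1-a_{ij}}{k}$ up to an overall unit, which follows from $c_{ij}\theta_{ji}^{-1}=\theta_{ij}p_{ji}^{-1}p_i^{a_{ij}}=\tth_{ij}p_i^{a_{ij}}$ together with the definition of $f_i^\px{n}$).

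The main work is the second family: the quantum Serre relations among the $e_i'$'s, viewed as operators on $\FSm$. Here I would use that $\FSm$ is a free $\FSm$-module of rank one generated by $1$, so an operator built from $e_i'$'s vanishes iff it kills every element; equivalently, by the derivation-type rule for $e_i'$ on products (the Lemma preceding Definition~\ref{def:BqBg}) together with $e_i'(1)=0$, it suffices to check that the element $\sum_k(-\tth_{ij}p_i^{a_{ij}})^k\binmp{1-a_{ij}}{k}e_i'^{1-a_{ij}-k}e_j'e_i'^{k}$ annihilates $\FSm$. The key computational input is \eqref{eq: e_i' f_j 2}, $e_i'^{n}f_j=\tth_{ji}^nf_je_i'^{n}+\delta_{i,j}p_i^{1-n}\qintp{n}e_i'^{n-1}$, which lets one push the $e_i'$-monomials past left multiplications by $f_\ell$'s and reduce, by induction on height, to checking the relation on $1$. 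The vanishing on $1$ reduces — via the binomial identity \eqref{eq:bino} — to exactly the same trigonometric-sum cancellation that appeared in the proof of Proposition~\ref{prop:serre}; this is the place I expect to do real bookkeeping, matching the $p_i$-powers that arise from converting between $f_i^\px{n}$ and $f_i^\pix{n}$ normalizations.

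Finally, having shown $\FSm$ is a left $\Bg$-module (and likewise noting the anti-isomorphism \eqref{eqn: anti-auto of BqBg} gives the companion relations), I would invoke the isomorphism $\FSm\simeq\HSm$ of Proposition~\ref{prop:FHequiv}: under it, left multiplication by $f_i$ corresponds to left multiplication by $f_i$ (the map is the identity on the $f_i$'s), and the operators $e_i'$ on $\FSm$ were \emph{defined} so as to transport to the analogously-defined operators on $\HSm$ coming from \eqref{eq:der} for the algebra $\HS$ --- this compatibility is exactly the content of the remark following Lemma~\ref{Lem: boson relation BqBg} that $e_i'$ and $e_i^*$ "may also be regarded as endomorphisms of $\HSm$". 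Hence the $\Bg$-actions on $\FSm$ and $\HSm$ are intertwined by this isomorphism, proving both that $\HSm$ is a $\Bg$-module and that the two are isomorphic as such. The only genuine obstacle is the Serre-relation verification for the $e_i'$'s, and the safest route to it is to reuse Proposition~\ref{prop:serre} rather than recompute from scratch: the quasi-commutation statements there, together with \eqref{eq: e_i' f_j 2}, pin down the coefficients uniquely, so the vanishing follows.
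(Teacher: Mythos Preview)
Your approach is essentially the same as the paper's: the first boson relation is already \eqref{eq: e_i' f_j 1}, the Serre relation for the $f_i$'s holds in $\FSm$ by definition, and the main work is the Serre relation for the $e_i'$'s, proved by pushing the operator $S=\sum_k(-\tth_{ij}p_i^{a_{ij}})^k\binmp{1-a_{ij}}{k}e_i'^{1-a_{ij}-k}e_j'e_i'^{k}$ past $f_\ell$ using \eqref{eq: e_i' f_j 2}. The paper does exactly this direct computation and obtains $Sf_k=\tth_{ki}^{1-a_{ij}}\tth_{kj}\,f_kS$ as operators on $\FSm$, after which $S(1)=0$ forces $S=0$.

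One point in your write-up is slightly misplaced: you say the induction reduces to ``checking the relation on $1$'' and that \emph{this} vanishing requires the binomial identity \eqref{eq:bino}. In fact $S(1)=0$ is automatic (each summand contains at least one $e_i'$ or $e_j'$). The binomial cancellation is needed earlier, in the commutation step itself: when you expand $Sf_k$ via \eqref{eq: e_i' f_j 2} you get $c\,f_kS$ plus two correction terms, and it is precisely the identity $\sum_n(-p_i^{b-1})^n\binmp{b}{n}=0$ (for the $k=j$ term) together with $\binmp{b}{n}\qintp{b-n}=\binmp{b}{n+1}\qintp{n+1}$ (for the $k=i$ term) that kill these corrections. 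Without exact quasi-commutation the induction on height would not close, since the residual terms are not of the form $S(\text{lower height})$. Once you relocate the binomial argument to this step, your proof and the paper's coincide.
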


\begin{proof}
We have only to verify the second relation in Definition
\ref{def:BqBg}. For $i \ne j$ and $b \seteq 1-a_{ij}$, let
$$S=\sum_{n=0}^{b} x_n e_i'{}^{b-n} e_j' e_i'{}^n,$$
where $x_n=(-\tth_{ij}p_i^{1-b})^n\binmp{b}{n}
=(-\tth_{ji}^{-1}p_i^{b-1})^{-n}\binmp{b}{n}$. It is enough to show
that $S$ quasi-commutes with all the $f_k$'s as an operator on
$\FSm$. We have \eqn e_i'{}^{b-n} e_j' e'_i{}^nf_k&=& e_i'{}^{b-n}
e_j'
(\tth_{ki}^nf_ke'_i{}^n+\delta_{k,i}p_i^{1-n}\qintp{n}e_i'{}^{n-1})\\
&=&\tth_{ki}^n e_i'{}^{b-n}(\tth_{kj}f_k e_j' +\delta_{k,j})e'_i{}^n
+\delta_{k,i}p_i^{1-n}\qintp{n}e_i'{}^{b-n} e_j'e_i'{}^{n-1}\\
&=&\tth_{ki}^n \tth_{kj}(\tth_{ki}^{b-n}
f_ke_i'{}^{b-n}+\delta_{k,i}p_i^{1-b+n}\qintp{b-n}e_i'{}^{b-n-1})e'_j
e'_i{}^n\\
&&\hs{15ex}+\delta_{k,j}\tth_{ki}^n e_i'{}^{b}+\delta_{k,i}p_i^{1-n}\qintp{n}e_i'{}^{b-n} e_j'e_i'{}^{n-1}\\
&=&\tth_{ki}^b\tth_{kj}f_ke_i'{}^{b-n}e'_je'_i{}^n
+\delta_{k,j}\tth_{ki}^n e_i'{}^{b}\\
&&\hs{5ex}+\delta_{k,i}\Bigl(\tth_{ki}^n \tth_{kj}
p_i^{1-b+n}\qintp{b-n}e_i'{}^{b-n-1}e'_je'_i{}^n
+p_i^{1-n}\qintp{n}e_i'{}^{b-n} e_j'e_i'{}^{n-1}\Bigr).
\eneqn

Using $\tth_{ii}=p_i^{-2}$, we have
\begin{equation*}
\begin{aligned}
Sf_k
& = \tth_{ki}^b\tth_{kj}f_kS +\delta_{k,j}\Bigl(\ssum_{n=0}^bx_n\tth_{ji}^n\Bigr)
e_i'{}^{b}\\
& \quad + \delta_{k,i} \Bigl(\ssum_{n=0}^b x_n\tth_{ij}
p_i^{1-b-n}\qintp{b-n}e_i'{}^{b-n-1}e'_je'_i{}^n +\ssum_{n=0}^bx_n
p_i^{1-n}\qintp{n}e_i'{}^{b-n} e_j'e_i'{}^{n-1}\Bigr).
\end{aligned}
\end{equation*}
The second term vanishes since \eqn \ssum_{n=0}^bx_n\tth_{ji}^n
&=&\sum_{n=0}^b(-p_{i}^{b-1})^n\binmp{b}{n}=0. \eneqn

Since $\binmp{b}{n}\qintp{b-n}=
\binmp{b}{n+1}\qintp{n+1}$,
the coefficient of $e_i'{}^{b-n-1}e'_je'_i{}^n$ in the third term is equal to
\eqn
&&x_n\tth_{ij}p_i^{1-b-n}\qintp{b-n}+x_{n+1} p_i^{-n}\qintp{n+1}\\
&&=(-\tth_{ij}p_i^{1-b})^n\binmp{b}{n}\tth_{ij}p_i^{1-b-n}\qintp{b-n}
+ (-\tth_{ij}p_i^{1-b})^{n+1}\binmp{b}{n+1}p_i^{-n}\qintp{n+1}=0
\eneqn as desired.
\end{proof}

The following lemma will be used when we prove that, if the base
ring is a field, then $\FSm$ is a simple $\Bg$-module in the case of
quantum Kac-Moody superalgebras.

\begin{lemma} \label{lem: e_i' e_j* 1}
For $i,j \in I$, we have
$$ e_i'e_j^* = e_j^*e_i'.$$
\end{lemma}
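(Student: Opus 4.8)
The plan is to prove the identity on $\FSm$ (whence also on $\HSm$, via the isomorphism of Proposition~\ref{prop:FHequiv}) by induction on height, the inductive engine being a twisted Leibniz rule for the difference operator $D\seteq e_i'e_j^*-e_j^*e_i'$. First I would invoke the two product rules from the lemma just above, written as $e_i'(ab)=(e_i'a)\,b+\sigma(a)\,(e_i'b)$ and $e_j^*(ab)=(e_j^*a)\,\tau(b)+a\,(e_j^*b)$, where $\sigma\seteq\Ad(T_i^{-1}K_i)$ and $\tau\seteq\Ad(T_jK_j)$. Both $\sigma$ and $\tau$ are $\cor$-algebra automorphisms of $\FSm$ acting on each weight space $\FSm_\al$ by a single scalar depending only on $\al$; consequently $e_i'\circ\tau=u_i\,(\tau\circ e_i')$ and $\sigma\circ e_j^*=v_j^{-1}\,(e_j^*\circ\sigma)$ on $\FSm$, where $u_i$ (resp.\ $v_j$) is the scalar by which $\tau$ acts on $\cor f_i$ (resp.\ $\sigma$ acts on $\cor f_j$).

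Then I would expand $e_i'e_j^*(ab)$ and $e_j^*e_i'(ab)$ with these rules, commuting $e_i'$ past $\tau$ and $\sigma$ past $e_j^*$ at the cost of the scalars above, and subtract. The ``diagonal'' contributions collect into $D(a)\tau(b)+\sigma(a)D(b)$, leaving exactly one cross term:
$$D(ab)=D(a)\,\tau(b)+\sigma(a)\,D(b)+(u_iv_j^{-1}-1)\,e_j^*(\sigma(a))\,\tau(e_i'(b)).$$
Since $\tau=\Ad(T_jK_j)$ scales $f_i$ by $p_{ji}^{-1}\theta_{ij}^{-1}$ and $\sigma=\Ad(T_i^{-1}K_i)$ scales $f_j$ by $\tth_{ji}=\theta_{ji}p_{ij}^{-1}$, we obtain $u_iv_j^{-1}=(p_{ij}p_{ji}^{-1})(\theta_{ij}\theta_{ji})^{-1}$, which equals $1$ because $\theta_{ij}\theta_{ji}=p_{ij}p_{ji}^{-1}$ by \eqref{cond:tip} (a consequence of \eqref{cond:pt}). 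So the cross term dies and $D$ satisfies the clean twisted Leibniz rule $D(ab)=D(a)\tau(b)+\sigma(a)D(b)$.

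With this in hand the induction closes immediately. For the base cases, $D(1)=0$ and $D(f_k)=0$ for every $k$, because each of $e_i'$ and $e_j^*$ annihilates $1$ and sends $f_k$ to the scalar $\delta_{ik}$ (resp.\ $\delta_{jk}$), which the other operator then annihilates. Since $\FSm$ is the free $\cor$-algebra on $\{f_k\mid k\in I\}$, every element is a $\cor$-linear combination of monomials, and each monomial factors as $f_k\cdot(\text{monomial of smaller height})$; applying the Leibniz rule for $D$ together with the inductive hypothesis to both factors shows $D$ vanishes on it. Hence $D=0$ on $\FSm$, and therefore on $\HSm$ as well.

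I expect the only delicate point to be the scalar bookkeeping in the second step — tracking precisely which scalars appear as $e_i'$ moves past $\tau$ and $\sigma$ past $e_j^*$, and checking that the cross-term coefficient is exactly $(p_{ij}p_{ji}^{-1})(\theta_{ij}\theta_{ji})^{-1}-1$ so that \eqref{cond:tip} forces it to vanish. Everything else is routine; note in particular that the argument never uses $i\neq j$, so it covers the case $i=j$ as well (there the same formula gives $u_iv_i^{-1}=\theta_{ii}^{-2}=1$).
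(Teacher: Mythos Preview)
Your proof is correct, with one minor misstatement: $\FSm$ is \emph{not} the free $\cor$-algebra on $\{f_k\}$---it carries the Serre relations from Definition~\ref{Dfn:U(theta,p)}. Fortunately your induction never uses freeness; it only uses that $\FSm$ is generated as an algebra by the $f_k$'s, so every element is a linear combination of monomials and the height induction still closes.

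Your route is a mild variant of the paper's. The paper sets $S=e_i'e_j^*-e_j^*e_i'$ and computes $Sf_k$ directly from the operator identities $e_i'f_k=\tth_{ki}f_ke_i'+\delta_{i,k}$ and $e_j^*f_k=f_ke_j^*+\delta_{j,k}\Ad(T_jK_j)$, obtaining $Sf_k=(\text{scalar})\,f_kS$; together with $S(1)=0$ this forces $S=0$ by induction on height. You instead start from the product rules for $e_i'$ and $e_j^*$ (the lemma immediately preceding Definition~\ref{def:BqBg}) and derive a full twisted Leibniz identity $D(ab)=D(a)\tau(b)+\sigma(a)D(b)$ for arbitrary $a,b$. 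Specializing your Leibniz rule to $a=f_k$ recovers exactly the paper's quasi-commutation $Df_k=v_k\,f_kD$, so the two arguments are essentially the same induction; yours packages the inductive step more generally at the cost of a longer expansion. Both approaches hinge on the same scalar cancellation $\theta_{ij}\theta_{ji}=p_{ij}p_{ji}^{-1}$ from \eqref{cond:tip}---in the paper this appears as the identity $\Ad(T_jK_j)e_i'=\tth_{ji}^{-1}e_i'\Ad(T_jK_j)$, in yours as $u_iv_j^{-1}=1$.
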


\begin{proof}
Set $S = e_i'e_j^*-e_j^*e_i'$. It is enough to show that $S$
quasi-commutes with $f_k$ for any $k \in I$. The relation \eqref{eq:
e_i' f_j 2} yields
\begin{align*}
e'_ie_j^*f_k & = e_i'(f_ke_j^*+\delta_{j,k}\Ad(T_jK_j)) \\
&= (\tth_{ki}f_ke_i'+\delta_{i,k})e_j^*+\delta_{j,k}e_i'\Ad(T_jK_j)\\
&= \tth_{ki}f_ke_i'e_j^*+\delta_{i,k}e_j^*+\delta_{j,k}e_i'\Ad(T_jK_j).
\end{align*}
Similarly, we have
\begin{align*}
e_j^*e'_if_k
 & = e_j^*(\tth_{ki}f_ke_i'+\delta_{i,k}) \\
&= \tth_{ki}\bl f_ke_j^*+\delta_{j,k}\Ad(T_jK_j)\br e_i'+\delta_{i,k}e_j^* \\
&= \tth_{ki}f_ke_j^*e_i'+\delta_{j,k}\tth_{ji}\Ad(T_jK_j)e_i'
+\delta_{i,k}e_j''.
\end{align*}
Since we have
$\Ad(T_jK_j)e_i'=\theta_{ij}p_{ji}e_i'=\tth_{ji}^{-1}e'_i$, we
obtain
$$Sf_k = \tth_{jk}^{-1}\tth_{ki}f_k{S}.$$
\end{proof}

\begin{proposition} \label{prop:simple BqBg-module UqBg^-}
Suppose that the following condition holds:
\begin{equation}
\begin{aligned}
& \text{If $P \in \FSm$ satisfies $e_i P \in \FSm e_i$ for all $i
\in I$,} \\
& \text{then $P$ is a constant multiple of $1$.}
\end{aligned}
\end{equation}

Then any $\rtl$-weighted $\Bg$-submodule $N$ of $\FSm$ vanishes if
$N\cap \cor =0$.
\end{proposition}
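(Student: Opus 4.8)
The plan is to reduce the proposition to the single assertion that
$\mathcal K:=\{P\in\FSm\mid e_i'(P)=0\text{ for all }i\in I\}$ equals $\cor\cdot 1$, and then to run a standard ``highest weight'' argument on $N$. First I would record the trivial half of the hypothesis: if $P\in\FSm$ satisfies $e_i'(P)=0$ and $e_i^*(P)=0$ for every $i$, then Lemma~\ref{Lem: boson relation BqBg} yields $e_iP=(T_i^{-1}PT_i)e_i$, and since conjugation by $T_i$ sends $\FSm$ to itself (it merely rescales each $f_j$ by $\theta_{ji}$), this element lies in $\FSm e_i$; hence the hypothesis forces $P\in\cor\cdot 1$. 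So it suffices to prove $\mathcal K=\cor\cdot 1$.

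The heart of the argument is this last point. By Lemma~\ref{lem: e_i' e_j* 1} we have $e_i'e_j^*=e_j^*e_i'$, so each operator $e_j^*$ maps $\mathcal K$ into itself; moreover $e_j^*$ raises the $\rtl$-weight by $\al_j$, and $\mathcal K$ is graded with $\mathcal K_0=\FSm_0=\cor\cdot 1$. Suppose $\mathcal K\neq\cor\cdot 1$ and choose $\gamma\in\rtl^+\setminus\{0\}$ of minimal height with $\mathcal K_{-\gamma}\neq 0$, together with a nonzero $P\in\mathcal K_{-\gamma}$. For each $j$ the element $e_j^*(P)$ belongs to $\mathcal K_{-\gamma+\al_j}$, and it is $0$: if $\gamma-\al_j\notin\rtl^+$ then $\FSm_{-\gamma+\al_j}=0$; the case $\gamma=\al_j$ cannot occur, since then $P\in\FSm_{-\al_j}=\cor f_j$ would have $e_j'(P)$ equal to a nonzero scalar, contradicting $P\in\mathcal K$; and if $\gamma-\al_j\in\rtl^+\setminus\{0\}$ then its height is smaller than $|\gamma|$, so $\mathcal K_{-\gamma+\al_j}=0$ by minimality. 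Thus $e_j^*(P)=0$ for all $j$, while $e_j'(P)=0$ for all $j$ because $P\in\mathcal K$; by the trivial half above $P\in\cor\cdot 1$, contradicting $\gamma\neq 0$. Hence $\mathcal K=\cor\cdot 1$.

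Finally I would deduce the proposition. Assume $N\neq 0$. Since $N$ is $\rtl$-weighted we may write $N=\bigoplus_{\beta\in\rtl^+}N_{-\beta}$ with $N_{-\beta}=N\cap\FSm_{-\beta}$, and $N_0\subseteq N\cap\FSm_0=N\cap\cor\cdot 1=0$. Choose $\beta_0\in\rtl^+$ of minimal height with $N_{-\beta_0}\neq 0$, so $\beta_0\neq 0$, and a nonzero $P\in N_{-\beta_0}$. Because $N$ is a $\Bg$-submodule, $e_j'(P)\in N_{-\beta_0+\al_j}$ for each $j$; this space is $0$ if $\beta_0-\al_j\notin\rtl^+$, and otherwise it equals $N_{-(\beta_0-\al_j)}$, which vanishes by minimality of $|\beta_0|$ when $\beta_0\neq\al_j$ and equals $N_0=0$ when $\beta_0=\al_j$. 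Hence $e_j'(P)=0$ for all $j$, that is, $P\in\mathcal K=\cor\cdot 1$, which forces $\beta_0=0$, a contradiction. Therefore $N=0$.

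The only genuinely non-formal step is the second paragraph. The hypothesis as stated controls only elements annihilated simultaneously by all the $e_i'$ \emph{and} all the $e_i^*$, whereas the clean statement $\mathcal K=\cor\cdot 1$ refers to the $e_i'$ alone; bridging this gap is the obstacle, and the commutation relation $e_i'e_j^*=e_j^*e_i'$ of Lemma~\ref{lem: e_i' e_j* 1} is exactly the input that does it, since it forces the $e_j^*$-images of a minimal-weight element of $\mathcal K$ to vanish and thereby places such an element in the range where the hypothesis applies.
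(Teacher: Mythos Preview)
Your proof is correct and follows essentially the same route as the paper's. The paper also reduces to showing that any $e_i'$-highest weight vector of nonzero weight vanishes, does this by induction on height using Lemma~\ref{lem: e_i' e_j* 1} to force $e_j^*$ of such a vector to be again highest weight (hence zero by induction), and then invokes the hypothesis via the identity $e_iP=(T_i^{-1}PT_i)e_i$ once both $e_i'(P)$ and $e_i^*(P)$ vanish. Your organization---first isolating the statement $\mathcal K=\cor\cdot 1$ and then applying it to a minimal-height element of $N$---is a clean repackaging of the same argument.
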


\begin{proof}
Suppose $N \cap \cor =0$.  It is obvious that any non-zero
$\Bg$-submodule $N$ of $\FSm$ should have a non-zero highest weight
vector with respect to the action of $e_i'$ for all $i \in I$. Hence
it is enough to show that a highest weight vector $u$ of weight
$\al\not=0$ vanishes. We will show this by induction on the height
$|\al|$ of $\al$. If $\al=-\al_i$, then $u=f_i$ up to a constant
multiple, and it is not a highest weight vector. Assume that
$|\al|\ge2$. Then $e^*_iu$ is a highest weight vector by the
preceding lemma. By induction hypothesis, we have $e_i^*u=0$ which
implies $e_iu\in\FS e_i$. Then by our  assumption, $u$ must be a
constant multiple of $1$, which is a contradiction.
\end{proof}

\vskip 5mm

\section{Quantum Kac-Moody superalgebras}
\label{sec: KM super}

In this section, we show that quantum Kac-Moody superalgebras arise
as a special case of the algebras $\FS$ and we study their structure
and representation theory. We first recall the definition and their
properties following \cite{BKM98}.

\subsection{Quantum Kac-Moody superalgebras} \label{subsec: qGK super}

A {\em Cartan superdatum} is a Cartan datum $(\car,\P,\Pi,\Pi^\vee)$
endowed with a decomposition $I=\Iev \sqcup \Iod$ of $I$ such that
\eq&&\text{ $a_{ij} \in 2 \Z $ for all $ i \in \Iod$ and $j \in I$.}
\eneq For a Cartan superdatum $(\car,\P,\Pi,\Pi^\vee)$, we define
the {\em parity function} $\pa \cl I \to \{ 0, 1 \}$ by
$$\pa(i)=1  \quad \text{ if } i \in \Iod \quad \text{ and } \quad \pa(i)=0  \quad \text{ if } i \in \Iev.$$
We extend the parity function on $I^n$ and $\rtl^+$ as follows:
$$ \pa(\nu) \seteq \sum^n_{k=1} \pa(\nu_k), \ \ \pa(\beta) \seteq  \sum_{k=1}^r\pa(i_k)\quad \text{for all $\nu \in I^n$ and $\beta= \sum^r_{k=1} \alpha_{i_k} \in \rtl^+$.}
$$

\noindent
We denote by
$\Pev \seteq \set{ \lambda \in \P }{ \la h_i, \lambda \ra \in 2\Z
\hs{1.5ex} \text{for all $i \in \Iod$}}$ and
$\Pevp \seteq \P^+ \cap \Pev$.

\medskip
Let $\pi$ be an indeterminate with the defining relation
$\pi^2=1$. Then we have $\Z[\pi]=\Z\oplus\Z\pi$.
Let $\sqrt{\pi}$ be an indeterminate such that $(\sqrt{\pi})^2=\pi$.
Hence $\Z[\sqrt{\pi}]=\Z\oplus\Z\sqrt{\pi}\oplus
\Z{\pi}\oplus\Z(\sqrt{\pi})^{-1}$.
For a ring $R$, we define the rings $R^\pi$ and $R^{\sqrt{\pi}}$ by
\begin{equation} \label{dfn: pi ring}
R^\pi\seteq R\tens \Z[\pi]\qtext{and}
R^{\sqrt{\pi}}\seteq R\tens \Z[\sqrt{\pi}].
\end{equation}

For each $i\in I$, set $\pi_{i}:=\pi^{\pa(i)}$ and  choose
$\sqrt{\pi_i}\in \Z[\sqrt{\pi}]$ such that $(\sqrt{\pi_i})^2=\pi_i$.
Note that we have four choices of $\sqrt{\pi_i}$. The element
$\sqrt{\pi_i}$ may not be contained in $\Z[\pi]$ but
$\sqrt{\pi_i}^{a_{ij}} \in \Z[\pi]^\times$ because
$\sqrt{\pi_i}=\pm1$ or $\pm\pi$ for $i\in \Iev$ and $a_{ij}\in2\Z$
for $i\in \Iod$. Throughout this section, we fix a choice of
$\sqrt{\pi_i}$.

Let $q$ be an indeterminate, and set \eq&&\A= \Z[q,q^{-1}], \quad
q_i=q^{\dg_i}, \quad [ n ]^\pi_i=[n]_{\pi_i q_i,\,q^{-1}_i}=
\dfrac{(\pi_i q_i)^n- q_i^{-n}}{\pi_i q_i-q_i^{-1}} \quad \text{for
 $n \in \Z_{\ge 0}$.}\label{def:pifact}
\eneq We define  $[ n ]^\pi_i!$ and ${\binm{n}{m}}_i^\pi$ in a
natural way. Recall that $\dg_i\in\Z_{>0}$ satisfies
$\dg_ia_{ij}=\dg_ja_{ji}$. Hence we have
$q_i^{a_{ij}}=q_j^{a_{ji}}$.

\bigskip
Let $\cor = \Q(q)^{\sqrt{\pi}}$. The {\em quantum Kac-Moody
superalgebra} $\UqBg$ is the $\cor$-algebra $\FS$ with
\begin{equation} \label{eq:KMsuper}
\begin{aligned}
p_i = q_i\sqrt{\pi_i}, \quad  p_{ij}=q_i^{a_{ij}}, \quad
\theta_{ij}\theta_{ji}=1, \quad \theta_{ii}=\pi_i.
\end{aligned}
\end{equation}
Note that $\theta\seteq\{\theta_{ij}\}_{i,j\in I}$ and $\pa\seteq
(\{p_{ij}\}_{i,j\in I},\{p_i\}_{i\in I})$ satisfy the condition
\eqref{cond:pt}. We have $\sqrt{\pi_i}^{2a_{ij}}=1$ and hence
$p_i^{2a_{ij}}=q_i^{2a_{ij}}$. Hence, by multiplying $e_i$ by a
constant, the explicit description of the algebra $\UqBg$ can be
given as follows:


\begin{definition}[{\cite[Definition 2.7]{BKM98}}] \label{dfn: QBKM}
The \emph{quantum Kac-Moody superalgebra} $\UqBg$ associated with a
Cartan superdatum $(\car,\P,\Pi,\Pi^\vee)$ and $\theta$ is the
algebra over $\cor=\Q(q)^{\sqrt{\pi}}$ generated by $e_i$, $f_i$ and
$K_i^{\pm 1}$ $(i \in I)$ subject to the following defining
relations:
\begin{align*}
& K_iK_j=K_jK_i, \quad
K_i e_j K_i^{-1} = q^{a_{ij}}_i e_j, \quad K_i f_j K_i^{-1}= q^{-a_{ij}}_i f_j, \\
& e_if_j- \theta_{ji}f_je_i =\delta_{i,j} \dfrac{K_i-K^{-1}_i}{q_i\pi_i-q_i^{-1}}, \\
& \displaystyle \sum_{k=0}^{1-a_{ij}} (-\theta_{ij})^k \pi_i^{\frac{k(k-1)}{2}}
 f^{\{ 1-a_{ij}-k \}}_i f_j f_i^{ \{k \}} =0\quad (i \neq j),\\
& \sum_{k=0}^{1-a_{ij}} (-\theta_{ij})^k \pi_i^{\frac{k(k-1)}{2}}
e^{\{ 1-a_{ij}-k\} }_i e_j e_i^{\{ k\} } = 0 \quad (i \neq j),
\end{align*}
where $f_i^{\{n\}}=f_i^n/[n]^\pi_i!$ and
$e_i^{\{n\}}=e_i^n/[n]^\pi_i!$.
\end{definition}

We recall some of the basic properties of highest weight
$\BKM$-modules proved in \cite{BKM98}. We denote by ${\rm
 V}_{\theta}^{q}(\Lambda)=\VBe(\Lambda)$ the $\UqBg$-module defined in
\eqref{eq:V(lambda)}. Choose $\chi_i$ such that
$\chi_i(\lam)=p_i^{\la h_i,\lam\ra}$ for $\lam\in\Pev$. Then,  we
have
$$K_iu=p_i^{\la h_i,\lambda\ra}u=c_iq_i^{\la h_i,\lambda\ra}
u\quad\text{for all $\lam\in \Pev$ and $u\in V_\lam$,}
$$
where $c_i\seteq\sqrt{\pi_i}^{\la h_i,\lam\ra}$ satisfies $c_i^2=1$.
Hence the notion of weight space in this paper is the same as the
one in \cite{BKM98} for $\Pev$-weighted $\UqBg$-modules (after
applying the automorphism $K_i\mapsto c_iK_i$, $e_i\mapsto c_ie_i$).
However, {\em the notion of weight spaces in \cite{BKM98} is
different from ours when the weights are not in $\Pev$.} (See also
Section \ref{sec:QKM}.)

\begin{theorem}[{\cite[Theorem 4.15]{BKM98}}] \label{thm: main thm of BKM} \
\bna
\item For $\Lambda \in \Pev^+$, the $\UqBg$-module ${\rm V}_{\theta}^{q}(\Lambda)$ is generated by a highest weight vector $v_\Lambda$
with the defining relations:
\begin{equation} \label{eqn: defining relation of V^BKM(Lambda)}
 K_i v_{\Lambda} = p_i^{\langle h_i, \Lambda \rangle} v_{\Lambda}, \ \
 e_i v_{\Lambda} = 0, \ \
 f_i^{\langle h_i, \Lambda \rangle + 1} v_{\Lambda} =0 \ \
 \text{for all} \ i \in I.
\end{equation}
\item We have $\set{u\in {\rm V}_{\theta}^{q}(\Lambda)}{\text{$e_iu=0$ for any $i\in I$}}
=\cor v_\Lambda$.
\item The category $\Oint^{\Pev}(\C(q)\tens_{\Q(q)}\UqBg)$ is semisimple and every simple object
is isomorphic to ${\rm V}_{\theta}^{q}(\Lambda)
\big/(\sqrt{\pi}-c){\rm V}_{\theta}^{q}(\Lambda)$ for some $\Lambda
\in \Pev^+$ and $c\in\C$ such that $c^4=1$.
 \item  For $\Lambda \in \Pev^+$,
the weight spaces of \/$\UqmBg$ and ${\rm V}_{\theta}^{q}(\Lambda)$
are free $\cor$-modules, and their ranks are given by
\begin{align*}
 &\ch (\UqmBg)\seteq\sum_{\mu\in Q}\bl\rk_{\Q(q)^{\sqrt{\pi}}} \UqBg_\mu \br \ex{\mu}
= \prod_{\alpha \in \Delta^+} (1-\ex{-\alpha})^{-\mult(\alpha)},  \\
 &\ch (\VB(\Lambda)) \seteq
\sum_{\mu\in\P}\bl\rk_{\Q(q)^{\sqrt{\pi}}} {\rm
V}_{\theta}^{q}(\Lambda)_\mu \br \ex{\mu} = \dfrac{\sum_{ w \in W}
\epsilon(w)\ex{w(\Lambda+\rho)-\rho}} {\prod_{\alpha \in \Delta_+}
(1-\ex{-\alpha})^{\mult(\alpha)}},
\end{align*}
where $\rho$ is an element of $\P$ such that $\la h_i,\rho \ra =1$ for all $i \in I$,
\ee
\end{theorem}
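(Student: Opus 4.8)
Since the statement is quoted from \cite{BKM98}, the plan is not to reprove it but to match conventions and then cite; the only real work is the translation set up in the paragraph preceding the statement. First I would record the reduction. By \eqref{eq:KMsuper} we have $p_i=q_i\sqrt{\pi_i}$ and $p_i^{2a_{ij}}=q_i^{2a_{ij}}$, so after multiplying each $e_i$ by a suitable unit of $\cor$ (as noted before the statement) $\UqBg$ is identified with the algebra of Definition~\ref{dfn: QBKM}, which is \cite[Definition 2.7]{BKM98}. Fixing $\chi_i(\lambda)=p_i^{\langle h_i,\lambda\rangle}$ for $\lambda\in\Pev$ and twisting the action on each weight space $V_\lambda$ by the sign $c_i=\sqrt{\pi_i}^{\langle h_i,\lambda\rangle}$ (so $K_i\mapsto c_i^{-1}K_i$, $e_i\mapsto c_i^{-1}e_i$ on $V_\lambda$; cf.\ Lemma~\ref{lem:equiv}), a $\Pev$-weighted $\UqBg$-module in our sense becomes a weight module in the sense of \cite{BKM98}, and this identifies $\Oint^{\Pev}(\UqBg)$ with the corresponding category there. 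Since $a_{ij}\in2\Z$ for $i\in\Iod$, one has $\rtl\subset\Pev$, hence $\Lambda-\rtl^+\subset\Pev$ for $\Lambda\in\Pev^+$; thus $V_\theta^q(\Lambda)$ and every object of $\Oint^{\Pev}(\UqBg)$ is $\Pev$-weighted and falls under this identification.

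For (a), we already know from the discussion around \eqref{eq:V(lambda)} that $V_\theta^q(\Lambda)$ is an integrable highest weight module with $f_i^{\langle h_i,\Lambda\rangle+1}v_\Lambda=0$; what remains is that these are \emph{defining} relations, i.e.\ that the submodule of $\MBe(\Lambda)$ generated by the $f_i^{\langle h_i,\Lambda\rangle+1}u_\Lambda$ equals $\NBe(\Lambda)$ --- equivalently, that the quotient of $\MBe(\Lambda)$ by these relations has no nonzero submodule meeting $\cor u_\Lambda$ trivially. Through the identification this is precisely the irreducibility of the integrable highest weight module proved in \cite{BKM98}, and (b) is its companion statement that the space of vectors annihilated by all the $e_i$ in that module is $\cor v_\Lambda$.

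For (c) and (d) one transports, through the same identification, the complete reducibility of $\Oint^{\Pev}$ and the super Weyl--Kac denominator and character identities of \cite[Theorem 4.15]{BKM98}: the denominator identity yields $\ch(\UqmBg)$ --- $\UqmBg$ being the quotient of the free $\cor$-algebra on the $f_i$ by the relations \eqref{eq:Serre} among the $f_i$ --- and the character formula yields $\ch(V_\theta^q(\Lambda))$, the weight spaces being free $\cor$-modules of the stated ranks. For the classification of simples in (c) one first extends scalars from $\Q(q)$ to $\C(q)$: the central element $\sqrt{\pi}$ satisfies $(\sqrt{\pi})^4=1$, hence acts on a simple object of $\Oint^{\Pev}(\C(q)\tens_{\Q(q)}\UqBg)$ by a scalar $c$ with $c^4=1$, and the modules $V_\theta^q(\Lambda)/(\sqrt{\pi}-c)V_\theta^q(\Lambda)$ exhaust these. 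The one delicate point --- and where the hypothesis $\Lambda\in\Pev^+$, equivalently $\rtl\subset\Pev$, is genuinely used --- is to keep our weight-space convention aligned with that of \cite{BKM98} (which differs off $\Pev$) throughout (a) and (c); once the reduction of the first paragraph is in place the rest is a formal transport of structure, and any genuinely independent proof would merely retrace the Shapovalov-form and quantum-Casimir argument of \cite{BKM98}.
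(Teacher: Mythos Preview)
Your proposal is correct and matches the paper's approach: the theorem is quoted from \cite{BKM98} without proof, and the only work is the convention-matching done in the paragraph preceding the statement (the rescaling of $e_i$ and the sign twist $K_i\mapsto c_iK_i$ on $\Pev$-weights), which you have reproduced accurately. Your additional remarks on why $\rtl\subset\Pev$ and how the $\sqrt{\pi}$-eigenvalue $c$ with $c^4=1$ arises are sound elaborations of points the paper leaves implicit.
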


The following corollary will play a crucial role in studying the
representation theory of $\UqBg$.

\Cor\label{Cor:BKM e quasi commute}
We have
$$\set{a\in\UqmBg}{\text{$e_ia\in \UqBg e_i$ for any $i\in I$}}
=\cor.$$
\encor
\Proof
We may assume that $a$ is a weight vector of weight different from $0$.
Then, we have $a v_\Lambda=0$ for any $\Lambda\in\Pevp$
by Theorem~\ref{thm: main thm of BKM} (b).
Hence, $a$ belongs to
$\sum_{i\in I}\FSm f_i^{1+\lan h_i,\Lambda\ran}$ for any $\Lambda\in\Pevp$,
which implies that $a=0$.
\QED

\medskip

\subsection{The algebra $\Uqsug$}\label{sec: Uqsug}

Now we will take another choice of $\theta$ and $\pa$ satisfying
\eqref{cond:pt}:
\begin{align} \label{eq:boldU}
p_{i}= q_i \sqrt{\pi_i}, \quad p_{ij} = p_i^{a_{ij}}, \quad
\theta_{ij}=\begin{cases}
\sqrt{\pi_j}^{a_{ji}}&\text{if $i\not=j$,}\\
1&\text{if $i=j$}
\end{cases}
\end{align}
Note that $\theta_{ij}\in\Z[\pi]$ and $\theta_{ij}^2=1$.

We denote by $\Uqsug$ the $\cor$-algebra $\FS$ for this choice. The
explicit description of the algebra $\Uqsug$ is given as follows.

\begin{definition}
The algebra $\Uqsug$ associated with a Cartan superdatum
$(\car,\P,\Pi,\Pi^\vee)$ is the algebra over
$\cor=\Q(q)^{\sqrt{\pi}}$ generated by $e_i$, $f_i$ and $K_i^{\pm
1}$ $(i \in I)$ subject to the following defining relations:
\begin{equation} \label{eq:Uqsug}
\begin{aligned}
& K_iK_j=K_jK_i, \quad
K_i e_j K_i ^{-1}= p^{a_{ij}}_i e_j, \quad K_i f_j K_i^{-1}= p^{-a_{ij}}_i f_j,  \\
& e_if_j- \theta_{ji}f_je_i = \delta_{i,j}\dfrac{K_i-K^{-1}_i}{p_i-p_i^{-1}} \quad (i, j \in I), \\
& \displaystyle \sum_{k=0}^{1-a_{ij}} (-\theta_{ji})^k
f^{(1-a_{ij}-k)}_i f_j f_i^{(k)} =0 \quad (i \neq j), \\
& \displaystyle\sum_{k=0}^{1-a_{ij}} (-\theta_{ij})^k
e^{(1-a_{ij}-k)}_i e_j e_i^{(k)} =0 \quad (i \neq j),
\end{aligned}
\end{equation}
where $f_i^{(k)}= f_i^k / [k]^\pa_i!$ and $e_i^{ (k) } = e_i^k /
[k]^\pa_i!$.
\end{definition}

Let $\Uqmsug$ (resp.\ $\Uqpsug$) be the $\cor$-subalgebra of
$\Uqsug$ generated by the $f_i$'s (resp.\ the $e_i$'s)  and let
$\Uqzsug$ be the $\cor$-subalgebra generated by the $K_i^{\pm 1}$'s
$(i \in I)$. We choose $\chi_i(\lam)=p_i^{\la h_i,\lam\ra}$ to
define $\Mod^P(\Uqsug)$. By Corollary \ref{cor:Fequ} and Proposition
\ref{prop:serre}, we have

\begin{align} \label{eq:Iso1}
\Uqsug[P,Q,R] \simeq U^\theta_q(\g)[P,Q,R].
\end{align}
Hence the triangular decomposition of $U^\theta_q(\g)$ and Theorem
\ref{thm: main thm of BKM} imply the following corollary.

\begin{corollary} \label{cor: tri dec Super} \hfill
\bnum
\item The algebra $\Uqsug$ has a triangular decomposition
$$\Uqsug \simeq  \Uqmsug \ot \Uqzsug \ot \Uqpsug.$$
\item  $\ch(\Uqmsug) = \displaystyle
 \prod_{\alpha \in \Delta^+}
 (1-\ex{-\alpha})^{-\mult(\alpha)}$.
\item We have $\set{a\in\Uqmsug}{\text{$e_ia\in \Uqsug e_i$ for any $i\in I$}}
=\cor$.
\item There is an equivalence of categories $\Mod^{\P}(\UqBg) \simeq \Mod^{\P}(\Uqsug)$.

\ee
\end{corollary}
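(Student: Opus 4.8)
The plan is to obtain (i) from the general structure theory of Section~\ref{sec:FS} and to deduce (ii)--(iv) by transporting the known facts about the quantum Kac--Moody superalgebra $\UqBg$ across the isomorphism \eqref{eq:Iso1}.

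Statement (i) is immediate: the choice \eqref{eq:boldU} satisfies \eqref{cond:pt}, so Proposition~\ref{prop:tri}, applied to $\Uqsug$, shows that the multiplication map $\Uqmsug\ot\Uqzsug\ot\Uqpsug\to\Uqsug$ is an isomorphism. Statement (iv) is a special case of Proposition~\ref{Prop: depend on p_{ii}}: the BKM choice \eqref{eq:KMsuper} and the choice \eqref{eq:boldU} both have $p_i=q_i\sqrt{\pi_i}$, hence the same family $\{p_i^2\}_{i\in I}$, so the categories $\Mod^\P(\UqBg)$ and $\Mod^\P(\Uqsug)$ are equivalent.

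For (ii) and (iii) I would work with the isomorphism $\kappa\cl\UqBg{[P,Q,R]}\isoto\Uqsug{[P,Q,R]}$ of \eqref{eq:Iso1}, provided by Corollary~\ref{cor:Fequ} and Proposition~\ref{prop:serre} once one checks that \eqref{eq:twop} holds between \eqref{eq:KMsuper} and \eqref{eq:boldU}---which comes down precisely to the equality of the $p_i^2$. The relevant features of $\kappa$ are that it fixes each $P_i,Q_i,R_i$; that these auxiliary operators have $\rtl$-weight $0$ and act by invertible scalars on every weight space; and hence that $\kappa$ is weight-preserving and that, for each $i$, left multiplication by $e_i$ carries any monomial in the $P_j,Q_j,R_j$ to a scalar multiple of that monomial times $e_i$---so that in particular $\UqBg{[P,Q,R]}e_i\cap\UqBg=\UqBg e_i$. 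For (ii): since $\kappa(f_i)=f_iQ_i$, $\kappa$ restricts to a weight-preserving isomorphism of $\UqmBg$ onto the subalgebra $B$ of $\Uqsug{[P,Q,R]}$ generated by the $f_iQ_i$; moving the weight-$0$ factors $Q_i$ to the right of a monomial in the $f_iQ_i$ shows that each weight space $B_\mu$ is $(\Uqmsug)_\mu$ times a fixed invertible monomial $Q^{\vec m(\mu)}$ in the $Q_j$, so $\dim_\cor(\Uqmsug)_\mu=\dim_\cor B_\mu=\dim_\cor(\UqmBg)_\mu$ for every $\mu\in\rtl$; thus $\ch(\Uqmsug)=\ch(\UqmBg)$, and (ii) follows from Theorem~\ref{thm: main thm of BKM}(d). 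For (iii), the inclusion $\cor\subseteq\{a\in\Uqmsug\mid e_ia\in\Uqsug e_i\ \text{for all}\ i\}$ being clear, let $a\in(\Uqmsug)_\mu$ be homogeneous of weight $\mu\neq0$ with $e_ia\in\Uqsug e_i$ for all $i$; we must show $a=0$. Applying $\kappa^{-1}$ (which satisfies $\kappa^{-1}(f_i)=f_iQ_i^{-1}$ and $\kappa^{-1}(e_i)=e_iP_i^{-1}$) and collecting scalars yields $\kappa^{-1}(a)=a_0\,Q^{-\vec m(\mu)}$ with $a_0\in(\UqmBg)_\mu$, and the hypothesis becomes $e_ia_0\in\UqBg{[P,Q,R]}e_i$, hence $e_ia_0\in\UqBg e_i$, for every $i$; Corollary~\ref{Cor:BKM e quasi commute} then gives $a_0\in\cor$, so $a_0=0$ by homogeneity, whence $a=0$.

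All the serious input is imported from \cite{BKM98} through Theorem~\ref{thm: main thm of BKM}(d) and Corollary~\ref{Cor:BKM e quasi commute}; the only real work in the corollary is the bookkeeping with the auxiliary operators $P,Q,R$ in (ii) and (iii), and I expect the main (if modest) obstacle to be verifying that carrying the left ideals generated by the $e_i$ back and forth through $\kappa$ is exact---exactly the point where the weight-$0$, quasi-central nature of $P_i,Q_i,R_i$ is used.
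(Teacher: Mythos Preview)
Your proposal is correct and follows essentially the same route as the paper: the paper's entire proof is the sentence preceding the corollary, which invokes the isomorphism \eqref{eq:Iso1} together with Theorem~\ref{thm: main thm of BKM}, leaving implicit precisely the $P,Q,R$ bookkeeping that you have spelled out for (ii) and (iii). Your direct use of Proposition~\ref{prop:tri} for (i) and of Proposition~\ref{Prop: depend on p_{ii}} for (iv) is if anything slightly cleaner than transporting these statements through \eqref{eq:Iso1}.
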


Let $\mathbf{B}(\g)$ be the algebra $\Bg$ given in Definition
\ref{def:BqBg} with \eq
\tth_{ij}=\pi_i^{\delta_{i,j}}q^{-(\alpha_i|\alpha_j)},\quad
\tip_{ij}=q_i^{2a_{ij}},\quad\tip_i=q_i^2\pi_i. \eneq The explicit
description of $\mathbf{B}(\g)$ is given as follows.

\begin{definition} \label{dfn:quantum  boson algebra}
The {\em quantum  boson algebra} $\mathbf{B}(\g)$ is the
associative algebra over $\cor$ generated by $e_i', f_i \ (i \in I)$
satisfying the following defining relations:
\begin{equation}\label{def:boson2}
\begin{aligned}
& e_i'f_j = \pi_i^{\delta_{i,j}}q^{-(\alpha_i|\alpha_j)}f_je_i' +
\delta_{i,j}, \\
& \sum\limits_{k=0}^{1-a_{ij}} (-\theta_{ij})^k
\left[\begin{matrix}1-a_{ij}
\\ k \end{matrix} \right]^\pa_i
               e_i'^{1-a_{ij}-k} e_j'e_i'^{k}=0 \quad (i \neq j), \\
& \sum\limits_{k=0}^{1-a_{ij}} (-\theta_{ij})^k
\left[\begin{matrix}1-a_{ij}
\\ k \end{matrix} \right]^\pa_i
               f_i^{1-a_{ij}-k}f_jf_i^{k} =0 \quad (i \neq j).
\end{aligned}
\end{equation}
\end{definition}

Note that $\mathbf{B}(\g)$ has an anti-automorphism given by $e_i'
\mapsto f_i$, $f_i \mapsto e_i'$ $(i \in I)$. By Proposition
\ref{prop: BqBg-structure on UqBg^-}, Proposition \ref{prop:simple
BqBg-module UqBg^-} and Corollary~\ref{cor: tri dec Super}, we have
the following proposition.

\begin{proposition} \label{prop:simple Bqsug-module Uqsug^-}
Suppose $N$ is a $\rtl$-weighted $\mathbf{B}(\g)$-submodule of
$\Uqmsug$ such that $N \cap \cor = 0$. Then $N=0$.
\end{proposition}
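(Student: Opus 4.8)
The plan is to derive the statement directly from Proposition~\ref{prop: BqBg-structure on UqBg^-}, Proposition~\ref{prop:simple BqBg-module UqBg^-}, and Corollary~\ref{cor: tri dec Super}\,(iii); essentially all the substance is already available, and what remains is to line up the parameters. First I would observe that the algebra $\mathbf{B}(\g)$ of Definition~\ref{dfn:quantum  boson algebra} is precisely the quantum boson algebra $\Bg$ attached, through the dictionary \eqref{rel:ptt}, to the families $\theta,\pa$ of \eqref{eq:boldU}. Indeed, with $p_i=q_i\sqrt{\pi_i}$, $p_{ij}=p_i^{a_{ij}}$ and $\theta_{ij}$ as in \eqref{eq:boldU}, one computes $\tip_i=p_i^2=q_i^2\pi_i$ and $\tip_i^{a_{ij}}=p_{ij}^2$, while for $i\neq j$ the element $\tth_{ij}=\theta_{ij}p_{ji}^{-1}$ equals $\sqrt{\pi_j}^{\,a_{ji}}q_j^{-a_{ji}}\sqrt{\pi_j}^{\,-a_{ji}}=q_j^{-a_{ji}}=q^{-(\alpha_i|\alpha_j)}$, and for $i=j$ one has $\theta_{ii}p_{ii}^{-1}=p_i^{-2}=\pi_i q^{-(\alpha_i|\alpha_i)}$, using $q_i^{a_{ij}}=q_j^{a_{ji}}$ and $\pi_i^2=1$; thus $\tth_{ij}=\pi_i^{\delta_{i,j}}q^{-(\alpha_i|\alpha_j)}$, exactly as in Definition~\ref{dfn:quantum  boson algebra}. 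Since the $\theta,\pa$ of \eqref{eq:boldU} satisfy \eqref{cond:pt}, the corresponding $\tth,\tpa$ satisfy \eqref{cond:ttp}, so all the hypotheses needed for Proposition~\ref{prop: BqBg-structure on UqBg^-} and Proposition~\ref{prop:simple BqBg-module UqBg^-} are in force for this choice.

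Granting this identification, Proposition~\ref{prop: BqBg-structure on UqBg^-} endows $\Uqmsug=\FSm$ with the left $\mathbf{B}(\g)$-module structure implicit in the statement, and it then remains only to check the standing hypothesis of Proposition~\ref{prop:simple BqBg-module UqBg^-}: that every $P\in\Uqmsug$ with $e_iP\in\Uqmsug\,e_i$ for all $i\in I$ is a scalar multiple of $1$. This I would read off from Corollary~\ref{cor: tri dec Super}\,(iii), which asserts $\set{a\in\Uqmsug}{e_ia\in\Uqsug\,e_i\ \text{for all}\ i\in I}=\cor$; since $\Uqmsug\,e_i\subset\Uqsug\,e_i$, the set of $P$ obeying the displayed hypothesis is contained in this $\cor$, and it evidently contains $\cor$, so it equals $\cor$.

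With both inputs in place, Proposition~\ref{prop:simple BqBg-module UqBg^-} applies verbatim and yields the assertion: every $\rtl$-weighted $\mathbf{B}(\g)$-submodule $N$ of $\Uqmsug$ with $N\cap\cor=0$ vanishes. I do not expect any real obstacle here; the only genuinely substantial ingredient is Corollary~\ref{cor: tri dec Super}\,(iii), which is itself not a formal fact — via the isomorphism \eqref{eq:Iso1} it reduces to Corollary~\ref{Cor:BKM e quasi commute}, hence to the description of primitive vectors in integrable highest weight $\UqBg$-modules from \cite{BKM98} (Theorem~\ref{thm: main thm of BKM}\,(b)). So the main point requiring care is purely bookkeeping: ensuring that the parameter matching above is exact, so that the $\mathbf{B}(\g)$ appearing in the statement is literally the $\Bg$ acting on $\Uqmsug$ through Proposition~\ref{prop: BqBg-structure on UqBg^-}, and that the hypothesis borrowed from Proposition~\ref{prop:simple BqBg-module UqBg^-} is exactly the one supplied by Corollary~\ref{cor: tri dec Super}.
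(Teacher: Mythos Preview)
Your proposal is correct and follows exactly the route the paper takes: it cites Proposition~\ref{prop: BqBg-structure on UqBg^-}, Proposition~\ref{prop:simple BqBg-module UqBg^-}, and Corollary~\ref{cor: tri dec Super} and nothing more. Your explicit parameter check that $\mathbf{B}(\g)$ is the $\Bg$ attached to \eqref{eq:boldU} via \eqref{rel:ptt}, and your observation that $\Uqmsug e_i\subset\Uqsug e_i$ so Corollary~\ref{cor: tri dec Super}\,(iii) supplies the hypothesis of Proposition~\ref{prop:simple BqBg-module UqBg^-}, are precisely the bookkeeping the paper leaves implicit.
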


Let $E_i' \seteq (p_i-p_i^{-1})^{-1}e_i'$ and $E_i^* \seteq
(p_i-p_i^{-1})^{-1}e_i^*$ $(i \in I)$. Then we have \eq && e_i P -
(T_i^{-1} P T_i)e_i = \bl T_i^{-1}E_i^*(P)T_i\br K_i-K_i^{-1}
E'_i(P). \label{eq:EE*} \eneq The same argument as in \cite[Lemma
3.4.3, Proposition 3.4.4]{Kash91} shows that there exists a unique
non-degenerate symmetric bilinear form on $\Uqmsug$ satisfying
\eq&&\label{eqn: E_i', E_i^*} (1,1)=1, \ (E_i'P,Q) = (P,f_iQ), \
(E_i^*P,Q)=(P,Qf_i) \ \text{for} \ i\in I, P,Q \in \Uqmsug. \eneq

\vskip 5mm

\subsection{Representation theory of $\Uqsug$} \label{sec: Rep Uqsug}

In this subsection, we show that the category $\Oint^\P(\Uqsug)$ of
integrable $\Uqsug$-modules is semisimple. We first construct the
{\em quantum Casimir operator} which is the key ingredient of our
proof. The main argument follows those of  \cite[Chapter
9,10]{Kac90} and \cite[Chapter 1]{Lus93}. Note that, in the present
case, we take $\cor = \Q(q)^{\sqrt{\pi}}$. Moreover, we have
$\theta_{ij}^2=\theta_{ii}=1$ and hence the automorphism $\psi$ of
$\Uqsug$ introduced in \eqref{eq: auto psi} is given by \eq
e_i\mapsto f_iK_i^{-1},\quad  f_i\mapsto K_ie_i,\quad K_i\mapsto
K_i^{-1}. \eneq Recall that the operators $T_i$ introduced in
\eqref{def:Ti} become \eqn T_ie_jT_i^{-1}=\theta_{ji}e_j,\quad
T_if_jT_i^{-1}=\theta_{ji}^{-1}f_j,\quad T_iK_jT_i^{-1}=K_j. \eneqn
In this case, we have $T_i^2=1$.

\begin{lemma} \label{Lem: define inductive}
Let $a_i\cl \lambda-\rtl^+ \to \cor^\times$ $(i \in I)$ be a family
of maps such that
\begin{equation} \label{eqn: a_i, a_j}
\dfrac{a_i(\mu-\alpha_j)}{a_i(\mu)} =
\dfrac{a_j(\mu-\alpha_i)}{a_j(\mu)}
\end{equation}
for all $\mu \in \lam-\rtl^+$ and $i,j \in I$. Then there exists a
unique map $\Psi\cl\lambda - \rtl^+ \to \cor^\times$ such that
$$\Psi(\lambda)=1, \qquad \Psi(\mu-\alpha_i)=a_i(\mu)^{-1}\Psi(\mu).$$
\end{lemma}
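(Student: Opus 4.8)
The plan is to build $\Psi$ by induction on the height $|\lambda-\mu|$ (which makes sense since $\mu\in\lambda-\rtl^+$ forces $\lambda-\mu\in\rtl^+$), and to see that the only nontrivial point, well-definedness, is exactly the content of the hypothesis \eqref{eqn: a_i, a_j}.

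\emph{Uniqueness.} Any $\mu\in\lambda-\rtl^+$ with $\mu\neq\lambda$ has $\lambda-\mu=\sum_j k_j\alpha_j$ with some $k_i\ge 1$, hence can be written $\mu=\nu-\alpha_i$ with $\nu:=\mu+\alpha_i\in\lambda-\rtl^+$ and $|\lambda-\nu|=|\lambda-\mu|-1$. Thus the prescribed values $\Psi(\lambda)=1$ together with the recursions $\Psi(\mu-\alpha_i)=a_i(\mu)^{-1}\Psi(\mu)$ determine $\Psi(\mu)$ for every $\mu$ by induction on $|\lambda-\mu|$; in particular there is at most one such $\Psi$.

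\emph{Existence.} I would define $\Psi$ on $\{\mu:|\lambda-\mu|\le n\}$ by induction on $n$, carrying along the inductive statement that $\Psi(\nu-\alpha_i)=a_i(\nu)^{-1}\Psi(\nu)$ holds whenever $\nu$ and $\nu-\alpha_i$ are both in the range already treated (this is what makes the induction self-propagating, rather than just tracking the values of $\Psi$). Put $\Psi(\lambda)=1\in\cor^\times$. Given $\Psi$ on all $\nu$ with $|\lambda-\nu|\le n$, and $\mu$ with $|\lambda-\mu|=n+1$, write $\lambda-\mu=\sum_j k_j\alpha_j$; the ``parents'' $\mu+\alpha_i$ (those $i$ with $k_i\ge1$) form a nonempty set, and I set $\Psi(\mu):=a_i(\mu+\alpha_i)^{-1}\Psi(\mu+\alpha_i)$ for any one such parent, which lies in $\cor^\times$ since $a_i(\mu+\alpha_i)\in\cor^\times$ and $\Psi(\mu+\alpha_i)\in\cor^\times$ by induction. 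Independence of the parent reduces to comparing two parents $\nu=\mu+\alpha_i$, $\nu'=\mu+\alpha_j$ with $i\ne j$ and $k_i,k_j\ge1$; these share the common parent $\eta:=\mu+\alpha_i+\alpha_j$ with $|\lambda-\eta|=n-1$, so by the inductive statement $\Psi(\nu)=\Psi(\eta-\alpha_j)=a_j(\eta)^{-1}\Psi(\eta)$ and $\Psi(\nu')=\Psi(\eta-\alpha_i)=a_i(\eta)^{-1}\Psi(\eta)$. Hence
$$a_i(\nu)^{-1}\Psi(\nu)=a_i(\eta-\alpha_j)^{-1}a_j(\eta)^{-1}\Psi(\eta),\qquad a_j(\nu')^{-1}\Psi(\nu')=a_j(\eta-\alpha_i)^{-1}a_i(\eta)^{-1}\Psi(\eta),$$
and these two candidate values of $\Psi(\mu)$ coincide precisely because $\dfrac{a_i(\eta-\alpha_j)}{a_i(\eta)}=\dfrac{a_j(\eta-\alpha_i)}{a_j(\eta)}$, which is \eqref{eqn: a_i, a_j} with $\mu$ there replaced by $\eta$. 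Thus $\Psi(\mu)$ is well defined, and by construction $\Psi(\nu-\alpha_i)=a_i(\nu)^{-1}\Psi(\nu)$ holds for all $\nu$ with $|\lambda-\nu|\le n$; this completes the inductive step, and $\Psi$ is produced on all of $\lambda-\rtl^+$ with the required properties.

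\emph{Main obstacle.} There is essentially no deep obstacle here: the entire argument is the bookkeeping that pairwise comparison of two parents of $\mu$ through their common parent is governed exactly by the cocycle identity \eqref{eqn: a_i, a_j}. The only point requiring a little care is to formulate the induction so that the compatibility relation (and not merely the function $\Psi$) is maintained at each stage, which is what guarantees that $\Psi(\nu)$ and $\Psi(\nu')$ can be re-expressed through $\Psi(\eta)$ in the step above.
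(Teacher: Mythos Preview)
Your proof is correct and follows essentially the same approach as the paper: induction on the height $|\lambda-\mu|$, with well-definedness checked by comparing two parents $\mu+\alpha_i$, $\mu+\alpha_j$ through their common parent $\mu+\alpha_i+\alpha_j$ via the cocycle condition \eqref{eqn: a_i, a_j}. Your version is slightly more explicit about carrying the recursive relation as part of the inductive hypothesis and about uniqueness, but the argument is the same.
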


\begin{proof}
We shall define $\Psi(\lambda-\beta)$ for $\beta \in \rtl^+$ by induction on $|\beta|$ such that
$$\Psi(\lambda-\beta) =a_i(\lambda-\beta+\alpha_i)^{-1}\Psi(\lambda-\beta+\alpha_i)\qtext{whenever $\beta-\alpha_i \in \rtl^+$.}$$
It is enough to show that the right hand does not depend on $i$. Assume that
$i \neq j$ and $\beta-\alpha_i, \beta-\alpha_j \in \rtl^+$. Then $\beta-\alpha_i-\alpha_j \in \rtl^+$.
By the induction hypothesis, we have
\eqn
&&a_i(\lambda-\beta+\alpha_i)^{-1}\Psi(\lambda-\beta+\alpha_i)\\
&&\hs{10ex} =a_i(\lambda-\beta+\alpha_i)^{-1}a_j(\lambda-\beta+\alpha_i+\alpha_j)^{-1}
\Psi(\lambda-\beta+\alpha_i+\alpha_j),
\eneqn
and
\eqn
&&a_j(\lambda-\beta+\alpha_j)^{-1}\Psi(\lambda-\beta+\alpha_j)\\
&&\hs{10ex}
=a_j(\lambda-\beta+\alpha_j)^{-1}a_i(\lambda-\beta+\alpha_i+\alpha_j)^{-1}\Psi(\lambda-\beta+\alpha_i+\alpha_j).
\eneqn By our assumption \eqref{eqn: a_i, a_j}, the above two
quantities coincide.
\end{proof}

For $i \in I$, define $a_i\cl \rtl^- \to \cor $ as follows:
$$a_i(\beta) \seteq \prod \theta_{ji}^{-m_j}p_i^{-\la h_i,\beta \ra} \text{ for } \beta = \sum m_j\alpha_j.$$
Then we have
$$\dfrac{a_j(\beta-\alpha_i)}{a_j(\beta)}=\theta_{ji}p_i^{a_{ij}}=q_i^{a_{ij}}=q_j^{a_{ji}}
=\theta_{ij}p_j^{a_{ji}}= \dfrac{a_i(\beta-\alpha_j)}{a_i(\beta)}.$$
By Lemma \ref{Lem: define inductive}, we have a map $\Psi\cl \rtl^- \to
 \cor$
satisfying
\begin{equation} \label{eqn: dfn of Psi}
\Psi(0)=1 \ \text{ and } \ \Psi(\beta-\alpha_i)=a_i(\beta)^{-1}\Psi(\beta).
\end{equation}

We take a $\rtl$-homogeneous basis $\{ A_{\nu} \}$ of $\Uqmsug$ and its dual basis
$\{ A'_{\nu} \}$ with respect to the non-degenerate pairing in \eqref{eqn: E_i', E_i^*}.
Then we have
\begin{equation} \label{eqn: sum ot 1}
\begin{aligned}
& {\rm (i)} \ \sum_{\nu} A'_\nu \ot f_i A_\nu \hs{-0.3ex} = \hs{-0.3ex} \sum_{\nu} E_i'A'_\nu \ot A_\nu, \ \ \
\sum_{\nu} A'_\nu \ot A_\nu f_i \hs{-0.3ex} = \hs{-0.3ex} \sum_{\nu} E_i^*A'_\nu \ot A_\nu, \\
& \hs{-1ex} {\rm (ii)} \ \sum_{\nu} A'_\nu f_i \ot  A_\nu = \sum_{\nu} A'_\nu \ot E_i^* A_\nu, \ \
   \sum_{\nu} f_i A'_\nu \ot A_\nu  = \sum_{\nu} A'_\nu \ot E_i'A_\nu.
\end{aligned}
\end{equation}

\begin{proposition} \label{Prop: Phi-operator} Let $M \in \O^\P(\Uqsug)$
and set  $\Phi= \sum_{\nu} \Psi(\wt(A_\nu))A_\nu'\psi(A_\nu)$ as a
$\Uqsug$-module  endomorphism of $M$, where $\psi$ is the
automorphism in \eqref{eq: auto psi}. Then we have
$$ e_i \Phi = \Phi K_i^2 e_i, \quad \Phi f_i = f_i \Phi K_i^2 \quad \text{ for all } i \in I.$$
\end{proposition}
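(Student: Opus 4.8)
The plan is to verify the two identities $e_i\Phi = \Phi K_i^2 e_i$ and $\Phi f_i = f_i\Phi K_i^2$ by a direct computation, pushing $e_i$ (resp.\ $f_i$) past the sum $\Phi = \sum_\nu \Psi(\wt(A_\nu))A_\nu'\psi(A_\nu)$ and using the comultiplication-type identities \eqref{eqn: sum ot 1} together with the defining property \eqref{eqn: dfn of Psi} of $\Psi$. By the anti-automorphism \eqref{eq:antiiso} and the symmetry of the situation, it suffices to prove the first identity; the second follows by applying the transpose. So I would concentrate on showing $e_i\Phi = \Phi K_i^2 e_i$ as endomorphisms of $M \in \Oint^\P(\Uqsug)$.

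\textbf{Main computation.} First I would record the commutation relation between $e_i$ and $\psi(A_\nu)$ for a weight-homogeneous $A_\nu \in \Uqmsug$ of weight $\beta = \wt(A_\nu) \in \rtl^-$. Since $\psi$ sends $f_j \mapsto K_j e_j$ (and $K_j \mapsto K_j^{-1}$), $\psi(A_\nu)$ lies in the subalgebra generated by the $K_j e_j$'s; writing things out, $e_i \psi(A_\nu) - (\text{scalar})\,\psi(A_\nu)\,e_i$ produces a lower-order correction term of the form $\psi(E_i^*A_\nu)$ or $\psi(E_i'A_\nu)$ times an appropriate power of $K_i$, analogous to the boson relations \eqref{eq: e_i' f_j 1} and \eqref{eq:EE*}. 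The scalar that appears is precisely $a_i(\beta)$ (or its inverse), which is why $a_i$ was defined the way it was. Then, substituting this into $e_i\Phi = \sum_\nu \Psi(\wt(A_\nu))\, e_i A_\nu'\psi(A_\nu)$ and also pushing $e_i$ past $A_\nu'$ using the dual identities \eqref{eqn: sum ot 1}(i)--(ii), the two correction terms are reindexed: one sum runs over $E_i'A_\nu' \otimes A_\nu$ and the other over $A_\nu' \otimes E_i^*A_\nu$ (or the starred/primed versions), and by \eqref{eqn: sum ot 1} these two sums are \emph{equal}. The scalar mismatch between the two is exactly compensated by the ratio $\Psi(\beta - \alpha_i)/\Psi(\beta) = a_i(\beta)^{-1}$, so the correction terms cancel, leaving $e_i\Phi = \Phi\,(\text{something})\,e_i$. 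A bookkeeping of the $K_i$-powers — $\psi$ contributes $K_i$-factors, and $e_i$ moving past $\psi(A_\nu)$ contributes more — yields the factor $K_i^2$.

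\textbf{Key steps in order.} (1) Reduce to the first identity via the anti-automorphism. (2) Establish the single-generator commutation formula $e_i\,\psi(A) = a_i(\wt A)^{-1}\psi(A)\,e_i\,(\text{power of }K_i) + (\text{power of }K_i)\,\psi(E_i^{\#}A)$ for weight-homogeneous $A \in \Uqmsug$, by induction on height using \eqref{eq:EE*} and the explicit form of $\psi$; track the $K_i$-exponents carefully. (3) Plug into $\Phi$, use \eqref{eqn: sum ot 1} to move $e_i$ past the dual basis elements $A_\nu'$ as well, producing two ``error'' double-sums. (4) Use \eqref{eqn: dfn of Psi} to see that the $\Psi$-weights align so that the two error sums coincide and cancel. (5) Collect the surviving term and read off the $K_i^2 e_i$ on the right.

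\textbf{The hard part} will be step (2): getting the commutation of $e_i$ with $\psi(A_\nu)$ right, with all the $K_i$-powers and the scalars $a_i(\beta)$, $\theta_{ji}$, $p_i^{\langle h_i,\beta\rangle}$ tracked precisely — this is where sign/parity conventions and the choice $\theta_{ii}=\theta_{ij}^2=1$ in the present $\Uqsug$-setting really matter, and it is easy to be off by a power of $K_i$ or a factor of $\pi_i$. Once that formula is pinned down, steps (3)--(5) are a formal manipulation with the pairing identities that mirrors the classical Casimir computation in \cite[Chapter 1]{Lus93}; the role of \eqref{eqn: a_i, a_j} is exactly to guarantee $\Psi$ exists (Lemma \ref{Lem: define inductive}) and makes the two error terms cancel.
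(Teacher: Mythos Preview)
Your plan has two genuine gaps.

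\medskip
\textbf{Step (1) does not work.} The anti-isomorphism \eqref{eq:antiiso} sends $\FS$ to $\FS[\trp{\theta},\pa]$, and for the parameters \eqref{eq:boldU} defining $\Uqsug$ one has $\theta_{ij}=\sqrt{\pi_j}^{a_{ji}}\neq\theta_{ji}$ in general, so this is not an anti-automorphism of $\Uqsug$. Even if it were, $\Phi$ involves the automorphism $\psi$, the weighting $\Psi$, and the dual basis pairing, none of which transforms simply under such a map. The paper does \emph{not} reduce the $f_i$-identity to the $e_i$-identity; it proves it by a separate but parallel computation, using a different linear map $\varrho_2$ (weighting by $\wt$ of the \emph{first} tensor factor rather than the second) together with the identities \eqref{eqn: sum ot 1}(ii).

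\medskip
\textbf{Step (2) is the wrong commutation.} You propose to establish a formula of the shape
\[
e_i\,\psi(A)=a_i(\wt A)^{-1}\psi(A)\,e_i\,K_i^{?}\;+\;K_i^{?}\,\psi(E_i^{\#}A),
\]
but no such closed formula exists: $\psi(A)$ lies in the subalgebra generated by the $K_je_j$'s, and $e_i\psi(A)-c\,\psi(A)e_i$ involves arbitrary commutators $[e_i,e_{j_1}\cdots e_{j_r}]$ in $\Uqpsug$, which are governed only by the Serre relations and do not reduce to a single term $\psi(E_i'A)$ or $\psi(E_i^*A)$. The derivation identity \eqref{eq:EE*} concerns $e_i$ acting on $P\in\Uqmsug$, not on $\psi(P)$.

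\medskip
The paper's proof avoids this entirely. One commutes $e_i$ past $A'_\nu$ \emph{only}, via \eqref{eq:EE*}, obtaining an identity in $\Uqmsug\otimes\Uqmsug$; the dual-basis relations \eqref{eqn: sum ot 1}(i) then reindex the $E_i^*$- and $E_i'$-terms. One now applies the linear map $\varrho_1\colon a\otimes b\mapsto\Psi(\wt b)\,a\,\psi(b)$. The crucial point is that the two terms in which $e_i$ sits \emph{between} $A'_\nu$ and $\psi(A_\nu)$ cancel exactly---the ratio of their coefficients is $\Psi(\beta)/\Psi(\beta-\alpha_i)=a_i(\beta)$, matching the scalar produced by $T_i^{-1}(\,\cdot\,)T_i$ versus $K_i^{-1}(\,\cdot\,)K_i$---so $e_i$ never needs to be pushed past $\psi(A_\nu)$ at all. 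The remaining term is already of the form $(\cdots)\psi(A_\nu f_i)=(\cdots)\psi(A_\nu)K_ie_i$, with $e_i$ on the far right, and a short computation identifies it with $\Phi K_i^2 e_i$. Your intuition that the correction terms cancel via the $\Psi$-recursion is correct; it is the mechanism (commute past $A'_\nu$, not $\psi(A_\nu)$, and package everything via the map $\varrho_1$) that you are missing.
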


\begin{proof}
{}From \eqref{eq:EE*},
\eqref{eqn: E_i', E_i^*} and
\eqref{eqn: sum ot 1} (i), we obtain
\begin{align*}
\sum_{\nu} (e_iA'_{\nu} & -  (T_i^{-1}A'_{\nu}T_i)e_i) \ot A_\nu
 = \sum_{\nu} (E_i^*(T_i^{-1}A'_{\nu}T_i)K_i-K_i^{-1}E_i'(A_\nu)) \ot A_\nu \\
& = \sum_{\nu} (T_i^{-1}A'_{\nu}T_i)K_i \ot A_\nu f_i - K_i^{-1}A_\nu \ot f_i A_\nu.
\end{align*}
Thus
\eq \label{eqn: Casimir 1} &&
 \sum_{\nu} \left( e_i A'_{\nu} \ot A_{\nu}\hs{-0.5ex}-\hs{-0.5ex}(T_i^{-1} A'_{\nu} T_i) K_i \ot A_{\nu}f_i
 \right) \hs{-0.5ex}=\hs{-0.5ex}\sum_{\nu}\left( (T_i^{-1}A'_{\nu}T_i)e_i \ot
A_{\nu}\hs{-0.5ex}-\hs{-0.5ex}K_i^{-1} A_{\nu} \ot f_i A_{\nu}
\right)\hs{-0.5ex}. \eneq We define a map $ \varrho_1\cl\Uqmsug \ot
\Uqmsug \longrightarrow \Uqsug$ given by
$$ a  \ot b \longmapsto \Psi(\beta)a \psi(b), \ \ \text{ where } b \in \Uqmsug_\beta.$$

\noindent Applying $\varrho_1$, the right-hand-side of \eqref{eqn:
Casimir 1} vanishes by \eqref{eqn: dfn of Psi} as can be seen below:
\begin{align*}
& \quad \sum_{\nu}\Psi(\wt(A_\nu))(T_i^{-1}A_\nu'T_i)e_i\psi(A_\nu)
 \hs{-0.5ex}-\hs{-0.5ex}\sum_{\nu} \Psi(\wt(A_\nu)-\hs{-0.5ex}\alpha_i)K_i^{-1}A_\nu'K_ie_i\psi(A_\nu) \\
& \hs{-0.5ex}=\hs{-0.5ex} \sum_{\nu}\Psi(\wt(A_\nu))
 \prod \theta_{ji}^{-m_j^\nu} A_\nu' e_i\psi(A_\nu)
  \hs{-0.5ex}-\hs{-0.5ex}
 \sum_{\nu}\Psi(\wt(A_\nu)\hs{-0.5ex}-\hs{-0.5ex}\alpha_i)
 p_{i}^{\la h_i,\wt(A_\nu) \ra} A_\nu' e_i\psi(A_\nu) \hs{-0.5ex}=\hs{-0.5ex}0,
\end{align*}
where $\wt(A_\nu)=\sum m_j^\nu \alpha_j$.

The first term of  the left-hand-side of \eqref{eqn: Casimir 1} is
equal to $e_i\Phi$ and the second term is equal to \eq \label{eqn:
Casimir 2} &&\ba{l}
\sum_{\nu} \Psi(\wt(A_\nu)-\alpha_i)(T_i^{-1}A_\nu'T_i)K_i\psi(A_\nu)K_ie_i \\[1ex]
\hs{10ex}=\sum_{\nu} \left(\Psi(\wt(A_\nu)-\alpha_i)\prod
\theta_{ji}^{-m^\nu_j}p_i^{\la h_i,-\wt(A_\nu) \ra}\right)
    A_\nu'\psi(A_\nu)K^2_i e_i \\[1ex]
\hs{10ex} = \left(\sum_{\nu} \Psi(\wt(A_\nu))
A_\nu'\psi(A_\nu)\right)K^2_i e_i = \Phi K_i^2 e_i. \ea \eneq
Hence
we obtain $e_i\Phi=\Phi K_i^2 e_i$.

As in the
case of $e_i$'s with \eqref{eqn: sum ot 1}(ii), we have
\begin{equation} \label{eqn: sum ot 2}
\begin{aligned}
& \ \sum_{\nu} A_\nu' \ot \left(e_i A_\nu - (T_i^{-1} A_\nu T_i)e_i
\right)
 = \sum_{\nu} A_\nu'\tens \left( E_i^*(T_i^{-1} A_\nu T_i)K_i-K_i^{-1}E_i'(A_\nu) \right) \\
&\hs{30ex}
 = \sum_{\nu} A_\nu'f_i \ot (T_i^{-1} A_\nu T_i)K_i - f_i A_\nu' \ot K_i^{-1}A_\nu.
\end{aligned}
\end{equation}
By applying $x\tens y\mapsto x\tens T_i^{-1}\psi(y)T_iK_i$,
\eqref{eqn: sum ot 2} becomes
\begin{align*}
& \sum_{\nu} A_\nu' \ot T_i^{-1}f_i K_i^{-1} \psi(A_\nu)T_iK_i - A_\nu' \ot  \psi(A_\nu) f_i \\
& \qquad\qquad\qquad\qquad\qquad
= \sum_{\nu} A_\nu'f_i \ot \psi(A_\nu) -f_i  A_\nu' \ot T_i^{-1} K_i \psi(A_\nu) T_i K_i.
\end{align*}
Thus we have
\begin{equation} \label{eqn: Casimir f_i 3}
\begin{aligned}
&  \sum_{\nu} A_\nu'f_i \ot \psi(A_\nu) - A_\nu' \ot T_i^{-1}f_i K_i^{-1} \psi(A_\nu)T_iK_i\\
& \qquad\qquad\qquad = \sum_{\nu} f_i  A_\nu' \ot T_i^{-1} K_i
\psi(A_\nu) T_i K_i - A_\nu' \ot  \psi(A_\nu) f_i.
\end{aligned}
\end{equation}

Define a map $\varrho_2\cl\Uqmsug \ot \Uqmsug \longrightarrow
\Uqsug$ by
$$ a  \ot b  \longmapsto \Psi(\beta)ab, \ \ \text{ where } a \in \Uqsug^{-}_\beta.$$
The left-hand-side of \eqref{eqn: Casimir f_i 3} vanishes after
applying $\varrho_2$:
$$ \sum_{\nu} \Psi(\wt(A_\nu)-\alpha_i)A_\nu'f_i\psi(A_\nu) -
\sum_{\nu} \Psi(\wt(A_\nu)) \prod_j \theta_{ji}^{m^\nu_j}p_i^{\la
h_i,\wt(A_\nu) \ra} A_\nu' f_i \psi(A_\nu)=0,$$ and the
right-hand-side of \eqref{eqn: Casimir f_i 3} becomes \eqn&& f_i
\left(\sum_\nu \Psi(\wt(A_\nu)-\alpha_i)\prod_j
\theta_{ij}^{-m^\nu_j}p_i^{\la h_i,-\wt(A_\nu) \ra} A_\nu'
\psi(A_\nu) \right) K_i^2- \Phi f_i = f_i \Phi K_i^2- \Phi f_i,
\eneqn which completes the proof.
\end{proof}

Define an operator $\Xi$ on $M \in \mathcal{O}^\P(\Uqsug)$ such that
$$ \Xi|_{M_\lambda}=t(\lambda) q^{(\lambda+\rho|\lambda+\rho)-(\rho|\rho)}\id_{M_\lam}$$
where $t\cl \P \to \{ 1,\pi \}$ is a function satisfying
$$\dfrac{t(\lambda)}{t(\lambda-\alpha_i)}= \pi_i^{\langle h_i,\lambda \rangle}.$$
By Lemma~\ref{Lem: define inductive}, such a function $t$ uniquely
exists up to a constant multiple on a $\rtl$-orbit in $\P$. We
define the {\em quantum Casimir operator} of $\Uqsug$ by:
$$ \Omega\seteq \Phi \Xi.$$

\begin{theorem} \label{thm: quantum casimir op of Uqsug}
For any $M \in \mathcal{O}^{\P}(\Uqsug)$ and $i \in I$, we have
$$ \Omega e_i = e_i\Omega \ \text{ and } \ \Omega f_i = f_i\Omega$$
as $\Uqsug$-module endomorphisms in $M$.
\end{theorem}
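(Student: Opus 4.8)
The plan is to verify both identities directly, weight space by weight space, reducing everything to a numerical identity by using the intertwining relations for $\Phi$ already established in Proposition~\ref{Prop: Phi-operator} together with the defining property of $\Xi$. Fix $M\in\mathcal{O}^{\P}(\Uqsug)$, an element $i\in I$, and a weight $\lambda\in\wt(M)$. For $\mu\in\P$ write $c(\mu)=(\mu+\rho|\mu+\rho)-(\rho|\rho)$, so that $\Xi$ acts on $M_\mu$ by the scalar $t(\mu)q^{c(\mu)}$. Since $e_i$ maps $M_\lambda$ into $M_{\lambda+\alpha_i}$, for $v\in M_\lambda$ we get
$$\Omega e_i v=\Phi\,\Xi\, e_i v=t(\lambda+\alpha_i)\,q^{c(\lambda+\alpha_i)}\,\Phi(e_i v),$$
while, using $e_i\Phi=\Phi K_i^2 e_i$ from Proposition~\ref{Prop: Phi-operator} and the fact that $K_i^2$ acts on $M_{\lambda+\alpha_i}$ by $p_i^{2\la h_i,\lambda+\alpha_i\ra}$,
$$e_i\Omega v=e_i\Phi\,\Xi v=t(\lambda)\,q^{c(\lambda)}\,e_i\Phi v=t(\lambda)\,q^{c(\lambda)}\,p_i^{2\la h_i,\lambda+\alpha_i\ra}\,\Phi(e_i v).$$
Hence $\Omega e_i=e_i\Omega$ on $M_\lambda$ is equivalent to the scalar identity $\dfrac{t(\lambda+\alpha_i)}{t(\lambda)}\,q^{c(\lambda+\alpha_i)-c(\lambda)}=p_i^{2\la h_i,\lambda+\alpha_i\ra}$.

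To check this, I would expand the $q$-exponent using $(\alpha_i|\alpha_i)=2\dg_i$ and $(\lambda+\rho|\alpha_i)=\dg_i(\la h_i,\lambda\ra+1)$:
$$c(\lambda+\alpha_i)-c(\lambda)=2(\lambda+\rho|\alpha_i)+(\alpha_i|\alpha_i)=2\dg_i\la h_i,\lambda\ra+4\dg_i,$$
so the left-hand side equals $\dfrac{t(\lambda+\alpha_i)}{t(\lambda)}\,q_i^{2\la h_i,\lambda\ra}q_i^{4}$. By the defining property of $t$ (Lemma~\ref{Lem: define inductive}) we have $t(\lambda+\alpha_i)/t(\lambda)=\pi_i^{\la h_i,\lambda+\alpha_i\ra}=\pi_i^{\la h_i,\lambda\ra}$, using $\la h_i,\alpha_i\ra=2$ and $\pi_i^2=1$; and since $p_i^2=q_i^2\pi_i$ we get $p_i^{2\la h_i,\lambda+\alpha_i\ra}=(q_i^2\pi_i)^{\la h_i,\lambda\ra+2}=q_i^{2\la h_i,\lambda\ra}q_i^{4}\pi_i^{\la h_i,\lambda\ra}$. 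Both sides equal $\pi_i^{\la h_i,\lambda\ra}q_i^{2\la h_i,\lambda\ra}q_i^{4}$, which proves $\Omega e_i=e_i\Omega$.

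For $f_i$ the argument is symmetric. Rewriting $\Phi f_i=f_i\Phi K_i^2$ as $f_i\Phi=\Phi f_i K_i^{-2}$ and noting that $K_i^{-2}$ acts on $M_\lambda$ by $p_i^{-2\la h_i,\lambda\ra}$ while $f_i$ sends $M_\lambda$ to $M_{\lambda-\alpha_i}$, the same bookkeeping gives, for $v\in M_\lambda$,
$$f_i\Omega v=\Phi f_i K_i^{-2}\Xi v=t(\lambda)\,q^{c(\lambda)}\,p_i^{-2\la h_i,\lambda\ra}\,\Phi(f_i v),\qquad \Omega f_i v=t(\lambda-\alpha_i)\,q^{c(\lambda-\alpha_i)}\,\Phi(f_i v),$$
so $\Omega f_i=f_i\Omega$ reduces to $\dfrac{t(\lambda-\alpha_i)}{t(\lambda)}\,q^{c(\lambda-\alpha_i)-c(\lambda)}=p_i^{-2\la h_i,\lambda\ra}$. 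Here $c(\lambda-\alpha_i)-c(\lambda)=-2(\lambda+\rho|\alpha_i)+(\alpha_i|\alpha_i)=-2\dg_i\la h_i,\lambda\ra$ and $t(\lambda-\alpha_i)/t(\lambda)=\pi_i^{-\la h_i,\lambda\ra}=\pi_i^{\la h_i,\lambda\ra}$, while $p_i^{-2\la h_i,\lambda\ra}=q_i^{-2\la h_i,\lambda\ra}\pi_i^{\la h_i,\lambda\ra}$, so both sides equal $\pi_i^{\la h_i,\lambda\ra}q_i^{-2\la h_i,\lambda\ra}$.

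I do not expect a genuine obstacle here: the whole content of the theorem is that the twist $K_i^{\pm 2}$ occurring in the intertwining relations for $\Phi$ is absorbed by the scalar operator $\Xi$, and this works precisely because $t$ was designed so that $t(\lambda)/t(\lambda-\alpha_i)=\pi_i^{\la h_i,\lambda\ra}$, while the quadratic form contributes the required $q_i$-powers and $p_i^2=q_i^2\pi_i$ glues the two together. The only point needing a little care is the $\pi$-bookkeeping, i.e.\ matching the factor $\pi_i^{\la h_i,\lambda\ra}$ coming from $K_i^{\pm2}$ against the one coming from $t$; this is routine once one uses $a_{ii}=2$ and $\pi_i^2=1$. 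Well-definedness of the operators is already available: $\Phi$ is furnished by Proposition~\ref{Prop: Phi-operator}, and $\Xi$ by Lemma~\ref{Lem: define inductive}, which makes $t$ (hence $\Xi$, hence $\Omega=\Phi\Xi$) well defined up to a locally constant scalar on each $\rtl$-orbit in $\P$, which does not affect the commutation statement.
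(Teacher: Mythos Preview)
Your proof is correct and follows essentially the same approach as the paper: both arguments use the intertwining relations $e_i\Phi=\Phi K_i^2 e_i$ and $\Phi f_i=f_i\Phi K_i^2$ from Proposition~\ref{Prop: Phi-operator} and reduce the commutation $\Omega e_i=e_i\Omega$ (resp.\ $\Omega f_i=f_i\Omega$) to the scalar identity $K_i^2e_i\,\Xi=\Xi\,e_i$ (resp.\ $\Xi\,f_iK_i^2=f_i\,\Xi$) on weight spaces, which is exactly the computation you carry out. The paper's presentation simply packages the scalar check as $K_i^2 e_i\,\Xi=\Xi\,e_i$ before composing with $\Phi$, whereas you expand both sides of $\Omega e_i=e_i\Omega$ directly, but the numerical content (comparing $t(\lambda+\alpha_i)/t(\lambda)$, $q^{c(\lambda+\alpha_i)-c(\lambda)}$, and $p_i^{2\la h_i,\lambda+\alpha_i\ra}$) is identical.
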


\begin{proof}
For $u \in M_\lambda$,
$$ K_i^2 e_i \Xi u = t(\lambda)q^{(\lambda+\rho|\lambda+\rho)-(\rho|\rho)}
(q_i^2\pi_i)^{\la h_i,\lambda+\alpha_i \ra}e_i u.$$
On the other hand,
\begin{align*}
 \Xi e_i u &= t(\lambda+\alpha_i)q^{(\lambda+\alpha_i+\rho|\lambda+\alpha_i+\rho)-(\rho|\rho)}e_iu
= t(\lambda) \pi_i^{\la h_i,\lambda+\alpha_i\ra}
q^{(\lambda+\alpha_i+\rho|\lambda+\alpha_i+\rho)-(\rho|\rho)}e_iu\\
&= t(\lambda) \pi_i^{\la h_i,\lambda+\al_i\ra}q^{(\lambda+\alpha_i+\rho|\lambda+\alpha_i+\rho)-(\rho|\rho)}e_iu.
\end{align*}
Since
$$(\lambda+\alpha_i+\rho|\lambda+\alpha_i+\rho)-(\rho|\rho) =
(\lambda+\rho|\lambda+\rho)-(\rho|\rho)+2\dg_i\la
h_i,\lambda+\alpha_i \ra,$$ we have $K_i^2 e_i \Xi=\Xi e_i$, which
implies $e_i(\Phi \Xi) = \Phi K_i^2 e_i \Xi = (\Phi \Xi)e_i $.

The assertion for $f_i$ can be obtained in a similar way.
\end{proof}

\begin{definition} \label{dfn: primitive weight}
Let $V$ be a $\Uqsug$-module in $\mathcal{O}^{\P}(\Uqsug)$. A
vector $v \in V_\mu$ is called {\em primitive} if there exists a
$\Uqsug$-submodule $U$ in $V$ such that
$$ v \not\in U \text{ and } \Uqpsug v \in U.$$
In this case, $\mu$ is called a {\em primitive weight}.
\end{definition}

The following corollary immediately follows from
Theorem \ref{thm: quantum casimir op of Uqsug}.
\begin{corollary} \label{cor:Casimir behavior} \
\bnum
\item If $V$ is a highest weight $\Uqsug$-module with highest weight $\Lambda$, then
$$\Omega= t(\Lambda)  q^{(\Lambda+\rho|\Lambda+\rho)- (\rho|\rho)} \id_V.$$
\item
If $V$ is a $\Uqsug$-module in $\mathcal{O}^{\P}(\Uqsug)$ and $v$ is a
primitive vector with weight $\Lambda$,
 then there exists a submodule $U \subset V$ such that $v \not\in U$ and
$$\Omega(v)\equiv  t(\Lambda) q^{(\Lambda+\rho|\Lambda+\rho)- (\rho|\rho)}v \mod U.$$
\end{enumerate}
\end{corollary}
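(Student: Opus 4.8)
The plan is to read the corollary off Theorem~\ref{thm: quantum casimir op of Uqsug}. Since $\Omega=\Phi\Xi$ commutes with every $e_i$ and every $f_i$ on any $M\in\O^\P(\Uqsug)$, it commutes with all of $\Uqsug$ by the triangular decomposition (Corollary~\ref{cor: tri dec Super}), so $\Omega$ is a $\Uqsug$-module endomorphism of $M$. Moreover every highest weight $\Uqsug$-module with highest weight in $\P$ belongs to $\O^\P(\Uqsug)$: it is a quotient of the Verma module, whose weight spaces are finite-dimensional by Corollary~\ref{cor: tri dec Super} and whose weights lie in $\Lambda-\rtl^+$. Hence $\Phi$, $\Xi$, $\Omega$ are all defined on it, and for both (i) and (ii) it suffices to evaluate $\Omega$ on a single highest weight vector.

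For (i), let $v_\Lambda$ generate $V$. From the definition of $\Xi$ we get $\Xi v_\Lambda=t(\Lambda)\,q^{(\Lambda+\rho|\Lambda+\rho)-(\rho|\rho)}v_\Lambda$, so the point is to show $\Phi v_\Lambda=v_\Lambda$. Recall $\Phi=\sum_\nu\Psi(\wt(A_\nu))A_\nu'\psi(A_\nu)$, where $\{A_\nu\}$ is an $\rtl$-homogeneous basis of $\Uqmsug$ and $\{A_\nu'\}$ is the dual basis for the form in \eqref{eqn: E_i', E_i^*}. Since $\theta_{ij}^2=\theta_{ii}=1$, the automorphism $\psi$ of \eqref{eq: auto psi} satisfies $\psi(f_i)=K_ie_i$, so $\psi(A_\nu)$ has $\rtl$-weight $-\wt(A_\nu)\in\rtl^+$; when $A_\nu$ has positive height this weight is a nonzero element of $\rtl^+$, and by the triangular decomposition any element of $\Uqsug$ of strictly positive weight annihilates $v_\Lambda$, so $\psi(A_\nu)v_\Lambda=0$. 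As distinct weight spaces of $\Uqmsug$ are orthogonal under the form, the degree-$0$ part contributes exactly the basis vector $A_\nu=1$ with dual $A_\nu'=1$, and $\Psi(0)=1$ by \eqref{eqn: dfn of Psi}; therefore $\Phi v_\Lambda=v_\Lambda$. Hence $\Omega v_\Lambda=t(\Lambda)\,q^{(\Lambda+\rho|\Lambda+\rho)-(\rho|\rho)}v_\Lambda$, and since $\Omega$ is $\Uqsug$-linear with $V=\Uqsug v_\Lambda$, this scalar acts on all of $V$.

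For (ii), by Definition~\ref{dfn: primitive weight} pick a submodule $U\subset V$ with $v\notin U$ and $\Uqpsug v\subseteq U$. In $V/U$ the image $\bar v$ is nonzero, of weight $\Lambda$, and killed by every $e_i$, so $\Uqsug\bar v\subset V/U$ is a highest weight module with highest weight $\Lambda$ (and it lies in $\O^\P(\Uqsug)$, being a subquotient of $V$). Applying (i), $\Omega$ acts on $\Uqsug\bar v$ as $t(\Lambda)\,q^{(\Lambda+\rho|\Lambda+\rho)-(\rho|\rho)}$; since $\Omega$ is $\Uqsug$-linear on $V$ it descends to $V/U$, so $\Omega\bar v=t(\Lambda)\,q^{(\Lambda+\rho|\Lambda+\rho)-(\rho|\rho)}\bar v$, i.e. $\Omega v\equiv t(\Lambda)\,q^{(\Lambda+\rho|\Lambda+\rho)-(\rho|\rho)}v \bmod U$ with $v\notin U$, as required.

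The only step requiring genuine care — the rest being formal once Theorem~\ref{thm: quantum casimir op of Uqsug} is in hand — is the computation $\Phi v_\Lambda=v_\Lambda$. It uses two facts: that the bilinear form \eqref{eqn: E_i', E_i^*} is compatible with the $\rtl$-grading (so that the ``degree-$0$ part'' of $\sum_\nu A_\nu'\otimes A_\nu$ is exactly $1\otimes 1$), and that $\psi$ carries a monomial in the $f_i$'s of positive height to an element of $\Uqsug$ of strictly positive $\rtl$-weight, which then annihilates the highest weight vector by the triangular decomposition of Corollary~\ref{cor: tri dec Super}.
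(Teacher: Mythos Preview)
Your proof is correct and follows the same approach the paper intends: the paper simply states that the corollary ``immediately follows from Theorem~\ref{thm: quantum casimir op of Uqsug},'' and your argument spells out precisely that deduction---evaluating $\Omega$ on a highest weight vector via $\Phi v_\Lambda=v_\Lambda$ (only the degree-$0$ term survives) and $\Xi v_\Lambda=t(\Lambda)q^{(\Lambda+\rho|\Lambda+\rho)-(\rho|\rho)}v_\Lambda$, then using $\Uqsug$-linearity of $\Omega$. One small point you leave implicit: $\Omega$ also commutes with each $K_i$, but this is automatic since both $\Phi$ and $\Xi$ preserve weight spaces.
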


Let us take a ring homomorphism $\Z^{\sqrt{\pi}}\to\C$ and change
the base ring from $\Q(q)^{\sqrt{\pi}}$ to $\C(q)$. We then consider
$\Uqsug$ as an algebra over the field $\C(q)$. For the choice of
$\theta$ and $\pa$ given in \eqref{eq:boldU}, we denote by
$\Msu(\Lambda)=\MBe(\Lambda)$ the Verma module and
$\Vsu(\Lambda)=\VBe(\Lambda)$ the simple head of $\Msu(\Lambda)$
over $\Uqsug$, respectively.

\begin{lemma}[{\rm cf.\ \cite[Lemma 9.5, Lemma 9.6]{Kac90}}]
 \label{lem: Kac 9.5, 9.6}
Let $V$ be a non-zero $\Uqsug$-module in the category
$\mathcal{O}^\P(\Uqsug)$. \bna
\item If $\mu \ge \eta$ implies $\mu = \eta$ for any primitive
weights $\mu$ and $\eta$ of $V$, then $V$ is completely reducible.
\item For any $\lambda \in \P$, there exist a filtration
$V=V_t \supset V_{t-1} \supset \cdots \supset V_1 \supset V_0=0 $ and a subset
$J \subset \{ 1,\ldots,t \}$ such that
\bnum
\item if $j \in J$, then $V_j/V_{j-1} \simeq \Vsu(\lambda_j)$ for some $\lambda_j \ge \lambda$,
\item if $j \not\in J$, then $(V_j/V_{j-1})_{\mu}=0$ for every $\mu \ge \lambda$.
\end{enumerate}
\ee
\end{lemma}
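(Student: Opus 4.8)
The plan is to adapt the classical argument for Kac–Moody algebras (Kac, \cite{Kac90}, Lemmas 9.5 and 9.6) to the present setting, the only essential new input being that the eigenvalue of the quantum Casimir operator $\Omega$ of Corollary~\ref{cor:Casimir behavior} separates primitive weights well enough. First I would observe that for $V\in\mathcal{O}^\P(\Uqsug)$ the module is generated by its primitive vectors: indeed, by the triangular decomposition (Corollary~\ref{cor: tri dec Super}(i)) and the condition $\wt(V)\subset\bigcup_i(\lambda_i-\rtl^+)$, any non-zero submodule $U$ contains a weight vector $v$ with $e_iv\in U$ for all $i$ (take a weight of $U$ maximal in the dominance order among those not already forced into a proper submodule), so $v$ is primitive; a standard Zorn/maximality argument then shows $V$ is the sum of its submodules generated by primitive vectors.

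For part (a): suppose that among the primitive weights of $V$ the order is trivial, i.e.\ $\mu\ge\eta$ forces $\mu=\eta$. By Corollary~\ref{cor:Casimir behavior}(ii), if $v$ is primitive of weight $\Lambda$ then modulo a submodule $U$ not containing $v$ we have $\Omega v\equiv t(\Lambda)q^{(\Lambda+\rho|\Lambda+\rho)-(\rho|\rho)}v$. I would decompose $V$ along generalized eigenspaces of the (locally finite) operator $\Omega$; on each summand the set of primitive weights is an antichain lying in a single such generalized eigenspace, so the submodule generated by the primitive vectors of a fixed eigenvalue is a direct sum of highest weight modules $\Vsu(\Lambda)$ (each primitive vector generates a highest weight module whose head is $\Vsu(\Lambda)$, and the antichain condition prevents any extension among them because an extension would produce a primitive weight $\eta<\Lambda$). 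Since $V$ is generated by its primitive vectors, and distinct eigenvalue-summands are independent, $V$ is a direct sum of the $\Vsu(\Lambda)$'s, hence completely reducible. The delicate point here — the main obstacle — is verifying that the Casimir eigenvalue $t(\Lambda)q^{(\Lambda+\rho|\Lambda+\rho)-(\rho|\rho)}$ genuinely distinguishes $\Lambda$ from any \emph{strictly smaller} primitive weight $\eta=\Lambda-\beta$ with $\beta\in\rtl^+\setminus\{0\}$; as in Kac's proof one computes
$$(\Lambda+\rho|\Lambda+\rho)-(\eta+\rho|\eta+\rho)=(\beta|\beta)+2(\beta|\Lambda+\rho-\beta)$$
and one must rule out this being zero together with $t(\Lambda)=t(\eta)$ — but in our situation the antichain hypothesis of (a) already excludes $\eta<\Lambda$ among primitive weights, so no extension can occur and the separation is automatic; the real care is only needed in part (b).

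For part (b): fix $\lambda\in\P$. Since $V\in\mathcal{O}^\P(\Uqsug)$, the set $\{\mu\in\wt(V)\mid\mu\ge\lambda\}$ is finite; I would induct on the total dimension $\sum_{\mu\ge\lambda}\dim V_\mu$. If this is zero, take $t=1$, $V_0=0$, $J=\emptyset$. Otherwise pick a weight $\mu\ge\lambda$ of $V$ that is maximal in the dominance order; then any non-zero $v\in V_\mu$ with $e_iv=0$ (such $v$ exists by maximality) generates a highest weight submodule $V'\subseteq V$, which surjects onto $\Vsu(\mu)$. Let $V''\subseteq V'$ be the kernel of this surjection, so $V'/V''\simeq\Vsu(\mu)$. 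Applying the induction hypothesis to $V''$ (whose relevant dimension has dropped, since $\dim(V'/V'')_\mu\ge 1$) and to $V/V'$ (likewise), and splicing the two filtrations together with the single new factor $\Vsu(\mu)$ inserted at the level of $V'/V''$, yields the desired filtration of $V$; we put the new index into $J$, noting $\mu\ge\lambda$. The factors coming from $V/V'$ and $V''$ either are $\Vsu(\lambda_j)$ with $\lambda_j\ge\lambda$ (indices in $J$) or have vanishing $\mu$-component for all $\mu\ge\lambda$ (indices not in $J$), and these properties are preserved under the splice. The only thing to check is that the quotient $V/V'$ and the submodule $V''$ still lie in $\mathcal{O}^\P(\Uqsug)$, which is immediate since $\mathcal{O}^\P$ is closed under subquotients, and that the dimension bookkeeping strictly decreases at each step, which it does because we remove at least one dimension in degree~$\mu\ge\lambda$. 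This completes the induction.
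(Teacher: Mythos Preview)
The paper gives no proof here; it states the lemma with a reference to \cite[Lemmas~9.5, 9.6]{Kac90} and proceeds. Your plan is precisely to adapt Kac's arguments, so the approach is the intended one, and part~(b) is exactly Kac's induction on $\sum_{\mu\ge\lambda}\dim V_\mu$, carried out correctly.

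Part~(a) has a small imprecision worth tightening. You write that ``each primitive vector generates a highest weight module whose head is $\Vsu(\Lambda)$,'' but a primitive vector $v$ (with $e_iv\in U$ for some submodule $U\neq 0$) need not generate a highest weight submodule of $V$ itself; only its image in $V/U$ does. The cleanest fix follows Kac directly: let $V'$ be the socle of $V$, so $V'=\bigoplus_\alpha \Vsu(\mu_\alpha)$ with each $\mu_\alpha$ a primitive weight of $V$. If $V\neq V'$, lift a highest weight vector $\bar v$ of $V/V'$ of weight $\lambda$ to $v\in V$; then $e_iv\in V'_{\lambda+\alpha_i}$, and any $\mu_\alpha$ with $\Vsu(\mu_\alpha)_{\lambda+\alpha_i}\neq 0$ would satisfy $\mu_\alpha\ge\lambda+\alpha_i>\lambda$, contradicting the antichain hypothesis. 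Hence $e_iv=0$, so $v$ is a genuine highest weight vector, $\Uqsug\, v$ is simple (a proper submodule would produce a primitive weight strictly below $\lambda$), and $\Uqsug\, v\cap V'=0$, contradicting maximality of the socle. The Casimir decomposition you introduce is harmless but unnecessary here, as you yourself note; it enters only in the subsequent proposition (the analogue of \cite[Proposition~9.9]{Kac90}).
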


By Corollary \ref{cor: tri dec Super}(a), we have
\begin{equation} \label{eqn: ch of M^super(Lambda)}
\ch (\Msu(\Lambda)) = \ex{\Lambda} \prod_{\alpha \in \Delta_+}
(1-\ex{-\alpha})^{-\mult(\alpha)}.
\end{equation}

\begin{proposition} [{\rm cf.\ \cite[Proposition 9.8]{Kac90}}]
Let $V$ be a $\Uqsug$-module with highest weight $\Lambda$. Then
\begin{equation} \label{eqn: character h.w module V}
\ch(V) = \sum_{ \substack{ \lambda \le \Lambda, \\ (\lambda+\rho|\lambda+\rho) = (\Lambda+\rho|\Lambda+\rho) } }
t_\lambda \ch(\Msu(\lambda)), \ \ \text{ where } t_\lambda \in \Z, \ t_\Lambda=1.
\end{equation}
\end{proposition}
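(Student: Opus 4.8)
The plan is to transcribe the classical argument of Kac \cite[Prop.~9.8]{Kac90}, with the usual Casimir element replaced by the quantum Casimir operator $\Omega=\Phi\Xi$ of Theorem~\ref{thm: quantum casimir op of Uqsug}. I regard characters as elements of the completion of $\Z[\P]$ consisting of formal sums supported on finitely many translates $\mu-\rtl^+$ with finite multiplicities; since $\{\,\nu \mid \sigma\le\nu\le\Lambda\,\}$ is finite for each $\sigma\in\P$, all the a priori infinite sums written below are locally finite, hence well defined, and the identity to be proved makes sense even without knowing that $D(\Lambda)\seteq\{\,\lambda\le\Lambda \mid (\lambda+\rho|\lambda+\rho)=(\Lambda+\rho|\Lambda+\rho)\,\}$ is finite. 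By \eqref{eqn: ch of M^super(Lambda)} the coefficient of $\ex{\mu}$ in $\ch\Msu(\mu)$ is $1$ and $\ch\Msu(\mu)$ is supported on $\mu-\rtl^+$; the same holds for $\ch\Vsu(\mu)$, so both $\{\ch\Msu(\nu)\}_{\nu\le\Lambda}$ and $\{\ch\Vsu(\nu)\}_{\nu\le\Lambda}$ are ``triangular'' and every character supported on $\Lambda-\rtl^+$ is a unique locally finite $\Z$-combination of either family. We have base-changed to $\C(q)$, so that Corollary~\ref{cor:Casimir behavior} continues to hold with the function $t$ of Section~\ref{sec: Rep Uqsug} now taking values in $\{\pm1\}$.

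I first isolate the eigenvalue mechanism. If $W$ is a highest weight $\Uqsug$-module of highest weight $\mu$ in $\mathcal{O}^\P(\Uqsug)$, then by Corollary~\ref{cor:Casimir behavior}(i) the operator $\Omega$ acts on $W$, and therefore on every subquotient of $W$, by the single scalar $t(\mu)q^{(\mu+\rho|\mu+\rho)-(\rho|\rho)}$. If in addition $\Vsu(\eta)$ occurs as a subquotient of $W$, applying Corollary~\ref{cor:Casimir behavior}(i) to $\Vsu(\eta)$ shows that $\Omega$ also acts on it by $t(\eta)q^{(\eta+\rho|\eta+\rho)-(\rho|\rho)}$. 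Equating the two scalars in $\C(q)$: a relation $q^{m}=\pm q^{m'}$ in $\C(q)$ forces $m=m'$ (and the sign $+$), because $q$ is transcendental over $\C$; hence $(\eta+\rho|\eta+\rho)=(\mu+\rho|\mu+\rho)$, i.e.\ $\eta\in D(\mu)$ whenever $\eta\le\mu$.

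Next, let $W$ be any highest weight module of highest weight $\mu$ in $\mathcal{O}^\P(\Uqsug)$. Applying Lemma~\ref{lem: Kac 9.5, 9.6}(b) with parameter $\lambda$ gives a finite filtration of $W$ whose successive quotients are either of the form $\Vsu(\lambda_j)$ with $\lambda_j\ge\lambda$ or have vanishing weight spaces at all weights $\ge\lambda$. Truncating $\ch W$ to weights $\ge\lambda$ and using triangularity, the multiplicity with which each $\Vsu(\eta)$ (with $\eta\ge\lambda$) occurs in the list $\{\lambda_j\}$ is determined by $\ch W$, independently of the chosen filtration; letting $\lambda$ vary yields well-defined integers $c_\eta\ge0$ with $\ch W=\sum_{\eta\le\mu}c_\eta\,\ch\Vsu(\eta)$, $c_\mu=1$, and such that $c_\eta\neq0$ forces $\Vsu(\eta)$ to be an actual subquotient of $W$. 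Combining this with the previous paragraph, $\ch W=\sum_{\eta\in D(\mu)}c_\eta\,\ch\Vsu(\eta)$. Taking $W=\Msu(\nu)$ gives $\ch\Msu(\nu)=\sum_{\eta\in D(\nu)}c'_\eta\,\ch\Vsu(\eta)$ with $c'_\nu=1$; iterating the resulting relation $\ch\Vsu(\nu)=\ch\Msu(\nu)-\sum_{\eta\in D(\nu),\ \eta<\nu}c'_\eta\,\ch\Vsu(\eta)$, which in each fixed weight involves only finitely many terms and whose indices stay inside $D(\nu)$, expresses $\ch\Vsu(\nu)=\sum_{\eta\in D(\nu)}s_{\nu\eta}\,\ch\Msu(\eta)$ with $s_{\nu\eta}\in\Z$ and $s_{\nu\nu}=1$.

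Finally, applying the result of the previous paragraph to $W=V$ gives $\ch V=\sum_{\nu\in D(\Lambda)}c_\nu\,\ch\Vsu(\nu)$ with $c_\Lambda=1$ (as $\dim V_\Lambda=1$), and substituting the Verma expansion of each $\ch\Vsu(\nu)$—legitimate since $D(\nu)\subseteq D(\Lambda)$ for $\nu\in D(\Lambda)$—yields $\ch V=\sum_{\lambda\in D(\Lambda)}t_\lambda\,\ch\Msu(\lambda)$ with $t_\lambda\in\Z$ and $t_\Lambda=1$, which is \eqref{eqn: character h.w module V}. The work here is essentially bookkeeping: making the formal-character framework precise (triangular bases, locally finite sums, the interaction of Lemma~\ref{lem: Kac 9.5, 9.6}(b) with truncation) and unwinding the relation between simple and Verma characters. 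The only point genuinely new compared with \cite{Kac90} is the eigenvalue comparison of the second paragraph, where the quantum Casimir operator (Theorem~\ref{thm: quantum casimir op of Uqsug}, Corollary~\ref{cor:Casimir behavior}) and the transcendence of $q$ over $\C$ enter; that step, together with keeping the character-ring formalities honest, is the only slightly delicate part.
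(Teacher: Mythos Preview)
Your argument is correct and is precisely the intended one: the paper gives no proof, merely the citation ``cf.\ \cite[Proposition~9.8]{Kac90}'', and your write-up faithfully transcribes Kac's argument with the classical Casimir replaced by the quantum Casimir $\Omega$ of Theorem~\ref{thm: quantum casimir op of Uqsug} and Corollary~\ref{cor:Casimir behavior}. Your handling of the eigenvalue comparison via transcendence of $q$ over $\C$, the triangularity of $\{\ch\Msu(\nu)\}$ and $\{\ch\Vsu(\nu)\}$, and the local-finiteness bookkeeping are all accurate and fill in exactly the details the paper leaves implicit.
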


\begin{proposition} [{\rm cf.\ \cite[Proposition 9.9 b)]{Kac90}}]
Let $V$ be a $\Uqsug$-module in the category
$\mathcal{O}^\P(\Uqsug)$. Assume that for any two primitive weights
$\lambda$ and $\mu$ of $V$ such that $\lambda-\mu=\beta \in \rtl^+
\setminus \{ 0 \}$, we have
$2(\lambda+\rho|\beta)\neq(\beta|\beta)$. Then $V$ is completely
reducible.
\end{proposition}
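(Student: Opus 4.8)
The plan is to imitate the classical semisimplicity argument of \cite[Proposition~9.9]{Kac90}, using the quantum Casimir operator $\Omega=\Phi\Xi$ constructed in Section~\ref{sec: Rep Uqsug}. \emph{Step 1: decompose $V$ according to $\Omega$.} By Theorem~\ref{thm: quantum casimir op of Uqsug}, $\Omega$ is a $\Uqsug$-module endomorphism of $V$, and since it is built out of $\Phi$ and $\Xi$ it preserves each finite-dimensional weight space $V_\mu$; hence $\Omega$ acts locally finitely on $V$. By Lemma~\ref{lem: Kac 9.5, 9.6}(b), $V_\mu$ admits a finite filtration whose subquotients are weight spaces of highest-weight modules $\Vsu(\lambda')$, on each of which $\Omega$ acts by the scalar $t(\lambda')\,q^{(\lambda'+\rho|\lambda'+\rho)-(\rho|\rho)}\in\cor$ (Corollary~\ref{cor:Casimir behavior}(i)); consequently the characteristic polynomial of $\Omega|_{V_\mu}$ splits over the field $\cor=\C(q)$, and we obtain a decomposition $V=\bigoplus_{c}V^{(c)}$ into generalized $\Omega$-eigenspaces. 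Because $\Omega$ commutes with the $\Uqsug$-action, each $V^{(c)}$ is a $\Uqsug$-submodule which again lies in $\mathcal{O}^\P(\Uqsug)$, so it suffices to prove that every $V^{(c)}$ is completely reducible.

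\emph{Step 2: Casimir eigenvalues of primitive vectors.} Fix $c$ and let $\lambda$ be a primitive weight of $V^{(c)}$, with primitive vector $v$. By Corollary~\ref{cor:Casimir behavior}(ii) there is a submodule $U\subset V^{(c)}$ with $v\notin U$ and $(\Omega-\chi_\lambda)v\in U$, where $\chi_\lambda\seteq t(\lambda)\,q^{(\lambda+\rho|\lambda+\rho)-(\rho|\rho)}$. Since $v\in V^{(c)}$ there is $k$ with $(\Omega-c)^kv=0$. Writing $(\Omega-c)^k=\bigl((\Omega-\chi_\lambda)+(\chi_\lambda-c)\bigr)^k$ and using that $U$ is stable under $\Omega$ (hence under $\Omega-\chi_\lambda$), since $\Omega$ is the composite of the action of elements of $\Uqsug$ with weight-space scalars, one gets $(\chi_\lambda-c)^k\,v\in U$. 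As $\cor$ is a field, $\chi_\lambda\neq c$ would force $v\in U$, a contradiction; hence every primitive weight $\lambda$ of $V^{(c)}$ satisfies $\chi_\lambda=c$.

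\emph{Step 3: conclusion.} Let $\lambda,\mu$ be primitive weights of $V^{(c)}$ with $\lambda-\mu=\beta\in\rtl^+$. By Step~2, $t(\lambda)\,q^{(\lambda+\rho|\lambda+\rho)}=t(\mu)\,q^{(\mu+\rho|\mu+\rho)}$; comparing $q$-exponents (using that $q$ is transcendental over $\C$ and that $t$ is $\{\pm1\}$-valued after base change) forces $(\lambda+\rho|\lambda+\rho)=(\mu+\rho|\mu+\rho)$, that is, $2(\lambda+\rho|\beta)=(\beta|\beta)$. By the hypothesis on $V$ this cannot happen unless $\beta=0$, so two distinct primitive weights of $V^{(c)}$ are never comparable. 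Lemma~\ref{lem: Kac 9.5, 9.6}(a) then shows that $V^{(c)}$ is completely reducible, and therefore so is $V=\bigoplus_c V^{(c)}$.

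I expect the delicate point to be Step~1, namely checking that in the present $\P$-weighted, $\pi$-covering setting $\Omega$ really is locally finite on $V$ with eigenvalues in $\cor$, so that the decomposition into generalized $\Omega$-eigenspaces consists of honest $\Uqsug$-submodules; once that is granted, Steps~2 and~3 amount to routine linear algebra and bookkeeping of Casimir eigenvalues.
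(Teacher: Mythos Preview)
Your argument is correct and follows essentially the same route as the paper's proof. The paper's version is slightly more compressed: it first reduces to the case where $V$ is indecomposable, so that (by local finiteness of $\Omega$) there is a single scalar $c=\pi^\eps q^a$ with $\Omega-c\,\mathrm{Id}$ locally nilpotent on $V$; your decomposition $V=\bigoplus_c V^{(c)}$ into generalized $\Omega$-eigenspaces amounts to the same reduction. From there both proofs invoke Corollary~\ref{cor:Casimir behavior}(ii) to force $(\lambda+\rho|\lambda+\rho)=(\mu+\rho|\mu+\rho)$ for any two primitive weights and finish with Lemma~\ref{lem: Kac 9.5, 9.6}(a).
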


\begin{proof}
We may assume that the $\Uqsug$-module $V$ is indecomposable. Since
$\Omega$ is locally finite on $V$, i.e., every $v \in V$ is
contained in a finite-dimensional $\Omega$-invariant subspace, there
exist $\eps\in\{0,1\}$ and $a \in \Z$ such that $\Omega-\pi^\eps
q^a{\rm Id}$ is locally nilpotent on $V$. Thus Corollary
\ref{cor:Casimir behavior} (b) implies
$(\lambda+\rho|\lambda+\rho)=(\mu+\rho|\mu+\rho)$. Our assertion
follows from Lemma \ref{lem: Kac 9.5, 9.6} (a).
\end{proof}

As in \cite[Chapter 3, 9]{Kac90}, one can prove that
$\ch(\Vsu(\Lambda))$ is $W$-invariant. Thus we have the following
theorem.

\begin{theorem} \label{thm: ch V(Lambda)}
Let $\Vsu(\Lambda)$ be an irreducible $\Uqsug$-module with highest
weight $\Lambda \in \P^+$. Then the following statements hold.
\bna
\item $\ch(\Vsu(\Lambda))=\dfrac{\sum_{w \in W} \epsilon(w)
\ex{w(\Lambda+\rho)-\rho}}{\prod_{\alpha \in \Delta_+}
(1-\ex{-\alpha})^{\mult(\alpha)}}$.
\item $\Vsu(\Lambda)$ is generated by a vector $v_\Lambda$
with the defining relations:
$$  K_i v_{\Lambda} = p_i^{\langle h_i, \Lambda \rangle} v_{\Lambda}, \ \
 e_i v_{\Lambda} = 0, \ \
 f_i^{\langle h_i, \Lambda \rangle + 1} v_{\Lambda} =0 \ \
 \text{for all} \ i \in I.$$
\item The category $\Oint^\P(\Uqsug)$ is semisimple and
every simple object is isomorphic to $\Vsu(\Lambda)$ for some
$\Lambda \in \P^+$. \ee
\end{theorem}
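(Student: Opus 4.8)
The plan is to run the classical Kac-Weyl argument (as in \cite[Chapters 9--10]{Kac90}) in the present quantum super setting, with the quantum Casimir operator $\Omega = \Phi\Xi$ constructed above playing the role of the Casimir element. First I would establish part (a). By Corollary \ref{cor:Casimir behavior}(a) and the preceding proposition, any $\Uqsug$-module $V$ with highest weight $\Lambda$ has a character expansion $\ch(V) = \sum_{\lambda} t_\lambda \ch(\Msu(\lambda))$ over weights $\lambda \le \Lambda$ with $(\lambda+\rho|\lambda+\rho) = (\Lambda+\rho|\Lambda+\rho)$ and $t_\Lambda = 1$. Applying this to $V = \Vsu(\Lambda)$ for $\Lambda \in \P^+$, and combining with the formula \eqref{eqn: ch of M^super(Lambda)} for $\ch(\Msu(\lambda))$, one gets $\ch(\Vsu(\Lambda)) = \bigl(\sum_\lambda t_\lambda \ex{\lambda}\bigr)\prod_{\alpha \in \Delta_+}(1-\ex{-\alpha})^{-\mult(\alpha)}$. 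Since $\Vsu(\Lambda)$ is integrable ($f_i^{\langle h_i,\Lambda\rangle+1} v_\Lambda = 0$ by the Verma construction in Section \ref{sec:FS}), its character is $W$-invariant; this is proved exactly as in \cite[Chapter 3, 10]{Kac90} using the local nilpotency of the $e_i, f_i$ and the $\Sl_2$-reduction already used in the lemma preceding Proposition \ref{prop: KMPY96}. A $W$-invariant expression of the above form, together with the finiteness condition on $\wt(V)$ and the standard argument comparing leading terms, forces $\sum_\lambda t_\lambda \ex{\lambda} = \sum_{w\in W}\epsilon(w)\ex{w(\Lambda+\rho)-\rho}/\prod_{\alpha\in\Delta_+}(1-\ex{-\alpha})^{\mult(\alpha)} \cdot \prod(1-\ex{-\alpha})^{\mult(\alpha)}$; more precisely one obtains the Weyl-Kac denominator identity and hence the stated formula for $\ch(\Vsu(\Lambda))$.

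Next, part (b). Given part (a) and \eqref{eqn: ch of M^super(Lambda)}, the characters show that the maximal submodule $\NBe(\Lambda) = \Nsu(\Lambda)$ of $\Msu(\Lambda)$ must contain $\sum_{i\in I}\Uqsug f_i^{\langle h_i,\Lambda\rangle+1} u_\Lambda$; conversely, since $\Vsu(\Lambda)$ is irreducible and the quotient $\Msu(\Lambda)\big/\sum_i \Uqsug f_i^{\langle h_i,\Lambda\rangle+1} u_\Lambda$ is a nonzero highest weight module (it is integrable by the Serre-type computations, so in $\Oint^\P(\Uqsug)$), a character count forces equality. This gives the presentation in (b). Here I would lean on the triangular decomposition (Corollary \ref{cor: tri dec Super}(a)) to identify $\Msu(\Lambda) \cong \Uqmsug u_\Lambda$ and to compare graded dimensions weight space by weight space.

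For part (c), I would use the two structural propositions stated just above the theorem (the quantum analogues of \cite[Prop.~9.8, 9.9]{Kac90}). Let $V \in \Oint^\P(\Uqsug)$ be indecomposable. For any two primitive weights $\lambda, \mu$ of $V$ with $\lambda - \mu = \beta \in \rtl^+\setminus\{0\}$, local finiteness of $\Omega$ on $V$ (each vector lies in a finite-dimensional $\Omega$-stable subspace, since $V\in\Ocat^\P$) gives $\eps\in\{0,1\}$ and $a\in\Z$ with $\Omega - \pi^\eps q^a\,\id$ locally nilpotent; Corollary \ref{cor:Casimir behavior}(b) then yields $(\lambda+\rho|\lambda+\rho) = (\mu+\rho|\mu+\rho)$, i.e. $2(\lambda+\rho|\beta) = (\beta|\beta)$. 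But integrability of $V$ forces $\lambda, \mu \in \P$ to satisfy the $\Sl_2$-type constraints that make this impossible unless $\lambda = \mu$: applying the dominance/integrality bound one shows $\lambda, \mu$ lie in the closure of the dominant chamber shifted by $\rho$, where $2(\lambda+\rho|\beta) = (\beta|\beta) > 0$ cannot hold with $\beta \in \rtl^+\setminus\{0\}$ and $(\lambda+\rho|\alpha_i) \ge 0$. Hence distinct primitive weights are incomparable, and the quantum analogue of \cite[Prop.~9.9]{Kac90} (Lemma \ref{lem: Kac 9.5, 9.6}(a)) gives complete reducibility. Finally, every simple object in $\Oint^\P(\Uqsug)$ is a highest weight module (it has a maximal weight $\Lambda$, and the corresponding weight vector is killed by all $e_i$), hence isomorphic to $\Vsu(\Lambda)$, and integrability forces $\Lambda \in \P^+$.

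The main obstacle I anticipate is the $W$-invariance of $\ch(\Vsu(\Lambda))$ in part (a): the automorphism $\psi$ and the operators $T_i$ entering $\Phi$ carry sign/parity twists ($\theta_{ij}\in\Z[\pi]$, $\theta_{ii}=1$), so one must verify that the $\Sl_2$-reduction used to implement the simple reflection $s_i$ on characters is compatible with these twists — concretely, that for $u$ a weight vector killed by $f_i$ the $e_i$-string through $u$ has the same combinatorial shape as in the non-super case, so that $\ch(\Vsu(\Lambda))$ is genuinely $s_i$-invariant rather than invariant up to a power of $\pi$. Once the $U_q(\Sl_2)$-module structure generated by $e_i, f_iS_i, K_iS_i$ (as in the lemma before Proposition \ref{prop: KMPY96}) is used, this should reduce to the classical computation, but it needs to be checked with care, as does the bookkeeping of $\pi$'s in Corollary \ref{cor:Casimir behavior}(b) when deducing the quadratic identity on weights.
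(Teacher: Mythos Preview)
Your proposal is correct and follows essentially the same route as the paper, which simply defers to \cite[Theorem 10.4, Corollary 10.4, Theorem 9.9 b)]{Kac90} after noting (just before the theorem) that $W$-invariance of $\ch(\Vsu(\Lambda))$ goes through as in \cite[Chapters 3, 9]{Kac90}. Your write-up spells out precisely those steps --- the Casimir-controlled character expansion, $W$-invariance via the $\Sl_2$-reduction, the denominator argument for (a), the dominance of primitive weights in integrable modules combined with the norm identity $(\lambda+\rho|\lambda+\rho)=(\mu+\rho|\mu+\rho)$ for (c), and the character comparison for (b) --- so there is nothing to correct; the anticipated sign/parity issue in the $\Sl_2$-step is handled exactly as you indicate, since $\theta_{ii}=1$ for this choice of $\theta$ and the operator $S_i$ satisfies $S_i^2=1$.
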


\begin{proof}
The proofs are similar to those of \cite[Theorem 10.4, Corollary 10.4, Theorem 9.9 b)]{Kac90}.
\end{proof}

As an immediate corollary, we obtain:

\begin{theorem} {\rm Conjecture \ref{conj}} is true
if the following conditions are satisfied.
\bna
\item $(\car,\P,\Pi,\Pi^\vee)$ is a Cartan superdatum,
\item the base field $\cor$ is of characteristic $0$,
\item $q$ is algebraically independent over $\Q$,
\item there exists $\eps=\pm1$ such that
$p_{ii}\theta_{ii}^{-1}=q^{(\alpha_i|\alpha_i)}\eps^{\pa(i)}$ for
any $i\in I$.
\ee
\end{theorem}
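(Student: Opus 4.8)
The plan is to derive all five parts of Conjecture~\ref{conj} from what has already been proved for the algebra $\Uqsug$ of \eqref{eq:boldU}, namely Corollary~\ref{cor: tri dec Super} and Theorem~\ref{thm: ch V(Lambda)}, by means of two reductions: a gauge change bringing the parameters into standard form, and a flat base change identifying the result with a specialization of $\Uqsug$. First I would rewrite condition~(d): since $p_{ii}\theta_{ii}^{-1}=p_i^2$ by \eqref{cond:pt} and $(\alpha_i|\alpha_i)=2\dg_i$, it says $p_i^2=q_i^2\,\eps^{\pa(i)}$ with $\eps=\pm1$. If $\Iod=\varnothing$ the exponent $\eps^{\pa(i)}$ is always $1$, so I may take $\eps=1$; if $\Iod\neq\varnothing$ and $\eps=-1$ then, fixing $i_0\in\Iod$, the element $p_{i_0}/q_{i_0}\in\cor$ squares to $-1$, so $\sqrt{-1}\in\cor$. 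Hence in every case there is an element $\sqrt{\eps^{\pa(i)}}\in\cor^{\times}$, and I let $(\theta',\pa')$ be the parameters of \eqref{eq:boldU} with each $\sqrt{\pi_j}$ replaced by $\sqrt{\eps^{\pa(j)}}$; these satisfy \eqref{cond:pt} and $(p'_i)^2=p_i^2$. By Proposition~\ref{Prop: depend on p_{ii}} there is an equivalence $\Mod^{\P}(\FS)\simeq\Mod^{\P}(\FS[\theta',\pa'])$; since it rescales $e_i,f_i,K_i$ by weight-dependent scalars and is the identity on underlying $\cor$-modules, it preserves weight spaces, integrability, $\Oint^{\P}$, highest weights, Verma modules, irreducibility, presentations and characters; and the isomorphism $\FS{[P,Q,R]}\isoto\FS[\theta',\pa']{[P,Q,R]}$ appearing in its proof restricts to $\FSm{[Q]}\isoto\FSm[\theta',\pa']{[Q]}$, whence $\ch(\FSm)=\ch(\FSm[\theta',\pa'])$. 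Therefore Conjecture~\ref{conj} for $(\cor,\theta,\pa)$ is equivalent to Conjecture~\ref{conj} for $(\cor,\theta',\pa')$, and I replace the parameters by $(\theta',\pa')$.

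Next I would set up the base change. There is a ring homomorphism $\varphi\cl\Q(q)^{\sqrt{\pi}}\to\cor$ sending $q\mapsto q$ and $\sqrt{\pi}\mapsto\sqrt{\eps}$ (hence $\pi\mapsto\eps$ and $\sqrt{\pi_i}\mapsto\sqrt{\eps^{\pa(i)}}$): it is well defined because $q$ is transcendental over $\Q$ (condition~(c)), so $\Q(q)\hookrightarrow\cor$, because $\sqrt{\eps}\in\cor$, and because every element required to be invertible maps to a unit of the field $\cor$ (again using that $q$ is transcendental and $\cor$ has characteristic zero). Comparing the relations \eqref{eq:Uqsug} with their images one gets $\FS[\theta',\pa']\cong\cor\otimes_{\Q(q)^{\sqrt{\pi}}}\Uqsug$, compatibly with the triangular decomposition and the Verma modules. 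The key point is that $\varphi$ is automatically \emph{flat}: since $\Z[\sqrt{\pi}]\cong\Z[t]/(t^{4}-1)$, the ring $\Q(q)^{\sqrt{\pi}}$ is a finite product of fields, over which every module is projective. Consequently base change along $\varphi$ commutes with kernels and cokernels, and every statement of Corollary~\ref{cor: tri dec Super} -- proved over $\Q(q)^{\sqrt{\pi}}$ -- transfers verbatim to $\FS[\theta',\pa']$ over $\cor$: the triangular decomposition (a), the character formula $\ch(\FSm[\theta',\pa'])=\prod_{\alpha\in\Delta^+}(1-\ex{-\alpha})^{-\mult(\alpha)}$ of (b), which is Conjecture~\ref{conj}(i), and, crucially, the identity $\set{a\in\FSm[\theta',\pa']}{e_ia\in\FS[\theta',\pa']e_i\text{ for all }i}=\cor$ of (c).

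It remains to establish parts (ii)--(v), and for this I would simply rerun over $\cor$, in place of $\C(q)$, the constructions of Section~\ref{sec: Rep Uqsug} and the proof of Theorem~\ref{thm: ch V(Lambda)}. The non-degenerate bilinear form on $\FSm[\theta',\pa']$, the operators $\Psi$ and the quantum Casimir $\Omega=\Phi\Xi$ are built exactly as there (with a function $t\cl\P\to\{1,\eps\}$ in place of $t\cl\P\to\{1,\pi\}$, and using Proposition~\ref{prop:simple Bqsug-module Uqsug^-} for non-degeneracy, which holds by the transferred identity (c)); then Theorem~\ref{thm: quantum casimir op of Uqsug}, the Kac-type Lemma~\ref{lem: Kac 9.5, 9.6} and the subsequent complete-reducibility propositions go through word for word -- the only features of the base field they use are that $\cor$ has characteristic zero, that $q$ is transcendental, and (c) -- and one concludes that $\Oint^{\P}(\FS[\theta',\pa'])$ is semisimple with simple objects precisely the $\VBe[\theta',\pa'](\Lambda)$, $\Lambda\in\P^{+}$; this gives (ii) and (iv). Part (iii) follows formally: the module $\overline{\mathrm{M}}(\Lambda):=\MBe[\theta',\pa'](\Lambda)/\sum_i\FS[\theta',\pa']\,f_i^{\langle h_i,\Lambda\rangle+1}v_\Lambda$ lies in $\Oint^{\P}$ (it is generated by $v_\Lambda$ with $e_iv_\Lambda=0$ and $f_i^{\langle h_i,\Lambda\rangle+1}v_\Lambda=0$), hence is semisimple by (ii); being generated by $v_\Lambda$ with one-dimensional $\Lambda$-weight space it is indecomposable, so it is irreducible and equals $\VBe[\theta',\pa'](\Lambda)$, which is exactly the presentation asserted in (iii). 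Finally (v): since presentations commute with base change, $\overline{\mathrm{M}}(\Lambda)\cong\cor\otimes_{\Q(q)^{\sqrt{\pi}}}\overline{\mathrm{M}}_0(\Lambda)$ with $\overline{\mathrm{M}}_0(\Lambda)$ the corresponding module for $\Uqsug$ over $\Q(q)^{\sqrt{\pi}}$; its base change to $\C(q)$ along the matching specialization $\pi\mapsto\eps$ is the module with the three defining relations over $\C(q)$, i.e. $\Vsu(\Lambda)$ by Theorem~\ref{thm: ch V(Lambda)}(b), and by flatness the weight-space dimensions are the same in both cases, so $\ch(\VBe[\theta',\pa'](\Lambda))=\ch(\overline{\mathrm{M}}(\Lambda))=\ch(\Vsu(\Lambda))$, which is the Weyl--Kac formula of Theorem~\ref{thm: ch V(Lambda)}(a).

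The step I expect to be the main obstacle is exactly this passage from $\C(q)$, over which Theorem~\ref{thm: ch V(Lambda)} is formulated, to a general base field $\cor$. The device that makes it work is the flatness of the specialization map $\varphi$ -- automatic because $\Q(q)^{\sqrt{\pi}}$ is a finite product of fields -- which is what lets the ``annihilator-type'' assertion Corollary~\ref{cor: tri dec Super}(c) and the maximal-submodule quotients $\VBe(\Lambda)$ survive base change; one must take care to phrase everything in terms of kernels and cokernels rather than, say, preservation of irreducibility under scalar extension. A secondary point to get right is the observation that condition~(d) with $\eps=-1$ already forces $\sqrt{-1}\in\cor$ (when $\Iod=\varnothing$ the case $\eps=-1$ collapses to $\eps=1$), so that the standard parameters $(\theta',\pa')$ and the map $\varphi$ genuinely exist.
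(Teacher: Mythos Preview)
Your proof is correct and follows exactly the route the paper has in mind: the paper records this theorem only as ``an immediate corollary'' of Theorem~\ref{thm: ch V(Lambda)} with no argument given, and your write-up supplies precisely the two reductions that make the word ``immediate'' honest --- the gauge change via Proposition~\ref{Prop: depend on p_{ii}} (which reduces everything to the parameters of $\Uqsug$, since $\Mod^{\P}(\FS)$ depends only on $\{p_i^2\}$) and then rerunning the quantum Casimir argument of \S\ref{sec: Rep Uqsug} over the given field $\cor$ in place of $\C(q)$. Your handling of the two technical points the paper glosses over --- that $\eps=-1$ with $\Iod\neq\varnothing$ already forces $\sqrt{-1}\in\cor$, and that Corollary~\ref{cor: tri dec Super}(c) transfers to $\cor$ by flatness of the specialization $\Q(q)^{\sqrt{\pi}}\to\cor$ (the source being a finite product of fields) --- is sound. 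The one place you work slightly harder than necessary is part~(v): once (ii)--(iv) are established over $\cor$, the Weyl--Kac formula follows directly over $\cor$ by the standard argument (character of Verma modules plus $W$-invariance), so the detour through base change to $\C(q)$ is not needed, though it is correct.
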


\vskip 5mm

\section{The algebra $\Uqsg$}\label{sec: Uqsg}

In this section, we introduce an algebra $\Uqsg$ corresponding to a
Cartan superdatum, which is directly to our supercategorification
theorems via quiver Hecke superalgebras and their cyclotomic
quotients. Throughout this section, we take $\cor = \Q(q)^{\pi}$.

The algebra $\Uqsg$ is the $\cor$-algebra $\HS$, where
$\tilde{\theta}$ and $\tilde{\pa}$ are given by
\begin{equation}\label{eq:Uqsg}
\begin{aligned}
\tip_i = q_i^2\pi_i,  \quad
\tth_{ij}=\tth_{ji}=\pi^{\pa(i)\pa(j)}q_i^{-a_{ij}}.
\end{aligned}
\end{equation}
The explicit description of the algebra $\Uqsg$ is given as follows.

\begin{definition} \label{dfn:Uqsg}
The algebra $\Uqsg$ associated with a Cartan superdatum
$(\car,\P,\Pi,\Pi^\vee)$ is defined to be the algebra over $\cor =
\Q(q)^{\pi}$ generated by $e_i$, $f_i$ and $\dK_i^{\pm 1}$ $(i \in
I)$ subject to the following defining relations:
\begin{equation}
\begin{aligned}
& \dK_i\dK_j=\dK_j\dK_i, \quad \dK_i e_j \dK_i^{-1} = q_i^{2a_{ij}}
e_j, \quad \dK_i f_j \dK_i^{-1}=q_i^{-2a_{ij}}f_j, \\
& e_if_j- \pi^{\pa(i)\pa(j)}q_i^{-a_{ij}}f_je_i =\delta_{i,j}
\dfrac{1-\dK_i}{1-q_i^2\pi_i} \quad (i, j \in I), \\
& \sum_{k=0}^{1-a_{ij}}
(-\pi^{\pa(i)\pa(j)})^k\pi_i^{\frac{k(k-1)}{2}} f^{\{1-a_{ij}-k\}}_i
f_j f_i^{\{k\}} = 0 \quad (i \neq j), \\
& \sum_{k=0}^{1-a_{ij}} (-\pi^{\pa(i)\pa(j)})^k
\pi_i^{\frac{k(k-1)}{2}} e^{\{1-a_{ij}-k\}}_i e_j e_i^{\{k \}} =0
\quad (i \neq j),
\end{aligned}
\end{equation}
where $f_i^{\{n\}}=f_i^n/[n]^\pi_i!$ and
$e_i^{\{n\}}=e_i^n/[n]^\pi_i!$.
\end{definition}
Note that $\Uqsg$ has an anti-automorphism given by
\begin{equation}\label{eqn: anti-auto of Uqsg}
e_i \mapsto f_i, \quad f_i \mapsto e_i, \quad \dK_i^{\pm 1} \mapsto
\dK_i^{\pm 1}.
\end{equation}
For $\Lambda\in \P^+$, let $\Vs(\Lambda)$ be the $\P$-weighted
$\Uqsg$-module generated by $v_\Lambda$ of weight $\Lambda$ with the
defining relations given by: \eq &&\tilde{K}_i v_\Lambda =
(q_i^2\pi_i)^{\la h_i,\Lambda \ra}v_\Lambda, \quad e_i v_\Lambda=0,
\quad f_i^{\la h_i,\Lambda \ra+1} v_\Lambda=0 \quad\text{for all $i
\in I$.} \label{def:V} \eneq We define the subalgebras $\Uqmsg$,
$\Uqzsg$ and $\Uqpsg$ in the same way as we did for $\FS$ in Section
\ref{sec:FS}.

Then, by Theorem \ref{thm: ch V(Lambda)}, we obtain the following
results.

\begin{theorem} \label{thm:Oint}\hfill
\bnum
\item The $\Q(q)^{\pi}$-algebra $\Uqsg$ has a triangular decomposition
$$\Uqsg \simeq  \Uqmsg \ot \Uqzsg \ot \Uqpsg.$$
\item  $\ch (\Uqmsg) = \displaystyle
 \prod_{\alpha \in \Delta^+} (1-\ex{-\alpha})^{-\mult(\alpha)}.$
\item
For $\Lam\in\P^+$, if a $\Uqsg$-submodule $N$ of $\Vs(\La)$
satisfies $N\cap\cor v_\La=0$, then $N=0$.
\item There exist equivalences of categories
$$\Mod^\P\bl\Q(q)^{\sqrt{\pi}}\tens_{\Q(q)^{\pi}}\Uqsg\br\simeq \Mod^\P(\Uqsug),
\quad\Ocat^\P\bl\Q(q)^{\sqrt{\pi}}\tens_{\Q(q)^{\pi}}\Uqsg\br\simeq
\Ocat^\P(\Uqsug).$$
\item
The category $\Ocat^\P(\Uqsg)$ is semisimple and every simple object
is isomorphic to $\Vs(\Lambda)/(\pi-\eps)\Vs(\Lambda)$ for some
$\Lambda \in \P^+$ and $\eps=\pm1$.
\ee
\end{theorem}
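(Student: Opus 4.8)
The plan is to derive Theorem~\ref{thm:Oint} by transporting all the needed structural information about $\Uqsug$ (established in the previous section via the quantum Casimir operator) across the functor of Proposition~\ref{prop:FHequiv} and Proposition~\ref{Prop: equivalent 1}, and then to take the ``$\sqrt{\pi}$-descent'' down to $\Q(q)^\pi$. Concretely, first observe that the choice $\tip_i = q_i^2\pi_i$, $\tth_{ij}=\pi^{\pa(i)\pa(j)}q_i^{-a_{ij}}$ in \eqref{eq:Uqsg} satisfies \eqref{cond:ttp}, and after extending scalars to $\cor'=\Q(q)^{\sqrt{\pi}}$ and setting $p_i = q_i\sqrt{\pi_i}$, $p_{ij}=p_i^{a_{ij}}$, $\theta_{ij}=\sqrt{\pi_j}^{a_{ji}}$ ($i\neq j$), $\theta_{ii}=1$, the relation \eqref{rel:ptt} holds: indeed $\tth_{ij}=\theta_{ij}p_{ji}^{-1}$ becomes $\pi^{\pa(i)\pa(j)}q_i^{-a_{ij}} = \sqrt{\pi_j}^{a_{ji}}(q_j\sqrt{\pi_j})^{-a_{ji}}=q_j^{-a_{ji}}=q_i^{-a_{ij}}\pi^{\pa(i)\pa(j)}$ once one checks the $\sqrt\pi$-powers combine correctly using $a_{ij}\in2\Z$ for $i$ odd. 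Hence Proposition~\ref{prop:FHequiv} gives $\Q(q)^{\sqrt\pi}\tens_{\Q(q)^\pi}\Uqsg \cong \Uqsug$ after the harmless twist $e_i\mapsto p_i^{-1}p_{ii}\,e_i\dK_i$, and Proposition~\ref{Prop: equivalent 1} yields the category equivalences in part~(iv). Parts (i) and (ii) then follow by pulling back the triangular decomposition and the character formula of Corollary~\ref{cor: tri dec Super}: since $\Uqmsg\cong\HSm\cong\FSm$ is a free algebra whose Poincaré series does not see the scalar extension, the character of $\Uqmsg$ over $\Q(q)^\pi$ equals that of $\Uqmsug$ over $\Q(q)^{\sqrt\pi}$, namely $\prod_{\alpha\in\Delta^+}(1-\ex{-\alpha})^{-\mult(\alpha)}$.

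For part~(iii), I would argue as follows. The module $\Q(q)^{\sqrt\pi}\tens_{\Q(q)^\pi}\Vs(\Lambda)$, pulled across the equivalence, is the $\Uqsug$-module with highest weight vector $v_\Lambda$ satisfying $K_i v_\Lambda = p_i^{\la h_i,\Lambda\ra}v_\Lambda$, $e_i v_\Lambda=0$, $f_i^{\la h_i,\Lambda\ra+1}v_\Lambda=0$ --- which by Theorem~\ref{thm: ch V(Lambda)}(b) is exactly the irreducible $\Vsu(\Lambda)$. In particular it has no nonzero submodule meeting $\cor' v_\Lambda$ trivially. Now if $N\subset\Vs(\Lambda)$ is a $\Uqsg$-submodule over $\Q(q)^\pi$ with $N\cap\cor v_\Lambda=0$, then $\Q(q)^{\sqrt\pi}\tens_{\Q(q)^\pi}N$ is a submodule of $\Vsu(\Lambda)$ meeting $\cor' v_\Lambda$ trivially (flatness of the extension $\Q(q)^\pi\hookrightarrow\Q(q)^{\sqrt\pi}$ keeps the intersection zero), hence it is $0$, hence $N=0$. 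This is the cleanest route; alternatively one can invoke Proposition~\ref{prop:simple Bqsug-module UqBg^-} together with Corollary~\ref{Cor:BKM e quasi commute} transported via Proposition~\ref{prop: BqBg-structure on UqBg^-}, but the scalar-extension argument is shorter.

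For part~(v), the semisimplicity of $\Ocat^\P(\Uqsg)$ and the classification of simples, I would combine the equivalence of part~(iv) with Theorem~\ref{thm: ch V(Lambda)}(c). Given $V\in\Ocat^\P(\Uqsg)$, extend scalars: $\Q(q)^{\sqrt\pi}\tens V$ lies in $\Ocat^\P(\Uqsug)$ and is therefore a direct sum of copies of modules $\Vsu(\Lambda)$. One must descend this decomposition back to $\Q(q)^\pi$. The point is that $\Q(q)^{\sqrt\pi}=\Q(q)^\pi[\sqrt\pi]$ is a free rank-$2$ (indeed a quadratic Galois, with group swapping $\sqrt\pi\mapsto-\sqrt\pi$) extension of $\Q(q)^\pi$; Galois descent for modules in a semisimple category shows every indecomposable object of $\Ocat^\P(\Uqsg)$ becomes, after extension, either $\Vsu(\Lambda)$ or $\Vsu(\Lambda)\oplus\Vsu(\Lambda)$, and correspondingly the $\Q(q)^\pi$-form is $\Vs(\Lambda)/(\pi-\eps)\Vs(\Lambda)$ for a sign $\eps=\pm1$ (the two signs distinguish the two ways the Galois action can act on the line $\cor' v_\Lambda$, i.e. whether $\pi$ acts as $+1$ or $-1$ after the identification --- note $\Vs(\Lambda)/(\pi-\eps)$ is a module over $\Q(q)$, not over $\Q(q)^\pi$, and these two are the genuinely simple objects). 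Finally, semisimplicity of $\Ocat^\P(\Uqsg)$ itself follows because a short exact sequence of $\Q(q)^\pi$-modules that splits after the faithfully flat base change $\Q(q)^\pi\to\Q(q)^{\sqrt\pi}$ already splits (the obstruction lives in an $\Ext^1$ that injects under faithfully flat extension).

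\textbf{The main obstacle} I anticipate is the descent step in part~(v): carefully matching the $\Q(q)^\pi$-forms $\Vs(\Lambda)/(\pi-\eps)\Vs(\Lambda)$ with the indecomposable summands of a module in $\Ocat^\P(\Uqsg)$, and verifying that these are the only simples --- in particular that the Galois twist $\sqrt\pi\mapsto-\sqrt\pi$ acts compatibly with the isomorphism of Proposition~\ref{prop:FHequiv} (which itself involves $p_i = q_i\sqrt{\pi_i}$, a quantity not defined over $\Q(q)^\pi$). One has to check that although the intermediate algebra $\Uqsug$ and the twisting constants $p_i$ are only defined over $\Q(q)^{\sqrt\pi}$, the composite data relevant to $\Uqsg$ and to $\Vs(\Lambda)$ --- namely $\tip_i=p_i^2$, $\tth_{ij}=\theta_{ij}p_{ji}^{-1}$, and the defining relations \eqref{def:V} --- all live over $\Q(q)^\pi$, so the Galois group indeed acts on everything and the descent is legitimate. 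Once that bookkeeping is pinned down, everything else is a transfer of already-proven facts.
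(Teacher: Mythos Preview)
Your overall strategy matches the paper's: the paper simply asserts that Theorem~\ref{thm:Oint} follows from Theorem~\ref{thm: ch V(Lambda)}, i.e.\ by transporting the structure of $\Uqsug$ across Proposition~\ref{Prop: equivalent 1}, and you have spelled out the details the paper omits.

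One computation needs correction. Your claimed verification of \eqref{rel:ptt} with the $\Uqsug$ parameters $\theta_{ij}=\sqrt{\pi_j}^{a_{ji}}$ ($i\neq j$), $p_{ji}=p_j^{a_{ji}}$ is wrong when both $i,j\in\Iod$: one gets $\theta_{ij}p_{ji}^{-1}=\sqrt{\pi_j}^{a_{ji}}(q_j\sqrt{\pi_j})^{-a_{ji}}=q_j^{-a_{ji}}=q_i^{-a_{ij}}$, whereas $\tth_{ij}=\pi^{\pa(i)\pa(j)}q_i^{-a_{ij}}$, and these differ by a factor of $\pi$ when $\pa(i)=\pa(j)=1$. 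The appeal to ``$a_{ij}\in2\Z$ for $i$ odd'' does not rescue this. The correct route is the one the paper sets up in \eqref{eq:expl}: take $\theta_{ij}=\tth_{ij}p_j^{a_{ji}}$, which \emph{does} satisfy \eqref{rel:ptt} but defines an $\FS$ that is not literally $\Uqsug$; then invoke Proposition~\ref{Prop: depend on p_{ii}} (equivalently Proposition~\ref{Prop: equivalent 1}(iii)) to conclude $\Mod^\P(\FS)\simeq\Mod^\P(\Uqsug)$ since both share the same $\{p_i^2\}=\{q_i^2\pi_i\}$. This two-step passage is exactly why those propositions are stated in that generality.

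Your descent argument for (v) is fine in spirit, but note that neither $\Q(q)^\pi\cong\Q(q)\times\Q(q)$ nor $\Q(q)^{\sqrt\pi}$ is a field, so ``Galois descent'' should be read componentwise: on the $\pi=-1$ factor one has the genuine quadratic extension $\Q(q)\hookrightarrow\Q(q)(i)$, and on the $\pi=+1$ factor the split \'etale extension $\Q(q)\hookrightarrow\Q(q)\times\Q(q)$. With that bookkeeping your faithfully-flat splitting argument for semisimplicity and your identification of the simples as $\Vs(\Lambda)/(\pi-\eps)\Vs(\Lambda)$ go through.
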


For $i \in I$, $c \in \Z$ and $n \in \Z_{\ge 1}$, we define
\begin{equation} \label{dfn: K_i c,n}
\binma{x}{n}\seteq
\prod_{r=1}^{n} \dfrac{1-x(q_i^2\pi_i)^{1-r}}{1-(q_i^2\pi_i)^r}.
\end{equation}

In particular, when $n=1$, we have
$$ \binma{\tilde{K}_i}{1}= \dfrac{1-\tilde{K}_i}{1-q_i^2\pi_i}
= e_if_i- q_i^{-2} \pi_i f_ie_i.$$ Define the {\em $\A^\pi$-form}
$\Us_{\A^\pi}(\g)$ of $\Uqsg$ to be the $\A^\pi$-subalgebra of
$\Uqsg$ generated by the elements $e_i^{\{n \}}$, $f_i^{\{n\}}$,
$\tilde{K}_{i}^{\pm 1}$ for $i \in I, \, n \in \Z_{>0}$. We denote
by $\Us_{\A^\pi}^+(\g)$ (resp.\ $\Us^-_{\A^\pi}(\g)$) the
$\A^\pi$-subalgebra of $\Us_{\A^\pi}(\g)$ with $1$ generated by
$e_i^{\{n\}}$ (resp. $f_i^{\{n\}}$) and by $\Us_{\A^\pi}^0(\g)$ the
$\A^\pi$-subalgebra of $\Us_{\A^\pi}(\g)$ with $1$ generated by
$\tilde{K}_i$ and $\binma{\tilde{K}_i}{n}$ for $i \in I$,
$n\in\Z_{>0}$.

By a direct computation, we have the following lemma:
\begin{lemma} \label{Lem: a-pi tri form of Uqs}
For $i\in I$ and $n$, $m\in \Z_{\ge 0}$, we have
$$e_i^{\{n\}}f_i^{\{m\}}=\sum_{0\le k\le n,m}
q_i^{-k(k-n-m+1)}(q_i^2\pi_i)^{k(k+1)/2-nm}f_i^{\{m-k\}}e_i^{\{ n-k\}}
\binma{(q_i^2\pi_i)^{n-m}\tilde{K}_i}{k}.
$$
\end{lemma}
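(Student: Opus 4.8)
The plan is to reduce the identity to a computation inside the rank-one subalgebra $\mathcal{U}_i\seteq\langle e_i,f_i,\dK_i^{\pm1}\rangle$ of $\Uqsg$ attached to the fixed index $i$, and then to prove it there by a nested induction. Since $a_{ii}=2$ and $\pa(i)^2=\pa(i)$, the defining relations of $\Uqsg$ restrict on $\mathcal{U}_i$ to
\begin{equation*}
\dK_ie_i\dK_i^{-1}=\tip_i^{\,2}e_i,\qquad \dK_if_i\dK_i^{-1}=\tip_i^{\,-2}f_i,\qquad e_if_i-\tip_i^{\,-1}f_ie_i=\frac{1-\dK_i}{1-\tip_i},
\end{equation*}
where $\tip_i=q_i^2\pi_i$ and we have used $\pi_iq_i^{-2}=\tip_i^{\,-1}$ and $\pi_i^2=1$. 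Thus $\mathcal{U}_i$ is a $\pi$-deformation of the quantized $\mathfrak{sl}_2$ with parameter $\tip_i$, the symbols $\binma{x}{k}$ of \eqref{dfn: K_i c,n} form a commuting family with $\binma{\dK_i}{1}=e_if_i-\tip_i^{\,-1}f_ie_i$, and all of what follows takes place in $\mathcal{U}_i$.

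First I would prove the one-step relation
\begin{equation*}
e_if_i^{\{m\}}=\tip_i^{\,-m}f_i^{\{m\}}e_i+c_m\,f_i^{\{m-1\}}\binma{\tip_i^{\,1-m}\dK_i}{1},
\end{equation*}
with $c_m$ the scalar dictated by the $n=1$ specialization of the lemma, by induction on $m$: the step follows from $e_if_i^m=\tip_i^{\,-1}f_i(e_if_i^{m-1})+\tfrac{1}{1-\tip_i}(1-\dK_i)f_i^{m-1}$ together with $\dK_if_i^{m-1}=\tip_i^{\,-2(m-1)}f_i^{m-1}\dK_i$, after dividing by $[m]^\pi_i!$ to pass to divided powers. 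Applying the anti-automorphism \eqref{eqn: anti-auto of Uqsg} (or repeating the argument on the other side) yields the symmetric relation for $e_i^{\{n\}}f_i$. The general formula is then obtained by induction on $n$: writing $e_i^{\{n\}}f_i^{\{m\}}=\tfrac{1}{[n]^\pi_i}\,e_i\bigl(e_i^{\{n-1\}}f_i^{\{m\}}\bigr)$, one expands $e_i^{\{n-1\}}f_i^{\{m\}}$ by the induction hypothesis, pushes the remaining $e_i$ through each summand using the one-step relation on $e_if_i^{\{m-k\}}$ and the weight relations $e_i\dK_i=\tip_i^{\,-2}\dK_ie_i$ (which conjugate the argument of each $\binma{\cdot}{k}$ by a power of $\tip_i$), and regroups.

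The regrouping is where the actual work lies. After pushing $e_i$ through, each monomial $f_i^{\{m-k\}}e_i^{\{n-k\}}\binma{\tip_i^{\,n-m}\dK_i}{k}$ receives contributions from the $k$-th and $(k-1)$-st terms of the expansion of $e_i^{\{n-1\}}f_i^{\{m\}}$, and one must check that these sum to the stated coefficient $q_i^{-k(k-n-m+1)}(q_i^2\pi_i)^{k(k+1)/2-nm}$. This reduces to a Pascal-type identity for the symbols $\binma{x}{k}$ — the $\pi$-deformation of the standard one used for divided powers in $U_q(\mathfrak{sl}_2)$ — expressing a $\tip_i$-integer combination of $\binma{x}{k-1}$ and $\binma{x}{k}$, with the arguments suitably rescaled by $\tip_i$, again in terms of $\binma{x}{k}$. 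I expect the main obstacle to be purely bookkeeping: because the relevant parameter here is $\tip_i=q_i^2\pi_i$ rather than $q_i^2$, the mixed powers of $q_i$ and $\pi_i$ in all exponents must be tracked with care, repeatedly using $\pi_i^2=1$. As a consistency check one may note that $\Q(q)^{\pi}\cong\Q(q)\times\Q(q)$ via $\pi\mapsto(1,-1)$, so the identity is equivalent to its two specializations at $\pi=1$ (the classical Lusztig identity in $U_{q_i}(\mathfrak{sl}_2)$) and at $\pi=-1$; but the uniform induction above is self-contained, and it is what I would carry out.
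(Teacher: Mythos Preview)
Your proposal is correct and matches what the paper intends: the paper offers no proof beyond the phrase ``By a direct computation,'' and the nested induction you outline---first pushing a single $e_i$ past $f_i^{\{m\}}$, then inducting on $n$ and regrouping via a Pascal-type identity for the symbols $\binma{x}{k}$---is precisely that direct computation. Your reduction to the rank-one subalgebra with parameter $\tip_i=q_i^2\pi_i$ and the identifications $\pi_iq_i^{-2}=\tip_i^{-1}$, $\dK_ie_i\dK_i^{-1}=\tip_i^{2}e_i$ are correct, so the argument is exactly the standard $U_q(\mathfrak{sl}_2)$ computation with $q_i^2$ replaced by $\tip_i$.
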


As an immediate consequence of Lemma \ref{Lem: a-pi tri form of
Uqs}, we have a triangular decomposition of $\Us_{\A^\pi}(\g)$.
\Lemma The homomorphism
\begin{equation}\label{eqn: a-pi tri form of Uqs}
\Us^-_{\A^\pi}(\g) \ot_{\A^\pi} \Us^0_{\A^\pi}(\g) \ot_{\A_\pi}
\Us^+_{\A^\pi}(\g)\to\Us_{\A^\pi}(\g)
\end{equation}
induced by the multiplication on $\Us(\g)$ is surjective. By
tensoring with $\Q$, we obtain an isomorphism
\begin{equation*}
\Q\tens\bl\Us^-_{\A^\pi}(\g) \ot_{\A^\pi} \Us^0_{\A^\pi}(\g) \ot_{\A^\pi}
\Us^+_{\A^\pi}(\g)\br\isoto\Q\tens\Us_{\A^\pi}(\g).
\end{equation*}
\enlemma

We will see that $\Us^-_{\A^\pi}(\g)$ is a free $\A^\pi$-module
(Corollary~\ref{cor:main2}) and that \eqref{eqn: a-pi tri form of
Uqs} is an isomorphism.

\medskip
The following proposition easily follows from
Theorem~\ref{thm:Oint}. \Prop Let $\Lam\in\P^+$. Then there exists a
unique non-degenerate symmetric bilinear form $(\ ,\ )$ on
$\Vs(\Lambda)$ such that
$$(v_\La,v_\La)=1, \ (e_iu,v)=(u,f_iv) \ \text{for all} \ u,v\in
\Vs(\Lambda), i\in I.$$
\enprop

We introduce two $\A^{\pi}$-forms of $\Vs(\Lambda)$ by
\eq&&\Vs_{\A^\pi}(\Lambda)=\Us_{\A^\pi}(\g)v_\Lam \qtext{and}
\Vs_{\A^\pi}(\Lambda)^\vee=\set{u\in \Vs(\Lambda)}%
{(u,\Vs_{\A^\pi}(\Lambda))\subset\A^\pi}.\eneq

Note that we have an isomorphism
$$ \varphi|_{\UqBg^{-}}\cl\UqBg^{-}  \to \Uqmsg.$$
By Proposition~\ref{prop:simple BqBg-module UqBg^-} and
Corollary~\ref{Cor:BKM e quasi commute}, we have
\begin{proposition} \label{prop: constant multiple 2} \
If $P \in \Uqmsg$ satisfies $e_i'P=0$ for all $i \in I$, then
$P$ is a constant multiple of $1$.
\end{proposition}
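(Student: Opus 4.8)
The plan is to transport the statement to the negative part $\UqmBg$ of the quantum Kac--Moody superalgebra, where it is essentially contained in the proof of Proposition~\ref{prop:simple BqBg-module UqBg^-}, and to read it back through the isomorphism $\varphi|_{\UqmBg}\cl\UqmBg\isoto\Uqmsg$. First I would observe that, under the substitution \eqref{eq:expl}, the data \eqref{eq:Uqsg} of $\Uqsg$ satisfy \eqref{rel:ptt} and \eqref{cond:ttp}, so that by Proposition~\ref{prop: BqBg-structure on UqBg^-}, together with the gauge isomorphisms of Corollary~\ref{cor:Fequ} (cf.\ \eqref{eq:Iso1}), the map $\varphi$ identifies $\Uqmsg$ with $\UqmBg$ as modules over the quantum boson algebra $\mathbf{B}(\g)$, matching the operators $e_i'$ up to invertible scalars acting by weight. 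Hence it suffices to prove: \emph{if $a\in\UqmBg$ satisfies $e_i'a=0$ for every $i\in I$, then $a$ is a scalar multiple of $1$.}

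Next I would invoke Corollary~\ref{Cor:BKM e quasi commute}, which says precisely that $\UqmBg$ satisfies the hypothesis of Proposition~\ref{prop:simple BqBg-module UqBg^-}: any $P\in\UqmBg$ with $e_iP\in\UqmBg\,e_i$ for all $i$ is a scalar. (The two formulations agree because, by the triangular decomposition, $e_iP\in\UqmBg\,e_i$ is equivalent to $e_i'P=e_i^*P=0$, i.e.\ to $e_iP=(T_i^{-1}PT_i)e_i$.) Therefore Proposition~\ref{prop:simple BqBg-module UqBg^-} applies, and every $\rtl$-weighted $\mathbf{B}(\g)$-submodule $N$ of $\UqmBg$ with $N\cap\cor=0$ vanishes.

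To finish, let $a\in\UqmBg$ with $e_i'a=0$ for all $i$. Since $e_i'$ has weight $-\al_i$, each $\rtl$-homogeneous component of $a$ is again annihilated by all the $e_i'$, so we may assume $a$ is homogeneous of weight $\beta\in\rtl^{-}$. If $\beta=0$ then $a\in\UqmBg_{0}=\cor$. If $\beta\ne0$, set $N=\mathbf{B}(\g)\,a$; moving every $e_i'$ to the right past the $f$'s (where it annihilates $a$) shows that $N$ is spanned by the vectors $f_{j_1}\cdots f_{j_r}a$, whence $\wt(N)\subset\beta+\rtl^{-}\subset\rtl^{-}\setminus\{0\}$ and so $N\cap\cor=0$; by the previous paragraph $N=0$, hence $a=0$, contradicting $\beta\ne0$. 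So $a$ is a scalar in either case, which proves the proposition. The one delicate point is the first step: keeping track of the parameters $\theta,\pa,\tth,\tpa$ and the gauge twists carefully enough to be sure the reduction to $\UqmBg$ preserves the condition ``$e_i'a=0$ for all $i$''; everything afterward is formal.
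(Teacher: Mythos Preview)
Your proposal is correct and follows essentially the same route as the paper: the paper's entire proof is the sentence ``By Proposition~\ref{prop:simple BqBg-module UqBg^-} and Corollary~\ref{Cor:BKM e quasi commute}'', together with the isomorphism $\varphi|_{\UqBg^-}\cl\UqBg^-\to\Uqmsg$ noted just before the statement. Your third paragraph simply spells out how to pass from the conclusion of Proposition~\ref{prop:simple BqBg-module UqBg^-} (submodules missing $\cor$ vanish) to the desired statement (highest-weight vectors of nonzero weight vanish), which is straightforward; the only cosmetic slip is that Corollary~\ref{Cor:BKM e quasi commute} reads $e_ia\in\UqBg\,e_i$ rather than $\UqmBg\,e_i$, but this makes no difference once you unpack \eqref{eq:der} and the triangular decomposition.
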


Applying the arguments given in \cite[Lemma 3.4.3, Proposition
3.4.4]{Kash91}, we obtain the following proposition immediately.

\begin{proposition} \label{prop: non-deg form of Uqsg^-}
There is a unique non-degenerate symmetric bilinear form $( \ , \ )$ on $\Uqmsg$ such that
\begin{equation} \label{eqn: non-deg form on Uqsg-}
(1,1)=1, \quad (P,f_iQ) = (e_i'P,Q)  \ \ \text{for all} \ i \in I, \
P,Q \in \Uqmsg.
\end{equation}
\end{proposition}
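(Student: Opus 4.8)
The plan is to adapt the construction of the bilinear form on the negative half of a quantum group, following \cite[Lemma 3.4.3, Proposition 3.4.4]{Kash91} and \cite[Ch.~1]{Lus93}, while carrying along the twists $\tth_{ij}$ and the signs $\pi_i$ of the present covering setting — exactly as was done for $\Uqmsug$ in the discussion around \eqref{eqn: E_i', E_i^*}. Uniqueness is the easy half. Any symmetric form $b$ on $\Uqmsg$ with $b(1,1)=1$ and $b(P,f_iQ)=b(e_i'P,Q)$ is automatically $\rtl$-graded: since $e_i'$ lowers the degree by $\alpha_i$, $e_i'(1)=0$, and $\Uqmsg_{-\beta}=\sum_i f_i\,\Uqmsg_{-\beta+\alpha_i}$ for $\beta\neq 0$, one gets $b(\Uqmsg_{-\beta},\Uqmsg_{-\gamma})=0$ for $\beta\neq\gamma$ by induction on $|\beta|+|\gamma|$. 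Given gradedness, the value of $b$ on $\Uqmsg_{-\beta}\times\Uqmsg_{-\beta}$ is then forced by induction on $|\beta|$ through $b\bigl(P,\sum_\nu f_{i_\nu}Q_\nu\bigr)=\sum_\nu b(e_{i_\nu}'P,Q_\nu)$, starting from $b|_{\Uqmsg_0}=$ multiplication in $\cor$.

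For existence I would build the form on the free $\cor$-algebra $\mathsf F$ on generators $\vartheta_i$ $(i\in I)$ and then descend it. On $\mathsf F$ let $\partial_i$ be the twisted left derivation with $\partial_i(1)=0$ and $\partial_i(\vartheta_j x)=\tth_{ji}\,\vartheta_j\,\partial_i(x)+\delta_{i,j}\,x$, i.e.\ the free model of the boson operator $e_i'$, cf.\ \eqref{eq: e_i' f_j 1} and Lemma~\ref{Lem: boson relation BqBg}. Because $\mathsf F_{-\beta}=\bigoplus_i\vartheta_i\,\mathsf F_{-\beta+\alpha_i}$ for $\beta\neq 0$, the recursion $(1,1)=1$, $(P,\vartheta_i N)=(\partial_i P,N)$ unambiguously defines a bilinear form on $\mathsf F$. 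Two facts then have to be checked, each by the standard double induction of \cite{Kash91} (bringing in also the right derivations): that the form is symmetric, and that its radical is a two-sided ideal of $\mathsf F$ containing the Serre elements of \eqref{eq:Serre2} together with their images under the $\partial_i$. The combinatorial engine of the second fact is the identity \eqref{eq:bino}, used precisely as in the quasi-commutation arguments of Section~\ref{sec:FS} and in the proof of Proposition~\ref{prop: BqBg-structure on UqBg^-}, now with $\tip_i$ and $\pi_i^{k(k-1)/2}$ in place of the parameters there. Granting these, the form descends to $\mathsf F/(\text{Serre ideal})\simeq\Uqmsg$, is symmetric with $(1,1)=1$, and satisfies $(P,f_iQ)=(e_i'P,Q)$ since $\partial_i$ induces $e_i'$. (Alternatively, one may transport the non-degenerate form of \eqref{eqn: E_i', E_i^*} from $\Uqmsug$ to $\Uqmsg$ through the isomorphisms relating these negative subalgebras and verify that it is $\Q(q)^{\pi}$-valued.)

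Non-degeneracy of the resulting form on $\Uqmsg$ is then deduced from Proposition~\ref{prop: constant multiple 2}: if $P$ lies in the radical, gradedness lets us take $P\in\Uqmsg_{-\beta}$, and $(e_i'P,Q)=(P,f_iQ)=0$ for all $i$ and all $Q$ shows that each $e_i'P$ lies in the radical in the strictly lower degree $\beta-\alpha_i$, so induction on $|\beta|$ (base case $(P,1)=P$) gives $e_i'P=0$ for every $i\in I$; then Proposition~\ref{prop: constant multiple 2} forces $P\in\cor\cdot 1$, hence $P=0$ unless $\beta=0$. The step that needs real care — the main obstacle — is showing that the $q$-Serre elements and their $\partial_i$-derivatives lie in the radical of the pairing on $\mathsf F$: this is exactly where one must make sure that the identity \eqref{eq:bino} still closes up after all the $\tth_{ij}$ and $\pi_i^{k(k-1)/2}$ factors are inserted. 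The remaining ingredients — gradedness, uniqueness, symmetry, the descent, and the non-degeneracy deduction — are routine.
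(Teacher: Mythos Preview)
Your proposal is correct and is precisely what the paper intends: it simply cites \cite[Lemma~3.4.3, Proposition~3.4.4]{Kash91} and Proposition~\ref{prop: constant multiple 2}, and you have expanded those references faithfully, including the use of Proposition~\ref{prop: constant multiple 2} to conclude non-degeneracy. One small redundancy: once you know the radical of the free-algebra form is a two-sided ideal containing the Serre elements $S_{ij}$, it already contains the whole Serre ideal, so there is no need to check the $\partial_i$-images separately (indeed $(\partial_i S,Q)=(S,f_iQ)=0$ is automatic).
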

We define the {\em dual $\A^{\pi}$-form} of $\Us^{-}(\g)$ to be
$$\Us_{\A^{\pi}}^{-}(\g)^{\vee}: = \{ u \in \Us^{-}(\g)  \mid (u, \,
\Us_{\A^{\pi}}(\g)) \subset \A^{\pi} \}.$$

\vskip 5mm

\section{Perfect bases} \label{sec: Perfect bases}
In this section, using the notion of strong perfect bases, we prove
a theorem that characterizes
$\Vs_{\A^\pi}(\Lambda)^\vee$.

\vskip 1em

Let $V= \bigoplus_{\lambda \in \P} V_\lambda $ be a $\P$-graded
$\Q(q)^{\pi}$-module. We assume that
\begin{itemize}
\item [(i)] there are finitely many $\lambda_1,\ldots,\lambda_s \in \P$ such that
$$\wt(V) \seteq \{ \mu \in \P \ | \ V_\mu \neq 0  \} \subset \bigcup_{i=1}^s (\lambda_i - \rtl^+),$$
\item [(ii)] for each $i\in I$, there is a linear operator $e_i : V \rightarrow V$ such that
$e_i V_\lambda \subset V_{\lambda+\alpha_i}$.
\end{itemize}
For any $v \in V$ and $i \in I$, we define
\begin{itemize}
\item[(a)] $\varepsilon_i(v) \seteq \begin{cases} \min \{ n \in \Z_{\ge 0}  \ | \ e_i^{n+1}v =0 \}
& \text{ if $v \neq 0$,} \\
\qquad  - \infty & \text{if $v = 0$,} \end{cases}$
\item[(b)] $V_i^{<k}\seteq \set{v \in V}{\varepsilon_i(v) < k } = {\rm Ker} \ e_i^k$\quad for $k\ge0$.
\end{itemize}

\begin{definition}[\cite{BerKaz07,KOP11a}] \label{Def: perfect, strongly perfect}
\bnum
\item A $\Q(q)^\pi$-basis $B$ of $V$ is called a {\em perfect basis} if
\bna
\item $B=\bigsqcup_{\mu\in \wt(V)} B_{\mu}, \ \text{ where } B_{\mu}\seteq B \cap V_{\mu}$,
\item for any $ b \in B$ and $i \in I$ with $e_i(b) \neq 0$,
there exists a unique element in $B$, denoted by $\tilde{\mathsf{e}}_i(b)$, satisfying the following formula:
\begin{align*}
    e_ib -c_i(b)\;\tilde{\mathsf{e}}_i(b)\in V^{< \varepsilon_i(b)-1}_i \mbox{ for some }
    c_i(b) \in (\Q(q)^\pi)^{\times},
\end{align*}
\item if $b,b'\in B$ and $i\in I$ satisfy $\eps_i(b)=\eps_i(b')>0$
and $\tilde{\mathsf{e}}_i(b) = \tilde{\mathsf{e}}_i(b')$, then $ b = b'$.
\end{enumerate}
\item We say that a perfect basis is {\em strong} if,
for any $i\in I$ and $b\in B$ such that $e_i(b)\not=0$,
there exist some $m\in\Z$ and $\eps=0,1$ such that
$$ c_i(b)=\pi^\eps q^m[\varepsilon_i(b)]^\pi_i.$$
\end{enumerate}
\end{definition}
\noindent
Note that $[n]^\pi_i=\sum_{k=0}^{n-1}q_i^{1-n+2k}\pi_i^k$
for $n \in \Z_{> 0}$.

For any sequence ${\bf i}=(i_1,\ldots,i_m) \in I^m$ $(m \ge 1)$, we define a binary relation
$\preceq_{{\bf i}}$ on $V \setminus \{0\} $ inductively as follows:
\begin{equation*}
 \begin{aligned}
 \mbox{ if } {\bf i}=(i),\  v \preceq_{{\bf i}} v' &\Leftrightarrow \varepsilon_i(v) \le \varepsilon_i(v'), \\
 \mbox{ if } {\bf i}=(i;{\bf i}'), \ v \preceq_{{\bf i}} v'
&\Leftrightarrow
\begin{cases} \varepsilon_i(v) < \varepsilon_i(v') \ \ \mbox{ or} \\
             \varepsilon_i(v)=\varepsilon_i(v'),\; e_i^{\varepsilon_i(v)}(v) \preceq_{{\bf i'}} e_i^{\varepsilon_i(v')}(v'). \end{cases}
\end{aligned}
\end{equation*}
We write: \  (i) $v \equiv_{{\bf i}} v'$ if  $v \preceq_{{\bf i}}
v'$ and $v' \preceq_{{\bf i}} v$,  \ (ii) $ v' \prec_{{\bf i}} v$ if
$ v' \preceq_{{\bf i}} v$ and $v \not \equiv_{{\bf i}} v'$.

One can easily verify the following lemma.

\begin{lemma} \hfill

{\rm (a)} If $v \not\equiv_{{\bf i}} v'$, then
$v+v' \equiv_{{\bf i}} \begin{cases} v &\text{if } v' \prec_{{\bf i}} v , \\
                                               v' &\text{if } v \prec_{{\bf i}} v'. \end{cases}$

{\rm (b)} For all $v \in V \setminus \{ 0 \}$,
 the set $V^{\prec_{{\bf i}} v }\seteq\{ 0 \} \bigsqcup
 \{ v' \in V \setminus \{ 0 \} \mid v' \prec_{{\bf i}} v \}$ forms a
$\mathbb{Q}^\pi(q)$-module of V.
\end{lemma}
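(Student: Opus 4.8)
The plan is to reduce everything to two straightforward inductions on the length $m$ of the sequence ${\bf i}=(i_1,\dots,i_m)$. First I would record the basic structural facts about $\preceq_{{\bf i}}$ that the two assertions use implicitly. By induction on $m$ one checks that $\preceq_{{\bf i}}$ is a \emph{total preorder} on $V\setminus\{0\}$: reflexivity and totality are immediate from the recursive definition together with the corresponding facts for $\le$ on $\Z_{\ge 0}$, and transitivity follows by splitting $u\preceq_{{\bf i}}v\preceq_{{\bf i}}w$ into the cases $\varepsilon_{i_1}(u)<\varepsilon_{i_1}(v)$ and $\varepsilon_{i_1}(u)=\varepsilon_{i_1}(v)$ and invoking the inductive hypothesis for $(i_2,\dots,i_m)$. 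Since $\varepsilon_i(cv)=\varepsilon_i(v)$ and $e_i^k(cv)=c\,e_i^k(v)$ for every nonzero scalar $c$, the same induction yields $cv\equiv_{{\bf i}}v$; in particular $-v\equiv_{{\bf i}}v$, so whenever $v'\prec_{{\bf i}}v$ one automatically has $v+v'\neq 0$ (otherwise $v'=-v\equiv_{{\bf i}}v$, contradicting $v'\prec_{{\bf i}}v$).

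The heart of the matter is the following mild strengthening of (a), proved by induction on $m$: for nonzero $v,v'$ with $v+v'\neq 0$,
\[
v'\prec_{{\bf i}}v\ \Longrightarrow\ v+v'\equiv_{{\bf i}}v,
\qquad
v'\equiv_{{\bf i}}v\ \Longrightarrow\ v+v'\preceq_{{\bf i}}v.
\]
For $m=1$ this is a direct computation: with $n=\varepsilon_{i_1}(v)$, if $\varepsilon_{i_1}(v')<n$ then $e_{i_1}^n(v')=0$, so $e_{i_1}^n(v+v')=e_{i_1}^n(v)\neq 0$ while $e_{i_1}^{n+1}(v+v')=0$, giving $\varepsilon_{i_1}(v+v')=n$; and if $\varepsilon_{i_1}(v')=n$ then $e_{i_1}^{n+1}(v+v')=0$ gives $\varepsilon_{i_1}(v+v')\le n$. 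For the inductive step set ${\bf i}'=(i_2,\dots,i_m)$ and $n=\varepsilon_{i_1}(v)$. The case $\varepsilon_{i_1}(v')<n$ is identical to the base case and produces $\varepsilon_{i_1}(v+v')=n$ with $e_{i_1}^n(v+v')=e_{i_1}^n(v)$, whence $v+v'\equiv_{{\bf i}}v$. In the case $\varepsilon_{i_1}(v')=n$ one has $e_{i_1}^n(v')\prec_{{\bf i}'}e_{i_1}^n(v)$ (resp. $\equiv_{{\bf i}'}$); applying the inductive hypothesis over ${\bf i}'$ to the pair $e_{i_1}^n(v),e_{i_1}^n(v')$ — and, in the $\prec_{{\bf i}'}$ subcase, using $w\equiv_{{\bf i}'}-w$ again to see that their sum is nonzero — one computes $\varepsilon_{i_1}(v+v')$ and $e_{i_1}^n(v+v')$ and reads off $v+v'\equiv_{{\bf i}}v$ (resp. $v+v'\preceq_{{\bf i}}v$, allowing also the possibility $e_{i_1}^n(v+v')=0$, i.e. $\varepsilon_{i_1}(v+v')<n$) directly from the recursive definition of $\preceq_{{\bf i}}$.

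With this in hand, part (a) is exactly the first implication above together with its symmetric counterpart, the hypothesis $v'\prec_{{\bf i}}v$ having already guaranteed $v+v'\neq 0$. For part (b): $0$ lies in $V^{\prec_{{\bf i}}v}$ by definition; closure under scalars follows from $cv'\equiv_{{\bf i}}v'\prec_{{\bf i}}v$ and transitivity (so $cv'\preceq_{{\bf i}}v$ and $cv'\not\equiv_{{\bf i}}v$); and closure under addition follows by taking $v',v''\prec_{{\bf i}}v$ with $v'+v''\neq 0$ and using the strengthened lemma — if $v'\not\equiv_{{\bf i}}v''$, relabel so that $v''\prec_{{\bf i}}v'$ and get $v'+v''\equiv_{{\bf i}}v'$, while if $v'\equiv_{{\bf i}}v''$ one gets $v'+v''\preceq_{{\bf i}}v'$; in both cases transitivity with $v'\prec_{{\bf i}}v$ gives $v'+v''\prec_{{\bf i}}v$. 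I do not expect a genuine obstacle here: the only points that need a little care are the strengthening of (a) to the case $v'\equiv_{{\bf i}}v$ (which is what makes $V^{\prec_{{\bf i}}v}$ closed under sums of equivalent vectors) and the bookkeeping of exactly when a sum $v+v'$ vanishes, both of which are handled by the observation $-v\equiv_{{\bf i}}v$ from the first step.
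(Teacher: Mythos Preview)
Your proof is correct. The paper does not actually give a proof of this lemma---it simply states ``One can easily verify the following lemma''---so there is nothing to compare against; your induction on the length $m$ of ${\bf i}$, together with the strengthening of (a) to the case $v'\equiv_{{\bf i}}v$ (needed for closure of $V^{\prec_{{\bf i}}v}$ under sums of equivalent vectors), is precisely the routine verification the authors had in mind.
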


For ${\bf i}=(i_1,\ldots,i_m) \in I^m$ and $v \in V \setminus \{ 0
\}$, we define $e^\tp_{{\bf i}}$ as follows:
$$ e_i^\tp(v):= \begin{cases} e_i^{\{\varepsilon_i(v) \}}(v)
\ \ & \text{if} \ {\bf i}=(i), \\
 e^\tp_{i} \circ e_{{\bf i'}}^\tp \ \ & \text{if} \ {\bf i}=(i,{\bf
 i'}).
\end{cases}
$$
One can see that if $B$ is a strong perfect basis, then $e_{{\bf i}}^\tp B \subset (\A^\pi)^\times \cdot B$.

Let $ V^{H}\seteq \set{ v \in V }{e_iv=0 \mbox{ for all } i \in I}$
be the space of highest weight vectors in $V$ and let $ B^{H} =
V^{H} \cap B$ be the set of highest weight vectors in $B$.
Then we have
\Lemma[{\cite[Claim 5.32]{BerKaz07}}]\label{lem:ht}
The subset $B^H$ is a $\Q(q)^\pi$-basis of $V^H$.
\enlemma

\begin{proof}
Indeed, \cite{BerKaz07} treated the case when the base ring is a
field. However, since $\Q(q)^\pi \simeq \Q(q)^{\oplus2}$, we can
reduce this lemma to their case.
\end{proof}

In
\cite{BerKaz07}, Berenstein and Kazhdan proved a uniqueness theorem
for perfect bases in the following sense:

\begin{theorem}[\cite{BerKaz07}] \label{Thm: perfect morphism}
Let $B$ and $B'$ be perfect bases of $V$ such that $B^H=(B')^H$.
Then there exist a bijection $\psi\cl B \isoto B'$ and a map $\xi\cl
B \to \mathbb{Q}(q)^\times$ such that
$$\psi(b)-\xi(b)b \in V^{\prec_{{\bf i}} b}$$
for any $b \in B$ and any ${\bf i}=(i_1,\dots,i_m)$ satisfying $e_{{\bf i}}^{\tp}(b) \in V^{H}$.
Moreover, such $\psi$
and $\xi$ are unique and $\psi$ commutes with $\tilde{\mathsf{e}}_i$
and $\eps_i$ $(i \in I)$.
\end{theorem}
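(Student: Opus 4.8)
The plan is to prove the uniqueness theorem for perfect bases (Theorem~\ref{Thm: perfect morphism}) by a double induction: an outer induction on the ``depth'' of a basis element (the length $m$ of a sequence $\mathbf{i}$ needed to bring $b$ to a highest weight vector via $e_{\mathbf i}^{\tp}$) and an inner induction along the preorder $\preceq_{\mathbf i}$. The key structural input is Lemma~\ref{lem:ht}, which tells us $B^H=(B')^H$ are literally the same basis of $V^H$, so the map $\psi$ is already pinned down on the ``top layer''. For a general $b\in B$ with $e_i(b)\neq 0$ for some $i$, the perfect basis axioms give a distinguished element $\tilde{\mathsf e}_i(b)\in B$ with $e_ib-c_i(b)\tilde{\mathsf e}_i(b)\in V_i^{<\eps_i(b)-1}$; since $\tilde{\mathsf e}_i(b)$ has strictly smaller depth, we may assume $\psi(\tilde{\mathsf e}_i(b))$ is already constructed. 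The natural attempt is then to \emph{define} $\psi(b)$ to be the unique element $b'\in B'$ with $\tilde{\mathsf e}_i(b')=\psi(\tilde{\mathsf e}_i(b))$ and $\eps_i(b')=\eps_i(b)$ — uniqueness of such $b'$ being exactly axiom (c) of Definition~\ref{Def: perfect, strongly perfect} applied to $B'$, and existence coming by running the argument symmetrically (or by a surjectivity/counting argument using that both $B,B'$ refine the same weight grading).

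First I would fix, for each $b\in B$, a choice of $\mathbf i=(i_1,\dots,i_m)$ with $e_{\mathbf i}^{\tp}(b)\in V^H$, and argue that the resulting $\psi$ is \emph{independent} of the choice: this is where the preorder $\preceq_{\mathbf i}$ and the preceding Lemma (parts (a) and (b)) do the real work. The point is that $e_i^{\eps_i(b)}(b)$ and $c_i(b)^{-1}\eps_i(b)$-scaled $e_i^{\{\eps_i(b)\}}(b)$ differ from $\tilde{\mathsf e}_i(b)$ only by terms in $V^{\prec_{\mathbf i}(\cdot)}$, so ``top-term'' information propagates coherently. Concretely, one shows by induction on $m$ that for any admissible $\mathbf i$ one has $\psi(b)-\xi(b)\,b\in V^{\prec_{\mathbf i}b}$ for a scalar $\xi(b)$ independent of $\mathbf i$; the inductive step rewrites $e_i\psi(b)$ using $\psi(\tilde{\mathsf e}_i(b))=\xi(\tilde{\mathsf e}_i(b))\tilde{\mathsf e}_i(b)+(\text{lower})$ and matches leading coefficients in the $e_i$-string decomposition $V=\bigoplus_k e_i^{\{k\}}(V_i^{<1}\cap\ker(\cdot))$. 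Commutation of $\psi$ with $\tilde{\mathsf e}_i$ and $\eps_i$ is then immediate from the defining property, since $\eps_i$ is read off from the $\preceq_{(i)}$-class and $\tilde{\mathsf e}_i$ from the leading term.

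Uniqueness of $(\psi,\xi)$ is the easy direction: if $(\psi',\xi')$ is another such pair, then $\psi(b)-\psi'(b)\in V^{\prec_{\mathbf i}b}$ for all suitable $\mathbf i$, and since distinct basis elements of $B'$ are $\not\equiv_{\mathbf i}$-comparable for a suitable $\mathbf i$ (by iterating axiom (c)), we force $\psi=\psi'$ and then $\xi=\xi'$. The one genuinely delicate point — and the step I expect to be the main obstacle — is the \emph{well-definedness and bijectivity of $\psi$ on a single layer}: showing that the prescription ``$\psi(b)=$ the element of $B'$ with the same $\eps_i$ and with $\tilde{\mathsf e}_i$-image $\psi(\tilde{\mathsf e}_i(b))$'' does not depend on which $i$ (with $e_i(b)\neq0$) we chose, and that it is a bijection rather than merely an injection. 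Independence of $i$ requires comparing the two ``kashiwara-operator'' factorizations through different simple directions, which is handled by the $\preceq_{\mathbf i}$-leading-term calculus above applied to a sequence starting with either index; bijectivity follows by symmetry (the same construction with $B,B'$ swapped produces an inverse) once one checks the two constructions are mutually inverse on leading terms. Since $\Q(q)^\pi\cong\Q(q)^{\oplus 2}$ is not a field, I would also note explicitly, as in the proof of Lemma~\ref{lem:ht}, that every step that invokes \cite{BerKaz07} over a field can be run over each of the two factor fields and then recombined; the strongness refinement $c_i(b)=\pi^\eps q^m[\eps_i(b)]^\pi_i$ plays no role in this particular theorem and is only recorded for later use.
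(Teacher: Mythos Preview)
The paper does not give its own proof of this theorem: it is stated as a citation from \cite{BerKaz07} and used as a black box. There is therefore nothing to compare your argument against in this paper; what you have written is a plausible reconstruction of the Berenstein--Kazhdan argument, adapted to the base ring $\Q(q)^\pi$ via the same splitting trick the paper uses for Lemma~\ref{lem:ht}.

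A few remarks on your sketch. Your outline is broadly right, and you correctly flag the delicate point: proving that the prescription ``take the unique $b'\in B'$ with $\eps_i(b')=\eps_i(b)$ and $\tilde{\mathsf e}_i(b')=\psi(\tilde{\mathsf e}_i(b))$'' is well-defined (independent of $i$) and that such $b'$ exists. Axiom~(c) gives uniqueness but not existence; your appeal to symmetry is the right idea, but to make it rigorous you need to show that the candidate inverse built from $B'$ really lands back on $b$, not merely on something $\equiv_{\mathbf i}$-equivalent to $b$. In \cite{BerKaz07} this is handled by first establishing that $\psi$ is determined on leading terms by the condition $\psi(b)-\xi(b)b\in V^{\prec_{\mathbf i}b}$, and then checking compatibility with $\tilde{\mathsf e}_i$ a posteriori rather than using it as the definition. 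Also note the theorem as stated in the paper has $\xi\colon B\to\Q(q)^\times$ rather than $(\Q(q)^\pi)^\times$; after reducing to the two field factors you would a priori get two scalars in $\Q(q)^\times$, so a small argument (or simply a corrected codomain) is needed to package them.
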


\begin{lemma}\label{Lem:condition for recognition}
Let $B$ be a strong perfect basis of $V$.
\bnum
\item For any finite subset $S$ of $B$, there exists
a finite sequence $\mathbf{i}=(i_1,\ldots,i_m)$ of $I$ such that
$e_{\mathbf{i}}^\tp(b)\in  (\A^\pi)^\times \cdot B^H $
for any $b\in S$.
\item Let $b_0\in B^H$ and let $\mathbf{i}=(i_1,\ldots,i_m)$
be a finite sequence in $I$. Then the set  $$S\seteq\set{b\in
B}{e_{\mathbf{i}}^\tp(b) \in (\A^\pi)^\times \cdot b_0}$$ is
linearly ordered by $\preceq_{\mathbf{i}}$. \ee
\end{lemma}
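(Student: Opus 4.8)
The plan is to deduce both assertions from properties of the operators $e_i^\tp$, handling (i) by an induction on weights and (ii) by an induction on the length of $\mathbf{i}$, with the one-letter case as the common engine. First I would record three facts. (1) For $v\neq 0$ one has $e_i^\tp(v)\neq 0$, since $e_i^{\eps_i(v)}(v)\neq 0$ by the very definition of $\eps_i$. (2) $e_i^\tp(cv)=c\,e_i^\tp(v)$ for any scalar $c$; in particular $e_i^\tp$ carries $\equiv_{\mathbf{i}}$-classes to $\equiv_{\mathbf{i}}$-classes, and $\preceq_{\mathbf{i}}$ (a total preorder, hence the only real content of (ii) is its antisymmetry on $S$) is invariant under rescaling each argument by a unit of $\Q(q)^\pi$. (3) For $b\in B$ with $k=\eps_i(b)$, iterating the perfect-basis axiom (b) gives $e_i^k(b)=\bigl(\prod_{j=0}^{k-1}c_i(\tilde{\mathsf e}_i^{\,j}b)\bigr)\tilde{\mathsf e}_i^{\,k}(b)$, and \emph{strongness} rewrites $\prod_{j=0}^{k-1}c_i(\tilde{\mathsf e}_i^{\,j}b)$ as a unit of $\A^\pi$ times $[k]^\pi_i!$; hence $e_i^\tp(b)\in(\A^\pi)^\times\cdot\tilde{\mathsf e}_i^{\,k}(b)$, with $\wt\bigl(e_i^\tp(b)\bigr)=\wt(b)+k\alpha_i$, and $e_i^\tp(b)=b$ whenever $b\in B^H$ (this is the already-noted inclusion $e_{\mathbf{i}}^\tp B\subset(\A^\pi)^\times B$, made precise).

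For (i) I would first do a single $b\in B$. If $b\notin B^H$, choose $i$ with $e_ib\neq 0$; then fact (3) produces a basis element $b'$ with $e_i^\tp(b)\in(\A^\pi)^\times b'$ and $\wt(b')>\wt(b)$. Since $\wt(V)\subset\bigcup_k(\lambda_k-\rtl^+)$, for fixed $\wt(b)$ the set $\{\mu\in\wt(V):\mu\ge\wt(b)\}$ is finite (it is covered by the finite intervals $[\wt(b),\lambda_k]$ in the root lattice over those $k$ with $\lambda_k\ge\wt(b)$), so iterating yields a strictly increasing chain of weights that must terminate in $B^H$; this gives a sequence $\mathbf{i}(b)$ with $e_{\mathbf{i}(b)}^\tp(b)\in(\A^\pi)^\times\cdot B^H$. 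For a finite $S=\{b_1,\dots,b_r\}$ I would induct on $r$: given a sequence $\mathbf j$ working for $b_1,\dots,b_s$, apply the single-element statement to the basis element underlying $e_{\mathbf j}^\tp(b_{s+1})$ to obtain $\mathbf k$, and then a suitable concatenation of $\mathbf j$ and $\mathbf k$ works for $b_{s+1}$ by construction and still works for $b_1,\dots,b_s$ because $B^H$ is fixed pointwise by every $e_\ell^\tp$ and $e^\tp$ is scalar-equivariant.

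For (ii) I would induct on $m=|\mathbf i|$. If $m=1$, say $\mathbf i=(i)$: by fact (3), $b\in S$ iff $\tilde{\mathsf e}_i^{\,\eps_i(b)}(b)=b_0$, and $\preceq_{(i)}$ compares elements of $S$ through $\eps_i$; that this is a \emph{strict} total order on $S$ amounts to showing distinct $b,b'\in S$ have $\eps_i(b)\neq\eps_i(b')$, which follows by running the perfect-basis axiom (c) downward (from $\eps_i(b)=\eps_i(b')=k$ and $\tilde{\mathsf e}_i^{\,k}b=\tilde{\mathsf e}_i^{\,k}b'=b_0$ one forces $\tilde{\mathsf e}_i^{\,j}b=\tilde{\mathsf e}_i^{\,j}b'$ successively down to $b=b'$). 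For $m>1$ peel off the leading index: for $b\in S$ write $e_i^\tp(b)=u\hat b$ with $u\in(\A^\pi)^\times$ and $\hat b\in B$; then $\hat b$ lies in the analogous set $S'$ attached to $(i_2,\dots,i_m)$ and $b_0$, so by induction $S'$ is strictly totally ordered by $\preceq_{(i_2,\dots,i_m)}$. Now take $b\neq b'$ in $S$: if $\eps_i(b)\neq\eps_i(b')$ they are strictly comparable; if $\eps_i(b)=\eps_i(b')$ then, using $e_i^{\eps_i(v)}(v)=[\eps_i(v)]^\pi_i!\,e_i^\tp(v)$ together with the rescaling-invariance of $\preceq$, comparing $b,b'$ under $\preceq_{\mathbf i}$ reduces to comparing $\hat b,\hat b'$ under $\preceq_{(i_2,\dots,i_m)}$, which are comparable in $S'$ and strictly so unless $\hat b=\hat b'$ — and $\hat b=\hat b'$ together with $\eps_i(b)=\eps_i(b')$ forces $b=b'$ by the same downward use of axiom (c). Hence no two distinct elements of $S$ are $\equiv_{\mathbf i}$-equivalent, i.e.\ $\preceq_{\mathbf i}$ is a linear order on $S$.

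The step I expect to be the main obstacle is the bookkeeping of $\preceq_{\mathbf i}$: one must check carefully that it descends to $\equiv_{\mathbf i}$-classes and is invariant under rescaling each argument by a unit, so that one may freely pass between the plain power $e_i^{\eps_i(\cdot)}$ appearing in its definition and the divided power $e_i^\tp$ appearing in the definition of $S$; and, most importantly, that two \emph{distinct} basis elements of $S$ are never merely $\equiv_{\mathbf i}$-equivalent but genuinely strictly ordered. The latter is exactly where the injectivity packaged in axiom (c) of a perfect basis is indispensable, and it is also where strongness is used: $[k]^\pi_i!$ is a unit in $\Q(q)^\pi$ but not in $\A^\pi$, so without the explicit form of $c_i(b)$ one could not keep all transition scalars inside $(\A^\pi)^\times$, which is what makes the membership condition defining $S$ stable under the reductions above.
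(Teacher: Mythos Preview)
Your argument is correct and follows the standard route that the paper has in mind when it defers to \cite[Lemma~2.9]{KKO12}: part~(i) by climbing weights via $e_i^\tp$ and concatenating sequences, part~(ii) by induction on $|\mathbf{i}|$ with the injectivity axiom~(c) supplying strictness. Two small remarks: in fact~(3) the equality $e_i^{k}(b)=\bigl(\prod_j c_i(\tilde{\mathsf e}_i^{\,j}b)\bigr)\tilde{\mathsf e}_i^{\,k}(b)$ holds only for $k=\eps_i(b)$ (for smaller $k$ there is a correction term in $V_i^{<\eps_i(b)-k}$), which is exactly the case you use; and your downward use of axiom~(c) tacitly relies on $\eps_i(\tilde{\mathsf e}_i(b))=\eps_i(b)-1$, which follows from axiom~(b) by applying $e_i^{\eps_i(b)}$ to the defining congruence.
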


\begin{proof}
The proof is similar to the one of \cite[Lemma 2.9]{KKO12}.
\end{proof}

Now we prove the main result of this section: a characterization
theorem for $\Vs_{\A^{\pi}}(\Lambda)^\vee$.

\begin{theorem} \label{Thm:recognition thm}
Let $M$ be a $\Us(\g)$-module in $\Oint^{\P}(\Us(\g))$ such that
$\wt(M) \subset \Lambda-\rtl^+$. Suppose  $M_{\A^\pi}$ is an
$\A^\pi$-submodule of $M$ satisfying the following conditions:
\bna
\item $e_i^{\{n \}}M_{\A^\pi} \subset M_{\A^\pi}$ for any
$i\in I$,
\item
$(M_{\A^\pi})_\Lambda=\A^\pi v_\Lambda$ for some
$v_{\Lambda} \in M_{\Lambda}$,
\item $M$ has a strong perfect basis $B \subset
M_{\A^\pi}$ such that $B^H=\{ v_\Lambda \}$.
\ee
\noindent Then we have
\bnum
\item $M_{\A^\pi} \simeq \Vs_{\A^\pi}(\Lambda)^\vee$,
\item $B$ is an $\A^\pi$-basis of $M_{\A^\pi}$,
\item $\Vs_{\A^\pi}(\Lambda)_\lambda \simeq
\Hom_{\A^\pi}(\Vs_{\A^\pi}(\Lambda)^\vee_\lambda,\A^\pi)$.
\ee
\end{theorem}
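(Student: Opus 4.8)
The plan is to first identify the $\Us(\g)$-module $M$, then transport everything down to the $\A^\pi$-level by two nested inductions, and finally deduce the duality (iii) formally. \emph{Identification of $M$.} By Lemma~\ref{lem:ht}, $B^H=\{v_\Lambda\}$ is a $\Q(q)^\pi$-basis of the space $M^H$ of highest weight vectors, so $M^H$ is free of rank one over $\Q(q)^\pi$ and is concentrated in weight $\Lambda$. Since $\Oint^{\P}(\Us(\g))$ is semisimple with simple objects $\Vs(\mu)/(\pi-\eps)\Vs(\mu)$ (Theorem~\ref{thm:Oint}(v)), and the $(\mu,\eps)$-summand contributes precisely a one-dimensional highest weight line, in weight $\mu$, on which $\pi$ acts by $\eps$, the only possibility is $M\simeq\Vs(\Lambda)/(\pi-1)\Vs(\Lambda)\oplus\Vs(\Lambda)/(\pi+1)\Vs(\Lambda)\simeq\Vs(\Lambda)$. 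I would fix such an isomorphism carrying $v_\Lambda$ to the canonical generator and henceforth identify $M=\Vs(\Lambda)$, endowed with the non-degenerate symmetric bilinear form $(\ ,\ )$ normalized by $(v_\Lambda,v_\Lambda)=1$ and $(e_iu,v)=(u,f_iv)$.

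Next I would observe the easy inclusion $M_{\A^\pi}\subseteq\Vs_{\A^\pi}(\Lambda)^\vee$. Since $e_i^{\{n\}}$ is the adjoint of $f_i^{\{n\}}$ for $(\ ,\ )$, and $\Vs_{\A^\pi}(\Lambda)=\Us_{\A^\pi}(\g)v_\Lambda$ is spanned over $\A^\pi$ by the monomials $f_{i_1}^{\{n_1\}}\cdots f_{i_\ell}^{\{n_\ell\}}v_\Lambda$ (the divided powers of the $e_i$ and the elements of $\Us^0_{\A^\pi}(\g)$ act on $v_\Lambda$ within $\A^\pi v_\Lambda$), any $u\in M_{\A^\pi}$ satisfies $(u,\,f_{i_1}^{\{n_1\}}\cdots f_{i_\ell}^{\{n_\ell\}}v_\Lambda)=(e_{i_\ell}^{\{n_\ell\}}\cdots e_{i_1}^{\{n_1\}}u,\,v_\Lambda)$; by (a) the left-hand argument stays in $M_{\A^\pi}$, and if it has weight $\Lambda$ it lies in $(M_{\A^\pi})_\Lambda=\A^\pi v_\Lambda$ by (b), so the pairing lies in $\A^\pi$. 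In particular $B\subseteq\Vs_{\A^\pi}(\Lambda)^\vee$.

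\emph{The key lemma.} I would then establish: \emph{if $L\subseteq M$ is an $\A^\pi$-submodule with $e_i^{\{n\}}L\subseteq L$ for all $i,n$, with $L_\Lambda=\A^\pi v_\Lambda$, and with $B\subseteq L$, then $L=\A^\pi B$.} This gives (ii) by taking $L=M_{\A^\pi}$, and (i) by taking $L=\Vs_{\A^\pi}(\Lambda)^\vee$ --- which is $e_i^{\{n\}}$-stable (adjoint to the $f_i^{\{n\}}$-stability of $\Vs_{\A^\pi}(\Lambda)$), has weight-$\Lambda$ part $\A^\pi v_\Lambda$ because $(v_\Lambda,v_\Lambda)=1$, and contains $B$ by the previous step --- so that $\Vs_{\A^\pi}(\Lambda)^\vee=\A^\pi B=M_{\A^\pi}$. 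To prove the lemma, $\A^\pi B\subseteq L$ is clear, and I would show $L_\mu\subseteq\A^\pi B_\mu$ by induction on the height $|\Lambda-\mu|$ (the case $\mu=\Lambda$ being the hypothesis) and, for fixed $\mu<\Lambda$, by a secondary induction on the size of the support of $m=\sum_{b\in B_\mu}c_bb\in L_\mu$. As $M^H$ lives only in weight $\Lambda$, some $e_im\neq0$; set $n=\eps_i(m)$. The input needed from \emph{strongness} of $B$ is that, iterating the defining relation of a perfect basis and using that $\eps_i$ cannot become negative, one gets $e_i^{\{\eps_i(b)\}}(b)=u_b\,\tilde{\mathsf{e}}_i^{\,\eps_i(b)}(b)$ with $u_b\in(\A^\pi)^\times$ (the scalar being $\prod_{k}\pi^{\eps_k}q^{m_k}[\eps_i(b)-k]^\pi_i$ divided by $[\eps_i(b)]^\pi_i!$, hence a unit), that $e_i^{\{n\}}(b)=0$ when $\eps_i(b)<n$, and that the elements $\tilde{\mathsf{e}}_i^{\,n}(b)$ with $\eps_i(b)=n$ are pairwise distinct in $B_{\mu+n\alpha_i}$. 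Consequently $n=\max\{\eps_i(b):c_b\neq0\}$ and $e_i^{\{n\}}m=\sum_{\eps_i(b)=n}c_bu_b\,\tilde{\mathsf{e}}_i^{\,n}(b)\in L_{\mu+n\alpha_i}$, which by the height induction lies in $\A^\pi B_{\mu+n\alpha_i}$; comparing coefficients forces $c_b\in\A^\pi$ for each $b$ in the support with $\eps_i(b)=n$ (there is at least one such $b$, the one realizing the maximum). Subtracting those terms yields an element of $L_\mu$ with strictly smaller support, and the secondary induction closes the argument. (One can alternatively organise this via the sequences of Lemma~\ref{Lem:condition for recognition} together with the uniqueness theorem for perfect bases, Theorem~\ref{Thm: perfect morphism}.)

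\emph{The duality (iii) and the main obstacle.} Granting (i) and (ii), $\Vs_{\A^\pi}(\Lambda)^\vee=\A^\pi B$ has free weight spaces $\A^\pi B_\lambda$ of finite rank; the form then gives an injection $\Vs_{\A^\pi}(\Lambda)_\lambda\hookrightarrow\Hom_{\A^\pi}\!\bigl((\Vs_{\A^\pi}(\Lambda)^\vee)_\lambda,\A^\pi\bigr)$, and surjectivity follows because any $\A^\pi$-linear functional on $\A^\pi B_\lambda$ is represented by a unique $w\in\Vs(\Lambda)_\lambda$, which then lies in $\bigl((\Vs_{\A^\pi}(\Lambda)^\vee)^\vee\bigr)_\lambda=\Vs_{\A^\pi}(\Lambda)_\lambda$ since $\Vs_{\A^\pi}(\Lambda)$ is saturated in $\Vs(\Lambda)$ --- a point that reduces, via $\A^\pi\simeq\A\oplus\A$, to the standard statement for Lusztig's integral form of the integrable highest weight module over a quantum Kac--Moody algebra. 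I expect the main obstacle to be the key lemma, and within it the extraction from \emph{strongness} of the unitriangular-with-unit-diagonal behaviour of the divided powers $e_i^{\{n\}}$ on $B$, namely $e_i^{\{\eps_i(b)\}}(b)\in(\A^\pi)^\times\,\tilde{\mathsf{e}}_i^{\,\eps_i(b)}(b)$ together with injectivity of $b\mapsto\tilde{\mathsf{e}}_i^{\,n}(b)$ on $\{b:\eps_i(b)=n\}$; once this is in hand, both inductions are routine, and the remaining subtleties (arising because $\Q(q)^\pi$ is not a field) are handled uniformly by splitting $\A^\pi\simeq\A\oplus\A$ and appealing to the $\pi=\pm1$ quantum Kac--Moody case.
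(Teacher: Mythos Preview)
Your argument for (i) and (ii) is correct and is essentially a reorganisation of the paper's proof. The paper chooses, for each $u\in\Vs_{\A^\pi}(\Lambda)^\vee$, a sequence $\mathbf{i}$ via Lemma~\ref{Lem:condition for recognition} so that $B(u)$ becomes linearly ordered by $\prec_{\mathbf{i}}$, and then peels off the coefficients by descending induction along that order using the operators $e_{i_m}^{\{\ell_m\}}\cdots e_{i_1}^{\{\ell_1\}}$. Your height-plus-support double induction with a single $e_i^{\{n\}}$ achieves the same unitriangularity more directly; the ``extraction from strongness'' you worried about (namely $e_i^{\{\eps_i(b)\}}b\in(\A^\pi)^\times\,\tilde{\mathsf{e}}_i^{\,\eps_i(b)}(b)$ and injectivity of $b\mapsto\tilde{\mathsf{e}}_i^{\,n}(b)$ on $\{\eps_i(b)=n\}$) is exactly right and is all that is needed.

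There is, however, a genuine gap in your treatment of (iii). You reduce surjectivity to the equality $(\Vs_{\A^\pi}(\Lambda)^\vee)^\vee=\Vs_{\A^\pi}(\Lambda)$, i.e.\ to the saturation of $\Vs_{\A^\pi}(\Lambda)$ inside $\Vs(\Lambda)$, and then claim that this ``reduces, via $\A^\pi\simeq\A\oplus\A$, to the standard statement for Lusztig's integral form''. But the $\pi=-1$ component of $\Uqsg$ is not a quantum Kac--Moody algebra: the parameters $\tip_i=-q_i^2$ for $i\in\Iod$ and $\tth_{ij}=(-1)^{\pa(i)\pa(j)}q_i^{-a_{ij}}$ put you squarely in the super setting, where no such ``standard'' saturation result is available --- indeed, freeness of $\Vs_{\A^\pi}(\Lambda)$ is one of the outputs of this paper (Theorem~\ref{th:main1}), obtained \emph{via} the present recognition theorem and the categorification. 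So your appeal is circular.

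The paper avoids this by proving directly that the dual basis $\{b^\vee\}_{b\in B}$ lies in $\Vs_{\A^\pi}(\Lambda)$ (the lemma immediately following the theorem). Dualising the strong-perfect-basis relation $e_i^{\{\ell\}}b=c_{b,\ell}\binmpi{\eps_i(b)}{\ell}\,\tilde{\mathsf{e}}_i^{\,\ell}(b)+\sum_{\eps_i(b')<\eps_i(b)-\ell}a_{b'}b'$ gives
\[
f_i^{\{\ell\}}\bigl(\tilde{\mathsf{e}}_i^{\,\ell}(b)^\vee\bigr)
= c_{b,\ell}\binmpi{\eps_i(b)}{\ell}\,b^\vee
+\sum_{\eps_i(b')>\eps_i(b)}a'_{b'}(b')^\vee,
\]
with $c_{b,\ell}\in(\A^\pi)^\times$. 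Taking $\ell=\eps_i(b)$ and running an induction on $|\Lambda-\wt(b)|$ together with a descending induction on $\eps_i(b)$ then shows $b^\vee\in\Vs_{\A^\pi}(\Lambda)$, which is exactly the missing direction. This step genuinely uses the strong-perfect-basis hypothesis a second time (for the $f$-side), and cannot be replaced by an appeal to a pre-existing integrality result.
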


\begin{proof}
Since $M$ has a unique highest weight vector $v_\Lambda$,
the  $\Us(\g)$-module $M$ is isomorphic to
$\Vs(\Lambda)$. Since $(M_{\A^\pi})_\Lambda=\A^\pi
v_\Lambda$ and
\begin{align*}
\Vs_{\A^\pi}(\Lambda)^\vee_\lambda  =\left\{ u \in \Vs(\Lambda)_\lambda \ \big| \
\begin{matrix}
e^{\{a_1 \}}_{i_1} \cdots e^{\{ a_\ell \}}_{i_\ell} u \in \A^\pi v_\Lambda
\text{ for all } (i_1,\cdots,i_\ell)  \\
\text{ such that } \sum_{k=1}^\ell a_{k} \alpha_{i_k}+\lambda= \Lambda\\ \end{matrix}
 \right \},
\end{align*}
it is clear that $M_{\A^\pi}$ is contained in
$\Vs_{\A^\pi}(\Lambda)^\vee$. Thus, in order to see (i) and (ii), it
suffices to show that $\Vs_{\A^\pi}(\Lambda)^\vee
\subset\soplus_{b\in B}\A^{\pi}b$.

For any $u \in \Vs_{\A^\pi}(\Lambda)^\vee$, we write $u = \sum_{b
\in B}c_b b$ with $c_b \in \Q(q)^{\sqrt{\pi}}$. Set $B(u) \seteq \{
b \in B \mid c_b \neq 0 \}$. By Lemma \ref{Lem:condition for
recognition}(i), there exists a sequence ${\bf i}=(i_1,\ldots,i_m)$
such that $e_{{\bf i}}^\tp(b) \in (\A^\pi)^\times v_\Lambda$ for
every $b \in B(u)$.
 Then Lemma \ref{Lem:condition for
recognition}(ii) tells that $B(u)$ is linearly ordered with respect
to $\prec_{\mathbf{i}}$. Using the descending induction, we shall
show that $c_b \in \A^\pi$. For the maximal element $\mathbf{b}$,
$e_{\mathbf{i}}^{\tp}(c_{\mathbf{b}}
\mathbf{b})=e_{\mathbf{i}}^{\tp}(u)= a_{\mathbf{b}}
c_{\mathbf{b}}\mathbf{b}$ for some $a_{\mathbf{b}} \in
(\A^\pi)^\times$. Thus we can start an induction. Assume that
$c_{b'}\in\A^\pi$ for any $b'\in B$ such that
$b\prec_{\mathbf{i}}b'$. By setting $v_0=b$,
$\ell_k=\eps_{i_k}(v_{k-1})$ and $v_k=e_{i_k}^{\{\ell_k\}}v_{k-1}$
($1\le k\le m$), we have
$$e_{i_m}^{\{ \ell_m \}}\cdots e_{i_1}^{\{ \ell_1 \}}u\hs{-0.5ex}=\hs{-0.5ex}a_b c_b v_\Lambda
\hs{-0.5ex}+
\hs{-1.5ex}\sum_{ \substack{ b\prec_{\mathbf{i}}b'\\ a_{b'}\hs{-0.5ex} \in \hs{-0.5ex} (\A^\pi)^\times}}
\hs{-1.3ex}c_{b'}e_{i_m}^{\{ \ell_m \}}\cdots
e_{i_1}^{\{ \ell_1 \}}b' \in \Vs_{\A^\pi}(\Lambda)^\vee  \ \
\text{ for some } a_b \hs{-0.5ex} \in \hs{-0.5ex} (\A^\pi)^\times,$$
 which implies $c_b\in\A^\pi$.

\smallskip
\noi (iii) follows from (i), (ii) and the lemma below.
\end{proof}

\Lemma Assume that $\Vs(\Lambda)^\vee$ has a strong perfect basis
$B$ such that $v_\La\in B$ and $B\subset
\Vs_{\A^\pi}(\Lambda)^\vee$. Then the dual basis of $B$ is an
$\A^\pi$-basis of $\Vs_{\A^\pi}(\Lambda)$. \enlemma \Proof Let
$\{b^\vee\}_{b\in B}$ be the dual basis of $B$. By the definition of
strong perfect bases, for any $\ell\in\Z_{>0}$ and $b\in B$, we can
write
$$e_i^{\{\ell\}}b=c_{b,\ell}\binmpi{\eps_i(b)}{\ell}\tilde{\mathsf{e}}_i^\ell(b)
+\sum_{\eps_i(b')<\eps_i(b)-\ell}a_{b'}b'$$
for some $a_{b'}\in \A^\pi$ and $c_{b,\ell}\in(\A^\pi)^\times$.
Hence we have
\eq &&
f_i^{\{\ell\}}\bl\tilde{\mathsf{e}}_i^\ell(b)^\vee\br=
c_{b,\ell}\binmpi{\eps_i(b)}{\ell}b^\vee+\sum_{\eps_i(b')>\eps_i(b)}a'_{b'}
(b')^\vee\label{eq:fb}
\eneq
for some $a'_{b'}\in\A^\pi$.

Since $B$ is an $\A^\pi$-basis of $\Vs_{\A^\pi}(\Lambda)^\vee$, we
have
$$\Vs_{\A^\pi}(\Lambda)\subset\soplus_{b\in B} A^\pi b^\vee.$$
Hence it is enough to show that
\eq&&
b^\vee\in\Vs_{\A^\pi}(\Lambda)
\label{eq:bin}
\eneq
 for any $\beta\in\rtl^+$ and $b\in B_{\La-\beta}$.
We shall prove it by  induction on the height $|\beta|$.
If $\beta=0$, the assertion is trivial.
Let us assume $|\beta|>0$. Then we prove \eqref{eq:bin} for $i\in I$ and
$b\in B_{\La-\beta}$ such that $\eps_i(b)>0$ by the descending induction on $
\eps_i(b)$.
Taking $\ell=\eps_i(b)$, \eqref{eq:fb} implies
$$f_i^{\{\ell\}} \bl e_i^\tp(b)^\vee\br-b^\vee\in \soplus_{\eps_i(b')>\eps_i(b)}\A^\pi
(b')^\vee.$$
Since $f_i^{\{\ell\}}\bl e_i^\tp(b)\br^\vee$ and $(b')^\vee$
belong to  $\Vs_{\A^\pi}(\Lambda)$
by the induction hypothesis, we obtain
$b^\vee\in \Vs_{\A^\pi}(\Lambda)$.
\QED

In Theorem \ref{Thm: categorical strong} and Theorem \ref{th:main1},
we will show that $\Vs_{\A^\pi}(\Lambda)^\vee$ has a strong perfect
basis.
\vskip 5mm

\section{ Supercategories and $2$-supercategories} \label{Sec:supers}

In this section, we recall the notion of supercategories,
superfunctors, superbimodules and their basic properties (see
\cite[Section 2]{KKT11}). We also introduce the notion of
$2$-supercategories.

\subsection{ Supercategories }
\begin{definition} \
\bnum
\item
A {\em supercategory} is a category $\shc$ equipped with an
endofunctor $\Pi_\shc$ of $\shc$ and an isomorphism
$\xi_\shc\cl\Pi_\shc^2\overset{\sim}{\to}{\rm id}_{\shc}$ such that
$\xi_\shc\cdot\Pi_\shc=\Pi_\shc\cdot\xi_\shc \in
\Hom(\Pi_\shc^3,\Pi_\shc)$.
\item
For a pair of supercategories $\shc$ and
$\shc'$, a {\em superfunctor} from $\shc$ to
$\shc'$ is a functor $F\cl
\shc\to\shc'$ endowed with an isomorphism $\alm{F}\cl F\cdot \Pi_{\shc}\isoto
\Pi_{\shc'}\cdot F$ such that the following diagram commutes:
\eq&&\ba{c}\xymatrix@C=7ex{
F\cdot(\Pi_{\shc})^2\ar[r]^-{\alm{F}\cdot\Pi_{\shc}}
\ar[d]^-{F\cdot \xi_\shc}&\Pi_{\shc'}\cdot
F\cdot\Pi_{\shc}
\ar[r]^-{\Pi_{\shc'}\cdot\alm{F}}
&(\Pi_{\shc'})^2\cdot F\ar[d]_{\xi_{\shc'}\cdot F}\\
F\ar[rr]^{{\rm id}_F}&&F}\ea
\eneq
If $F$ is an equivalence of categories,
we say that $(F,\alm{F})$ is an {\em equivalence of supercategories}.

\item Let $(F,\alm{F})$ and $(F',\alm{F'})$ be superfunctors
from a supercategory  $\shc$ to $\shc'$.
A morphism from $(F,\alm{F})$ to $(F',\alm{F'})$ is a
morphism of functors $\varphi\cl F\to F'$ such that
$$\xymatrix@C=8ex@R=4ex{
F\cdot\Pi_{\shc}\ar[r]^{\varphi\cdot\Pi_{\shc}}\ar[d]_{\alm{F}}
&F'\cdot\Pi_{\shc}\ar[d]_{\alm{F'}}\\
\Pi_{\shc'}\cdot F\ar[r]^{\Pi_{\shc'}\cdot\varphi}&\Pi_{\shc'}\cdot F'
}$$
commutes.
\item
For  a pair of superfunctors $F\cl\shc\to\shc'$ and $F'\cl\shc'\to \shc''$, the
composition $F'\cdot F\cl \shc\to\shc''$ of superfunctors
is defined by
taking the composition
$$\xymatrix{F'\cdot F\cdot\Pi_{\shc}\ar[r]^{F'\cdot \alm{F}}&
F'\cdot\Pi_{\shc'}\cdot F\ar[r]^{\alm{F'}\cdot F}&\Pi_{\shc''}\cdot F'\cdot F}$$
as $\alm{F'\cdot F}$.
\end{enumerate}
\end{definition}

\noindent {\em In this paper, a supercategory is assumed to be a $\cor$-linear
additive category,
where $\cor$ is a commutative ring in which $2$ is invertible.}

The functors ${\rm id}_{\shc}$ and $\Pi$ are  superfunctors by
taking  $\alm{{\rm id}_{\shc}}={\rm id}_{\Pi}\cl{\rm id}_{\shc}\cdot
\Pi \to \Pi \cdot {\rm id}_{\shc}$ and $\alm{\Pi}= - {\rm
id}_{\Pi^2}\cl\Pi \cdot \Pi \to \Pi \cdot \Pi$. {\em Note the sign}.
This is one of the main reasons that the sign is involved in
calculation in supercategories. The morphism $\alm{F}\cl F \cdot \Pi
\to \Pi \cdot F$ is a morphism of superfunctors. Note that we have
\eq &&\alm{\Pi\cdot F}=-\Pi\cdot\alm{F}\in \Hom(\Pi\cdot F\cdot
\Pi, \Pi^2\cdot F). \eneq  For a supercategory $(\shc, \Pi,\xi)$,
its {\em sign-reversed supercategory} $\shc^{\rev}$ is the
supercategory $(\shc,\Pi,-\xi)$. If $\sqrt{-1}$ exists in $\cor$,
then $\shc^\rev$ is equivalent to $\shc$ as a supercategory.

The {\em Clifford twist} of a supercategory $(\shc,\Pi,\xi)$ is the
supercategory $(\shc^{\ct},\Pi^\ct,\xi^\ct)$,  where $\shc^\ct$ is
the category whose set of objects is the set of pairs $(X,\vphi)$ of
objects $X$ of $\shc$ and isomorphisms $\vphi\cl \Pi X\isoto X$ such
that \eq&&\ba[c]{ll} \xymatrix@R=3ex@C=7ex{
&\Pi X\ar[dr]^\vphi\\
\Pi^2 X\ar[ur]^{\Pi\vphi}\ar[rr]^{\xi_X}&&X}& \qquad
\parbox{15ex}{\vs{1ex}\quad \\commutes.}\ea \label{eq:phi2} \eneq
For objects $(X,\vphi)$ and $(X',\vphi')$ of $\shc^\ct$, we define
$\Hom_{\shc^\ct}\bigl((X,\vphi),(X',\vphi')\br)$ as the subset of
$\Hom_\shc(X,X')$ consisting of morphisms $f\cl X\to X'$ such that
the following diagram commutes:
$$\xymatrix@C=8ex@R=4ex{
\Pi X\ar[r]^{\Pi f}\ar[d]_\vphi&\Pi X'\ar[d]^{\vphi'}\\
X\ar[r]^f&X'.}
$$
We define $\Pi_{\shc^\ct}\cl \shc^\ct \to \shc^\ct$ and $\xi_{\shc^\ct}\cl
(\Pi_{\shc^\ct})^2\isoto\id_{\shc^\ct}$ by
$$
\begin{aligned}
& \Pi_{\shc^\ct}(X,\vphi)=(X,-\vphi), \\
& \xi_{\shc^\ct}{(X, \vphi)}= \id_{(X, \vphi)}:
(\Pi_{\shc^\ct})^2(X,\vphi)=(X,\vphi) \to (X,\vphi).
\end{aligned}
$$

We have morphisms of superfunctors
$$\text{$\shc^\rev\to\shc^\ct$ and $\shc^\ct\to\shc^\rev$.}$$
If $\shc$ is idempotent complete (i.e., any endomorphism $f$ of an object $X\in\shc$ such that $f^2=f$ has a kernel in $\shc$), then we have an equivalence of
supercategories \eq&&(\shc^\ct)^\ct\simeq\shc.\eneq

\subsection{Superbifunctors}

\Def {\rm Let $\shc$, $\shc'$ and $\shc''$ be supercategories. A
\emph{superbifunctor} $F\cl \shc\times\shc'\to\shc$ is a bifunctor
endowed with isomorphisms
$$\alm{F}(X,Y)\cl F(\Pi X,Y)\isoto \Pi F(X,Y)
\qtext{and} \beta_F(X,Y)\cl F(X,\Pi Y)\isoto \Pi F(X,Y)$$ which are
functorial in $X\in\shc$ and $Y\in \shc'$ such that the two diagrams
$$\xymatrix@C=10ex{
F(\Pi^2X,Y)\ar[r]^-{\alm{F}(\Pi X,Y)}\ar[dr]_-{\xi_{\shc}}&\Pi F(\Pi X,Y)
\ar[r]^-{\Pi\cdot\alm{F}(X,Y)}
&\Pi^2F(X,Y)\ar[dl]^-{\xi_{\shc''}}\\
&F(X,Y)
}
$$
and
$$\xymatrix@C=10ex{
F(X,\Pi^2Y)\ar[r]^-{\beta_F(X,\Pi Y)}\ar[dr]_-{\xi_{\shc'}}&\Pi F(X,\Pi Y)
\ar[r]^-{\Pi\cdot\beta_F(X,Y)}
&\Pi^2F(X,Y)\ar[dl]^-{\xi_{\shc''}}\\
&F(X,Y)
}
$$
commute, and the diagram \eq&&\ba{l} \xymatrix@C=15ex{ F(\Pi X,\Pi
Y)\ar[r]^{\beta_F(\Pi X,Y)}\ar[d]_-{\alm{F}(X,\Pi Y)}\ar@{}[dr]|-
{-}
&\Pi F(\Pi X,Y)\ar[d]^-{\Pi\cdot\alm{F}(X,Y)}\\
\Pi F( X,\Pi Y)\ar[r]_{\Pi\cdot\beta_F(X,Y)}&\Pi^2 F(X,Y)
}\ea\label{dia:antb}\eneq {\em anti-commutes}. } \edf

Let $F\cl\shc\times\shc'\to\shc''$ be a superbifunctor
of supercategories.
Then we can check that
$F$ induces superbifunctors
\eqn
&&\shc^\rev\times\shc'\,{}^\rev\to\shc''\,{}^\rev,\\
&&\shc^\ct\times\shc'\,{}^\rev\to\shc''\,{}^\ct.
\eneqn

Let $\shc$ and $\shc'$ be a pair of supercategories. We denote by
$\Fcts(\shc,\shc')$ the category of superfunctors from $\shc$ to
$\shc'$. This category is endowed with a structure of supercategory
by: \eqn &&\Pi(F,\alm{F})\seteq(\Pi_{\shc'},\alm{\Pi_{\shc'}})\cdot
(F,\alm{F})
=(\Pi_{\shc'}\cdot F,-\Pi_{\shc'}\cdot\alm{F}), \\
&&\xi(F,\alm{F})\seteq\xi_{\shc'}\cdot F\, :\,\Pi{}^2(F,\alm{F})
=((\Pi_{\shc'})^2\cdot F,\;(\Pi_{\shc'})^2\cdot \alm{F})\isoto (F,\alm{F}).
\eneqn
Note the sign in the definition of
$\Pi(F,\alm{F})$.

Let $\shc''$ be another supercategory.
Then we have the following proposition.
Since the proof is routine,
we just remark that
 the anti-commutativity of \eqref{dia:antb}
follows from $\alm{\Pi\cdot F}=-\Pi\cdot\alm{F}\in\Hom(\Pi F\Pi,\Pi^2F)$,
and we omit the details.

\Prop \hfill
\bnum\item The bifunctor
$\Fcts(\shc,\shc')\times\shc\to\shc'$, $(F,X)\mapsto F(X)$ is
endowed with a structure of  superbifunctor by: \eqn&&
\parbox{75ex}{$\al(F,X)\cl (\Pi\cdot F)(X)\isoto \Pi_{\shc'}(F(X))$
is the canonical isomorphism, \\
$\beta(F,X)\cl F(\Pi_\shc X)\to  \Pi_{\shc'}(F(X))$
is $\alm{F}(X)$.}
\eneqn
\item
The bifunctor
$\Fcts(\shc',\shc'')\times\Fcts(\shc.\shc')
\to \Fcts(\shc,\shc'')$ ,  $(G,F)\mapsto G\cdot F$,
is endowed with a structure of superbifunctor by:
\eqn&&
\parbox{75ex}{$\al(G,F)\cl (\Pi\cdot G)\cdot F\isoto \Pi\cdot (G\cdot F)$
is the canonical isomorphism, \\
$\beta(G,F)\cl G\cdot (\Pi\cdot F)\isoto \Pi\cdot (G\cdot F)$
is $\alm{G}\cdot F$.}
\eneqn
\ee
\enprop

The following proposition is also obvious.
\Prop Let $\shc$, $\shc'$ and $\shc''$ be supercategories.
A superbifunctor $\shc\times\shc'\to\shc''$
induces superfunctors
$$\shc\to\Fcts(\shc',\shc'')\qtext{and}\shc'\to\Fcts(\shc,\shc'').$$
\item
Conversely, a superfunctor $\shc\to\Fcts(\shc',\shc'')$ induces a
superbifunctor $\shc\times\shc'\to\shc''$. \enprop

Note that we have equivalences of supercategories:
$$
\begin{aligned}
& \Fcts(\shc^\ct,{\shc'}^\ct)\simeq \Fcts(\shc,\shc')^\rev, \\
& \Fcts(\shc^\rev,\shc'{}^\rev)\simeq\Fcts(\shc,\shc')^\rev.
\end{aligned}
$$

\subsection{Even and odd morphisms}
Let $(\shc,\Pi,\xi)$ be a supercategory. Let us denote by
$\sd{\shc}$ the category defined by $\Ob(\sd{\shc})=\Ob(\shc)$ and
$\Hom_{\sd{\shc}}(X,Y)=\Hom_{\shc}(X,Y)\oplus \Hom_{\shc}(X,\Pi Y)$.
The composition of $f\in \Hom_{\shc}(Y,\Pi^\eps
Z)\subset\Hom_{\sd{\shc}}(Y,Z)$ and $g\in \Hom_{\shc}(X,\Pi^{\eps'}
Y)\subset\Hom_{\sd{\shc}}(X,Y)$ ($\eps,\eps'=0,1$) is defined by
$X\To[g]\Pi^{\eps'} Y\To[{\Pi^{\eps'}f}]\Pi^{\eps+\eps'} Z$
(composed with $\Pi^2Z\isoto[\xi]Z$ when $\eps=\eps'=1$). Hence
$\Hom_{\sd{\shc}}(X,Y)$ has a structure of superspace, where
$\Hom_{\shc}(X,Y)$ is the even part and $\Hom_{\shc}(X,\Pi Y)$ is
the odd part. A morphism $X\to \Pi Y$ in $\shc$ is sometimes called
an {\em odd morphism} (in $\sd{\shc}$) from $X$ to $Y$.

The category $\sd{\shc}$ has a structure of supercategory. The
functor $\Pi_{\sd{\shc}}$ is defined as follows. For $X\in\shc$,
define $\Pi_{\sd{\shc}}(X)=X$. For $X,Y\in\shc$, the map
$\Pi_{\sd{\shc}}\cl\Hom_{\sd{\shc}}(X,Y) \to \Hom_{\sd{\shc}}\bl
\Pi_{\sd{\shc}}(X),\Pi_{\sd{\shc}}(Y)\br=\Hom_{\sd{\shc}}(X,Y)$ is
defined by
$$\Pi_{\sd{\shc}}\vert_{\Hom_{\shc}(X,\Pi^\eps Y)}
=(-1)^\eps\,\id_{\Hom_{\shc}(X,\Pi^\eps Y)} \qtext{for
$\eps=0,1$.}$$ The morphism $\xi_X\cl(\Pi_{\sd{\shc}})^2X\to X$ is
defined to be $\id_X$. Note that $\sd{\shc}$ is not idempotent
complete in general eve if $\shc$ is abelian.

There exists a canonical functor $\shc\to\sd{\shc}$ that
we denote by $X\mapsto \sd{X}$.
It has a structure of superfunctor by the isomorphism
$\alm{\sd{}}\cl\sd{}\cdot\Pi_{\shc}\isoto \Pi_{\sd{\shc}}\cdot\sd{}$ defined by
$(\alm{\sd{}})(X)=\id_{\Pi X}$, where
$(\alm{\sd{}})(X)\cl \sd{(\Pi X)}\isoto \Pi_{\sd{\shc}}(\sd{X})=\sd{X}$.

We can easily verify the following lemma.

\Lemma \label{lem:Dequi}
Let $\shc$ and $\shc'$ be supercategories.
\bnum\item There exists a canonical equivalence of supercategories
$$\Fcts(\shc,\shc')\isoto\Fcts(\sd{\shc},\sd{\shc'}).$$
\ro We denote it by $F\mapsto \sd{F}$.\rf
\item
We have $\sd{(\Pi_{\shc})}\simeq\Pi_{\sd{\shc}}$
as a superfunctor from $\shc^D$ to $\shc^D$.
\ee
\enlemma

\Lemma  Let $\shc$ and $\shc'$ be supercategories,
and let $\vphi\cl \sd{F}\to \sd{G}$ be a morphism in
$\sd{\Fcts(\shc,\shc')}_\eps$
and $f\cl X\to Y$be a morphism in
$\Hom_{\sd{\shc}}(X,Y)_{\eps'}$ $(\eps,\eps'=0,1)$.
Then the following diagram supercommutes:
$$\xymatrix@C=10ex@R=5ex{
\sd{F}(X)\ar[r]^{\sd{F}(f)}\ar[d]_{\vphi(X)}\ar@{}[dr]|{(-1)^{\eps\eps'}}
&\sd{F}(Y)\ar[d]^{\vphi(Y)}\\
\sd{G}(X)\ar[r]_{\sd{G}(f)}&\sd{G}(Y),
}
$$
i.e.\ $\vphi(Y)\circ \sd{F}(f)=(-1)^{\eps\eps'}\sd{G}(f)\circ\vphi(X)$.
\enlemma
\Proof
We denote by the same letters $\ol\vphi$ and $\ol f$ the morphisms
$\ol\vphi\cl F\to \Pi^\eps G$ and $\ol f\cl X\to \Pi^{\eps'}Y$
corresponding to $\vphi$ and $f$, respectively.
Then the result follows from the following commutative diagram in $\shc'$
$$\xymatrix@C=7ex@R=4ex{
F(X)\ar[r]^-{F(\ol{f})}\ar[d]_{\ol\vphi(X)}&F(\Pi^{\eps'}Y)
\ar[rr]^-{(\alm{F})^{\eps'}}
\ar[d]^{\ol\vphi(\Pi^{\eps'}Y)}&&
\Pi^{\eps'}F(Y)\ar[d]^{\ol\vphi(Y)}\\
\Pi^\eps G(X)\ar[r]_-{G(\ol f)}&\Pi^\eps G(\Pi^{\eps'}Y)
\ar[r]_-{(\alm{G})^{\eps'}}&
\Pi^\eps \Pi^{\eps'}G(Y)\ar[r]_-{(\alm{\Pi^\eps})^{\eps'}}
&\Pi^{\eps'} \Pi^{\eps}G(Y)
}
$$
and $(\al_{\Pi^\eps})^{\eps'}=(-1)^{\eps\eps'}\id_{\Pi^{\eps+\eps'}G(Y)}$.
\QED

\subsection{$2$-supercategories}\label{subsec:super2}

In this subsection, we give a definition of {\em
$2$-supercategories}. We only consider additive $2$-supercategories
over a base ring $\cor$ in which $2$ is invertible.

\Def {\rm A {\em $1$-supercategory} is a $\cor$-linear category
$\shc$ such that $\Hom_{\shc}(X,Y)$ is endowed with a structure of
$\cor$-supermodule for $X,Y\in\shc$ and the composition map
$\Hom_{\shc}(Y,Z)\times\Hom_{\shc}(X,Y)\to\Hom_{\shc}(X,Z)$ is
$\cor$-superbilinear.} \edf

We say that a morphism $f\cl X\to Y$ is even or odd according as $f$
belongs to the even part or the odd part of $\Hom(X,Y)$. For a
supercategory $\shc$, the category $\sd{\shc}$ is a
$1$-supercategory.

For a diagram
\eq
&&\ba{c}\xymatrix@C=7ex{
X\ar[r]^{f}\ar[d]_{\vphi}&Y\ar[d]^{\psi}\\
X'\ar[r]_{f'}&Y'}\ea \label{dia:super} \eneq with
$f\in\Hom_{\shc}(X,Y)_\eps$, $f'\in\Hom_{\shc}(X',Y')_{\eps}$ and
$\vphi\in\Hom_{\shc}(X,X')_{\eps'}$,
$\psi\in\Hom_{\shc}(Y,Y')_{\eps'}$ with $\eps,\eps'=0,1$, we say
that the diagram \eqref{dia:super} {\em supercommutes} or sometimes
{\em $(-1)^{\eps\eps'}$-commutes} if $\psi\circ
f=(-1)^{\eps\eps'}f'\circ \vphi$.

\medskip
For a pair $\shc$, $\shc'$ of super-$1$-categories, the notion of a
superfunctor from $\shc$ to $\shc'$ is naturally defined, and we do
not write it.  However, as for morphisms of functors and bifunctors,
we need a special care. \Def Let $\shc$ and  $\shc'$ be
$1$-supercategories and let $F,G\cl\shc\to\shc'$ be two
superfunctors.
 An {\em even} \ro resp.\ {\em odd}\rf\ {\em morphism}
$\vphi\cl F\to G$ is the data
associating
an even (resp.\ odd) morphism $\vphi(X)\cl F(X)\to G(X)$ to any
$X\in\shc$
such that
the diagram
$$\xymatrix@C=8ex{
F(X)\ar[r]^{F(f)}\ar[d]^{\vphi(X)}&F(Y)\ar[d]^{\vphi(Y)}\\
G(X)\ar[r]^{G(f)}&G(Y)}
$$
supercommutes for any $X,Y\in\shc$ and $f\in\Hom_\shc(X,Y)_\eps$ $(\eps=0,1)$.
\edf Then the superfunctors from $\shc$ to $\shc'$ and the morphisms
of superfunctors form a $1$-supercategory, which we denote by
$\Fcts(\shc,\shc')$.

\Def  Let $\shc$, $\shc'$, $\shc''$ be three
$1$-supercategories. A {\em superbifunctor}
$F\cl\shc\times\shc'\to\shc''$ is the data \bnum
\item
a map $\Ob(\shc)\times\Ob(\shc')\to \Ob(\shc'')$,
\item a $\cor$-linear even map $F(\scbul,Y)\cl
\Hom_{\shc}(X,X')\to \Hom_{\shc''}\bl F(X,Y),F(X',Y)\br$
for $X,X'\in\shc$ and $Y\in\shc'$,
\item a $\cor$-linear even map $F(X,\scbul)\cl
\Hom_{\shc'}(Y,Y')\to \Hom_{\shc''}\bl F(X,Y),F(X,Y')\br$
for $X\in\shc$ and $Y,Y'\in\shc'$,
\ee
such that
\bna\item
$F(\scbul,Y)\cl \shc\to\shc''$ and $F(X,\scbul)\cl \shc'\to\shc''$
are superfunctors,
\item
 as
elements of $\Hom_{\shc''}\bl F(X,Y),F(X',Y')\br$, we have
$$F(f,Y')\circ F(X,g)=(-1)^{\eps\eps'}F(X',g)\circ F(f,Y)$$ for
$X,X'\in\shc$, $f\in\Hom_{\shc}(X,X')_\eps$ and $Y,Y'\in\shc'$,
$g\in\Hom_{\shc'}(Y,Y')_{\eps'}$. \ee \edf

The following propositions are easy to verify.
\Prop
For $1$-supercategories $\shc$,$\shc'$ and $\shc''$,
the composition $(F,G)\mapsto F\cdot G$ gives a superbifunctor
$\Fcts(\shc',\shc'')\times\Fcts(\shc,\shc')\To\Fcts(\shc,\shc'')$
of $1$-supercategories.
\enprop

\Prop\label{prop:Dequiv} \hfill \bnum\item Let $F\cl \shc\to\shc'$
be a superfunctor of supercategories. Then  it induces a
superfunctor $\sd{F}\cl\sd{\shc}\to\sd{\shc'}$ of
$1$-supercategories.

 Moreover we have an equivalence of $1$-supercategories
$$\sd{\Fcts(\shc,\shc')}\isoto\Fcts(\sd{\shc},\sd{\shc'}).$$
\item
Let $F\cl \shc\times\shc' \to\shc''$ be a superbifunctor of
supercategories. Then  it induces a superbifunctor
$\sd{F}\cl\sd{\shc}\times \sd{\shc'}\to\sd{\shc''}$ of
$1$-supercategories. \ee \enprop

\Def {\rm A {\em $2$-supercategory} $\FA$ is the data of \bnum
\item a set $\FA$ of objects,
\item a $1$-supercategory $\CHom_\FA(a,a')$ for $a,a'\in\FA$,
\item a superbifunctor $\CHom_\FA(a_2,a_3)\times\CHom_\FA(a_1,a_2)\to
\CHom_\FA(a_1,a_3)$, $(b_2,b_1)\mapsto b_2b_1$ for $a_1,a_2,a_3\in\FA$,
\item an object $\one_a\in\CEnd_\FA(a)$ for $a\in\FA$,
\item a natural even isomorphism
$$\can(b_3,b_2,b_1)\cl(b_3b_2)b_1\isoto b_3(b_2b_1)$$
for $a_k\in\FA$ and $b_i\in\CHom_\FA(a_i,a_{i+1})$ $(k=1,\ldots,4$, $i=1,2,3)$,
\item natural even isomorphisms $$b \one_a\isoto b \quad \text{and}
\quad \one_{a'} b\isoto b$$ for $a,a'\in\FA$ and
$b\in\CHom_\FA(a,a')$

\ee such that the following diagrams are commutative.
$$\xymatrix@R=2ex{
& \bl (b_4b_3)b_2\br b_1 \ar[rr]^-{\can(b_4,b_3,b_2)\cdot b_1}
  \ar[dl]^(.4){\hs{2ex} \can(b_4b_3,b_2,b_1)} &&
 \bl b_4(b_3b_2)\br b_1\ar[dr]_(.4){\can(b_4,b_3b_2,b_1)\hs{3ex}} \\
(b_4b_3)(b_2b_1) \ar[drr]_{\can(b_4,b_3,b_2b_1)} &&&& b_4\bl (b_3b_2)b_1\br
  \ar[dll]^{b_4\cdot\can(b_3,b_2,b_1)}\\
&& b_4\bl b_3(b_2b_1)\br
}$$
$$\xymatrix@C=8ex@R=3ex{
(b_2 \one_a)b_1 \ar[rr]^{\can(b_2,I_a,b_1)} \ar[dr]&&
 b_2(\one_ab_1) \ar[dl]\\
& b_2b_1 }$$ } \edf

\begin{example} \hfill
\bnum
\item
Let the set of objects of $\FA$ be the set of supercategories. For
supercategories $\shc$ and $\shc'$, set
$\CHom_{\FA}(\shc,\shc')=\sd{\Fcts(\shc,\shc')}$. Then $\FA$ becomes
a $2$-supercategory.

\item
Let the set of objects of $\FA$ be the set of $\cor$-superalgebras.
Let $A$, $B$, $C$ be $\cor$-superalgebras. Set
$\CHom_{\FA}(A,B)=\sd{\Mods(B,A)}$ and define the bifunctor
$$\CHom_{\FA}(B,C)\times\CHom_{\FA}(A,B)\to\CHom_{\FA}(A,C) \quad
\text{by} \quad (K,L)\mapsto K\tens_BL.$$ Then $\FA$ is a
$2$-supercategory. (See \S\,\ref{subsec:superbimodules} below.)\ee
\end{example}

Let $\FA$ be a $2$-supercategory. The objects (resp.\ morphisms) of
$\CHom_{\FA}(a,a')$ are referred to as {\em $1$-arrows} (resp.\ {\em
$2$-arrows}). Let $b\cl a\to a'$ be a $1$-arrow. A {\em right
superadjoint} of $b$ is a $1$-arrow $b^\vee\cl a'\to a$ with even
$2$-arrows $\eps\cl bb^\vee\to\one_{a'}$ and $\eta\cl \one_a\to
b^\vee b $ such that
$$b\isoto b\one_a\To[ b\eta]bb^\vee b\To[\eps\,b]\one_{a'}b\isoto b$$
and
$$b^\vee\isoto \one_ab^\vee\To[\eta b^\vee]b^\vee bb^\vee\To[b^\vee\eps]
b^\vee\one_{a'}\isoto b^\vee$$ are the identities. If a right
superadjoint exists, then it is unique up to a unique even
isomorphism. We call $(b,b^\vee)$ a superadjoint pair and
$(\eps,\eta)$ the {\em superadjunction}.

Let $b,b'\cl a\to a'$ be a pair of $1$-arrows,
and assume that they admit right superadjoints
with superadjunctions $(\eps,\eta)$ and $(\eps',\eta')$.
Then we have an even isomorphism
$$\Hom_{\CHom(a,a')}(b,b')\isoto\Hom_{\CHom(a',a)}(b'\,{}^\vee,b^\vee)\qquad (f\mapsto f^\vee).$$
Here, $f^\vee$ is given by the composition
$$b'{}^\vee\isoto \one_ab'{}^\vee\To[\eta b'{}^\vee]b^\vee bb'{}^\vee
\To[f]b^\vee b'b'{}^\vee
\To[b^\vee\eps']
b^\vee\one_{a'}\isoto b^\vee.$$

\Prop Let $b_1,b_2,b_3$ be $1$-arrows from $a$ to $a'$. Assume that
they admit right superadjoints. For $f\in \Hom(b_1,b_2)_\eps$ and
$g\in \Hom(b_2,b_3)_{\eps'}$ with $\eps,\eps'=0,1$, we have
$$(g\circ f)^\vee=(-1)^{\eps\eps'}f^\vee\circ g^\vee.$$
\enprop \Proof Let $(\eps_k,\eta_k)$ be the superadjunction for
$b_k$ $(k=1,2,3)$. Then we have a diagram in $\CHom(a',a)$
$$\xymatrix@C=4ex@R=2.2ex{
&&b^\vee_3\ar[dl]_{\eta_1}\ar[dr]^{\eta_2}\\
&b^\vee_1b_1b^\vee_3\ar[dl]_{f}\ar[dr]^{\eta_2}&&b^\vee_2b_2b^\vee_3
\ar[dl]_{\eta_1}\ar[dr]^{g}\\
b^\vee_1b_2b^\vee_3\ar[dd]^{\id}\ar[dr]^{\eta_2}&&b^\vee_1b_1b^\vee_2b_2b^\vee_3
\ar[dl]_{f}\ar[dr]^{g}\ar@{}[dd]|{\fbox{A}}&&b^\vee_2b_3b^\vee_3\ar[dl]_{\eta_1}\ar[dr]^{\eps_3}\\
&b^\vee_1b_2b^\vee_2b_2b^\vee_3\ar[dl]_{\eps_2}\ar[dr]^{g}&&b^\vee_1b_1b^\vee_2b_3b^\vee_3
\ar[dl]_{f}\ar[dr]^{\eps_3}&&{\makebox[8ex][l]{$\quad b^\vee_2$}}\ar[dl]_{\eta_1}\\
b^\vee_1b_2b^\vee_3\ar[dr]^{g}&&b^\vee_1b_2b^\vee_2b_3b^\vee_3\ar[dl]_{\eps_2}\ar[dr]^{\eps_3}&&
b^\vee_1b_1b^\vee_2\ar[dl]_{f}\\
&b^\vee_1b_3b^\vee_3\ar[dr]^{\eps_3}&&b^\vee_1b_2b^\vee_2\ar[dl]_{\eps_2}\\
&&b^\vee_1 }$$
Here, $\eps_k$ and $\eta_k$ are even morphisms. Hence
all the squares are commutative except that the central square
$\fbox{A}$ is $(-1)^{\eps\eps'}$-commutative.

By the definition, $(g\circ f)^\vee$ is the composition of the left
most arrows, and $f^\vee\circ g^\vee$ is the composition of the
rightmost arrows. Hence we obtain the desired result. \QED

\begin{remark}
As seen in Lemma~\ref{lem:Dequi} (i) and
Proposition~\ref{prop:Dequiv}, the notion of supercategories and
that of super-$1$-categories are almost equivalent. Hence, although
we can define the notion of a $2$-category using the condition that
$\CHom_\FA(a,a')$ are supercategories, those two definitions are
almost equivalent.
\end{remark}

\subsection{Superalgebras and superbimodules}\label{subsec:superbimodules}

Recall that a $\cor$-superalgebra is a $\Z_2$-graded $\cor$-algebra.
Let $A=A_0 \oplus A_1$ be a superalgebra. We denote by $\phi_{A}$
the involution of $A$ given by
$$\phi_{A}(a)=(-1)^{\epsilon}a \quad
\text{for} \ \ a\in A_\epsilon, \ \epsilon=0,1.$$ We call $\phi_A$
the {\em parity involution} of the superalgebra $A$. An {\em
$A$-supermodule} is an $A$-module with a decomposition $M= M_0
\oplus M_1$ such that $A_\epsilon M_{\epsilon'} \subset
M_{\epsilon+\epsilon'}$ ($\epsilon,\epsilon' \in \Z_2$).  For an
$A$-supermodule $M$, we denote by $\phi_M\cl M\to M$ the involution
of $M$ given by
$\phi_M|_{M_\epsilon}=(-1)^\epsilon\,\id_{M_\epsilon}$. We call
$\phi_M$ the {\em parity involution} of the $A$-supermodule $M$.
Then we have $\phi_M(ax)=\phi_{A}(a)\phi_M(x)$ for any $a\in A$ and
$x\in M$.

Let $A$ and $B$ be $\cor$-superalgebras. We define the multiplication on
the tensor product $A \otimes_\cor B$  by
\begin{align} \label{eq:tensor superal}
(a_1\otimes b_1)(a_2\otimes b_2)=(-1)^{\eps'_1\eps_2}(a_1a_2)\otimes (b_1b_2)
\end{align}
for $a_i\in A_{\eps_i}$, $b_i\in B_{\eps'_i}$
($\eps_i,\eps_{i}'=0,1$). If $M$ is an $A$-supermodule and $N$ is a
$B$-supermodule, then $M\otimes_\cor N$ has a structure of
$A\otimes_\cor B$-supermodule by
\begin{align*}
(a\otimes b)(u\otimes v)=(-1)^{\eps\eps'}(au)\otimes (bv)
\end{align*}
for $a\in A$, $b\in B_{\eps}$, $u\in M_{\eps'}$, $v\in N$ ($\eps,\eps'=0,1$).

\begin{example} \label{exa:supercategories} Let $A$ be a $\cor$-superalgebra.
\bna
\item Let $\Mod(A)$ be the category of $A$-modules.
Then  $\Mod(A)$ is endowed with a supercategory structure induced by
the parity involution $\phi_A$; i.e., for $M\in\Mod(A)$, we have
$$
\begin{aligned}
& \Pi M  \seteq\set{ \pi(x)}{x \in M }, \ \
\pi(x)+\pi(x')=\pi(x+x'),
\\
& a \cdot \pi(x)  \seteq \pi(\phi_A(a) \cdot x) \ \ (a\in A, \, x,
x' \in M).
\end{aligned}$$
The isomorphism $\xi_M\cl \Pi^2M\to M$ is given by
$\pi\left(\pi(x)\right)\mapsto x$  ($x\in M$).

\item Let $\MOD(A)$ be the category of $A$-supermodules. The
morphisms in this category are $A$-module homomorphisms which
preserve the $\Z_2$-grading. Then $\MOD(A)$ has a supercategory structure
induced by the {\em parity shift}; i.e.,
\begin{align*}
& (\Pi M)_\epsilon\seteq\set{\pi(x)}{x\in M_{1-\epsilon}} \quad  (\epsilon=0,1), \\
& a\cdot\pi(x)\seteq\pi(\phi_{A}(a)\cdot x) \quad (a\in A, \, x\in
M).
\end{align*}
The isomorphism $\xi_M\cl\Pi^2M\to M$ is also given by
$\pi\left(\pi(x)\right)\mapsto x$.
\ee
\end{example}

Let $A$ be a $\cor$-superalgebra. The {\em sign-reversed}
$\cor$-superalgebra of $A$ is defined to be the $\cor$-superalgebra
$A^\rev\seteq\set{a^\rev}{a\in A}$ which is isomorphic to $A$ as a
$\cor$-supermodule with the multiplication given by
$$a^\rev\; b^\rev=(-1)^{\eps\eps'}(ab)^\rev \quad \text{for} \ \ a \in A_{\eps}, \,
b \in A_{\eps'}, \ \eps,\eps'=0,1. $$ For an $A$-supermodule $M$,
let $M^\rev\seteq\set{u^\rev}{u\in M}$ be the $A^\rev$-module with
the action given by
$$a^\rev\; u^\rev=(-1)^{\eps\eps'}(au)^\rev \quad \text{for} \ \
a \in A_{\eps}, \, u \in M_{\eps'}, \ \eps,\eps'=0,1. $$ We remark
that if $\cor$ contains $\sqrt{-1}$, then $A^\rev$ is (non
canonically) isomorphic to $A$ by $a^\rev\mapsto (\sqrt{-1})^\eps a$
for $\eps=0,1$ and $a\in A_\eps$.

\Lemma We have equivalences of supercategories:
$$\Mod_\super(A^\rev)\simeq \Mod_\super(A)^\rev\simeq\Mod(A)^\ct.$$
\enlemma
\Proof
The right equivalence is proved in \cite[Section 2]{KKT11}.
Let $M\mapsto M^\rev$ be an equivalence of categories
from $\Mod_\super(A)$ to $\Mod_\super(A^\rev)$.
We give an isomorphism
$$(\Pi M)^\rev\simeq \Pi (M^\rev)$$
by $\bl\pi(x)\br^\rev\mapsto \pi\bl\phi_M(x)^\rev\br$. We can check
easily that it gives an equivalence of supercategories from
$\Mod_\super(A)^\rev$ to $\Mod_\super(A^\rev)$. \QED

Let $A$ be a $\cor$-superalgebra.
Let us denote by $A^\sop$ the opposite superalgebra of $A$.
By definition, it is the superalgebra $(A^\sop)_\eps\seteq\set{a^\sop}{a\in A_\eps}$ ($\eps=0,1$)
with $a^\sop\; b^\sop=(-1)^{\eps\eps'}(ba)^\sop$ for $a\in A_\eps$ and $b\in A_{\eps'}$.
Then a right $A$-supermodule $M$ may be regarded as a left $A^\sop$-supermodule by
$a^\sop\; x=(-1)^{\eps\eps'}x a$ for $a\in A_\eps$ and $x\in M_{\eps'}$.
We should not confuse $A^\sop $ with the opposite algebra $A^\op\seteq\set{a^\op}{a\in A}$
with the multiplication $a^\op\;b^\op=(ba)^\op$.
We have $A^\sop\simeq(A^\op)^\rev$.

Let $A$ and $B$ be $\cor$-superalgebras. An {\em
$(A,B)$-superbimodule} is an $(A,B)$-bimodule with a
$\Z_2$-grading compatible with the left action of $A$ and the right
action of $B$. Furthermore, we assume that $ax=xa$ for $a\in\cor$ and $x\in M$.
We denote by $\Mods(A,B)$ the category of
$(A,B)$-superbimodules. We have $\Mods(A,B)\simeq\Mods(A\tens B^\sop)$.

For an $(A,B)$-superbimodule $L$, we have a functor $F_L\cl
\MOD(B)\to\MOD(A)$ given by $N\mapsto L\otimes_BN$ for $N \in\MOD(B)$.
Then $F_L$ becomes a superfunctor with an isomorphism
$$\alm{F_L}\cl F_L\Pi N=L\otimes_B\Pi N\to \Pi F_L N=\Pi(L\otimes_BN)$$
given by $$s\otimes \pi(x)\mapsto \pi(\phi_L(s)\otimes x) \ (s\in L, \ x\in N).$$

For an $(A,B)$-superbimodule $L$, the superbimodule structure on  $\Pi L$ is given as follows:
$$a\cdot\pi(s)\cdot b=\pi(\phi_A(a)\cdot s\cdot b) \quad
\text{for all $s\in L$,  $a\in A$ and $b\in B$.}$$
Then there exists a natural isomorphism between superfunctors $\eta\cl
F_{\Pi L}\overset{\sim}{\to} \Pi\cdot F_L$. The isomorphism
$\eta_N\cl (\Pi L)\otimes_BN\overset{\sim}{\to} \Pi(L\otimes_BN)$ is
given by $\pi(s)\otimes x\mapsto \pi(s\otimes x)$. It is an
isomorphism of superfunctors since one can easily check the
commutativity of the following diagram:
$$\xymatrix@C=10ex@R=2ex{
F_{\,\Pi L}\cdot\Pi\ar[r]^-{\eta\cdot\Pi}\ar[dd]_{\alm{F_{\Pi L}}}
&\Pi\cdot F_L\cdot \Pi\ar[dr]^{\Pi\cdot\alm{F_L}}\ar[dd]^{\alm{(\Pi\cdot F_L)}}\\
&&\Pi\cdot\Pi\cdot F_L\ar[dl]^(.3){\hs{3ex}\alm{\Pi}\cdot F_L=-{\rm id}_{\Pi\cdot\Pi\cdot F_L}}\\
\Pi\cdot F_{\,\Pi L}\ar[r]^-{\Pi\cdot\eta}&\Pi\cdot\Pi\cdot F_L.
}
$$
by using the fact $\phi_{\Pi L}(\pi(s))=-\pi(\phi_L(s))$.
Summing up, we obtain
\Prop $L\mapsto F_L$ gives a superfunctor
$$\Mods(A,B)\to \Fcts(\Mods(B),\Mods(A))\simeq \Fcts(\Mod(B),\Mod(A))$$
and superbifunctors \eqn&& \Mods(A,B)\times \Mods(B)\to\Mods(A), \\
&&\Mods(A,B)\times \Mod(B)\to\Mod(A).\eneqn
\enprop

Let $A,B,C$ be $\cor$-superalgebras. For $K\in\Mods(A,B)$ and
$L\in\Mods(B,C)$, the tensor product $K\ot_B L$ has a structure of
$(A,C)$-superbimodule. We define the homomorphisms
$$\al(K,L)\cl (\Pi K)\ot_BL\isoto\Pi (K\ot_B L) \quad \text{by}
\ \ \pi(x)\tens y\mapsto \pi(x\ot y)$$
and
$$\beta(K,L)\cl K\ot_B(\Pi L)\isoto\Pi (K\ot_B L) \quad \text{by}
\ \ x\otimes \pi(y)\mapsto \pi\bl\phi_K(x)\tens y\br.$$ These
homomorphisms are well-defined and we can easily check the following
lemma. \Lemma $\scbul\ot_B\scbul:\Mods(A,B)\times
\Mods(B,C)\to\Mods(A,C)$ is a superbifunctor of supercategories.
\enlemma

We now discuss the endomorphisms of bimodules. Let $A$, $B$, $C$ be
$\cor$-superalgebras and let $L$ be an $(A\tens C,B)$-superbimodule.
Regarding $L$ as an $(A,B)$-bimodule, we obtain a superfunctor
$F_L\cl \Mod(B)\to\Mod(A)$. Thus we get  a superalgebra homomorphism
$$C\to \End_{\sd{\Fcts(\Mod(B),\Mod(A))}}(F_L)\simeq\End_{\sd{\Mod(A,B)}}(L),$$
which is given by assigning to $c\in C_\eps$ ($\eps=0,1$) the
morphism in $\Mods(A,B)$
$$L\ni x\longmapsto \pi^\eps(cx)\in\Pi^\eps L.$$

Similarly, let $K$ be an $(A,B\tens C)$-superbimodule and consider
$K$ as an $(A,B)$-bimodule to  obtain a superfunctor $F_K\cl
\Mod(B)\to\Mod(A)$. Then we get a superalgebra homomorphism
$$C\to \End_{\sd{\Fcts(\Mod(B),\Mod(A))}}(F_K)^\sop\simeq\End_{\sd{\Mod(A,B)}}(K)^\sop$$
by assigning $\psi^\sop$ to $c\in C_\eps$ ($\eps=0,1$), where
$\psi\in\Hom_{\Mod(A,B)}(K,\Pi^\eps K)$ is the morphism
$$K\ni x\longmapsto \pi^\eps(\phi_K{}^\eps(x)c)\in\Pi^\eps K.$$

\subsection{Grothendieck group} Assume that the supercategory
$(\shc,\Pi,\xi)$ is an exact category such that $\Pi$ sends the
exact sequences to exact sequences. Recall that the Grothendieck
group $[\shc]$ of $\shc$ is the abelian group generated by $[X]$ \ro
$X$ is an object of $\shc$\rf\ with the defining relations:
\begin{center}
if $0 \to X' \to X \to X''\to 0$ is an exact sequence, then $[X] = [X'] + [X'']$.
\end{center}
We denote by $\pi$ the
involution of $[\shc]$ given by $[X] \mapsto [\Pi X]$.
Then $[\shc]$ is a module over $\Z^\pi=\Z\oplus\Z\pi$.

\vskip 5mm

\section{Supercategorification via quiver Hecke superalgebras} \label{sec: Quiver Hecke}

\subsection{Quiver Hecke superalgebras} \label{subsec:quiver}
In this subsection, we recall the definition of quiver Hecke
superalgebras and their basic properties (\cite{KKT11}). We take a
graded commutative ring $\k=\bigoplus_{n \in \Z_{\ge 0}}\k_n$ as a
base ring. {\em For the sake of simplicity, we assume that $\k_0$ is
a field of characteristic different from $2$.}

Let $(\car,\P,\Pi,\Pi^{\vee})$ be a Cartan superdatum.
For $i \neq j \in I$ and $r,s \in \Z_{\ge0}$, let
$t_{i,j;(r,s)}$ be an element of $\k$ satisfying the following conditions:
\begin{align*}
& t_{i,j;(r,s)} \in \k_{-2(\alpha_i|\alpha_j)-r(\alpha_i|\alpha_i)-s(\alpha_j|\alpha_j)},
\qquad t_{i,j;(r,s)}=t_{j,i;(s,r)}, \\
& t_{i,j;(-a_{ij},0)} \in \k_0^{\times},
\qquad  t_{i,j;(r,s)}=0 \text{ if $i \in \Iod$ and $r$ is odd.}
\end{align*}
We take $t_{i,j;(r,s)}=0$ for  $i=j$.

For any $\nu \in I^n$ ($n \ge 2$), let
$$\cP_\nu\seteq \k \langle x_1,\ldots,x_n \rangle /
    \langle x_ax_b - (-1)^{\pa(\nu_a)\pa(\nu_b)}x_bx_a \rangle_{1 \le a < b \le n}$$
be the superalgebra generated by $x_k$ ($1\le k\le n$) where the
parity of the indeterminate $x_k$ is $\pa(\nu_k)$. For $i,j \in I$,
we choose an element $\cQ_{i,j}$ in $\cP_{(ij)}$ of the form
\begin{equation*}
 \cQ_{i,j}(x_1,x_2)= \sum_{r,s\in\Z_{\ge0}} t_{i,j;(r,s)} x_1^r x_2^s.
\end{equation*}

\begin{definition}[\cite{KKT11}] \label{def:Quiver Hekce superalg} \
The {\em quiver Hecke superalgebra} $R(n)$ of degree $n$
associated with a Cartan superdatum $(\car,\P,\Pi,\Pi^{\vee})$ and
$(\cQ_{i,j})_{i,j\in I}$
 is the superalgebra over $\k$ generated by $e(\nu)$ $(\nu \in I^n)$, $x_k$
 $(1 \le k \le n)$, $\tau_{a}$ $(1 \le a \le n-1)$ with the parity
 \begin{equation*}
 \pa(e(\nu))=0, \quad \pa(x_k e(\nu))= \pa(\nu_k), \quad \pa(\tau_a e(\nu))=\pa(\nu_a)\pa(\nu_{a+1})
 \end{equation*}
 subject to the following defining relations:
\begin{equation}\label{eq:R(n)}
\begin{aligned}
& e(\mu)e(\nu)=\delta_{\mu,\nu}e(\nu) \ \ \text{for} \ \mu, \ \nu
\in I^n,  \ \ 1 = \sum_{\nu \in I^n} e(\nu),\\
& x_p x_q e(\nu)= (-1)^{\pa(\nu_p)\pa(\nu_q)}x_qx_p e(\nu) \quad
\text{if} \ p \neq q, \\
& x_pe(\nu)=e(\nu)x_p, \ \ \tau_ae(\nu) = e(s_a \,\nu) \tau_a, \
\text{where} \  s_a=(a,a+1), \\
& \tau_ax_pe(\nu)=
(-1)^{\pa(\nu_p)\pa(\nu_a)\pa(\nu_{a+1})}x_p\tau_ae(\nu) \ \
\text{if} \ p \neq a, \ a+1, \\
&(\tau_ax_{a+1}\hs{-0.2ex}-(-1)^{\pa(\nu_a)\pa(\nu_{a+1})}x_{a}\tau_a)e(\nu)
\\ & \qquad \qquad =(x_{a+1}\tau_a \, - (-1)^{\pa(\nu_a)\pa(\nu_{a+1})}\tau_ax_{a})e(\nu)
= \delta_{\nu_a,\nu_{a+1}}e(\nu), \\
& \tau_a^2e(\nu)=\cQ_{\nu_a,\nu_{a+1}}(x_a,x_{a+1})e(\nu), \\
&
\tau_a\tau_be(\nu)=(-1)^{\pa(\nu_a)\pa(\nu_{a+1})\pa(\nu_b)\pa(\nu_{b+1})}
\tau_b\tau_ae(\nu) \ \ \text{if} \ |a-b|>1,\\
& (\tau_{a+1}\tau_{a}\tau_{a+1}-\tau_{a}\tau_{a+1}\tau_{a})e(\nu) \\
& =  \begin{cases}
    \dfrac{\cQ_{\nu_{a},\nu_{a+1}}(x_{a+2},x_{a+1})-
    \cQ_{\nu_{a},\nu_{a+1}}(x_{a},x_{a+1}) }{x_{a+2}-x_{a}}e(\nu)
        \quad  \text{if} \ \ \nu_{a}=\nu_{a+2} \in \Iev, \\
    (-1)^{\pa(\nu_{a+1})}(x_{a+2}-x_a)
\dfrac{\cQ_{\nu_{a},\nu_{a+1}}(x_{a+2},x_{a+1})-\cQ_{\nu_{a},\nu_{a+1}}(x_{a},x_{a+1})}{x^2_{a+2}-x^2_{a}}e(\nu) \\
    \qquad \qquad \qquad \qquad \qquad \qquad \qquad \qquad \qquad \qquad   \text{if} \ \ \nu_{a}=\nu_{a+2} \in \Iod, \\
     \qquad \qquad \qquad \qquad \qquad 0 \quad \qquad \qquad \qquad\qquad \text{ otherwise }.
    \end{cases}
\end{aligned}
\end{equation}
\end{definition}

\noindent The algebra $R(n)$ is also $\Z$-graded by setting
\begin{equation*}
\degZ(e(\nu))=0, \quad \degZ(x_k e(\nu))= (\alpha_{\nu_k}|\alpha_{\nu_k}),
\quad \degZ(\tau_a e(\nu))=-(\alpha_{\nu_a}|\alpha_{\nu_{a+1}}).
\end{equation*}

For $\beta \in \rtl^+$ with $|\beta|=n$, set
\begin{align*}
& I^\beta=\set{ \nu=(\nu_1,\ldots,\nu_n) \in I^n}{\alpha_{\nu_1}+ \cdots + \alpha_{\nu_n}=\beta }.
\end{align*}
For $\alpha,\beta \in \rtl^+$ and $m,n \in \Z_{\ge 0}$, we define
\begin{align*}
& R(m,n)=R(m)\otimes_{\k}R(n)\subset R(m+n), \\
& e(n)=\sum_{\nu \in I^n}e(\nu), \quad e(\beta)=\sum_{\nu \in I^\beta}e(\nu), \quad
       e(\alpha,\beta)=\sum_{\mu \in I^\alpha, \ \nu \in I^\beta}e(\mu,\nu), \\
& R(\beta)=e(\beta)R(n),  \quad R(\alpha,\beta)=R(\alpha)\otimes_{\k}R(\beta)\subset R(\alpha+\beta),  \\
& e(n,i^k)=\sum_{ \substack{\nu \in I^{n+k}, \\ \nu_{n+1}=\cdots=\nu_{n+k}=i} }e(\nu), \quad
e(\beta,i^k)=e(\beta,k\alpha_i).
\end{align*}
Here, $R(m)\otimes_{\k}R(n)$ is endowed with a superalgebra
structure by \eqref{eq:tensor superal} and the map
$R(m)\otimes_{\k}R(n) \to R(m+n)$ is a superalgebra homomorphism.

For an $R(m)$-supermodule $M$ and an $R(n)$-supermodule $N$, we define their {\em convolution product}
$M \circ N$ by
\begin{equation*}
M \circ N \seteq R(m+n) \otimes_{R(m,n)}(M \tens N).
\end{equation*}
\begin{proposition} [{\cite[Corollary 3.15]{KKT11}}] \label{Prop: PBW}
For each $w \in S_n$, choose a reduced expression $s_{i_1}\cdots
s_{i_\ell}$ of $w$ and write $\tau_w=\tau_{i_1}\cdots\tau_{i_\ell}$.
Then
$$\{ x_1^{a_1}\cdots x_n^{a_n} \tau_w e(\nu) \ | \ a=(a_1,\ldots,a_n) \in \Z_{\ge 0}^n, \ w \in S_n, \ \nu \in I^n \} $$
forms a basis of the free $\k$-module $R(n)$.
\end{proposition}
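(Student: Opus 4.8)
The plan is to prove this PBW-type statement in two independent halves, exactly in the spirit of the non-super Khovanov--Lauda--Rouquier case but with every sign governed by the parity function $\pa$: first a spanning argument using only the defining relations \eqref{eq:R(n)}, and then a linear-independence argument based on a faithful ``polynomial'' representation of $R(n)$.

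For spanning, I would take an arbitrary monomial in the generators $e(\nu)$, $x_k$, $\tau_a$ and reduce it in three stages. First, using $e(\mu)e(\nu)=\delta_{\mu,\nu}e(\nu)$, $x_p e(\nu)=e(\nu)x_p$ and $\tau_a e(\nu)=e(s_a\nu)\tau_a$, collect all idempotents on the right, so that every term becomes a word in the $x$'s and $\tau$'s times a single $e(\nu)$. Second, move every $x$-variable to the left of every $\tau$: the relation $\tau_a x_p e(\nu)=\pm\, x_p\tau_a e(\nu)$ for $p\neq a,a+1$, together with the two ``mixed'' relations $(\tau_a x_{a+1}-(-1)^{\pa(\nu_a)\pa(\nu_{a+1})}x_a\tau_a)e(\nu)=\delta_{\nu_a,\nu_{a+1}}e(\nu)$ and its companion, achieve this at the cost of producing terms with strictly fewer $\tau$-factors, so one inducts on the number of $\tau$'s. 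Third, given a pure $\tau$-word $\tau_{j_1}\cdots\tau_{j_m}e(\nu)$, rewrite it modulo words with fewer $\tau$-factors as $\tau_w e(\nu)$ with $w=s_{j_1}\cdots s_{j_m}\in S_n$: if the word is non-reduced one shortens it by a Matsumoto-type argument using $\tau_a\tau_b=\pm\tau_b\tau_a$ for $|a-b|>1$, the deformed braid relation $\tau_{a+1}\tau_a\tau_{a+1}-\tau_a\tau_{a+1}\tau_a\in e(\nu)\cP_\nu$ (zero $\tau$'s on the right-hand side of \eqref{eq:R(n)}), and $\tau_a^2 e(\nu)=\cQ_{\nu_a,\nu_{a+1}}(x_a,x_{a+1})e(\nu)$; while for a reduced word, any two reduced expressions of $w$ are related by braid and commutation moves, so $\tau_w$ is well defined up to fewer-$\tau$ terms. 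A final pass pushing the new $x$'s back to the left shows that the proposed family spans $R(n)$ over $\k$.

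For linear independence I would realize $R(n)$ faithfully inside $\End_\k\bigl(\bigoplus_{\nu\in I^n}\cP_\nu\bigr)$. Let $e(\nu)$ act as the projection onto the $\cP_\nu$-summand and let $x_k$ act by left multiplication; let $\tau_a$ send $\cP_\nu$ to $\cP_{s_a\nu}$ by the parity-signed interchange of $x_a$ and $x_{a+1}$ when $\nu_a\neq\nu_{a+1}$, and by a super Demazure (divided-difference) operator when $\nu_a=\nu_{a+1}$, with the $\cQ$-dependent polynomial corrections --- and the denominator $x_{a+2}^2-x_a^2$ in the odd diagonal case $\nu_a=\nu_{a+2}\in\Iod$ in place of $x_{a+2}-x_a$ --- chosen exactly so as to match \eqref{eq:R(n)}. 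One then verifies relation by relation that this assignment is well defined, making $\bigoplus_\nu\cP_\nu$ an $R(n)$-supermodule. Evaluating a spanning element $x_1^{a_1}\cdots x_n^{a_n}\tau_w e(\nu)$ on $1\in\cP_\nu$ and extracting the leading term with respect to a term order refined by the length $\ell(w)$, one sees that distinct triples $(a,w,\nu)$ yield $\k$-linearly independent vectors; since $\k_0$ is a field and $\bigoplus_\nu\cP_\nu$ is $\k$-free, the spanning family admits no nontrivial $\k$-linear relation and is therefore a basis of $R(n)$.

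The main obstacle is the verification that these super divided-difference operators actually define an action --- in particular that the deformed braid relation for $\tau_{a+1}\tau_a\tau_{a+1}-\tau_a\tau_{a+1}\tau_a$ holds on $\bigoplus_\nu\cP_\nu$ in each of the three cases ($\nu_a=\nu_{a+2}$ even, $\nu_a=\nu_{a+2}$ odd, $\nu_a\neq\nu_{a+2}$), with every sign coming from $\pa$ tracked correctly; this is the genuinely new computation compared with the ordinary KLR algebra and is carried out in \cite{KKT11}. Once the representation is in hand, both the well-definedness of $\tau_w$ up to lower-order terms and the linear independence reduce to the routine leading-term analysis sketched above.
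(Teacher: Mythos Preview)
The paper does not give its own proof of this proposition: it is stated with a citation to \cite[Corollary~3.15]{KKT11} and no argument is supplied. So there is nothing in the present paper to compare your write-up against.

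That said, your outline is exactly the approach of \cite{KKT11} (and of the non-super case in \cite{KL1}): spanning via a straightening argument using the defining relations \eqref{eq:R(n)} with induction on the number of $\tau$'s, followed by linear independence via a faithful polynomial representation on $\bigoplus_{\nu\in I^n}\cP_\nu$ in which the $\tau_a$ act by super divided-difference/Demazure operators. You correctly identify the one genuinely new point in the super setting, namely the verification of the deformed braid relation on this representation in the odd-diagonal case $\nu_a=\nu_{a+2}\in\Iod$, and you correctly attribute this computation to \cite{KKT11}. Your sketch is an accurate summary of that reference.
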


Let $\MOD(R(\beta))$ be the category of  arbitrary $\Z$-graded
$R(\beta)$-supermodules. Let $\PROJ(R(\beta))$ and $\REP(R(\beta))$
be the full subcategories of $\MOD(R(\beta))$ consisting of finitely
generated projective $R(\beta)$-supermodules and
$R(\beta)$-supermodules finite-dimensional over $\k_0$,
respectively. The morphisms in these categories are
$R(\beta)$-linear homomorphisms preserving the $\Z \times
\Z_2$-grading. As we have seen in Example \ref{exa:supercategories}
(b), these categories have a supercategory structure induced by the
parity shift.

\vskip 1em

{\em In the sequel, by an $R(n)$-module or $R(\beta)$-module,
we mean a $\Z$-graded $R(n)$-supermodule
or $R(\beta)$-supermodule. }

\vskip 1em For an $R(\beta)$-module $M = \bigoplus_{t \in \Z} M_t$,
let $M \langle k \rangle$ denote the $\Z$-graded $R(\beta)$-module
such that $M \langle k \rangle_t \seteq M_{k+t}$; i.e., $M \langle k
\rangle = \bigoplus_{t \in \Z} M_{k+t}$. We also denote by $q$ the
grading shift functor
$$(qM)_i=M_{i-1}.$$
The Grothendieck groups $[\PROJ(R(\beta))]$ and $[\REP (R(\beta))]$
have the $\A^\pi$-module structure given by $q[M] = [qM]$ and
$\pi[M]=[\Pi M]$, where $[M]$ denotes the isomorphism class of an
$R(\beta)$-module $M$.

Let $a=\sum\limits_{k\in\Z,\;\eps=0.1}m_{k,\eps}
q^k\pi^\eps\in\A^\pi$ with $m_{k,\eps}\in\Z_{\ge0}$. For an
$R(\beta)$-module $M$, we define \eq aM=\soplus_{k\in\Z,\;\eps=0.1}
\bl q^k\Pi^\eps M\br^{\oplus m_{k,\eps}}, \eneq so that we have
$[aM]=a[M]$.

\vskip 3mm

\subsection{Strong perfect basis of $\REP(R)$} \label{sec:
categorical strong perfect basis}

In this subsection, we study the structure of the supercategory
$\REP(R(\beta))$ based on the results of \cite{EKL} and
\cite[Section 6]{KKO12}. In those papers, the authors studied the
supercategory $\Rep(R(\beta))$, not $\REP(R(\beta))$, but their
results provide us with a good foundation.  In \cite{HW12},
Hill and Wang dealt with the supercategory $\REP(R(\beta))$ under a
certain restriction, called the (C6) condition (see \S\;{sec:QKM}).
Although $[{\rm
Rep}(R(\beta))] \simeq [\REP(R(\beta))]/(\pi-1)$ as we saw in
\cite{KKO12}, the action of $\pi$ on $\REP(R(\beta))$ is non-trivial
and will be investigated here.

Throughout this subsection, we assume that
\eq&&\parbox{70ex}%
{the ring $\k_0$ is a field of characteristic different from $2$
 and the $\k_i$'s are finite-dimensional over $\k_0$} \label{cond:k0} \eneq
Under the assumption \eqref{cond:k0}, the superalgebra $R(\beta)$
has the following properties: \eq&&\parbox{75ex}{ \bnum
\item Any simple object in $\MOD(R(\beta))$ is finite-dimensional over $\k_0$ and has an indecomposable
finitely generated projective cover (unique up to isomorphism),
\item there are finitely many simple objects in $\REP(R(\beta))$
up to $\Z$-grading shifts and isomorphisms. \ee }\label{property of
R(beta)-mod} \eneq \noindent Thus $\REP(R(\beta))$ contains all
simple $R(\beta)$-supermodules and the set of isomorphism classes of
simple $R(\beta)$-supermodules, denoted by $\Irr(R(\beta))$, forms a
{\em $\Z$-basis} of $[\REP(R(\beta))]$.

For $1 \le k < n$, let $\mathbf{b}_k\seteq \tau_k x_{k+1} \in R(n\al_i)$. It is known (\cite{EKL,HW12,KL1}) that
\eqn&&\parbox{70ex}{
\bna
\item The $\mathbf{b}_k$'s are idempotents and they satisfy the braid relations,
\item $\mathbf{b}_{w}$ is well-defined for any $w \in S_n$ by (a),
\item $\mathbf{b}(i^n)\seteq\mathbf{b}_{w_0}$ is a primitive idempotent of $R(n\al_i)$, where
$w_0$ is the longest element of $S_n$.
\ee}
\eneqn

\begin{proposition} [\cite{EKL}]\label{prop:proj}
The superalgebra $R(n\alpha_i)$ is decomposed into a direct sum of
projective indecomposable $\Z \times \Z_2$-modules\,{\rm :}
\eq\label{eq:divided} &&R(n\alpha_i) \simeq [n]_i^\pi !  P(i^{n}),
\eneq where
$$  P(i^{n})\seteq (\pi_iq_i)^{-n(n-1)/2} R(n\alpha_i) \mathbf{b}(i^n).$$
\end{proposition}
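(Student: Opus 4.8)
Since $t_{i,j;(r,s)}=0$ for $i=j$, we have $\cQ_{i,i}=0$, hence $\tau_a^2e(i^n)=0$ and $R(n\al_i)$ is a ``(super-)nilHecke algebra'': for $i\in\Iev$ it is, up to rescaling the $\Z$-grading by $\dg_i$, the nilHecke algebra of \cite{KL1}, and for $i\in\Iod$ it is a rescaling of the odd nilHecke algebra of \cite{EKL}. The plan is to transport the ``matrix algebra over (odd) symmetric polynomials'' structure of these algebras to the present $\Z\times\Z_2$-graded setting with the parameter $\pi_i$.

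First I would use the faithful polynomial representation of $R(n\al_i)$ on $\cP_{(i^n)}=\cP_\nu$ (with $\nu=(i,\dots,i)$): $x_k$ acts by left multiplication and $\tau_a$ by the super divided difference operator $\partial_a$ forced by $\bigl(\tau_ax_{a+1}-(-1)^{\pa(i)}x_a\tau_a\bigr)e(i^n)=e(i^n)$, faithfulness being Proposition~\ref{Prop: PBW}. Writing $\Lambda_n\subset\cP_{(i^n)}$ for the algebra of (odd) symmetric polynomials, the two inputs I would quote from \cite{EKL} (resp.\ \cite{KL1}) are: the image of $R(n\al_i)$ in $\End_\k(\cP_{(i^n)})$ is $\End_{\Lambda_n}(\cP_{(i^n)})$; and $\cP_{(i^n)}$ is graded-free over $\Lambda_n$ with basis the (odd) Schubert polynomials $v_w=\partial_{w_0w}(x^\delta)$, $w\in S_n$, $\delta=(n-1,\dots,1,0)$, where $v_w$ has $\Z$-degree $2\dg_i\ell(w)$ and parity $\pa(i)\ell(w)$.

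Granting this, $\cP_{(i^n)}=\bigoplus_{w\in S_n}\Lambda_n v_w$ as $\Lambda_n$-modules, so $R(n\al_i)\cong\End_{\Lambda_n}(\cP_{(i^n)})$ decomposes as a left module into the column modules $\Hom_{\Lambda_n}(\Lambda_n v_w,\cP_{(i^n)})\cong(q_i^2\pi_i)^{-\ell(w)}\cP_{(i^n)}$; summing and using the Poincar\'e polynomial identity $\sum_{w\in S_n}t^{\ell(w)}=\prod_{k=1}^n(1+t+\cdots+t^{k-1})$ at $t=(q_i^2\pi_i)^{-1}$ gives $R(n\al_i)\cong(\pi_iq_i)^{-n(n-1)/2}[n]^\pi_i!\cdot\cP_{(i^n)}$. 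Each column is a grading/parity shift of $\cP_{(i^n)}$, which is projective indecomposable because $\mathbf{b}(i^n)$ is a primitive idempotent (recalled just before the statement). It then remains to identify $\mathbf{b}(i^n)=\mathbf{b}_{w_0}$, acting via the polynomial representation, with the idempotent projection of $\cP_{(i^n)}$ onto its rank-one free $\Lambda_n$-summand $\Lambda_n\cdot 1$ generated in degree $0$ (visible already for $n=2$, where $\mathbf{b}_1=\tau_1x_2$ fixes $1$ and kills $x_1$); then $R(n\al_i)\mathbf{b}(i^n)\cong\cP_{(i^n)}$ with no shift, so $P(i^n)=(\pi_iq_i)^{-n(n-1)/2}R(n\al_i)\mathbf{b}(i^n)=(\pi_iq_i)^{-n(n-1)/2}\cP_{(i^n)}$ and the displayed isomorphism becomes $R(n\al_i)\cong[n]^\pi_i!\cdot P(i^n)$, which is \eqref{eq:divided}.

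The main obstacle is the input quoted in the second paragraph, in the odd case $i\in\Iod$: since the $x_k$ anticommute and the $\partial_a$ obey a twisted Leibniz rule, proving that the odd divided differences generate $\End_{\Lambda_n}(\cP_{(i^n)})$ and that $\cP_{(i^n)}$ is $\Lambda_n$-free of graded-\emph{super} rank $(\pi_iq_i)^{n(n-1)/2}[n]^\pi_i!$, with the parities of the $v_w$ exactly as stated, requires careful sign bookkeeping; likewise for pinning down the normalization $(\pi_iq_i)^{-n(n-1)/2}$. All of this is supplied by \cite{EKL}; what is added here is only the translation of their results into the $\Z\times\Z_2$-graded formalism involving the indeterminate $\pi$.
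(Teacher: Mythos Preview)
The paper does not give its own proof of this proposition: it is stated with the citation \cite{EKL} and no argument follows. Your proposal is therefore not to be compared against a proof in the paper but against the literature it invokes, and in that respect it is accurate. You correctly identify $R(n\al_i)$ as the (odd) nilHecke algebra, you quote the two structural inputs from \cite{EKL} (matrix algebra over $\Lambda_n$; Schubert basis of $\cP_{(i^n)}$ over $\Lambda_n$), and your grading bookkeeping is right: the identity $\sum_{w\in S_n}(q_i^2\pi_i)^{-\ell(w)}=(\pi_iq_i)^{-n(n-1)/2}[n]^\pi_i!$ together with $R(n\al_i)\mathbf{b}(i^n)\cong\cP_{(i^n)}$ (no shift, since $\mathbf{b}(i^n)$ projects onto the degree-zero $\Lambda_n$-summand) gives exactly \eqref{eq:divided}. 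The only thing you add beyond the citation is the explicit translation into the $\Z\times\Z_2$-graded formalism with the indeterminate $\pi$, which you flag yourself; this is straightforward once one tracks that the Schubert element $v_w$ sits in $\Z$-degree $2\dg_i\ell(w)$ and parity $\pa(i)\ell(w)$.
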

\noindent The factorial $[n]_i^\pi!$ is defined in
\eqref{def:pifact}.

Note that $P(i^{n})$ is a unique indecomposable projective
$R(n\alpha_i)$-supermodule up to isomorphism and ($\Z \times
\Z_2$)-grading shift. By \eqref{property of R(beta)-mod}, there
exists an irreducible $R(n\alpha_i)$-supermodule, denoted by
$L(i^n)$, which is unique up to isomorphism and ($\Z \times
\Z_2$)-grading shift:
\begin{equation} \label{Eqn: Def of L(i^n)}
 L(i^n)\seteq \ind^{R(n\alpha_i)}_{\k[x_1] \ot \cdots \ot \k[x_n]} {\mathbf 1},
\end{equation}
where ${\mathbf 1}$ is the simple $\k[x_1] \ot \cdots \ot \k[x_n]$-supermodule
which is isomorphic to $\k_0$.

\medskip
For $M \in \REP(R(\beta))$ and $i\in I$, define
\begin{equation} \label{eqn: crystal operators}
\begin{aligned}
&\Delta_{i^k} M = e( \beta- k\alpha_i,i^k) M \in \REP(R(\beta-k\alpha_i,k\alpha_i)), \\
& \varepsilon_i(M) = \max\{ k \ge 0 \mid \Delta_{i^k} M \ne 0 \}, \\
& E_i(M) = e(\beta-\alpha_i,i)M \in \REP(R(\beta-\alpha_i)), \\
& \tilde{e}_i(M) = \soc(E_i(M)) \in \REP(R(\beta-\alpha_i)), \\
& F_i'(M) = \ind_{\beta,\alpha_i}(M \bt L(i)) \in \REP(R(\beta+\alpha_i)),\\
& \tilde{f}_i(M) = \hd(F_i' M) \in \REP(R(\beta+\alpha_i)).
\end{aligned}
\end{equation}
Here, $\soc(M)$ means the {\em socle} of $M$ and $\hd(M)$ means the
{\em head} of $M$. We set $\eps_i(M)=-\infty$ for $M=0$. Then $E_i$
and $F'_i$ are superfunctors.

For $M = \bigoplus_{a \in \Z} (M_{a,\bar{0}} \oplus M_{a,\bar{1}}) \in \REP(R(\beta))$,
we define its $(q,\pi)${\em -dimension} and $(q,\pi)${\em -character}
 as follows:
\eq
&&\ba{l}
 \dim^\pi_q(M) \seteq \sum_{a \in \Z}
(\dim_{\k_0}M_{a,\bar{0}}+ \pi\dim_{\k_0}M_{a,\bar{1}})q^a \in\Z[q^{\pm1}]^\pi,
\\[1ex]
 \ch_q^\pi(M) \seteq \sum_{\nu \in I^\beta} \dim^\pi_q(e(\nu)M) \cdot e(\nu).
\ea\eneq

\begin{lemma} [\cite{Kle05,KL1,LV09}] \label{Lem: crystal structure}

For any $[M] \in \Irr(R(\beta))$ and $i \in I$, we have \bna
\item
$[\tilde{e}_iM] \in \Irr(R(\beta-\alpha_i))$ if $\varepsilon_i(M)>0$, and
$[\tilde{f}_iM] \in \Irr(R(\beta+\alpha_i))$.

\item $\tilde{f}_i\tilde{e}_iM \simeq M$ if $\varepsilon_i(M)>0$,
and $\tilde{e}_i\tilde{f}_iM \simeq M$.
\item $\k_nM=0$ for $n>0$ and $\k_0 \simeq {\rm End}_{R(\beta)}(M)$.
\ee
\end{lemma}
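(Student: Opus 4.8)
The lemma collects the standard ``crystal datum'' facts for quiver Hecke algebras (\cite{Kle05,KL1,LV09}), and the plan is to transport those proofs to the super setting while keeping careful track of three super-specific features: the signs in the defining relations \eqref{eq:R(n)}, the parity-shift $\Pi$ built into the superfunctor structures of $E_i$ and $F_i'$, and the modified nil-Hecke-type algebra $R(n\alpha_i)$ of Proposition~\ref{prop:proj}.

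First I would assemble three tools. (1) A super Mackey (shuffle) filtration: using the PBW basis of Proposition~\ref{Prop: PBW}, $\Delta_{i^k}\ind(X\bt L(i^a))$ and, more generally, $\Delta_{i^k}(M_1\circ M_2)$ admit finite filtrations whose subquotients are $(q,\Pi)$-shifts of convolution products built out of the $\Delta_{i^\bullet}$ of the factors, with the signs dictated by \eqref{eq:R(n)}. (2) The adjunctions, up to explicit $(q,\Pi)$-shift: $F_i'=\ind_{\beta,\alpha_i}(-\bt L(i))$ is left adjoint to $E_i=e(\beta-\alpha_i,i)(-)$ and, more generally, $\ind(-\bt L(i^k))$ is left adjoint to $\Delta_{i^k}$; the superfunctor structures must be carried along so that all the $\Hom$-identifications respect the $\Z\times\Z_2$-grading. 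Since $E_iM$ is finite-dimensional over $\k_0$ and $x_n$ has positive $\Z$-degree, $x_n$ acts nilpotently on $E_iM$, so every simple submodule lies in $\ker x_n$ and these adjunctions can be read off on socles. (3) The elementary structure of $R(n\alpha_i)$-supermodules: from $R(n\alpha_i)\simeq[n]^\pi_i!\,P(i^n)$ one gets that $P(i^n)$ is the unique indecomposable projective, $L(i^n)$ (see \eqref{Eqn: Def of L(i^n)}) the unique, finite-dimensional, simple, and $\Delta_{i^k}L(i^n)\ne 0$ exactly for $0\le k\le n$.

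The core step, and the one I expect to be the main obstacle, is the structural lemma: for simple $M\in\REP(R(\beta))$ with $\eps\seteq\varepsilon_i(M)>0$ there is a $(q,\Pi)$-shifted isomorphism $\Delta_{i^\eps}M\simeq N\bt L(i^\eps)$ with $N$ a simple $R(\beta-\eps\alpha_i)$-supermodule such that $\varepsilon_i(N)=0$; symmetrically, for $N'$ simple, $\ind(N'\bt L(i^a))$ has simple head. Following \cite{LV09}: $\Delta_{i^\eps}M$ is killed by $e(\beta-(\eps+1)\alpha_i,i^{\eps+1})$ by maximality of $\eps$; analyzing it as an $R(\beta-\eps\alpha_i)\otimes R(\eps\alpha_i)$-supermodule, the super Mackey filtration together with the primitive idempotent $\mathbf{b}(i^\eps)$ of Proposition~\ref{prop:proj} shows that its restriction to the last $\eps$ strands is a direct sum of copies of $L(i^\eps)$, so $\Delta_{i^\eps}M\simeq N'\bt L(i^\eps)$; then $\Hom(\ind(N_0\bt L(i^\eps)),M)\cong\Hom(N_0\bt L(i^\eps),\Delta_{i^\eps}M)$ for simple $N_0$, combined with the simplicity of $M$, forces $N'=N$ to be simple occurring with multiplicity one. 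The genuinely new work is checking that the Mackey subquotients and the idempotent $\mathbf{b}(i^n)$ behave as required when $i$ is odd --- i.e.\ controlling the Clifford-type contributions inside $R(n\alpha_i)$ and the accompanying $\Pi$-shifts --- and this is where most of the effort will go.

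Granting the core lemma, the three assertions follow formally. (a): $\tilde{e}_iM=\soc(E_iM)$ is simple because for simple $N'$ the adjunction gives $\Hom(N',E_iM)\cong\Hom(F_i'N',M)$ (up to shift), which is nonzero iff $M$ is a quotient of $F_i'N'$, i.e.\ $M\simeq\hd(F_i'N')$, and the $\ind$-form of the core lemma produces a unique such $N'$; one reads off $\varepsilon_i(\tilde{e}_iM)=\eps-1$, and dually $\tilde{f}_iM=\hd(F_i'M)$ is simple with $\varepsilon_i(\tilde{f}_iM)=\eps+1$. (b): the surjection $F_i'M\epi\tilde{f}_iM$ corresponds under adjunction to a nonzero, hence injective, map $M\to E_i\tilde{f}_iM$, whose image lies in the simple socle $\tilde{e}_i\tilde{f}_iM$, so $\tilde{e}_i\tilde{f}_iM\simeq M$; when $\eps>0$, the inclusion $\tilde{e}_iM\monoto E_iM$ corresponds to a nonzero, hence surjective, map $F_i'\tilde{e}_iM\to M$, so $M\simeq\hd(F_i'\tilde{e}_iM)=\tilde{f}_i\tilde{e}_iM$. (c): $M$ is finite-dimensional over $\k_0$ by \eqref{property of R(beta)-mod}; the graded ideal $\k_{>0}\seteq\bigoplus_{n>0}\k_n$ raises $\Z$-degree, so it acts nilpotently on the bounded module $M$ and $\k_{>0}M$ is a proper graded submodule, giving $\k_nM=0$ for $n>0$ by simplicity; and $\End_{R(\beta)}(M)$ is a finite-dimensional graded-division superalgebra over $\k_0$ concentrated in $\Z\times\Z_2$-degree $(0,\bar{0})$, which is $\k_0$ by the classification of simple $R(\beta)$-supermodules in \cite{KKT11}.
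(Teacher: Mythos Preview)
The paper does not give its own proof of this lemma: it is stated with the citations \cite{Kle05,KL1,LV09} and used as a black box, so there is nothing to compare your argument against line by line. Your sketch is precisely the standard Lauda--Vazirani strategy transported to the super setting, and the three tools you isolate (super Mackey filtration from Proposition~\ref{Prop: PBW}, the $(\ind,\Delta)$ superadjunctions, and the structure of $R(n\alpha_i)$-supermodules from Proposition~\ref{prop:proj}) are exactly what the cited references use in the non-super case; the paper's companion \cite{KKO12} carries out this adaptation in its Section~6, so your plan is on target.

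One small point on part~(c): your reduction to $\End_{R(\beta)}(M)$ being a finite-dimensional division algebra over $\k_0$ concentrated in degree $(0,\bar 0)$ is correct (morphisms in $\MOD(R(\beta))$ preserve the $\Z\times\Z_2$-grading by definition), but the final step ``which is $\k_0$ by the classification of simple $R(\beta)$-supermodules in \cite{KKT11}'' is not quite what \cite{KKT11} provides. Since $\k_0$ is not assumed algebraically closed, the cleanest way to finish is by induction on $|\beta|$ using the parts you have already established: once you know $\tilde e_iM$ is simple with $\End(\tilde e_iM)=\k_0$ and $M\simeq\tilde f_i\tilde e_iM$, the adjunction gives $\End(M)\hookrightarrow\Hom(F_i'\tilde e_iM,M)\cong\Hom(\tilde e_iM,E_iM)\cong\End(\tilde e_iM)=\k_0$. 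This is the argument implicit in \cite{KL1,LV09}.
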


\begin{proposition} [{\cite[Proposition 6.2]{KKO12}}]
For any $[M] \in \Irr(R(\beta))$ with
$\varepsilon \seteq \varepsilon_i(M) >0$, we have
\begin{equation} \label{eqn: Perfect basis of REP}
[E_iM]= \pi_i^{1-\eps}q_i^{1-\eps}[\eps]_i^\pi[\tilde{e}_iM] +
\sum_k [N_k],
\end{equation}
where $[N_k] \in \Irr(R(\beta-\alpha_i))$ with $\eps_i(N_k) < \eps-1$.
\end{proposition}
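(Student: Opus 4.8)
The plan is to show that the strong perfect basis formula \eqref{eqn: Perfect basis of REP} follows from the known structure of $E_i$ on the category $\REP(R(\beta))$ together with a careful bookkeeping of the grading shifts, including the $\pi$-grading coming from the parity shift functor $\Pi$. First I would recall from \cite[Section 6]{KKO12} (and \cite{LV09,Kle05,KL1}) that, for $[M]\in\Irr(R(\beta))$ with $\eps=\eps_i(M)>0$, the restriction $E_i M=e(\beta-\alpha_i,i)M$ has socle filtration controlled by $\eps_i$: its socle is $\widetilde{e}_i M$ (up to a shift), and all other composition factors $N_k$ satisfy $\eps_i(N_k)<\eps-1$. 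This is the purely module-theoretic input; the essential point that distinguishes the super case is the determination of the coefficient of $[\widetilde{e}_i M]$ in $[E_i M]$ as an element of $\A^\pi=\Z[q^{\pm1}]^\pi$.

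The key step is to compute the multiplicity of $\widetilde{e}_i M$ as a $\Z[q^{\pm1}]^\pi$-graded multiplicity. I would argue as in the proof of \cite[Proposition 6.2]{KKO12}: consider $\Delta_{i^{\eps}}M=e(\beta-\eps\alpha_i,i^{\eps})M$, which is a module over $R(\beta-\eps\alpha_i)\otimes R(\eps\alpha_i)$, and use Proposition~\ref{prop:proj}, i.e.\ $R(\eps\alpha_i)\simeq [\eps]_i^\pi!\,P(i^\eps)$, to write $\Delta_{i^\eps}M\simeq (\widetilde{e}_i M)\boxtimes\bl\text{a shift of }L(i^\eps)\br$ up to lower $\eps_i$-terms, where $L(i^\eps)$ is the unique irreducible $R(\eps\alpha_i)$-supermodule of \eqref{Eqn: Def of L(i^n)}. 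Peeling off one factor of $i$ — i.e.\ restricting from $e(\beta-\eps\alpha_i,i^\eps)$ to $e(\beta-\alpha_i,i)$ after the induction-restriction adjunction — contributes exactly the graded dimension of $e(i^{\eps-1},i)$ acting on $L(i^\eps)$, which by the $[\eps]_i^\pi$-divided structure and the definition \eqref{def:pifact} of $[\eps]_i^\pi=\sum_{k=0}^{\eps-1}q_i^{1-\eps+2k}\pi_i^k$ gives the factor $[\eps]_i^\pi$. The normalization constant $P(i^\eps)=(\pi_iq_i)^{-\eps(\eps-1)/2}R(\eps\alpha_i)\mathbf{b}(i^\eps)$ together with the grading conventions $\degZ$ and the parity $\pa(x_k e(\nu))=\pa(\nu_k)$ accounts for the overall prefactor $\pi_i^{1-\eps}q_i^{1-\eps}$; here the $\pi_i$-exponent $1-\eps$ is precisely the parity shift that is invisible in the non-super setting of \cite{KL1} but must be tracked because $x_k$ has odd parity when $i\in\Iod$.

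The main obstacle I anticipate is the sign/parity bookkeeping: one must verify that the $\Z_2$-grading (equivalently, the power of $\pi$) attached to the socle term is exactly $\pi_i^{1-\eps}$ and not some other power, and that the lower terms $N_k$ genuinely have $\eps_i(N_k)<\eps-1$ rather than merely $\le\eps-1$. The first is handled by comparing with the explicit idempotent decomposition in Proposition~\ref{prop:proj} and the parity of $\mathbf{b}(i^\eps)=\mathbf{b}_{w_0}$ (which is $\pa(\tau_{w_0})$, computable from the reduced word of $w_0\in S_\eps$); since $t_{i,j;(r,s)}=0$ when $i\in\Iod$ and $r$ is odd, and the relevant degree counts are the same as in the even case up to the parity twist, the answer matches $\pi_i^{1-\eps}q_i^{1-\eps}[\eps]_i^\pi$. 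The strict inequality for the $N_k$ follows from Lemma~\ref{Lem: crystal structure}(b), exactly as in \cite{KKO12}: if some $N_k$ had $\eps_i(N_k)=\eps-1$ then applying $\widetilde{f}_i$ would produce an extra copy of $M$ in $F_i' E_i M$ beyond what the known $[E_i,F_i]$-type relation allows. Thus the whole argument is a faithful super-analogue of \cite[Proposition 6.2]{KKO12}, and I would present it by quoting that proof and inserting the parity refinements at each grading computation.
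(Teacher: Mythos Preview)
The paper does not prove this proposition at all: it is stated with the citation \cite[Proposition~6.2]{KKO12} and no argument is given in the text. So there is no ``paper's own proof'' to compare against; the authors evidently regard the statement as an immediate $\pi$-graded refinement of the result already established in their earlier paper, where the same formula appears with $q_i^{1-\eps}[\eps]_i$ in place of $\pi_i^{1-\eps}q_i^{1-\eps}[\eps]_i^\pi$.

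Your proposal is essentially the right reconstruction of what that proof would be, and your identification of the main task---tracking the $\Z_2$-grading through the socle-filtration argument of \cite{KKO12,LV09,Kle05}---is correct. One point of phrasing to fix: you describe the argument as ``a faithful super-analogue of \cite[Proposition~6.2]{KKO12},'' but \cite{KKO12} is already the super paper (it is Part~I of the present work); the difference is not super versus non-super but rather $\Mod(R(\beta))$ versus $\Mod_{\mathrm{super}}(R(\beta))$, i.e.\ whether the parity functor $\Pi$ acts trivially on the Grothendieck group or contributes the genuine involution $\pi$. With that correction, your sketch---compute $\Delta_{i^\eps}M$ using the decomposition $R(\eps\alpha_i)\simeq[\eps]_i^\pi!\,P(i^\eps)$ from Proposition~\ref{prop:proj}, identify the leading term as a shift of $\widetilde{e}_iM\boxtimes L(i^\eps)$, and read off the $(q,\pi)$-multiplicity from the graded character of $L(i^\eps)$---is exactly the expected argument, and the strict inequality $\eps_i(N_k)<\eps-1$ is indeed inherited verbatim from the earlier paper since it is a statement about the underlying module structure, insensitive to the $\pi$-grading.
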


As can be seen in the following theorem, the endofunctor $\Pi$ on
$\REP(R(\beta))$ treated in this paper is substantially different
from the one in \cite{KKO12} (cf.\ \cite[Theorem 6.4]{KKO12}).

\begin{theorem}\label{thm: Pi M }
For any $[M] \in \Irr(R(\beta))$, we have
$$ M \not\simeq \Pi M.$$
\end{theorem}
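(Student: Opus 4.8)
The statement to prove is that a simple module $M$ over the quiver Hecke superalgebra $R(\beta)$ is never isomorphic to its parity shift $\Pi M$. The plan is to argue by contradiction: suppose $M \simeq \Pi M$. Since $M$ is finite-dimensional over $\k_0$ (by \eqref{property of R(beta)-mod}) and $\End_{R(\beta)}(M) \simeq \k_0$ (Lemma~\ref{Lem: crystal structure}(c)), an isomorphism $\varphi\cl M \isoto \Pi M$ would, after composing with the canonical identification $\Pi^2 M \isoto M$ given by $\xi_M$, produce an $R(\beta)$-linear automorphism of $M$ of odd parity; squaring it gives a nonzero scalar, so after rescaling we obtain an odd $R(\beta)$-linear involution $\sigma$ of $M$ with $\sigma^2 = \id_M$. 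Equivalently, $M$ becomes a module over the Clifford twist, or concretely $M$ is a ``type Q'' simple supermodule. The goal is then to derive a contradiction with the structure of $R(\beta)$ acting on $M$.

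The key step is to track where such an odd involution can come from. The plan is to reduce to the string module $L(i^n)$ and the projective $P(i^n)$ analyzed in Proposition~\ref{prop:proj} and \eqref{eq:divided}, and more generally to use the crystal-theoretic combinatorics of \eqref{eqn: crystal operators}. First I would observe that if $M \simeq \Pi M$, then by applying the exact functor $E_i = e(\beta-\alpha_i,i)(-)$ and using $\eps_i$, one can propagate the isomorphism: $E_i M \simeq \Pi E_i M$, and by Proposition~\ref{prop:proj}'s analogue (the formula \eqref{eqn: Perfect basis of REP}), the $\tilde e_i M$ summand must also satisfy $\tilde e_i M \simeq \Pi \tilde e_i M$ up to a grading shift, because it appears with a coefficient $\pi_i^{1-\eps}q_i^{1-\eps}[\eps]_i^\pi$ which is a genuine element of $\A^\pi$ (not fixed by $\pi$ unless $\pi_i = 1$). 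Iterating $\tilde e_i$ along a path in the crystal reduces $M$ down to a one-dimensional module $L(\nu)$ for some word $\nu$, or to $L(i^n)$-type situations, where the $\Z\times\Z_2$-graded dimension can be computed explicitly from the PBW basis (Proposition~\ref{Prop: PBW}) and from \eqref{eq:divided}. At that base case, the graded $(q,\pi)$-dimension $\dim^\pi_q(M)$ is visibly not invariant under $M \mapsto q^k \Pi M$ for any $k$ — for instance, $\dim^\pi_q L(i^n)$ has a monomial (the bottom-degree one, coming from the image of $\mathbf 1$) with coefficient $1 \in \Z$, which cannot equal $\pi$ times anything. Since $[\Pi M] = \pi[M]$ in $[\REP(R(\beta))]$ and $\Irr(R(\beta))$ is a $\Z$-basis, the classes $[M]$ and $[\Pi M]$ are linearly independent over $\Z$ unless already $M \simeq \Pi M$ forces a parity-symmetric graded dimension, which the base case rules out.

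A cleaner route, which I would actually pursue, is to use the $(q,\pi)$-character directly. If $M \simeq \Pi M$ then $\ch_q^\pi(M) = \ch_q^\pi(\Pi M) = \pi\cdot\ch_q^\pi(M)$ in $\Z[q^{\pm 1}]^\pi = \Z[q^{\pm 1}] \oplus \Z[q^{\pm 1}]\pi$, which forces $\ch_q^\pi(M) \in \pi \Z[q^{\pm 1}]$ coordinatewise — but $\ch_q^\pi(M)$ has nonnegative integer contributions $\dim_{\k_0} M_{a,\bar 0} + \pi \dim_{\k_0} M_{a,\bar 1}$, and the ``$\bar 0$'' part cannot all vanish since $M \neq 0$. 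Hence no such isomorphism exists. The main obstacle is to make sure the parity bookkeeping in the $(q,\pi)$-dimension is exactly compatible with the parity-shift supercategory structure of Example~\ref{exa:supercategories}(b) — i.e., that $\dim^\pi_q(\Pi M) = \pi\dim^\pi_q(M)$ with no extra sign or grading shift — and to handle the subtlety that an abstract isomorphism $M \simeq \Pi M$ need only respect the $\Z$-grading up to shift, so one should phrase the argument with $M \simeq q^k\Pi M$ and note that multiplication by $q^k$ does not change the coordinate decomposition into $\Z[q^{\pm1}] \oplus \Z[q^{\pm1}]\pi$. Once that compatibility is pinned down, the contradiction is immediate and the proof is short.
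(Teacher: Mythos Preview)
Your ``cleaner route'' via the $(q,\pi)$-character contains a genuine error. If $M\simeq\Pi M$ then indeed $\dim^\pi_q(M)=\pi\cdot\dim^\pi_q(M)$, but in $\Z[q^{\pm1}]^\pi=\Z[q^{\pm1}]\oplus\Z[q^{\pm1}]\pi$ the equation $x=\pi x$ does \emph{not} force $x\in\pi\Z[q^{\pm1}]$. Writing $x=A+\pi B$, the equation gives $A+\pi B=B+\pi A$, hence only $A=B$; that is, $\dim_{\k_0}M_{a,\bar0}=\dim_{\k_0}M_{a,\bar1}$ for all $a$. This is precisely the type-Q condition you identified in your first paragraph, not a contradiction. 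So the character alone cannot rule out $M\simeq\Pi M$; the whole content of the theorem is that type-Q simples do not occur.

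Your first approach, the crystal reduction, is correct and can be made short: since $E_i$ commutes with $\Pi$ (the idempotent $e(\beta-\al_i,i)$ is even) and $\soc$ commutes with any equivalence, $M\simeq\Pi M$ implies $\tilde e_iM\simeq\Pi\tilde e_iM$. By Lemma~\ref{Lem: crystal structure} and induction on $|\beta|$ one reduces to the simple $R(0)$-module $\k_0$, which is one-dimensional in parity $\bar0$ and hence not isomorphic to its parity shift. You do not need to stop at $L(i^n)$; going all the way to $|\beta|=0$ is cleaner.

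This is a genuinely different route from the paper. The paper argues categorically: it invokes \cite[Theorem~6.4]{KKO12}, which shows that in the \emph{ordinary} module category $\Rep(R(\beta))$ every simple $S$ satisfies $S\simeq\Pi S$, together with the equivalence $\REP(R(\beta))^{\rev}\simeq\Rep(R(\beta))^{\ct}$ and \cite[Lemma~2.11]{KKT11}, which says that under the Clifford twist the $\Pi$-fixed/non-fixed dichotomy for simples reverses. Your crystal argument is more elementary and self-contained, avoiding the Clifford-twist machinery and the citation to \cite{KKO12}; the paper's argument, on the other hand, explains \emph{why} the result holds structurally, as the mirror image of the $\Mod$-side statement.
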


\begin{proof} It was shown in \cite[Theorem 6.4]{KKO12} that
\eqn&&\text{$S\simeq\Pi S$ for any simple $S\in \Mod(R(\beta))$.}
\eneqn Since $\REP(R(\beta))^\rev$ is equivalent to the Clifford
twist of $\on{Rep}(R(\beta))$, the assertion follows from
\cite[Lemma 2.11]{KKT11}.
\end{proof}

Let $\psi\cl R(\beta) \to R(\beta)$ be the anti-involution given by
\begin{align}
\psi(ab)=\psi(b)\psi(a), \quad \psi(e(\nu))=e(\nu), \quad
\psi(x_k)=x_k, \quad  \psi(\tau_l)=\tau_l \label{eq:anti}
\end{align}
for all $a,b \in R(\beta)$. For any $M \in \MOD(R(\beta))$, we
denote by $M^* \seteq \Hom_{\k_0}(M,\k_0)$ the $\k_0$-dual of $M$
whose left $R(\beta)$-module structure is given by $\psi$. By a
direct computation, we have
$$ (q M)^* = \Hom_{\k_0}(q  M, \k_0) \simeq q^{-1}  \Hom_{\k_0}(M, \k_0)= q^{-1}  (M^*).$$
Similarly, we have $(\Pi M)^* \simeq \Pi(M^*)$, which implies
$$ ([k]_i^\pi M)^* \simeq \Pi_i^{1-k} [k]_i^\pi (M^*) \ \ \text{ for } k \in \Z_{\ge 0}.$$
Here we set $\Pi_i\seteq\Pi^{\pa(i)}$.

\begin{proposition} \label{prop: quasi self-dual} \hfill
\bna \item For any $[M] \in \Irr(R(\beta))$ such that $\varepsilon \seteq \varepsilon_i(M) >0$, we have
$$(q_i^{1-\eps}\tilde{e}_iM)^* \simeq \Pi_i^{1-\eps}q_i^{1-\eps}\tilde{e}_i(M^*).$$
\item For any $[M] \in \Irr(R(\beta))$, there exists a pair of integer $(r_1,r_2)$ such that
$$(q^{r_2} M)^*\simeq\Pi^{r_1}q^{r_2} M. $$
\ee
\end{proposition}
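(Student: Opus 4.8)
\textbf{Proof proposal for Proposition \ref{prop: quasi self-dual}.}

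The plan is to treat (a) first and then deduce (b) from (a) by an induction on the crystal graph, exactly mirroring the structure of the dual computations already used in Section \ref{sec: Quiver Hecke}. For (a), I would start from the duality formulas for the grading and parity shift functors that have just been recorded: $(qM)^*\simeq q^{-1}(M^*)$, $(\Pi M)^*\simeq\Pi(M^*)$, and hence $([k]_i^\pi M)^*\simeq \Pi_i^{1-k}[k]_i^\pi(M^*)$. Next I would apply the $\k_0$-duality functor $(\ )^*$ to the decomposition of $[E_iM]$ in \eqref{eqn: Perfect basis of REP}, using that $(\ )^*$ is exact and that it commutes with $E_i=e(\beta-\alpha_i,i)(\ )$ up to isomorphism (because $\psi$ fixes $e(\beta-\alpha_i,i)$, so $(E_iM)^*\simeq E_i(M^*)$). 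Comparing the leading terms with respect to the ordering $\preceq$ (equivalently, picking out the socle component, which is the unique simple constituent of $E_iM$ with $\varepsilon_i$-value equal to $\varepsilon_i(M)-1$), and using that $E_i(M^*)$ has the same leading term $\pi_i^{1-\eps}q_i^{1-\eps}[\eps]_i^\pi[\tilde{\mathsf e}_i(M^*)]$ by the same proposition applied to $M^*$, I would match coefficients: the factor $([\eps]_i^\pi)^*$ produces a $\Pi_i^{1-\eps}$, and balancing the remaining $q$- and $\pi$-shifts forces $(q_i^{1-\eps}\tilde e_iM)^*\simeq \Pi_i^{1-\eps}q_i^{1-\eps}\tilde e_i(M^*)$. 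Since $\soc(E_iM)=\tilde e_iM$ and $(\ )^*$ turns socles into heads and vice versa, one has to be slightly careful that $\tilde e_i(M^*)\simeq(\tilde e_i M)^*$ up to shift — this identification, together with Lemma \ref{Lem: crystal structure}, is what makes the leading-term comparison legitimate.

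For (b), I would argue by induction on the height $|\beta|$. If $\beta=0$ then $M$ is (a shift of) the trivial module over $\k_0$ and is manifestly self-dual, so $(r_1,r_2)=(0,0)$ works. If $|\beta|>0$, choose $i\in I$ with $\eps\seteq\varepsilon_i(M)>0$; such $i$ exists since $M\neq\k_0$. Then $\tilde e_iM\in\Irr(R(\beta-\alpha_i))$ and by the induction hypothesis there is a pair $(r_1',r_2')$ with $(q^{r_2'}\tilde e_iM)^*\simeq\Pi^{r_1'}q^{r_2'}\tilde e_iM$. Applying part (a) and the relation $\tilde f_i\tilde e_iM\simeq M$ from Lemma \ref{Lem: crystal structure}(b), together with the dual of the defining isomorphism for $F_i'$/$\tilde f_i$ (which introduces only a controlled grading and parity shift, analogous to $(F_i'N)^*\simeq\Pi^?q^?F_i'(N^*)$ up to a shift governed by $(\alpha_i|\alpha_i)$ and $\pa(i)$), I would express $(q^{r_2}M)^*$ in terms of $q^{r_2}\Pi^{r_1}M$ for a suitable $(r_1,r_2)$ obtained by adding the contributions from (a), from the induction hypothesis, and from the $\tilde f_i$-step. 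The key point is that at each stage the ambiguity is only a global shift by a monomial $q^a\pi^b$, and since $M$ is indecomposable (being simple), such a self-duality-up-to-monomial-shift pins down $(r_1,r_2)$ uniquely modulo the relation $\pi^2=1$.

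The main obstacle I anticipate is the bookkeeping in the inductive step: one must track precisely how the $\k_0$-dual interacts with the functor $F_i'=\ind_{\beta,\alpha_i}(\ \bt L(i))$, i.e.\ establish a clean formula of the shape $(F_i'N)^*\simeq \Pi^{c_1}q^{c_2}F_i'(N^*)$ (or its head-version for $\tilde f_i$), with $c_1,c_2$ depending only on $i$ and on $\varepsilon_i$-type data. This requires knowing that $L(i)$ is self-dual up to a shift and understanding the duality behaviour of induction, which is standard for quiver Hecke algebras but acquires extra sign/parity terms in the super setting; the cleanest route is probably to invoke the anti-involution $\psi$ of \eqref{eq:anti} together with the identification $\REP(R(\beta))^\rev\simeq$ (Clifford twist of $\mathrm{Rep}(R(\beta))$) used in the proof of Theorem \ref{thm: Pi M }, reducing the parity-shift bookkeeping to the non-super statements of \cite{KKO12} plus the general properties of the Clifford twist developed in Section \ref{Sec:supers}. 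Once that formula is in hand, (b) follows formally by composing the shifts.
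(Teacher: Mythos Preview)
Your argument for (a) is essentially the paper's: dualize \eqref{eqn: Perfect basis of REP}, use $(E_iM)^*\simeq E_i(M^*)$, and compare leading terms. Your worry about socles versus heads is unnecessary, though---the comparison happens entirely in the Grothendieck group. You have two expressions for $[E_i(M^*)]$ (one from dualizing the formula for $M$, one from applying the formula to $M^*$ directly, noting $\eps_i(M^*)=\eps_i(M)$), each with a unique ``leading'' simple constituent of $\eps_i$-value $\eps-1$; matching those constituents and their $[\eps]_i^\pi$-multiplicities forces the claimed isomorphism.

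For (b), your plan works but is more laborious than needed, and the ``main obstacle'' you anticipate is self-inflicted. You propose to push the self-duality of $\tilde e_iM$ forward along $\tilde f_i$, which requires a duality formula for $F_i'$ or $\tilde f_i$---this is where you get stuck. The paper avoids this entirely: once you know from the induction hypothesis that $\tilde e_iM$ is quasi-self-dual, part (a) tells you
\[
\tilde e_i(q^{r_2'}M)\;\simeq\;(\text{monomial shift})\cdot\tilde e_i\bigl((q^{r_2'}M)^*\bigr),
\]
i.e.\ $\tilde e_i$ applied to $q^{r_2'}M$ and to $(q^{r_2'}M)^*$ agree up to a $q,\pi$-shift. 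Now just use that $\tilde e_i$ is \emph{injective} on isomorphism classes of simples with $\eps_i>0$ (Lemma~\ref{Lem: crystal structure}(b): $\tilde f_i\tilde e_iN\simeq N$). This immediately gives $(q^{r_2'}M)^*\simeq \Pi^{r_1}q^{r_2'}M$ for the appropriate $r_1$, with no need to understand how $*$ interacts with $F_i'$ or induction. So drop the $\tilde f_i$-duality computation and argue via injectivity of $\tilde e_i$ instead.
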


\begin{proof}
Note that the duality functor $*$ commutes with the functor $E_i$.
Applying the functor $*$ to \eqref{eqn: Perfect basis of REP}, we have
$$[E_i(M^*)]=[\eps]_i^\pi [(q_i^{1-\eps}\tilde{e}_iM)^*] + \sum_{k,\ \eps_i(N^*_k) < \eps-1} [N^*_k].$$
On the other hand,
$$[E_i(M^*)]=\pi_i^{1-\eps}[\eps]_i^\pi[q_i^{1-\eps}\tilde{e}_i(M^*)]+\sum_{k,\ \eps_i(N'_k) < \eps-1} [N_k'].$$
Therefore the assertion (a) holds.

\smallskip

We will prove (b) by induction on $|\beta|$. If $|\beta|=0$, our
assertion is trivial. If $|\beta|>0$, take $i \in I$ such that
$\eps=\eps_i(M)>0$. By induction hypothesis, there exists
$(r_1',r_2')$  such that
$$\Pi^{r_1'}q^{r_2'}q_i^{1-\eps}\tilde{e}_i M \simeq (q^{r_2'}q_i^{1-\eps} \tilde{e}_iM)^*.$$
The assertion (a) implies
\begin{align*}
\Pi^{r_1'}q_i^{1-\eps}\tilde{e}_i(q^{r_2'}M) & \simeq (q_i^{1-\eps} \tilde{e}_i(q^{r_2'}M))^*
\simeq \Pi_i^{1-\eps} q_i^{1-\eps} \tilde{e}_i(q^{r_2'}M)^*,
\end{align*}
which yields
$$q_i^{1-\eps}\tilde{e}_i(q^{r_2'}M) \simeq  \Pi^{\pa(i)(1-\eps)-r_1'} q_i^{1-\eps} \tilde{e}_i(q^{r_2}M)^*.$$
Therefore, by Lemma \ref{Lem: crystal structure}\;(b), we conclude
$$\Pi^{\pa(i)(1-\eps)-r_1'}q^{r_2'}M \simeq (q^{r_2'}M)^*.$$
Thus the pair $(\pa(i)(1-\eps)-r_1', r_2')$ is the desired one.
\end{proof}

For $[M] \in \Irr(R(\beta))$, we say that $M$ is  {\em quasi-self-dual}  if
$$M^* \simeq \Pi^{\epsilon}M \quad\text{for $\epsilon =0$ or $1$.}$$
Note that, by Theorem \ref{thm: Pi M },
$\eps$ is uniquely determined by $M$.

\begin{example} For $i \in I$, we can easily check that
$$(q_i^{\frac{n(n-1)}{2}}L(i^n))^* \simeq \Pi^{\frac{n(n-1)}{2}} q_i^{\frac{n(n-1)}{2}}L(i^n).$$
Hence, for $n=2$ and $i\in\Iod$, we have
$(q_i L(i^2))^* \simeq \Pi_i (q_i L(i^2))$.
However, $(\Pi^\eps q^r L(i^2))^* $ is never isomorphic to
$\Pi^\eps q^rL(i^2)$ for any $r \in \Z$ and any $\eps=0,1$.
\end{example}

\medskip
 Let $\Irr_{\mathrm{qsd}}(R(\beta))$
be the subset of $\Irr(R(\beta))$ consisting of the isomorphism
classes of  quasi-self-dual modules in $\Irr(R(\beta))$. Then
$\Irr_{\mathrm{qsd}}(R(\beta))$ forms an $\A$-basis of the
Grothendieck group $[\REP(R(\beta))]$. Choose a subset
$\Irr_0(R(\beta)) \subset \Irr_{\mathrm{qsd}}(R(\beta))$ satisfying
the conditions: \eq &&\ba{l}
\Irr_0(R(\beta))\cap \pi\Irr_0(R(\beta))=\emptyset, \\
\Irr_{\mathrm{qsd}}(R(\beta))=\Irr_0(R(\beta))\sqcup \pi\Irr_0(R(\beta)).
\ea
\eneq
Such a subset $\Irr_0(R(\beta))$ exists
by Theorem~\ref{thm: Pi M }.

\begin{theorem} \label{Thm: categorical strong} For $\beta \in \rtl^+$,
$\Irr_0(R(\beta))$ is a strong perfect basis of $[\REP(R(\beta))]$
as an $\A^{\pi}$-module.
\end{theorem}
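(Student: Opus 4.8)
The plan is to verify the three axioms of Definition~\ref{Def: perfect, strongly perfect} for the set $\Irr_0(R(\beta))$ inside $[\REP(R(\beta))]$, together with the ``strong'' refinement, using the operators $e_i$ coming from the exact functors $E_i$ on the Grothendieck group. First I would record the basic setup: by \eqref{property of R(beta)-mod} the set $\Irr(R(\beta))$ is a $\Z$-basis of $[\REP(R(\beta))]$, hence $\Irr_{\mathrm{qsd}}(R(\beta))$ is an $\A$-basis, and by the chosen splitting $\Irr_{\mathrm{qsd}}(R(\beta)) = \Irr_0(R(\beta)) \sqcup \pi\,\Irr_0(R(\beta))$ together with Theorem~\ref{thm: Pi M } (which guarantees $M \not\simeq \Pi M$, so this is a genuine disjoint union giving no $\Z^\pi$-relations), we get that $\Irr_0(R(\beta))$ is an $\A^\pi$-basis of $[\REP(R(\beta))]$. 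The $\P$-grading condition (a) is immediate since each simple module lives in a single weight space $\beta \mapsto \Lambda - \beta$, and one checks $e_i = [E_i]$ shifts weight by $\alpha_i$ as required.

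\textbf{Key steps.} The heart of the argument is condition (b) together with the strong condition, and for this I would feed \eqref{eqn: Perfect basis of REP} into the basis $\Irr_0$. Fix $[M] \in \Irr_0(R(\beta))$ with $\eps \seteq \varepsilon_i(M) > 0$. Proposition from \cite[Proposition 6.2]{KKO12} gives in $[\REP(R(\beta-\alpha_i))]$
\[
[E_i M] = \pi_i^{1-\eps} q_i^{1-\eps} [\eps]_i^\pi\,[\tilde{e}_i M] + \sum_k [N_k], \qquad \eps_i(N_k) < \eps - 1.
\]
Now $[\tilde{e}_i M]$ is simple and quasi-self-dual by Proposition~\ref{prop: quasi self-dual}(a)–(b) applied after tracking the duality, so it lies in $(\A^\pi)^\times \cdot \Irr_0(R(\beta-\alpha_i))$; write $[\tilde{e}_i M] = \pi^{\delta}\,[M']$ for a unique $[M'] \in \Irr_0$ and $\delta \in \{0,1\}$, and set $\tilde{\mathsf{e}}_i([M]) \seteq [M']$. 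Since $\varepsilon_i$ is constant on $\pi$-orbits and each $[N_k]$ (being a $\Z_{\ge0}$-combination of elements of $\Irr_0$ up to $q,\pi$) satisfies $\eps_i < \eps - 1$, the error term lies in $V_i^{<\eps-1}$; the leading coefficient is $c_i([M]) = \pi^\delta\,\pi_i^{1-\eps} q_i^{1-\eps}[\eps]_i^\pi$, which is of the form $\pi^{\eps'} q^m [\varepsilon_i(M)]_i^\pi$, establishing the strong condition. For injectivity (condition (c)): if $\tilde{\mathsf{e}}_i([M_1]) = \tilde{\mathsf{e}}_i([M_2])$ with $\eps_i(M_1) = \eps_i(M_2) = \eps > 0$, then $[\tilde{e}_i M_1]$ and $[\tilde{e}_i M_2]$ agree up to a power of $\pi$, hence (lifting to honest modules, using that $\tilde{e}_i M$ is simple) $M_1 \simeq \Pi^? \tilde{f}_i \tilde{e}_i M_1$ and similarly for $M_2$; by Lemma~\ref{Lem: crystal structure}(b), $\tilde{f}_i\tilde{e}_i M_j \simeq M_j$, so $[M_1]$ and $[M_2]$ coincide in $\Irr_{\mathrm{qsd}}$ up to a $\pi$-twist, and since both are in $\Irr_0$ and $\Irr_0 \cap \pi\Irr_0 = \emptyset$ we conclude $[M_1] = [M_2]$.

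\textbf{Main obstacle.} I expect the delicate point to be the bookkeeping of the powers of $\pi$ (equivalently, the parity twists $\Pi_i$) when passing between honest $R(\beta)$-supermodules, their $\k_0$-duals, and the classes in $\Irr_0$: the functor $\tilde{e}_i$ does not literally preserve $\Irr_0$ but only $\Irr_{\mathrm{qsd}}$, and one must carefully use Proposition~\ref{prop: quasi self-dual}(a) (which controls $(q_i^{1-\eps}\tilde{e}_i M)^* \simeq \Pi_i^{1-\eps} q_i^{1-\eps} \tilde{e}_i(M^*)$) to see that the quasi-self-dual representative one lands on is well-defined modulo $\pi$, so that $\tilde{\mathsf{e}}_i$ is a genuine map $\Irr_0(R(\beta)) \setminus \{\varepsilon_i = 0\} \to \Irr_0(R(\beta-\alpha_i))$ and the coefficient $c_i$ is correctly normalized. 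Once the $\pi$-signs are pinned down, conditions (a)–(c) and strongness all follow from the two cited propositions of \cite{KKO12} and Lemma~\ref{Lem: crystal structure}, exactly paralleling the non-super argument of \cite{KKO12} but with $\Irr_{\mathrm{qsd}}$ in place of $\Irr$ and an extra $\pi$-orbit splitting.
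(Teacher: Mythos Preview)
Your proposal is correct and follows essentially the same approach as the paper. The paper's proof is the single sentence ``The statement is an immediate consequence of Proposition~\ref{prop: quasi self-dual} and \eqref{eqn: Perfect basis of REP},'' and what you have written is precisely the unpacking of that sentence: \eqref{eqn: Perfect basis of REP} gives the leading-term expansion needed for condition~(b) and the strong form of $c_i(b)$, Proposition~\ref{prop: quasi self-dual}(b) (together with Theorem~\ref{thm: Pi M }) ensures that $\Irr_0(R(\beta))$ is an $\A^\pi$-basis and that the quasi-self-dual normalization pins down the $q$-shift so that condition~(c) goes through via Lemma~\ref{Lem: crystal structure}(b).
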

\begin{proof}
The statement is an immediate consequence of Proposition \ref{prop: quasi self-dual} and
\eqref{eqn: Perfect basis of REP}.
\end{proof}

\vskip 3mm

\subsection{Cyclotomic quotients} \label{sec: cycl quotient} In this
subsection, we quickly review the results on the cyclotomic quiver
Hecke superalgebras $R^\Lambda$ which were proved in \cite[Section
7, 8, 9]{KKO12}.

\bigskip

For each $i \in I$ and $k \in \Z_{\ge 0}$,
we take  $c_{i;k} \in \k_{k(\alpha_i|\alpha_i)}$ such that
{\rm(i)} $c_{i,0}=1$,  {\rm(ii)} $c_{i;k}=0$ if $i \in \Iod$ and $k$ is odd.
For $\Lambda \in \P^+$ and $i \in I$, we choose a monic
polynomial
\begin{equation} \label{Eq: cylotomic polynomial}
\begin{aligned}
a^{\Lambda}_i(u)= \sum_{k=0}^{\langle h_i,\Lambda \rangle}
c_{i;k}u^{\langle h_i,\Lambda \rangle-k}
\end{aligned}
\end{equation}
and define
$$ a^{\Lambda}(x_1) = \sum_{\nu \in I^n} a^{\Lambda}_{\nu_1}(x_1)e(\nu) \in R(n) .$$

\begin{definition} \label{Def: cyclo}
Let $\beta \in \rtl^+$ and $\Lambda\in \P^+$. The {\em
cyclotomic quiver Hecke superalgebra $R^{\Lambda}(\beta)$ at
$\beta$} is the quotient algebra
$$ R^{\Lambda}(\beta) =
\dfrac{R(\beta)}{R(\beta)a^{\Lambda}(x_1)R(\beta)}.$$
\end{definition}

We need the next proposition in  proving our main result: the
supercategorification of integrable highest weight modules.

\begin{proposition}[{\cite[Corollary 7.5]{KKO12}}] \label{Prop: Nilpotency}
For $\beta \in \rtl^+$, there exists $m$ such that
$$R^\Lambda(\beta+k\alpha_i)=0\quad \text{for any $k \ge m$.}$$
\end{proposition}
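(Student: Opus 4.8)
The statement to prove is Proposition~\ref{Prop: Nilpotency}: for $\beta\in\rtl^+$ there is $m$ with $R^\Lambda(\beta+k\alpha_i)=0$ for all $k\ge m$. The natural strategy is to reduce the vanishing to a statement about the functors $F_i^\Lambda$ and $E_i^\Lambda$ on the supercategories $\PROJ(R^\Lambda)$ and $\REP(R^\Lambda)$, exactly as in the non-super case (\cite{KK11}) and in \cite{KKO12}. Since $R^\Lambda(\gamma)$ vanishes if and only if the identity functor on $\MOD(R^\Lambda(\gamma))$ is zero, and this is detected on the Grothendieck group, it suffices to show that $[\PROJ(R^\Lambda(\beta+k\alpha_i))]=0$ for $k\gg0$; equivalently, that the iterated functor $(F_i^\Lambda)^k$ applied to any object eventually lands in the zero category. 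Concretely, I would argue by contradiction: if $R^\Lambda(\beta+k\alpha_i)\ne0$ for infinitely many $k$, then there is an infinite chain of nonzero projective modules, and I will derive a contradiction with the finite length / integrability constraints coming from the weight $\Lambda$.

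\textbf{Key steps.} First I would invoke the commutation isomorphisms of endofunctors stated in the introduction, namely
\[
\Pi_iq_i^{-2}F^{\Lambda}_iE^{\Lambda}_i \oplus \bigoplus_{k=0}^{\langle h_i,\Lambda-\beta\rangle-1}\Pi_i^k q_i^{2k}
\overset{\sim}{\to} E^{\Lambda}_iF^{\Lambda}_i \quad\text{if }\langle h_i,\Lambda-\beta\rangle\ge0,
\]
together with its counterpart for $\langle h_i,\Lambda-\beta\rangle<0$. Evaluating these in the Grothendieck group $[\PROJ(R^\Lambda)]$, which is an $\A^\pi$-module, gives the $\mathfrak{sl}_2$-type relation between $[E_i^\Lambda]$ and $[F_i^\Lambda]$ acting on each weight space. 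Second, since $E_i^\Lambda$ and $F_i^\Lambda$ are exact and $E_i^\Lambda$ is left adjoint (up to shift) to $F_i^\Lambda$ — or rather the adjunction coming from the $(R^\Lambda(\beta+\alpha_i),R^\Lambda(\beta))$-superbimodule $R^\Lambda(\beta+\alpha_i)e(\beta,i)$ — I would show that on $[\PROJ(R^\Lambda(\beta+k\alpha_i))]$ the operator $E_i^\Lambda F_i^\Lambda$ eventually acts with a ``gap'': for $k$ large enough $\langle h_i,\Lambda-\beta-k\alpha_i\rangle=\langle h_i,\Lambda-\beta\rangle-2k$ is very negative, so the second displayed isomorphism forces $E_i^\Lambda F_i^\Lambda$ to contain a copy of $F_i^\Lambda E_i^\Lambda$ plus a large sum of grading shifts of the identity, and this positivity in $\A^\pi$ (coefficients in $\Z_{\ge0}[q^{\pm1}]^\pi$) is incompatible with $E_i^\Lambda$ eventually killing everything unless the module itself is zero. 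Third, I would feed in Proposition~\ref{Prop: PBW} and the structure of $R(n\alpha_i)$ from Proposition~\ref{prop:proj}: the cyclotomic relation $a^\Lambda(x_1)$ has degree $\langle h_{\nu_1},\Lambda\rangle$ in $x_1$, so a dimension-counting / filtration argument on $e(\beta,i^k)R^\Lambda(\beta+k\alpha_i)$ shows the relevant $e(\nu)$-components are annihilated once $k$ exceeds a bound depending only on $\beta$ and $\Lambda$. Finally I would assemble these into the statement that $R^\Lambda(\beta+k\alpha_i)=e(\beta+k\alpha_i)R^\Lambda(\beta+k\alpha_i)e(\beta+k\alpha_i)$ is a quotient of a module that has been shown to vanish.

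\textbf{Main obstacle.} The principal difficulty is bookkeeping the parity functor $\Pi_i$ and the half-integer ambiguities ($\sqrt{\pi_i}$, $\Pi^{\pa(i)}$) through the adjunctions and the Grothendieck-group computation: in the super setting the adjunction isomorphisms carry signs (as spelled out in Section~\ref{Sec:supers} and \S\ref{subsec:superbimodules}), and one must check that the positivity argument still goes through over $\A^\pi=\Z[q,q^{-1}]^\pi$ rather than over $\Z[q,q^{-1}]$. I expect that none of this changes the underlying $\mathfrak{sl}_2$-combinatorics — the proof in \cite[Section 7]{KKO12} (for $\Rep$, not $\REP$) adapts essentially verbatim, since $[\REP(R^\Lambda(\beta))]\simeq[\Rep(R^\Lambda(\beta))]/(\pi-1)$ and the vanishing of a module is insensitive to the $\pi$-action — so the honest content is to observe that the cited Corollary~7.5 of \cite{KKO12} is a statement about the underlying ungraded-by-parity algebra and therefore transfers directly. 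In the write-up I would therefore state that the proof is identical to that of \cite[Corollary 7.5]{KKO12}, the only new point being that the functors $E_i^\Lambda, F_i^\Lambda$ on $\MOD(R^\Lambda)$ restrict to well-defined exact superfunctors on $\PROJ(R^\Lambda)$ and $\REP(R^\Lambda)$, which was already remarked in the introduction.
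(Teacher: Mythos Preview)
The paper gives no proof: the proposition is simply cited as \cite[Corollary 7.5]{KKO12}. Your final sentence lands in the right place --- one just invokes that result --- but almost everything preceding it is either unnecessary or circular.

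The main misconception is that there is anything ``super'' to check here. The statement is that the \emph{ring} $R^\Lambda(\beta+k\alpha_i)$ vanishes for $k\gg0$. This ring is exactly the same object whether one studies $\Mod$, $\MOD$, $\Rep$, or $\REP$ over it; its vanishing has nothing to do with parity functors, $\Pi_i$, $\A^\pi$-positivity, or which Grothendieck group one works in. So \cite[Corollary 7.5]{KKO12} applies verbatim with no adaptation required --- not ``essentially verbatim,'' but literally the same statement about the same algebra.

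Your proposed strategy is also circular in the logical order of \cite{KKO12}: the commutation isomorphisms you invoke are \cite[Theorems 9.1, 9.6]{KKO12}, proved in Section~9, whereas Corollary~7.5 sits in Section~7 and is an input to the later development (in particular, it is used to show that $\mathsf{F}_i$ acts locally nilpotently, cf.\ \eqref{prop:gro}(ii) here). The actual proof in \cite{KKO12} is a direct algebraic argument on the cyclotomic relation $a^\Lambda(x_1)$ and the PBW basis (your ``Third'' step is closer to the real argument than your ``First'' and ``Second'' steps), not a Grothendieck-group computation.
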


Let $\MOD(R^\Lambda(\beta))$, $\PROJ(R^\Lambda(\beta))$ and
$\REP(R^\Lambda(\beta))$ be the supercategories defined in a similar
manner as we did in \S \ref{subsec:quiver}. For each $i \in I$ and
$\beta \in \rtl^+$, we define the superfunctors
\begin{align*}
& E_i^{\Lambda}\cl \MOD(R^{\Lambda}(\beta+\alpha_i)) \to \MOD(R^{\Lambda}(\beta)), \\
& F_i^{\Lambda}\cl \MOD(R^{\Lambda}(\beta)) \to \MOD(R^{\Lambda}(\beta+\alpha_i))
\end{align*}
by
\begin{align*}
& E_i^{\Lambda}(N)=e(\beta,i)N =e(\beta,i)R^{\Lambda}(\beta+\alpha_i)
\otimes_{R^{\Lambda}(\beta+\alpha_i)}N,\\
& F_i^{\Lambda}(M)=R^{\Lambda}(\beta+\alpha_i)e(\beta,i)
\otimes_{R^{\Lambda}(\beta)}M
\end{align*}
for $M \in \MOD(R^{\Lambda}(\beta))$ and $N\in
\MOD(R^{\Lambda}(\beta+\alpha_i))$. Then $(\FL_i,E^{\Lambda}_i)$ is
a superadjoint pair (see \S\;\ref{subsec:super2}); i.e.,
$$\Hom_{R^\La(\beta+\al_i)}(F^{\Lambda}_iM,N)\simeq\Hom_{R^\La(\beta)}(M,E^{\Lambda}_iN).$$

Set $n=|\beta|$. There exist natural transformations: \eqn &&
x_{E^{\Lambda}_i}\cl E^{\Lambda}_i \to \Pi_i q^{-2}_iE_i^{\Lambda},
\hs{20ex} x_{\FL_i}\cl \FL_i \to \Pi_i q^{-2}_i\FL_i,  \\
&&\tau_{\EL_{ij}}\cl \EL_i\EL_j \to \Pi^{\pa(i)\pa(j)}
q^{(\alpha_i|\alpha_j)} \EL_j\FL_i, \hs{6ex} \tau_{\FL_{ij}}\cl
\FL_i\FL_j \to \Pi^{\pa(i)\pa(j)} q^{(\alpha_i|\alpha_j)}\FL_j\FL_i
\eneqn induced by \bna
\item the left multiplication by $x_{n+1}$ on the kernel $e(\beta,i)R^\La(\beta+\al_i)$ of the functor $\EL_i$,
\item the right multiplication by $x_{n+1}$ on the kernel
$R^\La(\beta+\alpha_i)e(\beta,i)$ of the functor
$\FL_i$,
\item the left multiplication by $\tau_{n+1}$ on the kernel
$e(\beta,i,j)R^\La(\beta+\al_i+\al_j)$ of the functor $\EL_i \FL_j$,
\item the right multiplication by $\tau_{n+1}$ on the kernel $R^\La(\beta+\alpha_i+\alpha_j)e(\beta,j,i)$
of the functor $\FL_i\FL_j$.
\end{enumerate}

For $\gamma$ with $|\gamma|=n$ and $\nu\in I^\gamma$,
let us denote by
$$\EL_\nu=\EL_{\nu_1}\cdots\EL_{\nu_n}\cl \MOD(R^\La(\beta+\gamma))
\to\MOD(R^\La(\beta)).$$
Then $x_{\EL_i}$'s and $\tau_{\EL_{ij}}$'s
induce a superalgebra homomorphism
$$R(\gamma)\to
\End_{\sd{\Fcts\bl\MOD(R^\La(\beta+\gamma),\,\MOD(R^\La(\beta)\br}}
\bl\soplus_{\nu\in I^\gamma}\EL_\nu\br.
$$
(Recall the discussion at the end of
\S\,\ref{subsec:superbimodules}.) Under this homomorphism,
$e(\nu)\in R(\gamma)$ is sent to the projection to the factor
$\EL_\nu$, $x_ke(\nu)$ is sent to $\EL_{\nu_1}\cdots
x_{\EL_{\nu_k}}\cdots \EL_{\nu_n}$, and $\tau_ke(\nu)$ is sent to
$\EL_{\nu_1}\cdots\tau_{\EL_{\nu_k,\nu_{k+1}}} \cdots \EL_{\nu_n}$.
Here, we have forgotten the grading.

Similarly,
let us denote by
$$\FL_\nu=\FL_{\nu_n}\cdots\FL_{\nu_1}\cl \MOD(R^\La(\beta))
\to\MOD(R^\La(\beta+\gamma)).$$
Then $x_{\FL_i}$'s and
$\tau_{\FL_{ij}}$'s induce a superalgebra homomorphism
$$R(\gamma)\to
\End_{\sd{\Fcts\bl\MOD(R^\La(\beta)),\,\MOD(R^\La(\beta+\gamma))\br}}
\bl\soplus_{\nu\in I^\gamma}\FL_\nu\br^\sop,
$$
where $e(\nu)\in R(\gamma)$ is sent to the projection to the factor
$\FL_\nu$, $x_ke(\nu)$ is sent to $\FL_{\nu_n}\cdots
x_{\FL_{\nu_k}}\cdots\FL_{\nu_1}$, and $e(\nu)\tau_k$ is sent to
$\FL_{\nu_n}\cdots\tau_{\FL_{\nu_{k+1},\nu_{k}}} \cdots\FL_{\nu_1}$.

By the superadjunction, $\tau_{\EL_{ij}}$ induces  a natural
transformation
$$\FL_j\EL_i \to \Pi^{\pa(i)\pa(j)}q^{(\alpha_i|\alpha_j)} \EL_i\FL_j.$$
Set
\begin{align*}
\PROJ(R^{\Lambda})=\bigoplus_{\beta \in \rtl^+}\PROJ(R^{\Lambda}(\beta)),
\quad\REP(R^{\Lambda})=\bigoplus_{\beta \in \rtl^+}\REP(R^{\Lambda}(\beta)).
\end{align*}

\begin{theorem}[{\cite[Theorem 8.9]{KKO12}}] \label{Thm: injective}
The functors $E^{\Lambda}_i$ and $F^{\Lambda}_i$ are well-defined
exact superfunctors on $\PROJ(R^{\Lambda})$  and $\REP(R^{\Lambda})$.
Hence they induce the endomorphisms $\mathsf{E}_i$ and
$\mathsf{F}_i$ on the Grothendieck groups $[\PROJ(R^{\Lambda})]$ and
$[\REP(R^{\Lambda})]$\,{\rm :}
\eqn &&
\xymatrix@C=13ex{[\PROJ(R^\Lambda(\beta))]\ar@<.8ex>[r]^-{\mathsf{F}_i\seteq[F_{i}^{\Lambda}]}
&[\PROJ(R^\Lambda(\beta+\alpha_i))]
\ar@<.8ex>[l]^-{\mathsf{E}_i\seteq[E_{i}^{\Lambda}]}
},\\
&&\xymatrix@C=13ex{[\REP(R^\Lambda(\beta))]
\ar@<.8ex>[r]^-{\mathsf{F}_i\seteq[F_{i}^{\Lambda}]}
&[\REP(R^\Lambda(\beta+\alpha_i))]\ar@<.8ex>[l]^-{\mathsf{E}_i\seteq[E_{i}^{\Lambda}]}.
} \eneqn
\end{theorem}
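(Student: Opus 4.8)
The statement to prove is Theorem \ref{Thm: injective}: the functors $E_i^\Lambda$ and $F_i^\Lambda$ are well-defined exact superfunctors on $\PROJ(R^\Lambda)$ and $\REP(R^\Lambda)$, hence induce endomorphisms $\mathsf{E}_i,\mathsf{F}_i$ on the Grothendieck groups. The plan is to reduce everything to the non-super categorification established in \cite[Section 7, 8]{KKO12} (i.e.\ the already-proved results for $\Proj(R^\Lambda)$ and $\Rep(R^\Lambda)$), augmented by the supermodule bookkeeping developed in Section \ref{Sec:supers}.

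\textbf{Step 1: Well-definedness and exactness.} First I would observe that $E_i^\Lambda(N)=e(\beta,i)R^\Lambda(\beta+\alpha_i)\otimes_{R^\Lambda(\beta+\alpha_i)}N$ and $F_i^\Lambda(M)=R^\Lambda(\beta+\alpha_i)e(\beta,i)\otimes_{R^\Lambda(\beta)}M$ are given by tensoring with bimodules, so by Proposition in \S\ref{subsec:superbimodules} ($L\mapsto F_L$ is a superfunctor and a superbifunctor) they are automatically superfunctors on $\MOD$. The point to check is that they preserve the subcategories $\PROJ$ and $\REP$ and are exact. For $\PROJ$: $F_i^\Lambda$ sends projectives to projectives since $R^\Lambda(\beta+\alpha_i)e(\beta,i)$ is a projective left $R^\Lambda(\beta+\alpha_i)$-module and a right $R^\Lambda(\beta)$-module that is, by Proposition \ref{Prop: Nilpotency} (nilpotency) and the arguments of \cite[Section 7]{KKO12}, projective as a right module, so $F_i^\Lambda$ is exact; for $E_i^\Lambda$ exactness on $\PROJ$ one uses that $e(\beta,i)R^\Lambda(\beta+\alpha_i)$ is projective as a right module. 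For $\REP$: both functors visibly preserve finite-dimensionality over $\k_0$ (the bimodules are finite-dimensional over $\k_0$ after the nilpotency bound), and exactness on $\REP$ follows because $E_i^\Lambda$ is right adjoint to $F_i^\Lambda$ (so left exact) and also left adjoint up to grading/parity shift to $F_i^\Lambda$ via the adjunction coming from $\tau_{\EL_{ij}}$-type maps and the $\mathfrak{sl}_2$-relations displayed in the Introduction, hence exact. The key input here is that all of this is proved identically to \cite[Section 8]{KKO12}; the only new ingredient is that the parity shift $\Pi$ commutes with these functors, which is exactly the content of the isomorphism $\alpha_{F_L}$ and the computation $F_{\Pi L}\cong\Pi\cdot F_L$ in \S\ref{subsec:superbimodules}. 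I would state explicitly that $\Pi$ sends short exact sequences to short exact sequences, so the Grothendieck group inherits the $\Z^\pi$-module structure of \S\,8.

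\textbf{Step 2: Passing to Grothendieck groups.} Once $E_i^\Lambda, F_i^\Lambda$ are exact superfunctors on $\PROJ(R^\Lambda)$ and $\REP(R^\Lambda)$, and since $\Pi$ and the grading shift $q$ are exact, they descend to $\A^\pi$-linear endomorphisms $\mathsf{E}_i\seteq[E_i^\Lambda]$, $\mathsf{F}_i\seteq[F_i^\Lambda]$ of $[\PROJ(R^\Lambda)]$ and $[\REP(R^\Lambda)]$ compatible with the decomposition over $\beta\in\rtl^+$; this gives the two displayed diagrams. The naturality statement (that these are honestly pairs of maps between the $\beta$- and $(\beta+\alpha_i)$-graded pieces) is immediate from the definitions of the functors.

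\textbf{Expected main obstacle.} The genuinely delicate point is the exactness of $F_i^\Lambda$ (equivalently the projectivity of the bimodule $R^\Lambda(\beta+\alpha_i)e(\beta,i)$ as a right $R^\Lambda(\beta)$-module) and the biadjunction $E_i^\Lambda\leftrightarrow F_i^\Lambda$ up to shift, because these are where the cyclotomic relations and the nilpotency of Proposition \ref{Prop: Nilpotency} really enter — in the non-super case this is the technical heart of \cite[Section 8]{KKO12}. My plan is not to redo that analysis but to cite it: I would note that the quiver Hecke superalgebra $R(\beta)$ and its cyclotomic quotient $R^\Lambda(\beta)$ satisfy, by \cite{KKT11} and \cite[Sections 7--9]{KKO12}, all the structural properties (PBW-type basis, behavior of $e(\beta,i)$, nilpotency, the $\mathfrak{sl}_2$-relations among $E_i^\Lambda,F_i^\Lambda$) that are used in the proof, and that the only modification needed is to carry the $\Z_2$-grading (parity) through each step, which is harmless because all the relevant bimodule maps are parity-homogeneous and $\Pi$ is exact and commutes with $\otimes$. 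Thus the proof is: \emph{``By the same arguments as in \cite[Section 8]{KKO12}, using Proposition \ref{Prop: Nilpotency} and the fact that $\Pi$ is an exact endofunctor commuting with $E_i^\Lambda$ and $F_i^\Lambda$ (cf.\ \S\ref{subsec:superbimodules}), the functors $E_i^\Lambda$ and $F_i^\Lambda$ are well-defined and exact on $\PROJ(R^\Lambda)$ and $\REP(R^\Lambda)$, whence the claim.''}
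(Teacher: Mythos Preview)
Your approach is correct and matches the paper's own treatment: the paper gives no proof here but simply attributes the result to \cite[Theorem 8.9]{KKO12} (the section is explicitly a review of results proved in \cite[Sections 7--9]{KKO12}), and in the Introduction it says verbatim that ``by the same argument as in \cite{KKO12}, one can verify that $E_i^\Lambda$ and $F_i^\Lambda$ are well-defined exact functors on $\REP(R^\Lambda)$ and $\PROJ(R^\Lambda)$.'' Your plan to cite those arguments and observe that the parity shift $\Pi$ is exact and commutes with the bimodule functors (via \S\ref{subsec:superbimodules}) is exactly the intended reduction; one minor slip is that for $E_i^\Lambda$ the exactness is automatic (it is $N\mapsto e(\beta,i)N$, a direct summand), and the nontrivial point on $\PROJ$ is that it preserves projectives, i.e.\ that $e(\beta,i)R^\Lambda(\beta+\alpha_i)$ is projective as a \emph{left} $R^\Lambda(\beta)$-module --- but this is precisely what is supplied by \cite[Section 8]{KKO12}.
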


\begin{theorem}[{\cite[Theorem 9.1, Theorem 9.6]{KKO12}}] \label{Thm: Main}
There exist natural isomorphisms of endofunctors on
$\MOD(R^{\Lambda}(\beta))$ given below\,{\rm :}
\begin{equation}  \label{Eq: The com rel 0}
\begin{aligned}
& E^{\Lambda}_iF^{\Lambda}_j \overset{\sim}{\to}
 q^{-(\alpha_i|\alpha_j)}\Pi^{\pa(i)\pa(j)}F^{\Lambda}_j E^{\Lambda}_i \quad
  \text{ if $i \neq j$}, \\
& \Pi_iq_i^{-2}F^{\Lambda}_iE^{\Lambda}_i \oplus
\bigoplus^{\langle h_i,\Lambda-\beta \rangle-1}_{k=0}\Pi_i^k q_i^{2k} \overset{\sim}{\to}
E^{\Lambda}_iF^{\Lambda}_i \quad
\text{ if $\langle h_i,\Lambda-\beta \rangle \ge 0$}, \\
& \Pi_iq_i^{-2}F^{\Lambda}_iE^{\Lambda}_i \overset{\sim}{\to}
E^{\Lambda}_iF^{\Lambda}_i \oplus \bigoplus^{-\langle h_i,\Lambda-\beta \rangle-1}_{k=0}\Pi_i^{k+1}q_i^{-2k-2} \quad \text{ if $\langle h_i,\Lambda-\beta \rangle < 0$}.
\end{aligned}
\end{equation}
\end{theorem}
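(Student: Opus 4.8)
The statement is the super-analogue of the cyclotomic $\Sl_2$-categorification relations of \cite{KK11}, and the plan is to follow the bimodule argument there while keeping track of the $\Z_2$-grading and of the parity-shift twists, exactly as in Section~9 of \cite{KKO12}. First I would reduce everything to isomorphisms of $\Z\times\Z_2$-graded $(R^\Lambda,R^\Lambda)$-superbimodules. Since $\EL_i$ and $\FL_i$ are exact superfunctors (Theorem~\ref{Thm: injective}) given by tensoring with the projective superbimodules $e(\beta,i)R^\Lambda(\beta+\alpha_i)$ and $R^\Lambda(\beta+\alpha_i)e(\beta,i)$ (Example~\ref{exa:supercategories}), each composite appearing in \eqref{Eq: The com rel 0} is tensoring with an explicit superbimodule, e.g.
\begin{align*}
\EL_i\FL_i\ &\leftrightarrow\ e(\beta,i)\,R^\Lambda(\beta+\alpha_i)\,e(\beta,i),\\
\FL_i\EL_i\ &\leftrightarrow\ R^\Lambda(\beta)e(\beta-\alpha_i,i)\ot_{R^\Lambda(\beta-\alpha_i)}e(\beta-\alpha_i,i)R^\Lambda(\beta),
\end{align*}
and similarly for the mixed composites $\EL_i\FL_j$, $\FL_j\EL_i$. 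Hence it suffices to construct the claimed superbimodule isomorphisms, respecting both gradings; the natural candidates are the maps induced by the natural transformations $x_{\EL_i}$, $x_{\FL_i}$, $\tau_{\EL_{ij}}$, $\tau_{\FL_{ij}}$ constructed above (left/right multiplication by $x_{n+1}$ and $\tau_{n+1}$), together, in the case $i=j$, with the unit and counit of the super-adjunction $(\FL_i,\EL_i)$.

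For $i\ne j$ the isomorphism $\EL_i\FL_j\isoto q^{-(\alpha_i|\alpha_j)}\Pi^{\pa(i)\pa(j)}\FL_j\EL_i$ is the super version of the Mackey-type (shuffle) bimodule decomposition of \cite{KL1,KK11}. It is induced by the crossing $\tau_{n+1}$ exchanging the two distinctly labelled end strands; by the PBW theorem (Proposition~\ref{Prop: PBW}) this single crossing carries a basis of the source superbimodule bijectively onto a basis of the target, and the quadratic relation $\tau^2=\cQ_{i,j}$ never enters because the two strands are crossed only once. The $\Z$-degree $-(\alpha_i|\alpha_j)$ and parity $\pa(i)\pa(j)$ of $\tau_{n+1}$ yield precisely the shift $q^{-(\alpha_i|\alpha_j)}\Pi^{\pa(i)\pa(j)}$, and this map descends compatibly to the cyclotomic quotients.

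The case $i=j$ is the heart of the argument. Following \cite{KK11}, one decomposes the superbimodule $e(\beta,i)R^\Lambda(\beta+\alpha_i)e(\beta,i)$, via the PBW basis together with the cyclotomic relation, into: a ``through'' part, in which the newest strand passes through after a crossing, isomorphic to the $\FL_i\EL_i$-superbimodule up to the shift $\Pi_iq_i^{-2}$ (that is, $\Pi^{\pa(i)}q^{-(\alpha_i|\alpha_i)}$, read off from the degree and parity of the crossing of two $i$-strands); and a ``capped'' part spanned by the classes of $x_{n+1}^k$, each contributing a copy of $R^\Lambda(\beta)$ with shift $\Pi_i^kq_i^{2k}$ (the parity of $x_{n+1}$ on an $i$-strand being $\pa(i)$ and its $\Z$-degree $(\alpha_i|\alpha_i)$). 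The finiteness of the capped part, and the sign of $\la h_i,\Lambda-\beta\ra$ that decides on which side of \eqref{Eq: The com rel 0} it appears, come from the monic cyclotomic polynomial $a^\Lambda_i(x_1)$ and from Proposition~\ref{Prop: Nilpotency}: the polynomial truncates the would-be infinite sum $\bigoplus_{k\ge0}$ to its first $\la h_i,\Lambda-\beta\ra$ terms when this integer is $\ge 0$ (giving the second isomorphism), and pushes the complementary terms onto the $\FL_i\EL_i$ side otherwise (the third). One then identifies these bimodule maps with those induced by $x_{\EL_i}$, $x_{\FL_i}$, $\tau_{\EL_{ij}}$ and the (co)unit, and checks that all $\Z\times\Z_2$-degrees match.

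The main obstacle is the sign and $\Pi$-bookkeeping. Every crossing, every $x_k$, and every application of $\Pi$ carries a sign governed by the parities $\pa(i),\pa(j)$ and by the Clifford-type relations in $\cP_\nu$ and the odd-$i$ corrections in \eqref{eq:R(n)} (the $x_{a+2}-x_a$ term in the braid relation, the vanishing of $t_{i,j;(r,s)}$ for $i\in\Iod$ and $r$ odd, and so on). One must verify that the candidate maps are isomorphisms not merely of underlying functors but of \emph{super}functors, i.e.\ compatible with the data of \S\ref{Sec:supers}; in bimodule terms this amounts to choosing the super PBW basis, the primitive idempotent $\mathbf{b}(i^n)$, and the divided-power factorials $[n]^\pi_i!$ of \eqref{def:pifact} and Proposition~\ref{prop:proj} coherently with $\Pi$, and checking that the cyclotomic truncation preserves these choices so that the odd-$i$ factors $\Pi_i^k$ land in the correct summands. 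Making this coherence airtight is the technical crux, and it is precisely where the super-specific structure of $R^\Lambda$ enters.
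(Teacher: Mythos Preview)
The paper does not give a proof of this theorem; it is imported wholesale from \cite[Theorem~9.1, Theorem~9.6]{KKO12}, with only the remark in the Introduction that ``by the same argument as in \cite{KKO12}'' the isomorphisms hold on $\MOD(R^\Lambda(\beta))$ as well. Your outline follows precisely that cited argument --- the bimodule approach of \cite{KK11} adapted to the super setting as in Section~9 of \cite{KKO12}, with the $\Z\times\Z_2$-grading and $\Pi$-shifts tracked throughout --- so your proposal is aligned with what the paper invokes.
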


\vskip 3mm

\subsection{Supercategorification} \label{sec: supercategorification}

As our main results, we show that $\REP(R^{\Lambda})$ and $\REP(R)$
provide a supercategorification of $\Vs_{\A^\pi}(\Lambda)^\vee$ and
$\Us^-_{\A^\pi}(\g)^\vee$, respectively. In this subsection, we
assume that the condition \eqref{cond:k0} is satisfied; i.e., $\k_0$
is a field and the $\k_i$'s are finite-dimensional over $\k_0$.

By \eqref{property of R(beta)-mod} and Lemma \ref{Lem: crystal structure}(c), we have a perfect pairing
\eq&& [\PROJ(R^\Lambda)] \times [\REP(R^\Lambda)] \to \A^\pi
\label{def:coupl}
\eneq
given by
$$ ([P],[M]) \mapsto  \dim_q^\pi(P^\psi \ot_{R^{\Lambda}} M),$$
which implies that $ [\PROJ(R^\Lambda)]$ and $[\REP(R^\Lambda)]$ are
$\A^\pi$-dual to each other. Here, $P^\psi$ is the right
$R^{\Lambda}$-module obtained from $P$ by applying the
anti-involution $\psi$ (see \eqref{eq:anti}).

Let $\mathsf{E}_i$ and $\mathsf{F}_i$ be the endomorphisms on
$[\PROJ(R^\Lambda)]$ or $[\REP(R^\Lambda)]$ given in
Theorem~\ref{Thm: injective}. Then we can check easily that they are
adjoint to each other. For example, we have
\eqn(\FL_iP)^\psi\tens_{R^\La(\beta+\al_i)}M&=& \bl
R^\La(\beta+\al_i)e(\beta,i)\tens_{R^\La(\beta)}P)\br^\psi
\tens_{R^\La(\beta+\al_i)}M\\
&\simeq& P^\psi\tens_{R^\La(\beta)} e(\beta,i) R^\La(\beta+\al_i)
\tens_{R^\La(\beta+\al_i)}M\\ &\simeq&
P^\psi\tens_{R^\La(\beta)}\EL_iM \eneqn for $P\in
\REP(R^\La(\beta))$ and $M\in \REP(R^\La(\beta+\al_i))$.

Let us show that  $\mathsf{E}_i$ and $\mathsf{F}_i$ induce
$\Us_{\A^\pi}(\g)$-module structures on $[\PROJ(R^\Lambda)]$ and
$[\REP(R^\Lambda)]$. The natural isomorphisms given in $\eqref{Eq:
The com rel 0}$ can be written as follows:
\begin{equation}  \label{Eq: The com rel}
\begin{aligned}
& \mathsf{E}_i\mathsf{F}_j = q^{-(\alpha_i|\alpha_j)}\pi^{\pa(i)\pa(j)}\mathsf{F}_j\mathsf{E}_i \quad
\text{ if } i \neq j, \\
& \mathsf{E}_i\mathsf{F}_i = q_i^{-2} \pi_i \mathsf{F}_i\mathsf{E}_i
+ \dfrac{1-(q_i^{2}\pi_i)^{\langle h_i,\Lambda-\beta \rangle}}{1-q_i^{2}\pi_i} \quad
\text{ if } \langle h_i,\Lambda-\beta \rangle \ge 0, \\
& \mathsf{E}_i\mathsf{F}_i + \dfrac{1-(q_i^{2}\pi_i)^{\langle
h_i,\Lambda-\beta \rangle}}{q_i^{2}\pi_i-1} = q_i^{-2} \pi_i
\mathsf{F}_i\mathsf{E}_i  \quad \text{ if } \langle
h_i,\Lambda-\beta \rangle < 0
\end{aligned}
\end{equation}
on $[\PROJ(R^\Lambda)]$ or $[\REP(R^\Lambda)]$.

Let $\tilde{\mathsf{K}}_i$ be an endomorphism on $[\PROJ(R^\Lambda)]$ and $[\REP(R^\Lambda)]$ defined by
$$\tilde{\mathsf{K}}_i|_{[\PROJ(R^\Lambda(\beta))]} \seteq (q^2_i\pi_i)^{\langle h_i,\Lambda-\beta \rangle},
\quad  \tilde{\mathsf{K}}_i|_{[\REP(R^\Lambda(\beta))]} \seteq
(q^2_i\pi_i)^{\langle h_i,\Lambda-\beta \rangle}.$$ Then \eqref{Eq:
The com rel} can be rephrased as
\begin{equation}  \label{Eq: summarized com rel}
\mathsf{E}_i\mathsf{F}_j - q^{-(\alpha_i|\alpha_j)}\pi^{\pa(i)\pa(j)} \mathsf{F}_i\mathsf{E}_j
= \delta_{i,j} \dfrac{1-\tilde{\mathsf{K}}_i}{1-q_i^{2}\pi_i},
\end{equation}
which coincides with one of the defining relations in Definition
\ref{dfn:Uqsg}.

We now define the  superfunctors
\begin{align*}
& {F^\Lambda_i}^{\{n \}}: \MOD(R^\Lambda(\beta)) \to \MOD(R^\Lambda(\beta+n\alpha_i)), \\
& {E^\Lambda_i}^{\{ n \}}: \MOD(R^\Lambda(\beta+n\alpha_i)) \to \MOD(R^\Lambda(\beta)),
\end{align*}
by
\begin{align*}
& {F^\Lambda_i}^{\{ n \}}(M) = R^\Lambda(\beta+n\alpha_i) e(\beta,i^n)
\ot_{R^\Lambda(\beta)\ot R(n\alpha_i)}\bl M \tens P(i^{n})\br,\\
& {E^\Lambda_i}^{\{ n \}}(N) = \bl R^\Lambda(\beta)\ot P(i^{n})^\psi\br
\ot_{R^\Lambda(\beta)\ot R(n\alpha_i)}e(\beta,i^n)N
\end{align*}
for $M \in \MOD(R^\Lambda(\beta))$ and $N \in
\MOD(R^\Lambda(\beta+n\alpha_i))$. Then Proposition~\ref{prop:proj} implies that
$$[n]^\pi_i!{E^\Lambda_i}^{\{ n \}} \simeq (\EL_i)^n \quad \text{and} \quad
[n]^\pi_i!{F^\Lambda_i}^{\{ n \}} \simeq (\FL_i)^n.$$ Note that \eq
&&
\parbox{70ex}{
\be[(i)]
\item the actions of $\mathsf{E}_i$ on $[\PROJ(R^\Lambda)]$ and $[\REP(R^\Lambda)]$ are locally nilpotent,
\item by Proposition \ref{Prop: Nilpotency}, the actions of $\mathsf{F}_i$ on $[\PROJ(R^\Lambda)]$ and $[\REP(R^\Lambda)]$ are locally nilpotent,
\item if $\beta \neq 0$ and $M \in \REP(R^\Lambda(\beta))$ does not vanish, then there exists $i \in I$
such that $\mathsf{E}_i[M] \neq 0$, \label{eq: unique hwv}
\item $\mathsf{E}_i$ and $\mathsf{F}_i$ are the transpose of each other
with respect to the coupling \eqref{def:coupl}. Indeed we have
$P^\psi \ot_{R^{\Lambda}}\mathsf{F}_i M \simeq (\mathsf{E}_iP)^\psi
\ot_{R^{\Lambda}}M$ and $P^\psi \ot_{R^{\Lambda}}\mathsf{E}_i M
\simeq (\mathsf{F}_iP)^\psi \ot_{R^{\Lambda}}M$. \ee
}\label{prop:gro} \eneq By Proposition~\ref{prop: KMPY96},
\eqref{Eq: summarized com rel} and \eqref{prop:gro}, the
endomorphisms $\mathsf{E}_i$ and $\mathsf{F}_i$ satisfy the Serre
relations in Definition \ref{dfn:Uqsg}, which gives a
$\Us_{\A^\pi}(\g)$-module structure on $[\PROJ(R^\Lambda)]$ and
$[\REP(R^\Lambda)]$.

Let $\Irr(R^\Lambda(\beta))$ be the set of isomorphism classes of
simple $R^\Lambda(\beta)$-supermodules. Using the fully faithful
functor $\REP(R^\Lambda(\beta))\monoto\REP(R(\beta))$, we define a
subset $\Irr_0(R^\Lambda(\beta))$ of  $\Irr(R^\Lambda(\beta))$ by
$$\Irr_0(R^\Lambda(\beta)) = \Irr_0(R(\beta))\cap
[\REP(R^\Lambda(\beta))].$$ Set $\Irr_0(R^\Lambda) \seteq
\bigsqcup_{\beta \in \rtl^+} \Irr_0(R^\Lambda(\beta))$. Then Theorem
\ref{Thm: categorical strong} implies that $\Irr_0(R^\Lambda)$ is a
strong perfect basis of $[\REP(R^\Lambda)]$. Therefore, by Theorem
\ref{Thm:recognition thm} and \eqref{prop:gro}(iii), we obtain the
following supercategorification theorem.

\begin{theorem}\label{th:main1}
Let  $\Lambda \in \P^+$.
\bnum
\item $[\REP(R^\Lambda)]$ and $[\PROJ(R^\Lambda)]$ are
$\Us_{\A^\pi}(\g)$-modules.
\item
$\Irr_0(R^\Lambda)$ is a strong perfect basis of
$[\REP(R^\Lambda)]$.
\item There exist isomorphisms of $\Us_{\A^\pi}(\g)$-modules
\begin{align*}
\Vs_{\A^\pi}(\Lambda)^\vee \simeq [\REP(R^\Lambda)] \quad \text{ and
} \quad \Vs_{\A^\pi}(\Lambda) \simeq [\PROJ(R^\Lambda)].
\end{align*}
In particular, $\Vs_{\A^\pi}(\Lambda)$ and
$\Vs_{\A^\pi}(\Lambda)^\vee$ are free $\A^\pi$-modules. \ee
\end{theorem}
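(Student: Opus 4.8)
The plan is to deduce Theorem~\ref{th:main1} by combining the categorical machinery developed in Sections \ref{sec: Quiver Hecke}.2--\ref{sec: Quiver Hecke}.4 with the recognition Theorem~\ref{Thm:recognition thm}. First I would assemble the $\Us_{\A^\pi}(\g)$-module structure on $[\REP(R^\Lambda)]$: the exact superfunctors $\EL_i,\FL_i$ of Theorem~\ref{Thm: injective} induce operators $\mathsf{E}_i,\mathsf{F}_i$ whose commutation relations are exactly \eqref{Eq: summarized com rel}, derived from the natural isomorphisms of Theorem~\ref{Thm: Main}. Together with the divided-power functors ${E^\Lambda_i}^{\{n\}},{F^\Lambda_i}^{\{n\}}$ (well-defined because of Proposition~\ref{prop:proj}), the local nilpotency statements in \eqref{prop:gro}(i)--(ii) (the latter using Proposition~\ref{Prop: Nilpotency}), and Proposition~\ref{prop: KMPY96} which forces the Serre relations of Definition~\ref{dfn:Uqsg} to hold on any integrable module, one obtains that $[\REP(R^\Lambda)]$ is a $\Us_{\A^\pi}(\g)$-module. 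The same argument applied with the roles of $\PROJ$ and $\REP$ interchanged, using that $\mathsf{E}_i$ and $\mathsf{F}_i$ are transpose to each other under the perfect pairing \eqref{def:coupl} (verified by the adjunction computation displayed in \S\ref{sec: supercategorification}), gives the module structure on $[\PROJ(R^\Lambda)]$. This proves (i).

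Next I would establish (ii). By Theorem~\ref{Thm: categorical strong}, $\Irr_0(R(\beta))$ is a strong perfect basis of $[\REP(R(\beta))]$; restricting along the fully faithful embedding $\REP(R^\Lambda(\beta))\hookrightarrow\REP(R(\beta))$ and taking the intersection $\Irr_0(R(\beta))\cap[\REP(R^\Lambda(\beta))]$ yields a subset which is still a strong perfect basis of $[\REP(R^\Lambda)]$ — the axioms (a)--(c) of Definition~\ref{Def: perfect, strongly perfect} and the strongness condition all survive the restriction because the crystal operators $\te_i$, the values $\eps_i$, and the coefficients $c_i(b)$ are computed inside $\REP(R(\beta))$ and the subcategory is stable under $E_i$. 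One should check that $\wt([\REP(R^\Lambda)])\subset\Lambda-\rtl^+$ and that the $\Lambda$-weight space is $\A^\pi$ times the single simple $R^\Lambda(0)$-module $v_\Lambda$, and that the highest-weight part $B^H$ of the basis is exactly $\{v_\Lambda\}$ — this uses \eqref{prop:gro}(iii), which guarantees that a nonzero module at $\beta\ne 0$ is killed by no single $\mathsf{E}_i$ simultaneously.

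For (iii) I would apply Theorem~\ref{Thm:recognition thm} with $M=\Q(q)^\pi\tens[\REP(R^\Lambda)]$ regarded inside $\Oint^\P(\Us(\g))$ (using Theorem~\ref{thm:Oint} and Theorem~\ref{thm: ch V(Lambda)} to know this category behaves well), and $M_{\A^\pi}=[\REP(R^\Lambda)]$: hypotheses (a)--(c) of that theorem are precisely the stability under ${E^\Lambda_i}^{\{n\}}$, the description of the $\Lambda$-weight space, and the existence of the strong perfect basis with $B^H=\{v_\Lambda\}$ just verified. The conclusion gives $[\REP(R^\Lambda)]\simeq\Vs_{\A^\pi}(\Lambda)^\vee$ as $\Us_{\A^\pi}(\g)$-modules and shows $B=\Irr_0(R^\Lambda)$ is an $\A^\pi$-basis. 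Then $\Vs_{\A^\pi}(\Lambda)\simeq[\PROJ(R^\Lambda)]$ follows by $\A^\pi$-duality: the pairing \eqref{def:coupl} identifies $[\PROJ(R^\Lambda)]$ with $\Hom_{\A^\pi}([\REP(R^\Lambda)],\A^\pi)$, which by part (iii) of Theorem~\ref{Thm:recognition thm} is $\Vs_{\A^\pi}(\Lambda)$, and the transpose relation between $\mathsf{E}_i,\mathsf{F}_i$ makes this a $\Us_{\A^\pi}(\g)$-module isomorphism. Freeness of $\Vs_{\A^\pi}(\Lambda)$ and $\Vs_{\A^\pi}(\Lambda)^\vee$ as $\A^\pi$-modules is then immediate since $\Irr_0(R^\Lambda)$ and its dual are $\A^\pi$-bases.

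The main obstacle, I expect, is the bookkeeping around the parity functor $\Pi$ and the sign $\pi$ in verifying that $\Irr_0(R^\Lambda)$ genuinely restricts to a \emph{strong} perfect basis: one must be sure that choosing the subset $\Irr_0$ (which depends on the arbitrary splitting $\Irr_{\mathrm{qsd}}=\Irr_0\sqcup\pi\Irr_0$ from Theorem~\ref{thm: Pi M } via Proposition~\ref{prop: quasi self-dual}) is compatible with passing to the cyclotomic quotient, and that the coefficient $c_i(b)=\pi^\eps q^m[\eps_i(b)]^\pi_i$ appearing in \eqref{eqn: Perfect basis of REP} is unchanged by restriction. Everything else is a matter of citing Theorems~\ref{Thm: injective}, \ref{Thm: Main}, \ref{Thm: categorical strong}, \ref{Thm:recognition thm} and Propositions~\ref{prop:proj}, \ref{Prop: Nilpotency}, \ref{prop: KMPY96} in the right order.
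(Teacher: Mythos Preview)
Your proposal is correct and follows essentially the same route as the paper: assemble the $\Us_{\A^\pi}(\g)$-action from Theorems~\ref{Thm: injective} and \ref{Thm: Main} plus Proposition~\ref{prop: KMPY96}, restrict the strong perfect basis of Theorem~\ref{Thm: categorical strong} along the fully faithful embedding $\REP(R^\Lambda(\beta))\hookrightarrow\REP(R(\beta))$, then invoke Theorem~\ref{Thm:recognition thm} together with \eqref{prop:gro}(iii) and dualize via \eqref{def:coupl}. The paper does not address your ``main obstacle'' any more carefully than you do---it simply defines $\Irr_0(R^\Lambda(\beta))=\Irr_0(R(\beta))\cap[\REP(R^\Lambda(\beta))]$ and asserts that the strong perfect basis property persists, which is immediate since $E_i$ restricts and the relation \eqref{eqn: Perfect basis of REP} is computed in $\REP(R(\beta))$.
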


Set
$$[\PROJ(R)]\seteq \soplus_{\beta \in \rtl^+}[\PROJ(R(\beta))], \qquad
[\REP(R)]\seteq \soplus_{\beta \in \rtl^+}[\REP(R(\beta))].$$ We
denote by ${\rm B}^{\mathrm{low}}_{\A^\pi}(\g)$ (resp.\ ${\rm
B}^{\mathrm{up}}_{\A^\pi}(\g)$) the $\A^\pi$-subalgebra of $\Bg$
generated by $e_i'$ and $f_i^{\{n \}}$ (resp.\ ${e_i'}^{\{n \}}$ and
$f_i$) for all $i \in I$ and $n \in \Z_{> 0}$. Then, by a similar
argument given in \cite[Corollary 10.3]{KKO12}, we have:

\begin{corollary}\label{cor:main2} \hfill
\bnum
\item $[\REP(R)]$ and $[\PROJ(R)]$
have a structure of ${\rm B}^{\mathrm{up}}_{\A^\pi}(\g)$-module and
${\rm B}^{\mathrm{low}}_{\A^\pi}(\g)$-module, respectively.

\item There exist isomorphisms
$$\Us^-_{\A^\pi}(\g)^\vee \simeq [\REP(R)] \quad \text{ and } \quad  \Us^-_{\A^\pi}(\g) \simeq [\PROJ(R)]$$
as a ${\rm B}^{\mathrm{up}}_{\A^\pi}(\g)$-module and a ${\rm
B}^{\mathrm{low}}_{\A^\pi}(\g)$-module, respectively. In particular,
$\Us^-_{\A^\pi}(\g)$ and $\Us^-_{\A^\pi}(\g)^\vee$ are free
$\A^\pi$-modules. \ee
\end{corollary}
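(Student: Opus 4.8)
\textbf{Proof proposal for Corollary \ref{cor:main2}.}
The plan is to deduce the un-cyclotomic statement from the cyclotomic one (Theorem \ref{th:main1}) by taking an inductive limit over dominant weights $\Lambda$, in exact parallel with the argument of \cite[Corollary 10.3]{KKO12}. First I would recall that the projection functors give, for each $\Lambda\in\P^+$, exact superfunctors $\PROJ(R(\beta))\to\PROJ(R^\Lambda(\beta))$ and fully faithful embeddings $\REP(R^\Lambda(\beta))\monoto\REP(R(\beta))$, compatible with $\EL_i,\FL_i$ versus the convolution functors $E_i,F_i'$ on $R$-modules. On Grothendieck groups this yields, for the projective side, surjections $[\PROJ(R(\beta))]\to[\PROJ(R^\Lambda(\beta))]$ whose inverse limit over the directed system $\{\Lambda\}$ (ordered so that $\langle h_i,\Lambda\rangle\to\infty$ for all $i$) recovers $[\PROJ(R(\beta))]$, since any fixed finitely generated projective $R(\beta)$-module is nonzero in $R^\Lambda(\beta)$ for $\Lambda$ large by Proposition \ref{Prop: Nilpotency}; dually, $[\REP(R)]=\varinjlim_\Lambda[\REP(R^\Lambda)]$, the transition maps being induced by the inclusions $R^{\Lambda}(\beta)\to R^{\Lambda'}(\beta)$ for $\Lambda\le\Lambda'$. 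The grading and parity shifts are respected throughout, so these are isomorphisms of $\A^\pi$-modules.

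Next I would set up the algebra side. The operators $e_i'$ and $f_i^{\{n\}}$ (resp.\ ${e_i'}^{\{n\}}$ and $f_i$) act on $\Us^-_{\A^\pi}(\g)$ (resp.\ on $\Us^-_{\A^\pi}(\g)^\vee$) through the quantum boson algebra structure of Proposition \ref{prop: BqBg-structure on UqBg^-}, restricted to the $\A^\pi$-subalgebras ${\rm B}^{\mathrm{low}}_{\A^\pi}(\g)$ and ${\rm B}^{\mathrm{up}}_{\A^\pi}(\g)$ of $\Bg$ with $\tth,\tip$ as in \eqref{eq:Uqsg}; that the $\A^\pi$-forms are stable under divided-power $e_i'$ and $f_i$ is checked from the relations \eqref{eq: e_i' f_j 2} and the integrality $p_i^{ka_{ij}}\binmp{1-a_{ij}}{k}\in\Z[p_i^2,p_i^{-2}]$ noted after Definition \ref{def:BqBg}. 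One then observes, as in \cite{KKO12}, that the identification $\Vs_{\A^\pi}(\Lambda)=\Us^-_{\A^\pi}(\g)v_\Lambda$ is compatible with $\FL_i^{\{n\}},\EL_i$ and that, passing to the limit, $\mathsf F_i^{\{n\}}$ and $\mathsf E_i'$ on $[\PROJ(R)]$ (resp.\ $[\REP(R)]$) correspond to left multiplication by $f_i^{\{n\}}$ and the boson operator $e_i'$, giving the claimed module structures in part (i). For part (ii): Theorem \ref{th:main1}(iii) furnishes, for each $\Lambda$, an isomorphism $\Vs_{\A^\pi}(\Lambda)^\vee\simeq[\REP(R^\Lambda)]$ of $\Us_{\A^\pi}(\g)$-modules; these are compatible with the transition maps on both sides (on the algebraic side because $\Vs(\Lambda)^\vee\hookrightarrow\Vs(\Lambda')^\vee$ as $\Lambda\le\Lambda'$, a standard property of integrable highest weight modules, used here via Proposition \ref{prop: constant multiple 2} and the form of Proposition \ref{prop: non-deg form of Uqsg^-}), so the limit gives $\Us^-_{\A^\pi}(\g)^\vee\simeq[\REP(R)]$; dualizing with respect to the perfect pairing \eqref{def:coupl} (which also passes to the limit) gives $\Us^-_{\A^\pi}(\g)\simeq[\PROJ(R)]$. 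Freeness of $\Us^-_{\A^\pi}(\g)$ and its dual follows because $[\PROJ(R)]$ and $[\REP(R)]$ are free $\A^\pi$-modules: $[\REP(R(\beta))]$ has the $\A^\pi$-basis $\Irr_0(R(\beta))$ by Theorem \ref{Thm: categorical strong}, and $[\PROJ(R(\beta))]$ is free on the classes of the indecomposable projective covers.

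I expect the main obstacle to be the compatibility of the inductive/projective systems with the $\A^\pi$-lattices, i.e.\ verifying carefully that $\varinjlim_\Lambda\Vs_{\A^\pi}(\Lambda)^\vee\cong\Us^-_{\A^\pi}(\g)^\vee$ as $\A^\pi$-modules and that the dual limit on the projective side is exactly $\Us^-_{\A^\pi}(\g)$ rather than some a priori larger completion; this requires knowing that every weight space $\Us^-_{\A^\pi}(\g)_{-\beta}$ already embeds into $\Vs_{\A^\pi}(\Lambda)$ for $\Lambda$ sufficiently dominant, which on the categorical side is precisely the statement that $\PROJ(R(\beta))\to\PROJ(R^\Lambda(\beta))$ is an isomorphism on objects for $\langle h_i,\Lambda\rangle$ large ($i\in\mathrm{supp}\,\beta$), a consequence of Proposition \ref{Prop: Nilpotency} together with the cyclotomic categorification. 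The remaining points — stability of the $\A^\pi$-forms under the boson operators, and matching of the boson relations \eqref{def:boson2} with the limiting relations among $\mathsf E_i',\mathsf F_i$ coming from \eqref{Eq: The com rel} — are routine and follow the template of \cite[Corollary 10.3]{KKO12} verbatim, so I would simply cite that argument.
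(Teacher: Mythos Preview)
Your proposal is correct and follows exactly the approach the paper takes: the paper's entire proof is the single remark ``by a similar argument given in \cite[Corollary 10.3]{KKO12}'', and your outline is a faithful unpacking of that inductive-limit argument. One small slip: the transition maps at the algebra level go the other way (for $\Lambda\le\Lambda'$ and compatible choices of $a^\Lambda_i$ one has a surjection $R^{\Lambda'}(\beta)\twoheadrightarrow R^\Lambda(\beta)$, which then induces the inclusion $\REP(R^\Lambda(\beta))\hookrightarrow\REP(R^{\Lambda'}(\beta))$ you want), but this does not affect the argument.
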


\begin{corollary}\label{cor:main3}
Let $M,M' \in \REP(R(\beta))$. If $\ch^\pi_q(M)=\ch^\pi_q(M')$, then $[M]=[M']$. In particular, if $M$ and
$M'$ are simple, then $M \simeq M'$.
\end{corollary}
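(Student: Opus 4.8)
The plan is to show that the $(q,\pi)$-character induces an \emph{injective} $\A^\pi$-linear map on the Grothendieck group $[\REP(R(\beta))]$. Granting this, the corollary is immediate: $\ch^\pi_q(M)=\ch^\pi_q(M')$ forces $[M]=[M']$ in $[\REP(R(\beta))]$, and if in addition $M$ and $M'$ are simple then, since $\Irr(R(\beta))$ is a $\Z$-basis of $[\REP(R(\beta))]$, equality of classes forces $M\simeq M'$.

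First I would record that $\ch^\pi_q$ is additive on short exact sequences and satisfies $\ch^\pi_q(qM)=q\,\ch^\pi_q(M)$ and $\ch^\pi_q(\Pi M)=\pi\,\ch^\pi_q(M)$ (the last because $(\Pi M)_{a,\bar0}\cong M_{a,\bar1}$ and $\pi^2=1$), so that it descends to an $\A^\pi$-linear map $[\REP(R(\beta))]\to\bigoplus_{\nu\in I^\beta}\A^\pi\,e(\nu)$. Then I would observe that for $\nu=(i_1,\dots,i_n)\in I^\beta$ the coefficient of $e(\nu)$ in $\ch^\pi_q(M)$ is exactly the class of the iterated restriction $E_{i_1}\cdots E_{i_n}M$ in $[\REP(R(0))]\cong\A^\pi$; hence $\ch^\pi_q([M])=0$ is equivalent to $\mathsf E_{i_1}\cdots\mathsf E_{i_n}[M]=0$ for every $\nu\in I^\beta$.

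The core step is to transport this vanishing criterion through the categorification isomorphism $[\REP(R)]\simeq\Us^-_{\A^\pi}(\g)^\vee$ of ${\rm B}^{\rm up}_{\A^\pi}(\g)$-modules of Corollary \ref{cor:main2}, under which each $\mathsf E_i=[E_i]$ corresponds to the operator $e_i'$ up to an invertible scalar (harmless for a kernel computation; see \cite{KKO12}). Vanishing of all iterated $\mathsf E$'s on $[M]$ thus becomes vanishing of all iterated $e_i'$'s on the corresponding $P\in\Us^-_{\A^\pi}(\g)^\vee_{-\beta}\subseteq\Us^-(\g)_{-\beta}$. I would finish by proving, by induction on $|\gamma|$, that the $\cor$-linear map $\Us^-(\g)_{-\gamma}\to\bigoplus_{\mu\in I^\gamma}\cor$ sending $P$ to the tuple of all iterated $e_i'$-applications bringing $P$ down to degree zero is injective; the inductive step is precisely Proposition \ref{prop: constant multiple 2}, which says that an element of $\Us^-(\g)$ annihilated by all $e_i'$ is a scalar, hence $0$ in positive degree. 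Restricting this injective $\cor$-linear map to the $\A^\pi$-submodule $\Us^-_{\A^\pi}(\g)^\vee_{-\beta}$ yields $P=0$, i.e.\ $[M]=0$.

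I expect the only real obstacle to be bookkeeping rather than a hard estimate: one has to (i) pin down the normalization relating $\mathsf E_i=[E_i]$ to $e_i'$ under the isomorphism of Corollary \ref{cor:main2}, but only up to units, which is enough; (ii) check that the $e(\nu)$-coefficient of $\ch^\pi_q(M)$ genuinely equals the class of the iterated restriction in $[\REP(R(0))]$; and (iii) use that $\Us^-_{\A^\pi}(\g)^\vee$ lies inside $\Us^-(\g)$, so that injectivity over $\cor$ descends to the lattice. As a self-contained fallback one can bypass $\Us^-(\g)$ and instead prove linear independence of the characters of the simple modules by descending induction on $|\beta|$, using the perfect-basis formula \eqref{eqn: Perfect basis of REP} together with the injectivity of $\tilde e_i$ from Lemma \ref{Lem: crystal structure}(b).
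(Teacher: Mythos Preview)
Your proposal is correct and matches the paper's implicit reasoning. The paper gives no explicit proof of this corollary; its placement immediately after Corollary~\ref{cor:main2} signals that it is meant to follow from the isomorphism $[\REP(R)]\simeq\Us^-_{\A^\pi}(\g)^\vee$ together with Proposition~\ref{prop: constant multiple 2}, which is exactly the route you take: identify the $e(\nu)$-coefficient of $\ch^\pi_q$ with the iterated restriction $E_{\nu_1}\cdots E_{\nu_n}$, transport to $\Uqmsg$, and use that $\bigcap_i\ker e_i'=\cor$ to force injectivity by induction on height.

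One small remark on your bookkeeping items. For (ii), the identification $[\REP(R(0))]\cong\A^\pi$ via $[\k_0]\mapsto 1$ does give $[N]=\dim^\pi_q(N)$ for any $N\in\REP(R(0))$, since every composition factor of a finite-dimensional $\k$-supermodule is a shift of $\k_0$; so no discrepancy arises even when $\k\neq\k_0$. For (i), the scalar relating $\mathsf E_i$ to $e_i'$ is indeed a unit of $\A^\pi$ (a monomial in $q$ and $\pi$), so kernels coincide as you say. Your fallback via the perfect-basis formula also works; note that the induction there should be \emph{ascending} in $|\beta|$, not descending.
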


\vskip 3mm

\subsection{Quantum Kac-Moody algebras}\label{sec:QKM}

In \cite{HW12}, Hill and Wang proposed
a condition on a Cartan
superdatum

\vskip 2mm

\ (C6)\quad the integer  $\dg_i$ is odd if and only if $i \in \Iod$.

\vskip 2mm

Under the condition (C6), we claim that there are equivalences of
categories
$$\Mod^\P(\Uqsg) \simeq \Mod^\P(\Uqsug) \simeq \Mod^\P(\Uqg), $$
where $\Uqg$ is the usual quantum Kac-Moody algebra with a parameter
$v$ (which will be set to be $\sqrt{\pi}q$).

Let us recall the definition of quantum Kac-Moody algebras. For $n
\in \Z_{\ge 0}$, set
$$[ n ]^v_i=[n]_{v^{\dg_i},v^{-\dg_i}} \quad \text{and} \quad
{\genfrac{[}{]}{0pt}{0}{n}{m}}_v=
{\genfrac{[}{]}{0pt}{0}{n}{m}}_{v^{\dg_i},v^{-\dg_i}}.$$
The {\it
quantum Kac-Moody algebra} $\Uqg$ associated with a Cartan datum
$(\car,\P,\Pi,\Pi^\vee)$ is the  $\Q(v)$-algebra generated by $e_i$,
$f_i$ and $K_i^{\pm 1}$ $(i \in I)$ subject to the following
defining relations:
\begin{align*}
&K_iK_j=K_jK_i, \quad
 K_i e_j K_i^{-1} = v^{ \dg_ia_{ij} } e_j, \quad K_i f_j K_i^{-1}= v^{ -\dg_i a_{ij} } f_j, \\
& e_if_j- f_je_i =\delta_{i,j} \dfrac{K_i-K^{-1}_i}{v^{\dg_i}-v^{-\dg_i}} \ \ (i,j \in I), \\
& \sum_{k=0}^{1-a_{ij}}
(-1)^k{\genfrac{[}{]}{0pt}{0}{1-a_{ij}}{k}}_v f^{{ 1-a_{ij}-k } }_i
f_j f_i^{ { k } } = 0 \ \ (i \neq j), \\
& \sum_{k=0}^{1-a_{ij}} (-1)^k
{\genfrac{[}{]}{0pt}{0}{1-a_{ij}}{k}}_v e^{{ 1-a_{ij}-k} }_i e_j
e_i^{{ k } } =0 \ \ (i \neq j).
\end{align*}
Hence $\Q[\sqrt{\pi}]\tens\Uqg$ is nothing but the algebra $\FS$
with $p_{ii}\theta_{ii}^{-1}=v^{2\dg_i}$. Recall  that the algebra
$\Uqsug$ is equal to $\FS$ with $p_{ii}\theta_{ii}^{-1}=q_i^2\pi_i$.

Assume that the condition  (C6) is satisfied and set
$v=q\sqrt{\pi}$. Then we have
$$
v^{2\dg_i}=(q\sqrt{\pi})^{2\dg_i}=q_i^2\pi^{\dg_i}=q_i^2\pi_i.$$
Therefore, combining with Theorem \ref{thm:Oint}, we obtain \eq&&
\ba{rl} \Mod^{\P}(\Q[\sqrt{\pi}]\tens_{\Q[\pi]}\Uqsg) & \simeq \Mod^\P(\Uqsug) \simeq \Mod^\P\bl
\Q[\sqrt{\pi}]\tens\Uqg\br, \\
\Oint^{\P}(\Q[\sqrt{\pi}]\tens_{\Q[\pi]}\Uqsg) & \simeq
\Oint^\P(\Uqsug) \simeq \Oint^\P\bl\Q[\sqrt{\pi}]\tens\Uqg\br.
\ea\eneq

\end{document}